\documentclass[11pt,a4paper]{amsart}

\usepackage[top=2.5cm,bottom=2.5cm,left=2.5cm,right=2.5cm]{geometry}
\usepackage{fancyhdr}
\usepackage{nccmath}

\usepackage{amsmath}
\usepackage{amssymb}
\usepackage{amsfonts}
\usepackage{amsthm}
\usepackage{amscd}
\usepackage{mathrsfs}
\usepackage{bm}
\usepackage{latexsym}
\usepackage{colonequals}

\usepackage{graphicx}
\usepackage{float}
\usepackage{tikz-cd}
\usepackage[all]{xy}
\usepackage{pstricks}

\usepackage{indentfirst}
\usepackage{color}
\usepackage{pifont}
\usepackage{aliascnt}
\usepackage{appendix}
\usepackage{todonotes}
\usepackage{ulem}
\usepackage{setspace}

\let\oldthebibliography\thebibliography
\let\endoldthebibliography\endthebibliography
\renewenvironment{thebibliography}[1]{%
    \oldthebibliography{#1}%
    \onehalfspacing  
    \setlength{\parskip}{0.2\baselineskip}
}{%
    \endoldthebibliography
}

\usepackage[CJKbookmarks=true,colorlinks,linkcolor=red,anchorcolor=blue,citecolor=blue]{hyperref}
 \usepackage[hyperpageref]{backref}
 
\usepackage{xcolor}
\hypersetup{
	colorlinks,
	linkcolor={red},
	citecolor={blue},
	urlcolor={blue}
}
\usepackage{cleveref}

\theoremstyle{plain}

\newtheorem{thm}{Theorem}[section]
\newtheorem{proposition}[thm]{Proposition}
\newtheorem{lemma}[thm]{Lemma}
\newtheorem{corollary}[thm]{Corollary}

\newtheorem{conj}[thm]{Conjecture}

\theoremstyle{definition}

\newtheorem{definition}[thm]{Definition}
\newtheorem{remark}[thm]{Remark}
\newtheorem{example}[thm]{Example}
\newtheorem{question}[thm]{Question}

\newtheorem*{acknowledgement*}{Acknowledgements}

\newcommand{\tr}{\operatorname{tr}}
\newcommand{\com}{\mathrm{Comp}}
\newcommand{\p}{\partial}
\newcommand{\C}{\mathbb C}
\newcommand{\A}{\mathcal A}

\newcommand{\End}{\operatorname{End}}
\newcommand{\cc}{\frac{1}{2}}
\newcommand{\st}{{\star_{h_0}}}
\newcommand{\E}{{\mathcal E}}
\newcommand{\Ch}{D_{h_0}}
\newcommand{\bg}{g^{\star_{h_0}}}
\newcommand{\id}{\operatorname{id}}
\newcommand{\ob}{\operatorname{ob}}
\newcommand{\barHb}{{\mathbb H^1(\overline{X_0}, (\End \overline {E_0}^\vee,\operatorname{ad}(\theta_0^\st)))}}
\DeclareMathOperator{\KS}{KS}
\DeclareMathOperator{\Dol}{Dol}
\DeclareMathOperator{\dR}{dR}
\newcommand{\NHC}{\mathrm{NHC}}

\newcommand{\Betti}{\mathrm B}

\newcommand{\Ext}{\operatorname{Ext}}

\title[Isomonodromic deformations and non-abelian Noether--Lefschetz loci]{Isomonodromic deformations of Higgs bundles and characterization of the non-abelian Noether--Lefschetz locus}

\subjclass[2010]{14D22,14C30}
\keywords{}
\author[Tianzhi Hu]{Tianzhi Hu}
\address{ School of Mathematics and Statistics, Wuhan University, Luojiashan, Wuchang, Wuhan, Hubei, 430072, P.R. China}
\email{hutianzhi@whu.edu.cn}

\author{Ruiran Sun}
 \address{School of Mathematical Sciences, Xiamen University, Xiamen 361005, China}
\email{ruiransun@xmu.edu.cn}

\author{Jinbang Yang}
\address{School of Mathematical Sciences, University of Science and Technology of China, Hefei, Anhui 230026, PR China}
\email{yjb@mail.ustc.edu.cn}

\author[Kang Zuo]{Kang Zuo}
\address{ School of Mathematics and Statistics, Wuhan University, Luojiashan, Wuchang, Wuhan, Hubei, 430072, P.R. China; Institut f\"ur Mathematik, Universit\"at Mainz, Mainz, Germany, 55099}
\email{zuok@uni-mainz.de}

\makeatletter
\def\@settitle{%
  \begin{center}%
    \normalfont
    \Large
    \bfseries
    \@title\par
  \end{center}%
}
\makeatother

\begin{document}

\begin{abstract}
Let $f:X\to S$ be a smooth proper family of smooth projective varieties.  An irreducible complex local system on a fiber admits an isomonodromic deformation, hence determines a holomorphic section of the relative de Rham moduli space.  Applying the relative non-abelian Hodge correspondence produces a real-analytic section
$\sigma_{\Dol}:S\to M_{\Dol}(X/S)$ of the relative Dolbeault moduli space.

In this paper, we investigate when this real-analytic section is holomorphic.  The first approach uses the first-order infinitesimal deformation: we prove a Cauchy--Riemann type criterion showing that holomorphicity in a tangent direction of $S$ is measured by the composition of the Kodaira--Spencer map with the non-abelian Higgs field.  The second approach involves higher-order derivatives: after restricting $\sigma_{\Dol}$ to infinitesimal thickenings of the reference point in $S$, we introduce obstruction classes measuring the failure of holomorphicity and relate them to the Taylor expansion of the harmonic metric.

We apply these criteria to three problems.  First, we study the interaction between the $\C^*$-action on Higgs bundles and isomonodromic deformations.  For $\lambda\in S^1$, we prove that, on any complex analytic subvariety $U\subset S$, the rescaled family $\lambda\cdot\sigma_{\Dol}|_U$ is again isomonodromic whenever $\sigma_{\Dol}|_U$ is holomorphic. Second, for an initial polarized complex variation of Hodge structures, we consider the associated non-abelian Noether--Lefschetz locus.  We prove that this locus is precisely the maximal complex analytic subvariety of $S$ on which the real-analytic isomonodromic deformation $\sigma_{\Dol}$ becomes holomorphic.  Both the first-order and higher-order methods yield proofs of this characterization. Lastly, using the higher-order method, we prove that if the initial Higgs bundle is generically regular nilpotent and the isomonodromic deformation is holomorphic, then every member of the family is represented by a nilpotent Higgs bundle.
\end{abstract}
\maketitle

\setcounter{tocdepth}{1}
\hypersetup{linkcolor=black}
\tableofcontents
\hypersetup{linkcolor=red}

\section{Introduction} \label{sec_intro}
Let $f:X\longrightarrow S$ be a smooth proper family of smooth projective varieties.  In the local setting, we take $S$ to be a contractible complex manifold, such as a polydisc; in global statements, $S$ may instead be quasi-projective.  Following Simpson \cite{Simp94I,Simp94II}, the family $f$ has relative de Rham and Dolbeault moduli spaces
\[
\begin{tikzcd}
M_{\dR}(X/S) \arrow[rd] & & M_{\Dol}(X/S) \arrow[ld] \\
& S &
\end{tikzcd}
\]
whose fibers over $s\in S$ parametrize flat bundles and Higgs bundles on $X_s$, respectively.  The non-abelian Hodge correspondence, established in the vector-bundle setting by Donaldson \cite{Don} and Uhlenbeck--Yau \cite{UY}, and in the Higgs-bundle setting by Hitchin \cite{Hit}, Corlette \cite{Corl}, and Simpson \cite{Simp88,Simp92}, yields a
{\it real-analytic} isomorphism
\[
  \NHC:M_{\dR}(X/S)\xrightarrow{\ \sim\ } M_{\Dol}(X/S)
\]
covering the identity on $S$.  We will also use the real-analyticity of this correspondence in families; see Theorem~\ref{R-analyticity}.

\medskip
Fix a point $0\in S$, and let $\mathbb V$ be an irreducible $\C$-local system on $X_0$.  On each nearby fiber $X_s$, the flat bundle $(\mathbb V\otimes\mathcal O_{X_0},\nabla)$ admits an {\it isomonodromic deformation} $(\mathbb V\otimes\mathcal O_{X_s},\nabla_s)$.  Equivalently, these deformations define a holomorphic, possibly multivalued, section
\[
  \sigma_{\dR}:S\longrightarrow M_{\dR}(X/S).
\]
In the local setting, the contractibility of $S$ makes this section single-valued.  Composing with the relative non-abelian Hodge correspondence gives a real-analytic section, equivalently a real-analytic family of stable Higgs bundles,
\[
  \sigma_{\Dol}:=\NHC\circ \sigma_{\dR}
  :
  S\longrightarrow M_{\Dol}(X/S).
\]
We refer to both $\sigma_{\dR}$ and $\sigma_{\Dol}$ as isomonodromic deformations.  Since $\sigma_{\Dol}$ is generally only real-analytic, the following question is fundamental.

\begin{question}\label{q_hol}
How can one characterize the holomorphicity of $\sigma_{\Dol}$?
\end{question}

A second question concerns the geometric consequences of holomorphicity.
\begin{question}\label{q_whyhol}
If $\sigma_{\Dol}$ is holomorphic over $S$, or over a complex analytic subvariety of $S$, what additional structure or rigidity does $\sigma_{\Dol}$ acquire?
\end{question}

Section~\ref{sec_intro1} develops first- and higher-order criteria for the holomorphicity of $\sigma_{\Dol}$, thereby addressing Question~\ref{q_hol}.  These criteria lead to a characterization of the non-abelian Noether--Lefschetz locus conjectured by Esnault and Kerz in Question~\ref{EsnKer}.  We give two proofs: one uses the compatibility between isomonodromic deformation and the $\C^*$-action discussed in Section~\ref{sec_intro2}, while the other uses the higher-order holomorphicity criterion.

Question~\ref{q_whyhol} is intentionally broad.  In Sections~\ref{sec_intro2}--\ref{sec_intro4}, we formulate more precise versions of it---Question~\ref{Ker}, Question~\ref{EsnKer}, and Conjecture~\ref{conj_nil}---and establish several rigidity results.

\subsection{Higher-order holomorphicity of isomonodromic deformations}\label{sec_intro1}
We begin with Question~\ref{q_hol}.  The first-order infinitesimal criterion is the following Cauchy--Riemann-type statement.
\begin{proposition}[=Proposition~\ref{prop_firstob}]\label{prop_intro}
Fix a point $0\in S$, and suppose that $\sigma_{\Dol}(0)=[(E_0,\theta_0)]\in M_{\Dol}(X_0)$.  Let
\[
  \tau_0:T_0S\longrightarrow H^1(X_0,T_{X_0})
\]
be the Kodaira--Spencer map of the family $X\to S$.  The Higgs field $\theta$ induces a morphism of deformation complexes, denoted
$\theta_0:(T_{X_0},0)\to (\End E_0,\operatorname{ad}(\theta_0))$, and hence a map
\begin{align}\label{eq_nabHiggs}
  \theta_{0,*}:H^1(X_0,T_{X_0})
  \longrightarrow
  \mathbb{H}^1\bigl(X_0,(\End E_0,\operatorname{ad}(\theta_0))\bigr).
\end{align}
Then $\sigma_{\Dol}$ is holomorphic at $0$ in the direction $v\in T_0S$ if and only if $\theta_{0,*}\circ\tau_0(v)=0$.
\end{proposition}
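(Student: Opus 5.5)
The plan is to reduce to a disc and compute the $\bar\partial$-derivative of $\sigma_{\Dol}$ along $v$. Restrict the family to a holomorphic disc $\Delta\subset S$ through $0$ tangent to $v$, with coordinate $t$. Trivialize $X|_\Delta\cong X_0\times\Delta$ as a $C^\infty$ manifold. The complex structure on $X_t$ is then described by a Beltrami differential $\mu(t)\in A^{0,1}(T_{X_0})$ with $\mu(0)=0$. Its $t$-derivative $\dot\mu=\partial_t\mu|_{t=0}$ is a $\bar\partial$-closed form representing $\tau_0(v)$, and $\partial_{\bar t}\mu|_{t=0}=0$ because the family is holomorphic. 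On this trivialization, the isomonodromic deformation $\sigma_{\dR}$ is given by the constant flat connection $D$ on the fixed $C^\infty$ bundle $E$; only the complex structure of the base varies. By Theorem~\ref{R-analyticity}, the harmonic metrics $h_t$ for $(E,D)$ on $X_t$ vary real-analytically in $t$. Decomposing $D=\partial_{h_t}+\bar\partial_{h_t}+\theta_t+\theta_t^{*_{h_t}}$ with respect to the $(1,0)/(0,1)$ splitting of $X_t$ gives a real-analytic family of Higgs operators $\bar\partial_{E,t}+\theta_t$, which represents $\sigma_{\Dol}$ along $\Delta$.

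Next I would identify the tangent space to $M_{\Dol}(X/S)$ at $\sigma_{\Dol}(0)$. It is an extension of $T_0S$ by the hypercohomology $\mathbb H^1(X_0,(\End E_0,\operatorname{ad}\theta_0))$; I would represent this space by harmonic forms $(\beta,\psi)\in A^{0,1}(\End E)\oplus A^{1,0}(\End E)$ modulo gauge. Since the relative Dolbeault space is a complex space over $S$ and $\partial_{\bar t}$ of the base coordinate vanishes, holomorphicity in direction $v$ amounts to the vanishing of the vertical class of $\partial_{\bar t}$ of the family. Concretely, I need to compute the class of $\bigl(\partial_{\bar t}\bar\partial_{E,t},\partial_{\bar t}\theta_t\bigr)|_{t=0}$, pulled back to the fixed complex structure via $\mu$, in $\mathbb H^1$.

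The main computation, and the obstacle I expect, is the $\bar t$-derivative itself. The $(0,1)$-part of $D$ on $X_t$ is $D^{0,1}_0+\mu(t)\lrcorner D^{1,0}_0+O(|\mu|^2)$ up to the standard correction. The harmonic decomposition also changes through $\dot h=h_0^{-1}\partial_{\bar t}h_t$. Differentiating in $\bar t$ kills the holomorphic dependence on $\mu$, so only the conjugate terms survive, namely $\bar\mu$ acting on the $(0,1)$ parts, together with the metric variation. The aim is to show that the resulting pair equals $\bigl(\overline{\dot\mu}\lrcorner\theta_0^{*_{h_0}},\ \ast\bigr)$ plus the $\mathbb H^1$-coboundary $\bigl(\bar\partial_E\dot h,[\theta_0,\dot h]\bigr)$. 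Under the conjugate-linear Hodge-theoretic identification ($h_0$-adjoint composed with complex conjugation, the $\star_{h_0}$ duality), the class of $\overline{\dot\mu}\lrcorner\theta_0^{*}$ is the conjugate of $\dot\mu\lrcorner\theta_0$, and $\dot\mu\lrcorner\theta_0$ represents $\theta_{0,*}\tau_0(v)$ via the morphism of complexes $(T_{X_0},0)\to(\End E_0,\operatorname{ad}\theta_0)$, $\xi\mapsto\xi\lrcorner\theta_0$. To verify this cleanly I would use the harmonic-metric equation $F_{D''_{h}}=0$, differentiated in $\bar t$, to solve for $\dot h$, and then use Hodge theory of the harmonic bundle (the Kähler identities for $D',D''$) to show that the projection onto harmonic representatives retains exactly this term.

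Finally, because the conjugate-linear identification is an isomorphism, the vertical $\bar\partial$-derivative vanishes if and only if $\theta_{0,*}\tau_0(v)=0$. This gives holomorphicity in direction $v$, and the converse follows as well.
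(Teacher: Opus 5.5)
Your framing matches the paper's: holomorphicity along $v$ is the vanishing of the class of the $\partial_{\bar t}$-derivative $(\psi_1,\varphi_1)$ in $\mathbb H^1(X_0,(\End E_0,\operatorname{ad}\theta_0))$, i.e.\ first-order solvability of the gauge equation \eqref{hol}. The paper works with the conjugate class $\ob_1\in\barHb$, which is equivalent. The gap is in the decisive computation. Write $\dot h=\bg_1$ and $\overline{\dot\mu}\lrcorner\theta_0^{*}=\bar\eta_1(\theta_0^\st)$. Then Proposition~\ref{prop_Taylor_Exp_DolHig} gives
\[
\psi_1+\varphi_1=\bar\eta_1(\theta_0^\st)+\frac12\mathcal D''\bg_1-\frac12\mathcal D'\bg_1,\qquad \mathcal D''=\bar\p_0+\operatorname{ad}\theta_0,\quad \mathcal D'=\Ch^{1,0}+\operatorname{ad}\theta_0^\st .
\]
So the metric variation is not only a coboundary $(\bar\p_0\dot h,[\theta_0,\dot h])$. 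It also contributes the $\mathcal D'$-exact term $-\frac12\mathcal D'\bg_1$, which is in general not $\mathcal D''$-exact. Moreover, $\bar\eta_1(\theta_0^\st)$ by itself is not $\mathcal D''$-closed. Its $(1,1)$-component $\bar\p_0(\bar\eta_1(\theta_0^\st))$ equals $\frac12(\bar\p_0\Ch^{1,0}\bg_1+[\theta_0,[\theta_0^\st,\bg_1]])$, which by Lemma~\ref{vanlem} vanishes only if $\bg_1$ is scalar. Hence $\overline{\dot\mu}\lrcorner\theta_0^{*}$ has no class in the group where your vertical derivative lives. It is $\mathcal D'$-closed and defines a class in $\barHb$ instead. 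Your step ``the class of $\overline{\dot\mu}\lrcorner\theta_0^{*}$ is the conjugate of $\dot\mu\lrcorner\theta_0$'' therefore silently passes from $\mathcal D''$-cohomology to $\mathcal D'$-cohomology, and that passage is exactly the content of the proposition. The target formula you set out to verify is false as stated.

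Your final appeal to harmonic projection does repair the argument, and you never need to solve for $\dot h$; the differentiated harmonic-metric equation enters only as a closedness statement. Three facts suffice. First, $\psi_1+\varphi_1$ is $\mathcal D''$-closed, by the integrability conditions \eqref{eq_compatibility} at order $\bar t$; its $(1,1)$-part is the identity used in Step 2 of the paper's proof. Second, $\bar\eta_1(\theta_0^\st)$ is $\mathcal D'$-closed, being conjugate to $\eta_1(\theta_0)$. Third, since $\Delta_{\mathcal D'}=\Delta_{\mathcal D''}$, the harmonic projection $H$ annihilates both $\mathcal D'$-exact and $\mathcal D''$-exact forms. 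Hence
\[
[\psi_1+\varphi_1]_{\mathcal D''}=0\iff H\bigl(\bar\eta_1(\theta_0^\st)\bigr)=0\iff [\bar\eta_1(\theta_0^\st)]_{\mathcal D'}=0,
\]
and the last condition is the conjugate of $\theta_{0,*}\tau_0(v)=0$. This repaired route gives both implications at once. The paper instead writes out the gauge equations \eqref{g1} and applies the harmonicity Lemma~\ref{vanlem} twice. In one direction it forces $u_1=-\frac12\bg_1$, hence $(\bar\eta_1(\theta_0^\st),0)=\frac12\mathcal D'\bg_1$. In the other, starting from a primitive $f_1$, it forces $\bg_1=2f_1+c\cdot\id$, after which $u_1=-\frac12\bg_1$ solves \eqref{g1}.
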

For families of relative dimension one, related results appear in \cite[Theorem A]{HSZ} and \cite{CTW}.

Because $\sigma_{\Dol}$ is real-analytic, its holomorphicity can also be tested using finite-order jets at a fixed point.  Let $U\subset S$ be a complex analytic subvariety containing $0$, and, for $n\geq 1$, set $A_n:=\mathbb C[t]/(t^{n+1})$.  A morphism
\begin{align*}
\gamma:\operatorname{Spec} A_n \to U, \qquad \gamma(0)=0,
\end{align*}
is called an order-$n$ curve in $U$ through $0$.  We denote the set of all such curves by $J^n_0U$ and call it the {\it $n$-jet space of $U$ at $0$}.  We use the following criterion.

\medskip
\noindent\textbf{Criterion.}  Fix $0\in U$.  The restriction $\sigma_{\Dol}|_U$ is holomorphic if and only if, for every $n\geq 1$ and every $\gamma\in J_0^nU$, the pullback family $\gamma^*(\sigma_{\Dol}):=\sigma_{\Dol}\circ\gamma$ over $\operatorname{Spec} A_n$ is holomorphic.

\medskip
Thus there are two complementary approaches to the holomorphicity of $\sigma_{\Dol}$:
\begin{enumerate}
\item the first-order approach, which gives the explicit Cauchy--Riemann equation in Proposition~\ref{prop_intro}; and
\item the higher-order approach, which studies the Taylor expansion of the real-analytic section along all finite-order jets through a fixed point.
\end{enumerate}
The first approach is conceptually direct, but it must be applied pointwise and direction by direction.  The second is technically more involved, but it allows us to work in the formal neighborhood of a single point.

For $\gamma\in J^n_0S$, we construct obstruction classes to the holomorphicity of $\gamma^*(\sigma_{\Dol})$.  The main difficulty is that higher-order holomorphicity of a real-analytic map imposes many conditions, including the vanishing of derivatives in the $t\bar t$, $\bar t^2$, and higher mixed directions.  We divide the non-holomorphic derivatives into two types:
\begin{enumerate}
    \item purely anti-holomorphic derivatives, namely those with respect to $\bar t,\bar t^2,\ldots$; and
    \item mixed derivatives, namely those with respect to $t\bar t,t\bar t^2,t^2\bar t,\ldots$.
\end{enumerate}
In this paper, we isolate the purely anti-holomorphic derivatives.  This produces a sequence of partial obstruction classes whose successive vanishing measures increasing holomorphicity in the $\bar t$-directions.

\medskip
Let $X_n\to \operatorname{Spec}A_n$ be the $n$-th order deformation of $X_0$ determined by $\gamma\in J_0^nS$.  We view $X_n$ as a deformation of the complex structure on the fixed underlying differentiable manifold of $X_0$.  Suppose that $\sigma_{\Dol}(0)=[(E_0,\theta_0)]\in M_{\Dol}(X_0)$, and let $\E$ be the smooth vector bundle underlying $E_0$.  The Dolbeault operator $\bar\partial_0$ determines the holomorphic structure of $E_0$, so that $(\E,\bar\partial_0,\theta_0)=(E_0,\theta_0)$.  Truncating the isomonodromic deformation gives a real-analytic deformation on $X_n$, denoted
\[
  (\E,\bar\partial_t,\theta_t),
\]
where $\bar\partial_t$ defines a family of holomorphic structures on $\E$ over $X_n$, and $\theta_t$ is the corresponding family of Higgs fields.  Both depend on $t$ and $\bar t$; see Definition~\ref{def_real_ana_deform}.  The deformation is holomorphic precisely when it depends only on $t$; see Definition~\ref{def_holo_deform}.

For $k=1,2,\ldots,n$ and the ideal
\[
(t,\bar t^{k+1})\subset \mathbb C[t,\bar t]/(t,\bar t)^{n+1},
\]
we say that the deformation is \textit{modulo-$(t,\bar t^{k+1})$-holomorphic} if, after quotienting by $(t\bar t,\bar t^{k+1})$, it depends only on $t$; see Definition~\ref{def_modulo_hol}.  If $(\E,\bar\p_t,\theta_t)$ is modulo-$(t,\bar t^k)$-holomorphic, we construct an obstruction class
\[
  \ob_k
  \in
  \barHb
  \bigr)
\]
whose vanishing is equivalent to modulo-$(t,\bar t^{k+1})$-holomorphicity; see Proposition~\ref{prop_existence_ob}.  Consequently, the successive vanishing of
\[
  \ob_1,\ob_2,\ldots,\ob_n
\]
is equivalent to modulo-$(t)$-holomorphicity, that is, to the disappearance of all purely anti-holomorphic derivatives through order $n$.  By definition, $\ob_1$ is exactly $\theta_{0,*}\circ \tau_0$.

\medskip
We then describe these obstruction classes explicitly.  When the initial Higgs bundle is {\it stable}, the relevant obstruction data can be extracted from the deformed harmonic metric.  Let $h_0$ be the harmonic metric of the initial stable Higgs bundle $(\E,\bar\p_0,\theta_0)$, and let $h_t$, defined in \eqref{metric}, be the harmonic metric of the deformed Higgs bundle $(\E,\bar\partial_t,\theta_t)$.  After identifying all bundles with the fixed smooth bundle $\E$, we may write
\[
  h_t
  =
  h_0\left(
    \id
    +\sum_{i=1}^n t^i g_i
    +\sum_{i=1}^n \bar t^i g_i^{\st}
    +\text{mixed terms}
  \right),
\]
where $g_i\in \A^0(\End \E)$.  These coefficients are obtained by solving the Hermitian--Yang--Mills--Higgs equation for the isomonodromic deformation.  The harmonic-map formula
\[
  \theta_t
  =
  \left(
    -\frac12 h_t^{-1}dh_t
  \right)^{1,0}
\]
shows that the Taylor coefficients of the deformed Higgs field $\theta_t$ and Dolbeault operator $\bar\p_t$ in the directions $t,t^2,\ldots,t^n$ and $\bar t,\bar t^2,\ldots,\bar t^n$ are determined by the coefficients $g_i$ and $g_i^{\st}$; see Proposition~\ref{prop_Taylor_Exp_DolHig}.  The holomorphic part of the deformation is controlled by the $g_i$, whereas the obstruction classes $\ob_1,\ldots,\ob_n$ are controlled by the adjoint coefficients $g_i^{\st}$.

Using gauge theory, we compute $\ob_k$ in Proposition~\ref{prop_ob} and Proposition~\ref{prop_obstruction_class_higher_dim}.  More precisely, we give an \textbf{explicit expression} for $\ob_k$ in terms of the initial data $(\E,\bar\p_0,\theta_0,h_0)$, the deformation $X_n$, and the coefficients $\{\bg_i\}_{i=1}^k$.  Although each $\bg_i$ is determined uniquely by the initial data, this dependence is transcendental.  Thus $\ob_k$ is, in essence, determined by $X_n$ and the initial Higgs bundle.

In summary, the classes $\ob_1,\ob_2,\ldots$ explicitly encode all purely anti-holomorphic derivatives of $\sigma_{\Dol}$.  We expect the same method to extend to mixed derivatives and their obstruction classes.

\bigskip
Finally, the local deformation-theoretic framework used here---describing deformations of the complex structure on $X_0$ through the Kodaira--Spencer differential graded Lie algebra and combining this description with the harmonic theory underlying the non-abelian Hodge correspondence---has also been applied to the classical Hodge locus; see \cite{LiuShen26II}.

\subsection{Isomonodromic deformations and the $\C^*$-action}\label{sec_intro2}
We next address Question~\ref{q_whyhol} by studying the natural $\C^*$-action on the Dolbeault moduli space.

The {\it $\C^*$-action on the isomonodromic deformation $\sigma_{\Dol}$} is defined as follows.  If $\sigma_{\Dol}(s)=[(E_s,\theta_s)]$ for $s\in U$ and $\lambda\in \C^*$, then
\[
  (\lambda\cdot\sigma_{\Dol})(s):=[(E_s,\lambda\theta_s)].
\]
This again defines a real-analytic deformation of stable Higgs bundles.

Kerz asked whether this action is compatible with isomonodromic deformation.
\begin{question}[Kerz]\label{Ker}
Let $U\subset S$ be a complex analytic subvariety such that
\[
  \sigma_{\Dol}|_U:U\longrightarrow M_{\Dol}(X/S)
\]
is \textbf{holomorphic}.  For any $\lambda\in \C^*$, is $\lambda\cdot\sigma_{\Dol}|_U$ also an isomonodromic deformation?
\end{question}

The holomorphicity assumption is essential: without it, the two operations need not commute \cite[Theorem C]{HSZ}.  We use the term ``commutativity'' because Question~\ref{Ker} asks whether the following diagram represents two equivalent procedures:
\[
\begin{tikzcd}[column sep=4cm]
\sigma_{\Dol}(0) \arrow[r, "\text{isomonodromic deformation}"] \arrow[d,"\lambda\cdot"]& \sigma_{\Dol}|_U \arrow[d,"\lambda\cdot"]\\
(\lambda\cdot\sigma_{\Dol})(0) \arrow[r, "\text{isomonodromic deformation}"]  & \lambda\cdot\sigma_{\Dol}|_U.
\end{tikzcd}
\]
For technical reasons, we first treat $\lambda\in S^1:=\{z\in\C^*\mid |z|=1\}$.  For $\lambda\in S^1\setminus\{\pm1\}$, we prove the following stronger statement.
\begin{thm}\label{thm_main_1}
Let $\lambda\in S^1\setminus \{\pm1\}$, and let $U\subset S$ be a complex analytic subvariety.  Then $\sigma_{\Dol}|_U$ is holomorphic if and only if $\lambda\cdot\sigma_{\Dol}|_U$ is an isomonodromic deformation.
\end{thm}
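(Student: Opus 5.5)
We reduce both directions to the first-order criterion of Proposition~\ref{prop_intro}, applied pointwise along $U$, together with an explicit description of how the flat connection changes under $\theta\mapsto\lambda\theta$ for $|\lambda|=1$. For $\lambda\in S^1$, if $h_s$ is the harmonic metric of $(E_s,\theta_s)$, then $h_s$ is also harmonic for $(E_s,\lambda\theta_s)$, since $(\lambda\theta)^{*_h}=\bar\lambda\theta^{*_h}$ and the Hitchin equation is preserved. The associated flat connection is therefore
\[
D^\lambda_s=\partial_{h_s}+\bar\partial_s+\lambda\theta_s+\bar\lambda\theta_s^{*_{h_s}}.
\]
Being isomonodromic means that the family $D^\lambda_s$, viewed on the fixed differentiable manifold, has locally constant monodromy. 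Equivalently, the family $s\mapsto(E_s,\lambda\theta_s)$ is obtained by $\NHC$ from a $\nabla$-flat (Gauss--Manin) section. We may work locally, and restrict to smooth points of $U$ by an analytic continuation/density argument, since both conditions are closed and real-analytic.

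For ($\Rightarrow$), assume $\sigma_{\Dol}|_U$ is holomorphic. We vary $\lambda$: the family $\mu\mapsto\mu\cdot\sigma_{\Dol}|_U$ with $\mu\in S^1$ is real-analytic in $\mu$. We show that the derivative in $s$ of the monodromy of $D^\mu_s$ vanishes for all $\mu$. Write the first-order variation of $D^\mu_s$ along $v\in T_sU$ as $\dot D^\mu=A+\mu B+\bar\mu C$, where $A$, $B$, $C$ come from $\dot\partial_h+\dot{\bar\partial}$, $\dot\theta$, and $\dot{\theta^{*}}$ respectively. This expression is a Laurent polynomial in $\mu$. Its class in the de Rham deformation space $\mathbb H^1(\End\E,D^\mu)$, modulo the Kodaira--Spencer contribution that is identical for all $\mu$, is what must vanish.

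Holomorphicity (via Proposition~\ref{prop_intro}, $\theta_{*}\tau(v)=0$ and its conjugate, and using the $\bar v$ direction) lets us split $v$-variations into a type where $\dot\theta$, $\dot{\bar\partial}$ are genuinely holomorphic Higgs deformations and the $\bar v$-derivatives vanish. Isomonodromy at $\mu=1$ gives vanishing of the class for $\mu=1$. We then use the Hodge decomposition of $\mathbb H^1$ via harmonic representatives (the Kähler identities for Higgs bundles, which identify $\mathbb H^1_{\dR}$ with $\mathbb H^1_{\Dol}$ isometrically and compatibly with the $S^1$-action) to separate the $\mu^{1},\mu^0,\mu^{-1}$ components. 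The key claim is that under holomorphicity the three components vanish separately, so the class vanishes for every $\mu$, in particular for $\mu=\lambda$. Integrating along paths in $U$ then gives isomonodromy of $\lambda\cdot\sigma_{\Dol}|_U$.

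For ($\Leftarrow$), assume both $\sigma_{\Dol}|_U$ and $\lambda\cdot\sigma_{\Dol}|_U$ are isomonodromic. Then the classes of $A+B+C$ and of $A+\lambda B+\bar\lambda C$ both vanish up to the common Kodaira--Spencer term, so the class of $(\lambda-1)B+(\bar\lambda-1)C$ vanishes. Because $\lambda\neq\pm1$, the coefficients $\lambda-1$ and $\bar\lambda-1$ are not real multiples of each other in the required way, and we can solve for the anti-holomorphic component. Concretely, we pair with harmonic representatives and take real and imaginary parts, which isolates the $\bar v$-part of $\theta_*\tau$, namely $\overline{\theta_{*}\tau(v)}$ in the $\bar v$ direction. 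This shows that the $\bar\partial_s$-derivative of $\sigma_{\Dol}$ vanishes, so the Cauchy--Riemann criterion of Proposition~\ref{prop_intro} holds at every point of $U$ in every direction of $T_sU$, and $\sigma_{\Dol}|_U$ is holomorphic. The case $\lambda=-1$ is excluded precisely because then $\lambda-1=\bar\lambda-1=-2$ and the two components cannot be separated.

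The main obstacle is the ($\Rightarrow$) separation step: showing that holomorphicity forces the $\mu$-Fourier components of the variation class to vanish individually, rather than only their sum. Our first attempt is to use the harmonic-metric variation $\dot h$ (the $g_1,g_1^{\star}$ coefficients) together with the Kähler identities, to show that the $\mu^{\pm1}$ components are $D''$- and $D'$-exact separately when $\theta_*\tau(\bar v)=0$. If this fails, we fall back on the higher-order obstruction classes $\ob_k$ and compare Taylor expansions.
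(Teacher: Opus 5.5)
Your overall architecture matches the paper's. You reduce to a first-order statement at each point and tangent direction of a smooth $U$. You observe that $h_t$ stays harmonic for $(\bar\partial_t,\lambda\theta_t)$ when $|\lambda|=1$, so $D^\lambda_t=D_{h_t}+\lambda\theta_t+\bar\lambda\theta_t^{\star_{h_t}}$. You close with Proposition~\ref{prop_intro}. But the substance of both directions is missing.

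\textbf{The ($\Rightarrow$) direction.} Here the decisive step is left as an unproved \emph{key claim}, and its framing does not work. The variation $\dot D^\mu$ must be exact for $D^\mu_0=\Ch+\mu\theta_0+\bar\mu\theta_0^{\star_{h_0}}$, a differential that changes with $\mu$. So there is no meaningful sense in which the $\mu^{1},\mu^{0},\mu^{-1}$ pieces ``vanish separately''. What is true is that $\dot D^\lambda$ is $D^\lambda_0$-exact with a $\lambda$-dependent primitive.

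Two ingredients are missing. The first is the explicit first-order expansion of $\theta_t$ and $\theta_t^{\star_{h_t}}$ in terms of $\eta_1,g_1,\bg_1$ (Proposition~\ref{prop_Taylor_Exp_DolHig}). The second is that holomorphicity gives more than exactness of $\theta_{0,*}\tau_0(v)=[(0,\eta_1(\theta_0))]$: as in the proof of Proposition~\ref{prop_firstob}, Lemma~\ref{vanlem} forces the primitive to be $\frac12 g_1$. That is, $[\theta_0,g_1]=0$ and $\eta_1(\theta_0)=\frac12\bar\partial_0 g_1$, together with the $\star$-conjugate relations. Then $\theta_t=\theta_0-\frac t2\Ch g_1+\frac{\bar t}{2}[\theta_0,\bg_1]$. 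The gauge $\mathscr G=\id+t\,\frac{\lambda-1}{2}g_1+\bar t\,\frac{\bar\lambda-1}{2}\bg_1$ then removes the entire first-order variation of $D^\lambda_t$. The verification uses $(\lambda-1)\bar\lambda=1-\bar\lambda$, which is exactly where $\lambda\in S^1$ enters.

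\textbf{The ($\Leftarrow$) direction.} Subtracting the two classes is legitimate only because the isomonodromic normalization gives $A+B+C=0$ on the nose. More seriously, \emph{pairing with harmonic representatives and taking real and imaginary parts} determines nothing. Both $B$ and $C$ contain the unknown metric variation $g_1$, and the $D^\lambda_0$-primitive is also unknown.

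The needed argument runs as follows.
\begin{enumerate}
\item Write the gauge as $\id+tf+\bar t w$, take the $t$-coefficient, and split it into $(1,0)$ and $(0,1)$ parts.
\item Apply $\bar\partial_0$ to the first and $\lambda[\theta_0,\cdot\,]$ to the second, and add. Using $\lambda\bar\lambda=1$, this yields $\bar\partial_0\Ch^{1,0}u+[\theta_0,[\theta_0^{\star_{h_0}},u]]=0$ for $u=f-\frac{\lambda-1}{2}g_1$.
\item Lemma~\ref{vanlem} then pins down $f$ modulo scalars.
\item Substituting back gives $\frac{\lambda^2-1}{2}[\theta_0,g_1]=0$ and $(\lambda-\bar\lambda)\bigl(\frac12\bar\partial_0 g_1-\eta_1(\theta_0)\bigr)=0$.
\end{enumerate}
Thus $\lambda\neq\pm1$ is used precisely as $\lambda^2\neq1$ and $\lambda\neq\bar\lambda$, which is consistent with your remark on $\lambda=-1$. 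The outcome exhibits $\theta_{0,*}\tau_0(v)$ as the differential of $\frac12 g_1$, so Proposition~\ref{prop_intro} gives holomorphicity in the direction $v$.

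Without this use of the harmonicity lemma to identify the primitive, neither direction of your argument closes.
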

For $\lambda=\pm1$, we prove that $\lambda\cdot\sigma_{\Dol}$ is isomonodromic without assuming that $\sigma_{\Dol}$ is holomorphic; see Proposition~\ref{prop_pm1}.  Together with Theorem~\ref{thm_main_1}, this answers Question~\ref{Ker} affirmatively for every $\lambda\in S^1$.

For $\lambda\in\mathbb R^*$, we also prove that $\lambda\cdot\sigma_{\Dol}$ is isomonodromic when the initial Higgs bundle has rank one; see Proposition~\ref{prop_rank1}.  Thus Question~\ref{Ker} has an affirmative answer in rank one.  In higher rank, the corresponding question for general $\lambda\in\mathbb R^*$ remains open.

The proof of Theorem~\ref{thm_main_1} uses first-order deformation theory.  The argument reduces the assertion to an infinitesimal statement and then applies Proposition~\ref{prop_intro}; see Proposition~\ref{prop_firstorder} for details.

\medskip
When $f:X\to S$ is the universal curve over Teichm\"uller space $\mathcal T_g$, Question~\ref{Ker} can also be approached analytically.  For example, \cite[Theorem 1.1]{Tosic} treats the case $\lambda=i$ and shows that the commutativity property is equivalent to the failure of strict plurisubharmonicity of the energy functional associated with $\sigma_{\Dol}$.  Subsequent work \cite{HSZ,CTW} identifies this condition with the holomorphicity of $\sigma_{\Dol}$.  After the first version of this paper appeared, we learned that the ``only if'' direction of Theorem~\ref{thm_main_1} for the universal family of curves can also be obtained by studying a horizontal distribution $\mathcal H$ on the relative Dolbeault moduli space; see \cite[Theorem B]{CTW}.

Hitchin has also pointed out that \cite{Hit26} studies the $S^1$-action on flat connections and uses it to investigate the holomorphic structure of the relative Dolbeault moduli space for the universal family of curves over Teichm\"uller space.

\subsection{Characterization of the non-abelian Noether--Lefschetz locus}\label{sec_intro3}
We now apply the preceding results to the non-abelian Noether--Lefschetz locus.

The classical Noether--Lefschetz theorem concerns the variation of algebraic cycles in families of smooth projective varieties.  For example, if $\mathcal X\to B$ is a family of smooth hypersurfaces in $\mathbb P^3$, the Noether--Lefschetz locus consists of the points $t\in B$ at which the Picard rank of $X_t$ jumps.  Equivalently, it is the locus on which a flat cohomology class remains of Hodge type $(1,1)$.  More generally, let $(\mathbb V,F^\bullet,\nabla,Q)$ be a polarized $\mathbb Q$-variation of Hodge structures of even weight $2k$ over $B$, and let $\gamma$ be a flat, possibly multivalued, section such that $\gamma(0)\in F^k\mathcal V:=F^k(\mathbb V\otimes\mathcal O_B)$.  The associated Hodge locus is
\[
  \mathrm{HL}_\gamma
  :=
  \{\, t\in B \mid \gamma(t)\in F^k\mathcal V \,\}.
\]

There are two complementary viewpoints on the Hodge locus:
\begin{enumerate}
\item the {\it global} viewpoint, represented by the Deligne--Griffiths fixed part theorem; and
\item the {\it local} viewpoint, in which the Zariski tangent space of the Hodge locus is controlled by the Higgs field.
\end{enumerate}
More explicitly, suppose that $\gamma$ is a flat section over a quasi-projective base $B$, that $\gamma(0)\in F^k\mathcal V$, and that the monodromy orbit of $\gamma$ is finite.  The Deligne--Griffiths fixed part theorem then implies that $\gamma$ is everywhere of type $(k,k)$.  By contrast, the local tangent-space formula states that (cf. \cite[Section 5.3.2]{VoisinII})
\[
  T^{\mathrm{Zar}}_0 \mathrm{HL}_\gamma
  =
  \ker\bigl((\theta\circ \gamma)(0):T_0B\to E\bigr),
\]
where $(E:=\operatorname{gr}_F \mathcal V,\theta:=\operatorname{gr}_F\nabla)$ is the associated Higgs bundle.  The non-abelian analogue of the global viewpoint has been studied extensively.  The non-abelian counterpart of the local tangent-space formula is the starting point of this paper.
\medskip

Let $(\mathbb V,\mathcal F^\bullet,\nabla,Q)_0$ be a polarized $\mathbb C$-variation of Hodge structures on the central fiber $X_0$ over $0\in S$.  On each nearby fiber $X_s$, the flat bundle $(\mathbb V\otimes \mathcal O_{X_0},\nabla)$ has an isomonodromic deformation $(\mathbb V\otimes \mathcal O_{X_s},\nabla_s)$.  Equivalently, these deformations define a holomorphic section
\[
  \sigma_{\dR}:S\longrightarrow M_{\dR}(X/S).
\]

Simpson introduced the associated {\it non-abelian Noether--Lefschetz locus}
\begin{align}\label{eq_NLlocus}
  \mathcal{NL}
  :=
  \bigl\{
    s\in S
    \mid
    (\mathbb V\otimes \mathcal O_{X_s},\nabla_s)
    \text{ underlies a polarized } \mathbb C\text{-VHS}
  \bigr\}.
\end{align}
The non-abelian Deligne fixed part theorem, proved in \cite{JostZuo,KP,EK}, gives a global criterion ensuring that the isomonodromic deformation of a $\C$-PVHS again underlies a $\C$-PVHS: if the base is quasi-projective and the monodromy orbit of the isomonodromic deformation is finite, then the deformed local system underlies a $\C$-PVHS.

\medskip
We now turn to the local geometry of $\mathcal{NL}$ and therefore assume that $S$ is contractible.  Recall 
\[
  \sigma_{\Dol}:=\NHC\circ \sigma_{\dR}
  :
  S\longrightarrow M_{\Dol}(X/S).
\]
If $(E_0,\theta_0)$ is the graded Higgs bundle associated with the initial variation of Hodge structures on $X_0$, then $\sigma_{\Dol}(0)=[(E_0,\theta_0)]$.  By non-abelian Hodge theory, a flat bundle underlies a polarized $\mathbb C$-VHS precisely when the corresponding Higgs bundle is graded, or equivalently, fixed by the natural $\mathbb C^*$-action on the Dolbeault moduli space \cite{Simp92,Simp97,Simp10}.  Therefore, if
\[
  \mathcal{GR}
  :=
  \bigl\{
    s\in S
    \mid
    \sigma_{\Dol}(s)
    \text{ is represented by a graded Higgs bundle}
  \bigr\},
\]
then
\[
  \mathcal{NL}=\mathcal{GR}.
\]
Thus $\mathcal{NL}$ can be studied entirely in terms of Higgs bundles.  Simpson proved that $\mathcal{NL}$ is a complex analytic subvariety of $S$ and that the restriction
\[
  \sigma_{\Dol}|_{\mathcal{NL}}
  :
  \mathcal{NL}\longrightarrow M_{\Dol}(X/S)
\]
is holomorphic \cite[Theorem 12.1]{Simp97}.  Esnault and Kerz asked whether this property characterizes the non-abelian Noether--Lefschetz locus.

\begin{question}[Esnault--Kerz]\label{EsnKer}
Let $(\mathbb V,\mathcal F^\bullet,\nabla,Q)_0$ be a polarized $\mathbb C$-VHS on $X_0$, and let $U\subset S$ be a closed complex analytic subvariety passing through $0$.  Suppose that
\[
  \sigma_{\Dol}|_U:U\longrightarrow M_{\Dol}(X/S)
\]
is holomorphic.  Must one have
\[
  U\subset \mathcal{NL}?
\]
Equivalently, is $\mathcal{NL}$ the maximal complex analytic subvariety of $S$ on which the real-analytic section $\sigma_{\Dol}$ becomes holomorphic?
\end{question}

We answer this question affirmatively, obtaining a local characterization of the non-abelian Noether--Lefschetz locus in terms of holomorphicity.

\begin{thm}\label{thm_main}
Let $(\mathbb V,\mathcal F^\bullet,\nabla,Q)_0$ be a polarized $\mathbb C$-VHS on $X_0$.  Let $U\subset S$ be a closed complex analytic subvariety passing through $0$ such that
\[
  \sigma_{\Dol}|_U:U\longrightarrow M_{\Dol}(X/S)
\]
is holomorphic.  Then
\[
  U\subset \mathcal{NL}.
\]
\end{thm}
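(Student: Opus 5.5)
We plan to deduce Theorem~\ref{thm_main} from Theorem~\ref{thm_main_1} together with Simpson's characterization $\mathcal{NL}=\mathcal{GR}$, i.e.\ that a stable Higgs bundle is graded if and only if its class is fixed by the $\C^*$-action, equivalently by a dense subset of $S^1$. Since $\mathcal{GR}$ is a complex analytic subvariety of $S$ (Simpson), it suffices to show that every point $s\in U$ lies in $\mathcal{GR}$. We may first replace $U$ by a connected neighborhood of $0$ in $U$; the general case follows by analytic continuation, since both $U\cap\mathcal{NL}$ and $U$ are analytic and we will show the former contains an open subset of each irreducible component through $0$. The final answer should also handle components of $U$ not through $0$, but $U$ is assumed to pass through $0$ and we work locally near $0$.

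The key step is this. Fix $\lambda\in S^1\setminus\{\pm1\}$. Because $\sigma_{\Dol}|_U$ is holomorphic, Theorem~\ref{thm_main_1} says that $\lambda\cdot\sigma_{\Dol}|_U$ is an isomonodromic deformation. Its initial value is $(\lambda\cdot\sigma_{\Dol})(0)=[(E_0,\lambda\theta_0)]$, and since $(E_0,\theta_0)$ is graded (it comes from a $\C$-PVHS), we have $(E_0,\lambda\theta_0)\cong(E_0,\theta_0)$. Hence $\sigma_{\Dol}|_U$ and $\lambda\cdot\sigma_{\Dol}|_U$ are both isomonodromic deformations with the same initial point. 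We then use uniqueness of isomonodromic deformations. Transport through $\NHC$: the de Rham sections $\sigma_{\dR}$ and $\NHC^{-1}(\lambda\cdot\sigma_{\Dol})$ restricted to $U$ are both flat, i.e.\ locally constant in the Betti moduli via Riemann--Hilbert over a contractible neighborhood, and they agree at $0$, so they coincide on the connected neighborhood. Therefore $\lambda\cdot\sigma_{\Dol}(s)=\sigma_{\Dol}(s)$ for all $s\in U$ near $0$.

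Next we pass from one $\lambda$ to all of $\C^*$. The set of $\lambda\in\C^*$ fixing the point $\sigma_{\Dol}(s)$ is a closed subgroup of $\C^*$, since the action on $M_{\Dol}(X_s)$ is continuous. It contains $S^1\setminus\{\pm1\}$, hence all of $S^1$. To get all of $\C^*$, we invoke that the fixed locus of $S^1$ on the moduli of stable Higgs bundles coincides with the $\C^*$-fixed locus. Indeed, for a stable $(E,\theta)$ fixed by $S^1$, Simpson's argument produces an infinitesimal automorphism generating the action, which yields a grading $E=\bigoplus E^p$ with $\theta:E^p\to E^{p-1}\otimes\Omega^1$. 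Alternatively, one can use that the orbit map $\lambda\mapsto\lambda\cdot x$ is holomorphic on $\C^*$ and constant on $S^1$, hence constant. This second argument is the cleaner route. Thus $\sigma_{\Dol}(s)\in\mathcal{GR}=\mathcal{NL}$.

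The main obstacle is the uniqueness step: making precise that an ``isomonodromic deformation'' over a possibly singular analytic subvariety $U$ is determined by its value at $0$. This requires working with the relative de Rham moduli space over $U$ and the local constancy of monodromy in the Betti moduli. One also needs irreducibility/stability so that the moduli points are honest local systems. Stability of $\lambda\cdot\sigma_{\Dol}$ is automatic, since scaling preserves stability. I would handle the singular $U$ by restricting to a contractible neighborhood and using the Betti description $M_{\Betti}(X_s)\cong M_{\Betti}(X_0)$ via parallel transport. Finally, the global conclusion $U\subset\mathcal{NL}$ requires that the argument apply at every point, not only near $0$. So I would propagate along $U$ using the fact that $\mathcal{NL}\cap U$ is closed analytic and open in $U$, by rerunning the argument at each point of $\mathcal{NL}\cap U$ as a new base point. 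Connectedness of $U$, or working in the local setting, then finishes the proof.
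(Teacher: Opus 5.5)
Your proposal is correct and is essentially the paper's first proof of Theorem~\ref{thm_main}: you apply Theorem~\ref{thm_main_1} to see that $\lambda\cdot\sigma_{\Dol}|_U$ is isomonodromic with the same initial point as $\sigma_{\Dol}|_U$ (because $(E_0,\theta_0)$ is graded), conclude $\lambda\cdot\sigma_{\Dol}|_U=\sigma_{\Dol}|_U$ by uniqueness of isomonodromic deformations, and deduce that each $\sigma_{\Dol}(u)$ is graded from its $S^1$-invariance. The differences are cosmetic: the paper cites \cite[Lemma 4.1]{Simp92} for the last step, where you instead use holomorphy of the orbit map $\lambda\mapsto\lambda\cdot x$, and your open--closed propagation along $U$ spells out the connectedness of $U$ that the paper's argument uses tacitly.
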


We give two proofs of Theorem~\ref{thm_main}.  The first combines the first-order deformation theory of Section~\ref{sec_intro1} with the commutativity result of Section~\ref{sec_intro2}.

\begin{proof}[Proof of Theorem~\ref{thm_main} via Theorem~\ref{thm_main_1}]
For every $\lambda\in S^1$, the initial Higgs bundle $(E_0,\theta_0)$ is graded, and hence $\sigma_{\Dol}(0)=\lambda\cdot\sigma_{\Dol}(0)$.  By Theorem~\ref{thm_main_1}, the family $\lambda\cdot\sigma_{\Dol}|_U$ is an isomonodromic deformation of $(E_0,\theta_0)$ along $U$.  The uniqueness of isomonodromic deformation therefore gives $\lambda\cdot\sigma_{\Dol}|_U=\sigma_{\Dol}|_U$.  It follows from \cite[Lemma 4.1]{Simp92} that $(E_u,\theta_u)$ is graded for every $u\in U$.
\end{proof}

The second proof is higher-order.  The non-abelian analogue of the classical Zariski tangent-space formula for Hodge loci is
\[
  T^{\mathrm{Zar}}_0\mathcal{NL}
  =
  \ker
  \Bigl(
    \theta_{0,*}\circ \tau_0:
    T_0S
    \longrightarrow
    \mathbb{H}^1\bigl(X_0,(\End E_0,\operatorname{ad}(\theta_0))\bigr)
  \Bigr),
\]
as proved in Theorem~\ref{thm_Zar_NL}.  By Proposition~\ref{prop_intro}, first-order holomorphicity of $\sigma_{\Dol}$ along a tangent vector is equivalent to first-order liftability of the graded structure.  The higher-order theory developed in Section~\ref{sec_intro1} gives the following extension.

\medskip

\begin{thm}[Truncated version]\label{thm_main_tru}
Let $(\E,\bar\partial_0,\theta_0)$ be a graded stable Higgs bundle on $X_0$.  Let
\[
  (\E,\bar\partial_t,\theta_t)
\]
be the isomonodromic deformation of $(\E,\bar\partial_0,\theta_0)$ over an $n$-th order deformation $X_n$ of $X_0$.  If this real-analytic deformation is holomorphic, then it is graded.
\end{thm}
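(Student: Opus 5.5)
We will argue by induction on the order $n$ and follow the higher-order approach via the $\mathbb C^*$-action, i.e.\ a truncated version of the first proof of Theorem~\ref{thm_main}. Since $(\E,\bar\partial_0,\theta_0)$ is graded, it carries a $\mathbb C^*$-action: there is $\phi_\lambda\in\operatorname{Aut}(\E,\bar\partial_0)$ with $\phi_\lambda^{-1}\theta_0\phi_\lambda=\lambda\theta_0$, namely $\phi_\lambda=\lambda^{p}$ on the Hodge summand $E^{p,q}$. The harmonic metric $h_0$ is the Hodge metric, so $\phi_\lambda$ is $h_0$-unitary for $\lambda\in S^1$. The aim is to show that the $n$-th order isomonodromic deformation $(\E,\bar\partial_t,\theta_t)$ is isomorphic, over $A_n$, to $(\E,\bar\partial_t,\lambda\theta_t)$ for all $\lambda\in S^1$. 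A Higgs bundle over $X_n$ with such a circle of isomorphisms, extending $\phi_\lambda$, is graded: take the weight decomposition of the induced $S^1$-action on $\E$ over $A_n$. This is the infinitesimal analogue of \cite[Lemma 4.1]{Simp92}. Stability of $(E_0,\theta_0)$ makes the automorphisms unique up to scalars, so the isomorphisms can be normalized to form an honest group action, and averaging over $S^1$ gives the grading.

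To obtain these isomorphisms, first transport the whole construction by $\lambda\in S^1$. Rescaling $\theta\mapsto\lambda\theta$ with $|\lambda|=1$ preserves the Hitchin equation $F_{h}+[\theta,\theta^{*_h}]=0$. Hence $h_t$ is also the harmonic metric of $(\E,\bar\partial_t,\lambda\theta_t)$, and the associated flat connection is
\[
D_\lambda=\partial_{h_t}+\bar\partial_t+\lambda\theta_t+\bar\lambda\theta_t^{*_{h_t}}.
\]
Then compare $D_\lambda$ with the isomonodromic deformation of $D_\lambda|_{t=0}\cong D_{1}|_{t=0}$ (via $\phi_\lambda$). Flat connections over $X_n$ are determined up to gauge by their restriction to $X_0$ (isomonodromy, i.e.\ topological invariance of $\pi_1$). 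So it suffices to show that $D_\lambda$ is a flat connection on $X_n$ in the \emph{holomorphic} sense over $A_n$. Concretely, its $(0,1)$-part relative to the deformed complex structure must be compatible with $X_n$, and all coefficients must depend only on $t$. For $\lambda=1$ this holds by definition. For general $\lambda$, the $(0,1)$-part of $D_\lambda$ is $\bar\partial_t+\bar\lambda\theta_t^{*_{h_t}}$. Its compatibility with the complex structure of $X_n$ is exactly where the holomorphicity hypothesis enters: by Proposition~\ref{prop_Taylor_Exp_DolHig}, the $\bar t$-coefficients of $\bar\partial_t,\theta_t$ are governed by the $g_i^{\st}$. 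Holomorphicity means that these $\bar t$-dependences can be gauged away, which lets us write $D_\lambda$ using only holomorphic $t$-dependence, exactly as for $D_1$.

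The key step, and the one I expect to be the main obstacle, is this order-by-order verification. I would carry it out inductively in $k\le n$: assuming the isomorphism $(\E,\bar\partial_t,\theta_t)\cong(\E,\bar\partial_t,\lambda\theta_t)$ holds modulo $t^{k}$, the discrepancy at order $k$ is a class in $\mathbb H^1(X_0,(\End E_0,\operatorname{ad}\theta_0))$. This class equals $(\lambda-1)$ times a term assembled from $\theta_{0,*}\circ\tau_0$-type data and the lower coefficients. That term is identified with the obstruction class $\ob_k$ of Proposition~\ref{prop_existence_ob}, using the explicit formulas of Propositions~\ref{prop_ob} and \ref{prop_obstruction_class_higher_dim}. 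Holomorphicity gives $\ob_k=0$, so the discrepancy vanishes and the isomorphism lifts to order $k+1$. At order one this is just Proposition~\ref{prop_intro}, together with the $\lambda$-equivariance of $\theta_{0,*}$ under $\phi_\lambda$. The delicate points are tracking mixed $t\bar t$ terms, which vanish by the holomorphicity assumption, and checking that the lifted isomorphisms stay compatible with the group law. The latter follows from uniqueness of automorphisms up to scalars, using stability. Having the $S^1$-family of automorphisms over $A_n$, the grading follows as described in the first paragraph.
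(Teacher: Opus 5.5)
Your proposal has a genuine gap at exactly the step you flag as the main obstacle, and the reduction that leads into it is incorrect.

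\textbf{The reduction to ``holomorphic in $t$''.} The family $D_\lambda=D_{h_t}+\lambda\theta_t+\bar\lambda\theta_t^{\star_{h_t}}$ is a family of flat connections on the fixed smooth bundle $\E$ over the fixed manifold $X_0$. The complex structure of $X_n$ plays no role in its monodromy. A flat connection depending only on $t$ is a relative flat connection over $A_n$ extending $D_\lambda|_{t=0}$. Such a connection is the same as a deformation of the monodromy representation over $A_n$, and these are parametrized to first order by $H^1(X_0,\End\mathbb V)$, which is typically nonzero. So ``holomorphic in $t$'' does not imply isomonodromic. Your claim that flat connections on $X_n$ are determined by their restriction to $X_0$ holds only for absolute connections, i.e.\ ones with a $t$-direction component, and producing that component is precisely what has to be proved.

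What is needed is a gauge $\mathscr G\in\A^0(\End\E)\otimes B_n$ with $\mathscr G^{-1}\circ D_\lambda\circ\mathscr G=\Ch+\lambda\theta_0+\bar\lambda\theta_0^\st$, as constructed at first order in Proposition~\ref{prop_firstorder}. Beyond first order this requires $\theta_t^{\star_{h_t}}$ modulo $(t,\bar t)^{n+1}$, and hence the mixed coefficients $g_{i\bar j}$ of $h_t$. Holomorphicity lets you gauge away the $\bar t$-dependence of $(\bar\p_t,\theta_t)$, but not that of the harmonic metric. Nothing in your proposal controls these terms.

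\textbf{The inductive step.} You assert, without computation, that the order-$k$ discrepancy between $\phi_\lambda^{-1}\circ(\bar\p_t,\theta_t)\circ\phi_\lambda$ and $(\bar\p_t,\lambda\theta_t)$ is $(\lambda-1)$ times something identified with $\ob_k$. That is not how these classes work. The class $\ob_k\in\barHb$ is built from $\bar\eta$ and the anti-holomorphic data $x_1,\dots,x_{k-1}$, and its vanishing is an equation determining $x_k$. The discrepancy, by contrast, concerns the holomorphic coefficients $y_k=x_k^\st$, and specifically their Hodge types. Turning $\ob_k=0$ into a type statement requires the explicit equations \eqref{eq_obvan}, the types of the lower $x_i$, and stability.

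Note that the paper's $S^1$-argument proves only the non-truncated Theorem~\ref{thm_main}. There, Theorem~\ref{thm_main_1} is deduced from the first-order Proposition~\ref{prop_firstorder} applied at \emph{every} point of a smooth $U$. Over $\operatorname{Spec}A_n$ there is only one point, so a genuinely higher-order computation cannot be avoided.

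\textbf{How the paper does it.} The vanishing of all $\ob_k$ yields the normal form of Theorem~\ref{thm_sec3_ordered_normal_forms}: $\theta_t\sim P'_\eta(\theta_0-\frac12\Ch^{1,0}Y)$ and $\bar\p_t\sim\pi''_\eta\Ch+P''_\eta(-\frac12[\theta_0^\st,Y])$. Proposition~\ref{prop_sec3_purity_x} then shows inductively that the components $x_m^{a,-a}$ with $a\neq1$ satisfy $\Ch^{1,0}x_m^{a,-a}=0=[\theta_0^\st,x_m^{a,-a}]$, so they are scalars by stability. Hence $Y$ has type $(-1,1)$ and the normal form is visibly graded. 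Your circle of isomorphisms $\phi_\lambda$ would then follow from the theorem rather than provide a route to it.
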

\begin{proof}[Proof of Theorem~\ref{thm_main} via Theorem~\ref{thm_main_tru}]
We may reduce to the case in which $U$ is smooth near $0$.  Indeed, if $U$ is singular, let $\pi:\widehat U\to U$ be a resolution of singularities and pull back both $X$ and $\sigma_{\Dol}$ to $\widehat U$.  This gives a real-analytic family of Higgs bundles over $X_{\widehat U}:=X\times_U\widehat U$, denoted
\[
  \widehat\sigma_{\Dol}:\widehat U\longrightarrow M_{\Dol}(X_{\widehat U}/\widehat U).
\]
For any $\widehat u\in\widehat U$ with $\pi(\widehat u)=0$, the Higgs bundle $\widehat\sigma_{\Dol}(\widehat u)$ is stable and graded.  It is therefore enough to prove the theorem after this pullback, so we assume that $U$ is smooth.

The holomorphicity assumption and Theorem~\ref{thm_main_tru} imply that every Higgs bundle parametrized by $\sigma_{\Dol}|_U$ is graded.  Hence $U\subset \mathcal{GR}=\mathcal{NL}$, proving the Esnault--Kerz characterization.
\end{proof}

A related question asks whether, for the universal curve $\mathcal C/\mathcal T_g$ over Teichm\"uller space, the section $\sigma_{\Dol}$ can be holomorphic on all of $\mathcal T_g$.  For rank-two and rank-three non-unitary Higgs bundles, the answer is negative \cite{biswas,HSZII}.  On the other hand, \cite{biswas} constructs high-rank non-unitary examples for which $\sigma_{\Dol}$ is a holomorphic family of graded Higgs bundles over all of $\mathcal T_g$.

\subsection{Isomonodromic deformations of nilpotent Higgs bundles}\label{sec_intro4}
We now assume that $\sigma_{\Dol}(0)=[(E_0,\theta_0)]$ is a stable nilpotent Higgs bundle.  We expect an analogue of the Esnault--Kerz question in this setting and formulate the following conjecture.
\begin{conj}\label{conj_nil}
Assume that the initial Higgs bundle $\sigma_{\Dol}(0)=[(E_0,\theta_0)]$ is nilpotent.  Let $U\subset S$ be a closed complex analytic subvariety passing through $0$ such that $\sigma_{\Dol}|_U$ is holomorphic.  Then $\sigma_{\Dol}(u)$ is nilpotent for every $u\in U$.
\end{conj}

Let $r=\operatorname{rank}(E_0)$.  Using the higher-order deformation and obstruction theories developed in Section~\ref{sec_intro1}, we prove the conjecture for {\it generically regular nilpotent} Higgs bundles, namely, for nilpotent Higgs bundles satisfying
\[
\mathrm{Sym}^{r-1}\theta_0:\mathrm{Sym}^{r-1}T_{X_0}\longrightarrow\End(E_0)
\quad \text{is nonzero}.
\]
From this point onward, we suppress $\mathrm{Sym}$ from the notation and write $\theta^i$ for the corresponding symmetric power.  The main result of this subsection is the following.
\begin{thm}[Holomorphicity and nilpotency]
\label{thm_sec4_arbitrary_nilpotency}
Let $(\E,\bar\p,\theta)$ be a stable generically regular nilpotent Higgs bundle of rank $r$, and let $(\E,\bar\p_t,\theta_t)$ be its order-$n$ isomonodromic deformation on $X_n$.  If this deformation is holomorphic on $X_n$, then
\begin{equation}  \label{eq_sec4_theta_nilpotent}
  \theta_t^r=0
  \qquad\pmod{t^{n+1}}.
\end{equation}
\end{thm}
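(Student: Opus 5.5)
Proof sketch. The argument runs by induction on the order $n$. Assuming $\theta_t^r\equiv 0 \pmod{t^{n}}$, we show that the coefficient of $t^n$ in $\theta_t^r$ also vanishes. The key structure is the characteristic polynomial (Hitchin map) of the deformation. Since the deformation $(\E,\bar\p_t,\theta_t)$ is holomorphic on $X_n$, the coefficients $\operatorname{tr}(\wedge^i\theta_t)$ are holomorphic symmetric differentials on $X_n$ depending only on $t$. Our goal is to show that they all vanish modulo $t^{n+1}$; Cayley--Hamilton then gives $\theta_t^r=0$.

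The first step is to bring in the harmonic metric via Proposition~\ref{prop_Taylor_Exp_DolHig}. Write $h_t=h_0(\id+\sum t^ig_i+\sum\bar t^i g_i^{\st}+\dots)$. The $t$-Taylor coefficients of $\theta_t$ are then expressed through $g_i$, the Kodaira--Spencer data of $X_n$ and $\theta_0$. Holomorphicity means that the successive obstruction classes $\ob_1,\dots,\ob_n$ vanish (Proposition~\ref{prop_existence_ob}). By the explicit formulas of Proposition~\ref{prop_ob} and Proposition~\ref{prop_obstruction_class_higher_dim}, this vanishing says that certain $\bar\p$-type expressions built from $\theta_0^{\st}$ and the $g_i^{\st}$ are exact in the conjugate complex $(\End\overline{E_0}^\vee,\operatorname{ad}\theta_0^{\st})$. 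Taking adjoints with respect to $h_0$ converts this into statements about the holomorphic side.

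The second step exploits generic regular nilpotency. Since $\theta_0^{r-1}\neq0$ generically, on a dense open set $E_0$ carries a filtration by $\ker\theta_0^i$ with rank-one graded pieces. The centralizer of $\theta_0$ there is spanned by $\id,\theta_0,\dots,\theta_0^{r-1}$. The inductive hypothesis and the vanishing of the order-$n$ obstruction should then force the degree-$n$ correction $\dot\theta_n$ of the Higgs field to satisfy $\operatorname{tr}(\theta_0^{j}\dot\theta_n)=0$ for all relevant $j$. This is exactly the first-order variation of the characteristic polynomial, since nilpotency of $\theta_0$ kills all other terms. The coefficient $\operatorname{tr}(\wedge^i\theta_t)$ at order $n$ is a holomorphic section of $\mathrm{Sym}^i\Omega_{X_0}$. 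To show it vanishes, pair it against $\theta_0^{\st}$-terms and integrate using $h_0$. The pairing $\int\operatorname{tr}(\dot\theta_n\,(\theta_0^{i-1})^{\st}\cdots)$ is computed in two ways: via the obstruction formula it becomes an exact term, hence zero; via regularity it is a positive-definite norm of the characteristic-polynomial variation.

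The main obstacle is that second step. One has to show that vanishing of $\ob_k$, which lives in a hypercohomology of the conjugate complex, really controls the invariant polynomials of $\theta_t$, rather than only the gauge-equivalence class. The first thing I would try is the $\C^*$-scaling trick. Compute $\frac{d}{d\lambda}$ of the deformation at $\lambda\in S^1$, using Theorem~\ref{thm_main_1}: when $\sigma_{\Dol}$ is holomorphic, $\lambda\cdot\sigma_{\Dol}$ is isomonodromic. Uniqueness of isomonodromic deformation then identifies $(\E,\bar\p_t,\lambda\theta_t)$ with the isomonodromic deformation of $(\E,\bar\p_0,\lambda\theta_0)$. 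The latter is gauge equivalent to the original deformation at $t=0$ only up to the $\C^*$ rescaling. However, the Hitchin map is holomorphic and $\C^*$-equivariant, with weight $i$ on $\operatorname{tr}\wedge^i$. Comparing the Hitchin images of the two deformations along $S^1$ would then give $\lambda^i a_i(t)=a_i(t)$, where $a_i(t)$ is the Hitchin image of the isomonodromic deformation of $\lambda\theta_0$. For this to help, $a_i(t)$ must be independent of $\lambda$. That independence is where the regularity of $\theta_0$ and the explicit order-$n$ obstruction formula must be used, and it is the step I expect to require the most work. Once it holds, choosing $\lambda$ generic forces $a_i\equiv0 \pmod{t^{n+1}}$, and Cayley--Hamilton then yields \eqref{eq_sec4_theta_nilpotent}.
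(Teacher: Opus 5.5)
There is a genuine gap. The proposal never supplies a mechanism that turns holomorphicity into vanishing of the invariant polynomials, and the $\C^*$-route you fall back on is circular. The Hitchin map is $\C^*$-equivariant, so the Hitchin image of $(\E,\bar\p_t,\lambda\theta_t)$ is automatically $\lambda^i a_i^{(1)}(t)$, where $a_i^{(1)}$ is the Hitchin image of the original deformation. Hence, if this family is the isomonodromic deformation of $(E_0,\lambda\theta_0)$, then $a_i^{(\lambda)}=\lambda^i a_i^{(1)}$. The claim ``$a_i^{(\lambda)}$ is independent of $\lambda$'' is therefore literally equivalent to $a_i^{(1)}\equiv 0$, which is the theorem itself.

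Uniqueness of isomonodromic deformations gives $\lambda\cdot\sigma_{\Dol}=\sigma_{\Dol}$ only when $(E_0,\lambda\theta_0)\cong(E_0,\theta_0)$. That is the graded case, and it is exactly the paper's first proof of Theorem~\ref{thm_main}. For a non-graded nilpotent $\theta_0$ there is nothing to compare with. Moreover, Theorem~\ref{thm_main_1} is stated for holomorphicity on an analytic subvariety $U\subset S$, not on a thickening $X_n$, so it does not apply to the truncated hypothesis.

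The alternative ``pair with $\theta_0^{\st}$ and use positivity'' step is not an argument as written, and its premise fails for $n\ge 2$. The $t^n$-coefficient of $\tr(\theta_t^m)$ is $\sum_{a_1+\cdots+a_m=n}\tr(\theta^{(a_1)}\cdots\theta^{(a_m)})$. This contains products of lower-order coefficients and is not $m\,\tr(\theta_0^{m-1}\theta^{(n)})$. Nilpotency of $\theta_0$ together with the inductive hypothesis does not remove these cross terms.

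The missing ingredient is the higher-order normal form. Holomorphicity on $X_n$ gives, by Theorem~\ref{thm_sec3_ordered_normal_forms}, $\mathscr U^{-1}\theta_t\mathscr U=P'_\eta(M)$, where $M=\theta_0-\frac12\Ch^{1,0}Y$ and $Y=\sum_m t^m x_m^{\st}$. The $\st$-adjoint of the vanishing obstruction equations then yields $[M,\mathfrak EY]=0$, where $\mathfrak E=t\,\partial_t$. The identity $\mathfrak EM=-\frac12\Ch^{1,0}\mathfrak EY$ holds by definition.

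Your remark that the centralizer of a regular element is spanned by its powers is the right tool. However, it must be applied to the \emph{deformed} endomorphism $M_v=M(v)$, which remains cyclic near a point of $X_0^{\mathrm{reg}}$, and not to $\theta_0$ and $\theta^{(n)}$. Applied this way, it gives $\mathfrak EY=\sum_j a_jM_v^j$ with $a_j=O(t)$. Consequently $\mathfrak E\tr(M_v^m)$ becomes a combination of $\tr(M_v^k)$ and $\partial_v\tr(M_v^k)$ with $O(t)$ coefficients.

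This lets one bootstrap $\tr(M_v^m)=O(t^q)\Rightarrow \tr(M_v^m)=O(t^{q+1})$ for $m\ge 2$, pointwise on $X_0^{\mathrm{reg}}$. The case $m=1$ follows from the constancy of $\tr(x_m^{\st})$. Newton's identities, Cayley--Hamilton and density then give $\theta_t^r\equiv 0\pmod{t^{n+1}}$. No global integration and no $\C^*$-action are needed.
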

The theorem proves Conjecture~\ref{conj_nil} for every generically regular nilpotent Higgs bundle.  We expect that the generic regularity assumption can be removed.  A comparable result appears difficult to obtain from first-order deformation theory alone, illustrating the strength of the higher-order approach.

\subsection{Relation to earlier work and organization of the paper}
This paper substantially extends and refines the ideas developed in our earlier preprints arXiv:2606.05794 and arXiv:2606.18768, and we regard it as the definitive version of this line of work.  The remainder of the paper is organized as follows.  Section~\ref{sec_isom_defm_Higgs} develops the infinitesimal deformation theory of isomonodromic Higgs bundles and derives Taylor expansions for the deformed Higgs bundles and harmonic metrics.  Section~\ref{sec_ob_hol} introduces obstruction classes that measure the failure of holomorphicity and gives explicit formulas for the higher-order obstructions.  Section~\ref{sec_prop_first_hol} applies the first-order criterion to prove Theorem~\ref{thm_main_1} on the $S^1$-action and to compute the Zariski tangent space of the non-abelian Noether--Lefschetz locus.  Section~\ref{sec_high_hol_proof} carries out the higher-order arguments, including the proofs of Theorem~\ref{thm_main_tru} and Theorem~\ref{thm_sec4_arbitrary_nilpotency}.  Appendix~\ref{sec_complexification} records our conventions for real-analytic deformations and their first-order Kodaira--Spencer classes.  Appendix~\ref{sec_joint_real_ana} proves the real-analyticity of the relative non-abelian Hodge correspondence used throughout the paper.

\begin{acknowledgement*}
We are deeply grateful to H\'el\`ene Esnault and Moritz Kerz for communicating their conjecture (Question~\ref{EsnKer}) to us and for suggesting that an obstruction-theoretic approach might be the right tool for characterizing the non-abelian Noether--Lefschetz locus. After we completed the first preprint version of this work (arXiv:2606.05794), Kerz raised Question~\ref{Ker}, which led to a simplified proof of the main result Theorem~\ref{thm_main}.  Their insights and encouragement have been instrumental in the development of this work.  We thank Nigel Hitchin for explaining the relationship between our work and \cite{Hit26}.  We also thank Brian Collier for explaining the related works \cite{CTW} and \cite{Tosic}.  We are grateful to Sebastian Heller, J\"urgen Jost, Carlos Simpson, Lin Weng, and Shing-Tung Yau for their interest in this work, as well as for their valuable comments and suggestions.  Finally, we thank Runze Zhang for discussions on higher-order deformation theory and for providing the reference \cite{Ono}.
\end{acknowledgement*}

\section*{Notations}

\begin{itemize}
\item Unless otherwise stated, all indices $i$ (including $i_1, i_2, \dots$) appearing in this paper are positive integers.

\item Let $(\E,\bar\p_0,\theta_0)$ be a polystable Higgs bundle on $X_0$ carrying a harmonic metric $h_0$ and $\Ch=\Ch^{1,0}+\bar\p_0$ be the associated Chern connection. For any $g\in\A^0(\End\E)$, we let $\bg:=h_0^{-1}\bar g^T h_0$.

\item  Let $A_n = \mathbb{C}[t]/(t^{n+1})$ and $B_n = \mathbb{C}[t, \bar{t}]/(t^{n+1}, t^n\bar{t}, \dots, \bar{t}^{n+1})$ be the Artin rings of truncated holomorphic and real-analytic functions on the complex plane $\mathbb{C}$ at the origin, with maximal ideals $(t)$ and $(t, \bar{t})$, respectively.

\item Let $S:=\sum_{i=0}^n t^i s_i\in \mathcal A^l(\End\E)\otimes A_n$ be an $A_n$-valued $l$-section of $\End\E$. The notation $[t^i]S$ denotes the coefficient $s_i$, i.e. the coefficient of $t^i$ in $S$. Similarly for sections in $\mathcal A^l(\End\E)\otimes \C[\bar t]/(\bar t^{n+1}).$

\end{itemize}

\newpage

\section{Isomonodromic deformations of Higgs bundles}
\label{sec_isom_defm_Higgs}

In this section, we study isomonodromic deformations of stable Higgs bundles with trivial Chern classes on a smooth projective variety. We work with infinitesimal deformations over Artin rings and derive explicit Taylor expansions for the deformed Dolbeault operator, the Higgs field, and the associated harmonic metric. The main result is Proposition~\ref{prop_Taylor_Exp_DolHig}, which shows that the coefficients appearing in these expansions are determined solely by the initial Higgs bundle and the infinitesimal deformation of the underlying smooth projective variety. These results will be used throughout the paper.

\medskip 
\subsection{Deformation theory of a smooth projective variety} \label{sec_def_projvar}

Let $X_0$ be a smooth projective variety. Recall that $A_n = \mathbb{C}[t]/(t^{n+1})$ is the Artin ring of truncated holomorphic function germs on the complex plane at the origin with the maximal ideal $\mathfrak m:=(t)$. Let $$D_{X_0}(A_n)=\{\text{deformations } X_n\to\operatorname{Spec}A_n \text{ of }X_0 \}/\sim$$ 
be the set of isomorphism classes of deformations $X_n\to\operatorname{Spec}A_n$ of $X_0$ to $\operatorname{Spec}A_n$. 
Using the theory of \textbf{differential graded Lie algebras} (see \cite[Remark 14.8-14.9]{Gro}), we view a deformation $X_n\to \operatorname{Spec} A_n$ as a family of complex structures on \textbf{a fixed differentiable manifold} $X_0$ (forgetting the initial complex structure of $X_0$). This gives 
\begin{align*}D_{X_0}(A_n)\cong\frac{\{\eta\in \A^{0,1}(T_{X_0})\otimes\mathfrak m\mid \bar\p_{T_{X_0}}\eta+\cc[\eta,\eta]=0\}}{\text{gauge equivalence}}.\end{align*}
More precisely, for any $\eta=\sum_{i=1}^n \eta_i t^i\in \A^{0,1}(T_{X_0})\otimes \mathfrak m$ satisfying the above integrability condition, the corresponding deformation is the ringed space $X_n = (X_0^{\rm Top}, \mathcal{O}_{X_n})$ over $\operatorname{Spec} A_n$, where $\mathcal{O}_{X_n} \subset \mathcal{C}^\infty(X_0) \otimes A_n$ is the subsheaf of functions annihilated by the operator $\bar{\partial}_{X_0} + \eta \circ \partial_{X_0}$. The $A_n$-algebra structure on $\mathcal{O}_{X_n}$ induces the structural morphism $X_n \to \operatorname{Spec} A_n$.

Let $B_n = \mathbb{C}[t, \bar{t}]/(t,\bar{t})^{n+1}$ be the Artin ring of $n$-th order truncated real-analytic function germs at the origin of the complex plane.
Let $\mathcal C^\infty(X_n):=\mathcal C^\infty(X_0)\otimes B_n$ be the \textbf{sheaf of smooth functions} on $X_n$. We define the \textbf{holomorphic cotangent bundle} $\Omega^{1}(X_n/A_n)$ as the locally free sheaf of $\mathcal O_{X_n}$-modules locally generated by $df$, where $f$ is a local holomorphic function of $X_n$. Let $\Omega^{1,0}(X_n/A_n):=\Omega^{1}(X_n/A_n)\otimes_{\mathcal O_{X_n}}\mathcal C^\infty(X_n) $ be the \textbf{smooth $(1,0)$ cotangent bundle}, which is a subsheaf of the \textbf{smooth cotangent bundle} $\mathbb CT_{X_n}^*:=\A^1(X_0)\otimes B_n$. The \textbf{anti-holomorphic cotangent bundle} $\Omega^{0,1}(X_n/A_n)$ is defined as the complex conjugate of $\Omega^{1,0}(X_n/A_n)\subset \mathbb CT^*_{X_n}$.

\subsection{Deformations of Higgs bundles and gauge theory} \label{sec_def_Higg_and_gauge}

Let \((E_0,\theta_0)=(\E,\bar\p_0,\theta_0)\) be a stable Higgs bundle on \(X_0\); throughout, such bundles are assumed to have trivial Chern classes.  Here \(\E\) denotes the underlying smooth vector bundle obtained by forgetting the holomorphic structure of \(E_0\).  To study the deformation theory of the triple \((X_0,E_0,\theta_0)\), we fix the smooth model \((X_0,\E)\) and equip it with a family of complex structures \((\eta,\bar\p_t)\) together with a family of Higgs fields \(\theta_t\), as in the following definition.

\begin{definition} \label{def_real_ana_deform} 
For any order $n$ deformation of $X_0$, denoted by $X_n\in D_{X_0}(A_n)$, we define a \textbf{real-analytic deformation} of the initial stable Higgs bundle $(\E,\bar\p_0,\theta_0)$ on $X_0$ to $X_n$ as a triple $(\E,\bar\p_t,\theta_t)$ with
\begin{align*}
\bar\p_t:&\E\to\E\otimes_{\mathcal C^\infty(X_0)}\Omega^{0,1}(X_n/A_n);\\
\theta_t:&\E\to\E\otimes_{\mathcal C^\infty(X_0)}\Omega^{1,0}(X_n/A_n),
\end{align*}
satisfying the following conditions:
\begin{enumerate}
\item[(i)] $\bar\p_t$ is $\mathbb C$-linear and satisfies the Leibniz rule as a $(0,1)$ connection; $\theta_t$ is $\mathcal C^\infty(X_n)$-linear;
\item[(ii)] Modulo $t,\bar t$, the deformation triple reduces to the initial Higgs bundle, that is, $\bar\p_t\equiv\bar\p_0$ and $\theta_t\equiv\theta_0$;
\item[(iii)] $(\bar\p_t,\theta_t)$ satisfies the integrability conditions
\begin{align}\label{eq_compatibility}
\bar\p_t^2=0;\quad\theta_t\wedge\theta_t=0;\quad \bar\p_t\theta_t=0.
\end{align}
\end{enumerate}
\end{definition}

\begin{example}\label{eg_isomo}
Let $(\E,\bar\p_s,\theta_s)$ be the isomonodromic deformation of the initial Higgs bundle $(\E,\bar\p_0,\theta_0)$ from $X_0$ to the family $X/S$. By \cite[Theorem 4.23]{CTW} when the fibers of $X/S$ are compact Riemann surfaces, and by Theorem~\ref{R-analyticity} in the general case, $(\E,\bar\p_s,\theta_s)$ is a real-analytic deformation of Higgs bundles. We consider any order-$n$ germ of $S$ at $0$, that is, a morphism $\gamma:\operatorname{Spec}A_n\to S$ mapping $0\in\operatorname{Spec}A_n$ to $0\in S$.
The pull-back of $X/S$ via $\gamma:\operatorname{Spec}A_n\to S$ gives an $X_n\in D_{X_0}(A_n)$. The pull-back of $(\E,\bar\p_s,\theta_s)$ via $\gamma$ gives a real-analytic deformation of Higgs bundles.
\end{example}

Let $h_0$ be the harmonic metric of the initial Higgs bundle $(\E,\bar\p_0,\theta_0)$ on $X_0$ and $\Ch=\Ch^{1,0}+\bar\p_0$ be the Chern connection of $(\E,\bar\p_0,h_0)$. The deformed operators in Definition~\ref{def_real_ana_deform} can be expanded explicitly in the deformation parameters $t$ and $\bar t$, as follows.

\begin{lemma}\label{lem_expanding} 
Let $\eta=\sum\limits_{i=1}^n \eta_i t^i\in \A^{0,1}(T_{X_0})\otimes \mathfrak m$ represent $X_n$. Then there exist $\alpha_i,\beta_i,\varphi_i,\psi_i\in \A^1(\End \E)$ such that 
\begin{align*}\bar\p_t&=\bar\p_0-\bar\eta\circ\bar\p_0+\eta\circ\Ch^{1,0}+B+\Psi\pmod{t\bar t};\\
\theta_t&=\theta_0+A+\Phi\pmod{t\bar t},
\end{align*}
where $\eta\circ(\cdot)$ and $\bar\eta\circ(\cdot)$ are contractions here and 
\begin{align*}A=\sum_{i=1}^nt^i\cdot\alpha_i,\ B=\sum_{i=1}^nt^i\cdot\beta_i,\ \Phi=\sum_{i=1}^n\bar t^i\cdot\varphi_i,\ \text{ and } \ \Psi=\sum_{i=1}^n\bar t^i\cdot\psi_i.  
\end{align*}
\end{lemma}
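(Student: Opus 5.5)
The plan is to compare the deformed operators with a reference $(0,1)$-operator built from the complex structure of $X_n$, and then expand the difference in $t$ and $\bar t$ modulo $t\bar t$. The expansion is not unique: coefficients can be shuffled between the explicit terms and $A,B,\Phi,\Psi$. So the content of the lemma is that, modulo $t\bar t$, the two operators are $\theta_0$, respectively $\bar\partial_0$ corrected by the change of complex structure, plus tensorial corrections of the stated shape.

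\textbf{Step 1: the reference operator.} Recall that $\mathcal O_{X_n}$ is cut out by $\bar\partial_{X_0}+\eta\circ\partial_{X_0}$. It follows that $\Omega^{0,1}(X_n/A_n)$, as the conjugate of $\Omega^{1,0}(X_n/A_n)$, is spanned to first order by $d\bar z^j+\bar\eta$-corrections. Modulo $t\bar t$, the $t$-part and the $\bar t$-part do not interact, so I would treat them separately.

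On the holomorphic side, a function or section is $(0,1)$-annihilated by $\bar\partial+\eta\circ\partial$. Twisting $\partial$ by the Chern connection gives $\bar\partial_0+\eta\circ D_{h_0}^{1,0}$ as a valid $(0,1)$-connection to all orders in $t$. On the conjugate side, the projection of $d$ onto the deformed $(0,1)$-directions picks up $-\bar\eta\circ\bar\partial_0$ to all orders in $\bar t$. The sign is fixed by checking that the conjugate of the operator annihilating holomorphic functions annihilates antiholomorphic ones. Setting
\[
D:=\bar\partial_0-\bar\eta\circ\bar\partial_0+\eta\circ D_{h_0}^{1,0},
\]
I would verify that $D$ satisfies the Leibniz rule of Definition~\ref{def_real_ana_deform}(i) as an operator into $\mathcal E\otimes\Omega^{0,1}(X_n/A_n)$ modulo $t\bar t$.

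\textbf{Step 2: reduction to a tensor.} Both $\bar\partial_t$ and $D$ are $(0,1)$-connections, so their difference is $\mathcal C^\infty(X_n)$-linear. It is therefore an element of $\mathcal A^{0,1}(\mathrm{End}\,\mathcal E)\otimes B_n$, and in particular of $\mathcal A^1(\mathrm{End}\,\mathcal E)\otimes B_n$. By condition (ii) of Definition~\ref{def_real_ana_deform}, it vanishes modulo $(t,\bar t)$. Modulo $t\bar t$, the quotient $B_n/(t\bar t)$ has basis $1,t,\dots,t^n,\bar t,\dots,\bar t^n$. Expanding in this basis yields coefficients $\beta_i$ of $t^i$ and $\psi_i$ of $\bar t^i$.

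\textbf{Step 3: the Higgs field.} Condition (i) says $\theta_t$ is $\mathcal C^\infty(X_n)$-linear, so $\theta_t$ itself is an $\mathrm{End}\,\mathcal E$-valued form with coefficients in $B_n$. By (ii), $\theta_t-\theta_0$ vanishes modulo $(t,\bar t)$. Expanding modulo $t\bar t$ as in Step 2 gives $\alpha_i$ and $\varphi_i$, which are $1$-forms since $\Omega^{1,0}(X_n/A_n)\subset\mathbb CT^*_{X_n}$.

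\textbf{Main obstacle.} The only delicate point is Step 1: checking that $D$ lands in the deformed $(0,1)$-forms and has the correct signs and contractions. I would carry this out in local coordinates, using the description of $\mathcal O_{X_n}$ via the annihilator of $\bar\partial_{X_0}+\eta\circ\partial_{X_0}$. If the target space only matches to first order, the discrepancy is again tensorial and can be absorbed into $B$ and $\Psi$. For this reason, the lemma's weak form, with coefficients taken in all of $\mathcal A^1(\mathrm{End}\,\mathcal E)$, is robust to that issue.
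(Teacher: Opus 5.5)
Your proposal is correct and is essentially the paper's argument: the paper takes as reference operator $\pi''_\eta D_{h_0}$, the $(0,1)$-part of the Chern connection with respect to $X_n$ (so it is automatically a $(0,1)$-connection), checks that $\pi''_\eta D_{h_0}\equiv\bar\partial_0-\bar\eta\circ\bar\partial_0+\eta\circ D_{h_0}^{1,0}\pmod{t\bar t}$, and concludes that $\bar\partial_t-\pi''_\eta D_{h_0}$ is tensorial, exactly as in your Steps 1--2 (with $\theta_t$ handled trivially as in your Step 3). One caveat: the fallback in your last paragraph is not valid, because a mismatch of symbols could not be absorbed into $B$ and $\Psi$ (the difference $\bar\partial_t-D$ would then fail to be $\mathcal C^\infty(X_n)$-linear); it is not needed, though, since your separate computations modulo $\bar t$ and modulo $t$ give the symbol identity exactly modulo $(t)\cap(\bar t)=(t\bar t)$.
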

\begin{proof}
We consider the $(0,1)$-part of $\Ch$ with respect to the complex structure of $X_n$, denoted by $\pi''_\eta\Ch$.  One verifies directly, as an identity of operators, that  \begin{align}\label{eq_piCh}
\pi''_\eta\Ch\equiv \bar\p_0-\bar\eta\circ\bar\p_0+\eta\circ\Ch^{1,0}\pmod{t\bar t}.
\end{align}
Then $\bar\p_t-\pi''_\eta\Ch$ is a section of $\End\E\otimes_{\mathcal C^\infty(X_0)} \Omega^{0,1}(X_n/A_n)$. 
This proves the claim.
\end{proof}

\begin{definition} \label{def_holo_deform}
Let $(\E,\bar\p_0,\theta_0)$ be a Higgs bundle on $X_0$. 
A \textbf{(holomorphic) deformation} of $(\E,\bar\p_0,\theta_0)$ is a relative Higgs bundle $(\E,\bar\p_t,\theta_t)$ over $X_n/A_n$ with central fiber $(\E,\bar\p_0,\theta_0)$. 
There is a natural forgetful functor from the category of holomorphic deformations to the category of real-analytic deformations, induced by extending the coefficient sheaf from the holomorphic to the smooth setting. 
A real-analytic deformation is said to be \textbf{holomorphic} if it lies in the essential image of this functor.
\end{definition}

We next give a criterion for the holomorphicity of a real-analytic deformation.  We first introduce some notation. 
Let $\pi'_{\eta}:\C T^*_{X_n}\to \Omega^{1,0}(X_n/A_n)$ and $\pi''_{\eta}:\C T^*_{X_n}\to \Omega^{0,1}(X_n/A_n)$ be two natural projections. Via the trivial extension of smooth forms, we have $\Omega^{1,0}(X_0)\hookrightarrow \C T^*_{X_n}$ and $\Omega^{0,1}(X_0)\hookrightarrow \C T^*_{X_n}$. We define 
\begin{align*}P_\eta':=&\pi'_{\eta}|_{\Omega^{1,0}(X_0)}:\Omega^{1,0}(X_0)\to \Omega^{1,0}(X_n/A_n);\\ P_\eta'':=&\pi''_{\eta}|_{\Omega^{0,1}(X_0)}:\Omega^{0,1}(X_0)\to \Omega^{0,1}(X_n/A_n)
\end{align*}
For any $\alpha\in \Omega^{1,0}(X_0)$, by \cite[P75]{Gro}, we have
\begin{align}\label{eq_Peta}
P_\eta'(\alpha)=\alpha-\eta(\alpha) \quad \text{and}\quad P_\eta''(\bar\alpha)=\bar\alpha-\bar\eta(\bar\alpha),
\end{align}
where $\eta(\alpha)\in\Omega^{0,1}(X_0)\otimes (t)$ denotes contraction. 
We have the following canonical isomorphisms of $\mathcal C^\infty(X_0)$-modules:
\begin{align*}  
B_n \otimes P'_\eta \Omega^{1,0}(X_0)\cong \Omega^{1,0}(X_n/A_n) 
 \quad \text{and}\quad
B_n \otimes P''_\eta  \Omega^{0,1}(X_0) \cong  \Omega^{0,1}(X_n/A_n).
\end{align*}
Henceforth, we identify these sheaves via these canonical isomorphisms.  For a real-analytic deformation $(\E,\bar\p_t,\theta_t)$ in Definition~\ref{def_real_ana_deform}, we view $\bar\p_t-\pi_\eta''\Ch$ as an element in $B_n \otimes P''_\eta  \Omega^{0,1}(X_0) \otimes_{\mathcal C^\infty(X_0)} \End\E$ and view $\theta_t$ as an element in $B_n \otimes P'_\eta \Omega^{1,0}(X_0) \otimes_{\mathcal C^\infty(X_0)} \End\E$. 

\begin{proposition} \label{prop_criterion_holo_deform}
 The real-analytic deformation $(\E,\bar\p_t,\theta_t)$
 is holomorphic if and only if there exists a gauge transformation $\mathscr U$ such that 
\begin{equation}\label{eq_hol}
\begin{aligned}
 \mathscr U^{-1} \circ \bar\p_t \circ \mathscr U - \pi_\eta''\Ch & \in  A_n \otimes P''_\eta  \Omega^{0,1}(X_0) \otimes_{\mathcal C^\infty(X_0)} \End(\E);
 \\  
\mathscr U^{-1}\circ\theta_t \circ \mathscr U & \in A_n \otimes P'_\eta \Omega^{1,0}(X_0) \otimes_{\mathcal C^\infty(X_0)} \End(\E).
\end{aligned}
\end{equation}

\end{proposition}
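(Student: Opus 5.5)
The plan is to prove the two implications separately. In both, the key point is that Definition~\ref{def_holo_deform} defines holomorphicity up to isomorphism, and in the smooth model $(X_0,\E)$ an isomorphism of real-analytic deformations is a gauge transformation. By such a gauge transformation I mean an automorphism $\mathscr U\in \A^0(\End\E)\otimes B_n$ of $\E\otimes B_n$ that reduces to $\id$ modulo $(t,\bar t)$.

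For the ``if'' direction, suppose $\mathscr U$ satisfies \eqref{eq_hol}. Then I will set $\bar\p'_t:=\mathscr U^{-1}\circ\bar\p_t\circ\mathscr U$ and $\theta'_t:=\mathscr U^{-1}\theta_t\mathscr U$. These still satisfy the integrability conditions \eqref{eq_compatibility}, and their coefficients involve only powers of $t$.

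The operator $\pi''_\eta\Ch$ is itself $A_n$-valued, because of \eqref{eq_piCh} and the formula for $P''_\eta$. Hence $\bar\p'_t$ is an $A_n$-linear $(0,1)$-connection on $\E\otimes A_n$ relative to $X_n$. I will then define $E_n:=\ker\bar\p'_t\subset \E\otimes A_n$. A relative Koszul--Malgrange (Newlander--Nirenberg) argument over the Artin ring $A_n$ should show that $E_n$ is a locally free $\mathcal O_{X_n}$-module lifting $E_0$ with $E_n\otimes_{\mathcal O_{X_n}}\mathcal C^\infty = \E\otimes A_n$. To set this up, I will solve $\bar\p'_t s=0$ order by order in $t$, using local $\bar\p$-exactness of $(0,1)$-forms. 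The condition $\bar\p_t'\theta'_t=0$ makes $\theta'_t$ a holomorphic section of $\End E_n\otimes\Omega^1(X_n/A_n)$, and $\theta'_t\wedge\theta'_t=0$ holds. So $(E_n,\theta'_t)$ is a relative Higgs bundle whose image under the forgetful functor (tensoring with $B_n$) is isomorphic via $\mathscr U$ to $(\E,\bar\p_t,\theta_t)$.

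For the ``only if'' direction, let $(E_n,\theta_n)$ be a relative Higgs bundle over $X_n/A_n$. I will first choose a smooth trivialization $E_n\otimes_{\mathcal O_{X_n}}\mathcal C^\infty_{A_n}\cong \E\otimes A_n$ extending the identity on $E_0$; this exists since $A_n$-deformations of smooth bundles are trivial (partitions of unity, order by order). In this frame the operator $\bar\p_{E_n}$ differs from $\pi''_\eta\Ch$ by an $A_n$-valued element of $P''_\eta\Omega^{0,1}(X_0)\otimes\End\E$, and $\theta_n$ is $A_n$-valued in the same sense. Extending scalars to $B_n$ gives the forgetful image. By assumption this image is isomorphic to $(\E,\bar\p_t,\theta_t)$, and that isomorphism is some $\mathscr U$ that conjugates one to the other. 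Composing, $\mathscr U$ satisfies \eqref{eq_hol}.

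The main obstacle is the relative integrability step in the ``if'' direction: showing that $\ker\bar\p'_t$ is locally free of the right rank over $\mathcal O_{X_n}$. This amounts to constructing local holomorphic frames lifting those of $E_0$. I would do this by induction on the order in $t$. At each step the correction term is a $\bar\p_{X_n}$-closed $(0,1)$-form with values in $\End\E$, which can be solved locally by the Dolbeault lemma on polydiscs. Flatness over $A_n$ then follows from the construction.
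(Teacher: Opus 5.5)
Your strategy is the paper's: its proof is a one-line appeal to Ono's result (a triple normalized as in \eqref{eq_hol} is holomorphic, and conversely) plus gauge invariance. Your order-by-order Koszul--Malgrange construction and your choice of $A_n$-trivialization are what lies behind that citation. The step that fails is ``$\pi''_\eta\Ch$ is itself $A_n$-valued, because of \eqref{eq_piCh} and the formula for $P''_\eta$''. The identity \eqref{eq_piCh} is only a congruence modulo $t\bar t$, and beyond that order the claim is false. The projection $\pi''_\eta$ fixes $P''_\eta(\beta)=\beta-\bar\eta(\beta)$ and kills $\gamma-\eta(\gamma)$ for $\gamma\in\A^{1,0}(X_0)$. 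Iterating this gives, for $\beta\in\A^{0,1}(X_0)$,
\[
\pi''_\eta(\beta)=P''_\eta\bigl((1-\eta\bar\eta)^{-1}\beta\bigr),
\qquad
\pi''_\eta\Ch=P''_\eta\circ(1-\eta\bar\eta)^{-1}\circ\bigl(\bar\p_0+\eta\circ\Ch^{1,0}\bigr).
\]
So in the $P''_\eta$-frame, $\pi''_\eta\Ch$ already carries the mixed term $t\bar t\,\eta_1\bar\eta_1\circ\bar\p_0$. Equivalently, $\pi''_\eta(df)=\pi''_\eta(\bar\p f+\eta(\p f))$. Thus the $(0,1)$-differentials of $A_n$-valued functions lie in the $A_n$-lattice spanned by the $\pi''_\eta(d\bar z^j)$. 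That lattice differs from the one spanned by the $P''_\eta(d\bar z^j)$ from order $t\bar t$ on. Hence $\bar\p'_t=\pi''_\eta\Ch+P''_\eta(B')$ preserves neither lattice. Its kernel on $\E\otimes A_n$ is the kernel of $\bar\p_0+\eta\circ\Ch^{1,0}+(1-\eta\bar\eta)B'$, which imposes the extra constraint $\eta\bar\eta B's=0$ and need not lift $E_0$. Symmetrically, in your converse $\bar\p_{E_n}-\pi''_\eta\Ch=\pi''_\eta(\tilde B)=P''_\eta\bigl((1-\eta\bar\eta)^{-1}\tilde B\bigr)$ with $\tilde B$ $A_n$-valued, and this is not $A_n$-valued in the $P''_\eta$-frame.

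This is not cosmetic, and the paper's appeal to Ono does not address it: for $n\ge 3$ the equivalence fails in its literal form. Consider the following example.
\begin{itemize}
\item Data: $X_0=\C/\Lambda$, $\eta=t\mu\,d\bar z\otimes\p_z$ with $\mu\neq0$ constant, the trivial line bundle with flat metric, $\theta_t=0$, and $\bar\p_t:=\pi''_\eta\Ch+tc\,P''_\eta(d\bar z)$ with $c\neq0$. This is integrable since $\dim X_0=1$, and it satisfies \eqref{eq_hol} with $\mathscr U=\id$.
\item Rewriting: $P''_\eta(d\bar z)=\pi''_\eta\bigl((1-t\bar t|\mu|^2)\,d\bar z\bigr)$. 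So for every gauge $u$ one has $u^{-1}\circ\bar\p_t\circ u=\pi''_\eta\circ(\bar\p_0+\eta\circ\p+b\,d\bar z)$ with $b=tc(1-t\bar t|\mu|^2)+u^{-1}(\p_{\bar z}+t\mu\p_z)u$.
\item Obstruction: $\int_{X_0}u^{-1}(\p_{\bar z}+t\mu\p_z)u\in\C+t\C$, so the $t^2\bar t$-coefficient of $\int_{X_0}b$ is $-c|\mu|^2\operatorname{Area}(X_0)\neq0$ for every $u$.
\item Conclusion: every holomorphic deformation has the form $\pi''_\eta\circ(\bar\p_0+\eta\circ\p+b\,d\bar z)$ with $b\in\mathcal C^\infty(X_0)\otimes A_n$ in a smooth $A_n$-trivialization. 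Hence this triple is not holomorphic.
\end{itemize}
Reading $P'_\eta,P''_\eta$ as the restricted projections instead repairs the $\bar\p$-condition but breaks the $\theta$-condition, since then $P'_\eta(\alpha)=\alpha'-\eta(\alpha')$ with $\alpha'=(1-\bar\eta\eta)^{-1}\alpha$.

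The fix is to impose $\mathscr U^{-1}\circ\bar\p_t\circ\mathscr U=\pi''_\eta\circ(\bar\p_0+\eta\circ\Ch^{1,0}+\tilde B)$ with $\tilde B\in\A^{0,1}(\End\E)\otimes A_n$. Keep the $\theta$-condition in the frame $\alpha-\eta(\alpha)$ of \eqref{eq_Peta}. Since $\pi''_\eta$ is injective on $\A^{0,1}(X_0)\otimes B_n$, the relevant kernel is then that of an operator preserving $\E\otimes A_n$. Your induction and your converse then go through as written. Modulo $t\bar t$ the two formulations coincide, so none of this is visible at first order.
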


\begin{proof}
By \cite{Ono}, the condition \eqref{eq_hol} is equivalent to the holomorphicity of $(\E,\mathscr U^{-1} \circ \bar\p_t \circ \mathscr U,\mathscr U^{-1} \circ \theta_t \circ \mathscr U )$. Therefore $(\E,\bar\p_t,\theta_t)$ is also a holomorphic deformation because it differs from $(\E,\mathscr U^{-1} \circ \bar\p_t \circ \mathscr U,\mathscr U^{-1} \circ \theta_t \circ \mathscr U )$ by a gauge equivalence. 
\end{proof}

\begin{definition} \label{def_modulo_hol}
For the ideal $(t,\bar t^{k+1})$ of $B_n$, a real-analytic deformation $(\E,\bar\p_t,\theta_t)$
 is called \textbf{modulo-$(t,\bar t^{k+1})$-holomorphic} if there exists a gauge transformation $\mathscr U$ such that 
\begin{equation}\label{eq_hol_b}
\begin{aligned}
 \mathscr U^{-1} \circ \bar\p_t \circ \mathscr U - \pi_\eta''\Ch & \in  (A_n+(t,\bar t^{k+1})) \otimes P''_\eta  \Omega^{0,1}(X_0) \otimes_{\mathcal C^\infty(X_0)} \End(\E);
 \\  
\mathscr U^{-1}\circ\theta_t \circ \mathscr U & \in (A_n+(t,\bar t^{k+1})) \otimes P'_\eta \Omega^{1,0}(X_0) \otimes_{\mathcal C^\infty(X_0)} \End(\E).
\end{aligned}
\end{equation}

\end{definition}

\begin{remark}
\begin{enumerate}
    \item[(i)] Let $(t,\bar t^{k+1}) \subset (t,\bar t^{k})$ be two ideals of $B_n$. If $(\E,\bar\p_t,\theta_t)$  is modulo-$(t,\bar t^{k+1})$-holomorphic, then it is modulo-$(t,\bar t^{k})$-holomorphic. Moreover, all real-analytic deformations are modulo-$(t,\bar t)$-holomorphic.
    \item[(ii)]  By replacing $\mathscr U$ with $\mathscr U \mathscr U_0^{-1}$ in \eqref{eq_hol}, we may assume that the gauge transformation $\mathscr U\in\A^0(\End \E)\otimes B_n$ satisfies $\mathscr U \equiv \mathrm{id} \pmod{(t,\bar t)}$, where $\mathscr U_0$ is the constant term of $\mathscr U$. 
\end{enumerate}

\end{remark}

\bigskip

Let $(\E,\bar\p_t,\theta_t)$  be a modulo-$(t)$-holomorphic deformation and let 
\begin{equation} \label{eq_Taylor_U}
\mathscr{U} = \operatorname{id} + \sum_{i=1}^n \bar{t}^i u_i + \sum_{i=0}^{n-1} \sum_{j=1}^{n-i} t^j \bar{t}^i u_{j\bar i}\in\A^0(\End\E)\otimes B_n
\end{equation} 
be the Taylor expansion of a gauge transformation such that 
$\mathscr U^{-1}\circ\bar\p_t\circ \mathscr U$ and $\mathscr U^{-1}\circ\theta_t\circ\mathscr U$ satisfy \eqref{eq_hol_b} for $k=n$. Then we have the following lemma describing this form of holomorphicity as a system of equations for the coefficients $u_i$.

\begin{lemma}[Equations for gauge transformation] \label{lem_gauge} 
Let $(\E,\bar\p_t,\theta_t)$  be a real-analytic deformation. Then it is modulo-$(t,\bar t^{k+1})$-holomorphic if and only if all coefficients $u_i$ in the gauge transformation $\mathscr{U}$ given in \eqref{eq_Taylor_U} satisfy the following system of equations:
\begin{align}\label{hol}
\begin{cases}
\Phi U+[\theta, U]\equiv 0\pmod{(t,\bar t^{k+1})};\\
\Psi U+\bar\p_0 U-\bar\eta\circ\bar\p_0 U\equiv 0\pmod{(t,\bar t^{k+1})},
\end{cases}
\end{align}
where $U:=\sum_{i=1}^n \bar t^i u_i$ and $\Phi,\Psi$ are defined in Lemma~\ref{lem_expanding}.
 \end{lemma}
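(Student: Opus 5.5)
The plan is to unwind Definition~\ref{def_modulo_hol} using the expansions of Lemma~\ref{lem_expanding}, working throughout in the quotient ring $B_n/(t,\bar t^{k+1})\cong \C[\bar t]/(\bar t^{k+1})$. In this quotient every term containing $t$ disappears, in particular every mixed term. Since $\eta$ has coefficients in $(t)$, the contraction $\eta\circ\Ch^{1,0}$ also disappears, while $\bar\eta$ survives.

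Modulo $(t,\bar t^{k+1})$, Lemma~\ref{lem_expanding} therefore gives
\[
\bar\p_t\equiv \bar\p_0-\bar\eta\circ\bar\p_0+\Psi,\qquad \theta_t\equiv\theta_0+\Phi,\qquad \pi''_\eta\Ch\equiv \bar\p_0-\bar\eta\circ\bar\p_0 .
\]
Moreover, the subring $A_n+(t,\bar t^{k+1})$ reduces to the constants $\C$. So the membership conditions \eqref{eq_hol_b} say exactly that $\mathscr U^{-1}\bar\p_t\mathscr U-\pi''_\eta\Ch$ and $\mathscr U^{-1}\theta_t\mathscr U-\theta_0$ vanish modulo $(t,\bar t^{k+1})$. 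Here I use the identification $\Omega^{0,1}(X_n/A_n)\cong B_n\otimes P''_\eta\Omega^{0,1}(X_0)$ to read off coefficients, and I use that the constant terms are $0$ and $\theta_0$.

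Next I write $\mathscr U\equiv \id+U \pmod{(t,\bar t^{k+1})}$, using the normalization in the Remark and \eqref{eq_Taylor_U}. Since $\mathscr U$ is invertible, I multiply each condition on the left by $\mathscr U$, which gives equivalent conditions. The first becomes $\bar\p_t\circ\mathscr U=\mathscr U\circ\pi''_\eta\Ch$ as operators modulo $(t,\bar t^{k+1})$. Expanding,
\[
(\bar\p_0-\bar\eta\circ\bar\p_0+\Psi)\circ(\id+U)-(\id+U)\circ(\bar\p_0-\bar\eta\circ\bar\p_0)
=\Psi+\Psi U+\bigl(\bar\p_0 U-\bar\eta\circ\bar\p_0U\bigr),
\]
where the bracket is the commutator $[\bar\p_0-\bar\eta\circ\bar\p_0,U]$, a zeroth-order operator. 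Similarly, the Higgs condition becomes $(\theta_0+\Phi)(\id+U)-(\id+U)\theta_0=\Phi+\Phi U+[\theta_0,U]$.

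I expect the main obstacle to be the bookkeeping of the inhomogeneous terms $\Psi$ and $\Phi$, since \eqref{hol} as displayed contains only $\Phi U$ and $\Psi U$. I would first check whether the paper intends $\Phi U$ to include the $\id$-part, i.e. $\Phi\mathscr U$ with $\mathscr U=\id+U$, so that the equations are really $\Phi\mathscr U+[\theta,U]\equiv0$ and $\Psi\mathscr U+\bar\p_0U-\bar\eta\circ\bar\p_0U\equiv0$. Under that reading the computation above matches \eqref{hol} exactly, and I would state the equivalence in that form.

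A second point is that these are identities in $\mathcal A^1(\End\E)\otimes\C[\bar t]/(\bar t^{k+1})$. I must make sure the products like $\bar\eta\circ\bar\p_0U$ stay of the correct $P''_\eta$-type, so that vanishing of the whole expression is the same as vanishing of each component. This follows because $\bar\eta\circ$ is just the correction term in $P''_\eta$ from \eqref{eq_Peta}.

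The two directions then follow. If a gauge $\mathscr U$ as in Definition~\ref{def_modulo_hol} exists, the coefficients $u_i$ satisfy \eqref{hol}. Conversely, a solution $U$ of \eqref{hol} gives $\mathscr U=\id+U$, which is invertible because $U$ is nilpotent. Comparing coefficients of $\bar t^i$ then gives the equivalent recursive system for the $u_i$, namely $\varphi_i+\sum_{a+b=i}\varphi_a u_b+[\theta_0,u_i]=0$ and the analogous equation for the $\psi_i$, for $i\le k$.
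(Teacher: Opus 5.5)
Your proposal is correct and follows the paper's own argument. Like the paper, you reduce modulo $(t,\bar t^{k+1})$ using Lemma~\ref{lem_expanding} and \eqref{eq_piCh}, turn the conditions into the intertwining relations $\bar\p_t\circ\mathscr U=\mathscr U\circ(\bar\p_0-\bar\eta\circ\bar\p_0)$ and $\theta_t\circ\mathscr U=\mathscr U\circ\theta_0$, and expand in $\bar t$. You read $\Phi U$ and $\Psi U$ as $\Phi\mathscr U$ and $\Psi\mathscr U$, so that the inhomogeneous terms $\varphi_i,\psi_i$ appear; this is exactly how the paper uses \eqref{hol} later, for instance in \eqref{g1} and in writing $U\equiv R$ with $R(0)=\id$.
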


\begin{proof}
By Lemma~\ref{lem_expanding}, we have \begin{align*}\bar\p_t=\bar\p_0-\bar\eta\circ\bar\p_0+\Psi\pmod{(t,\bar t^{k+1})};\ 
\theta_t=\theta_0+\Phi\pmod{(t,\bar t^{k+1})}.
\end{align*}
By the definition of $\mathscr U$ in \eqref{eq_Taylor_U}, we have \eqref{eq_hol_b}. Modulo $(t,\bar t^{k+1})$ in these two identities and using \eqref{eq_piCh}, we have 
\begin{align*}\bar{\partial}_t \circ U&= U\circ (\bar\p_0-\bar\eta\circ\bar\p_0)\pmod{(t,\bar t^{k+1})};\\
\theta_t\circ U&= U\circ\theta_0\pmod{(t,\bar t^{k+1})}.
\end{align*}
After expanding the above expressions and comparing coefficients of $\bar t,\cdots,\bar t^k$, we have \eqref{hol}.
\end{proof}

\subsection{Harmonic metrics and isomonodromic deformation} \label{sec_HarmMet_IsomDef}

Let $(\E,\bar\p_s,\theta_s)$ be the isomonodromic deformation of the initial Higgs bundle $(\E,\bar\p_0,\theta_0)$ from $X_0$ to the family $X/S$. In this subsection, we use $(\E,\bar\p_t,\theta_t)$ to denote a truncated real-analytic deformation on $X_n$ defined in Example~\ref{eg_isomo}.

Let $h_t\in A^0(\bar E^\vee\otimes E^\vee)\otimes B_n$ denote the truncation of the harmonic metric of $(\E,\bar\p_s,\theta_s)$. There exist $g_i,g_{i\bar j}\in \A^0(\End E)$ such that 
\begin{align}\label{metric}
h_t=h_0 \cdot \underbrace{\left(\operatorname{id}+\sum_{i=1}^n t^ig_i+\sum_{i=1}^n \bar t^ig_i^\st+\sum_{i=1}^{n-1}\sum_{j=1}^{n-i}t^i\bar t^j g_{i\bar j}\right)}_{g(t,\bar{t})}.
\end{align}
where $\bg_i$ is the $h_0$-adjoint of $g_i$ defined in Notations. The coefficients $g_i,g_{i\bar j}$ characterize the \textbf{deformation} of the harmonic metric. Let $g(t,\bar t)$ denote the expression in parentheses in the above equation, which is invertible as $g(t,\bar t)-\mathrm{id}$ is nilpotent. The expression $g(t,\bar t)$ must be of this form to ensure that $h_t$ is Hermitian.

\medskip

We view $h_t$ as a harmonic map defined on the universal cover $\widetilde X_n$  of $X_n$. Let $\Theta_t:=-\cc h_t^{-1}dh_t$. 
Let $(\mathcal V,D)$ on $X_0$ be the associated flat bundle given by the non-abelian Hodge correspondence, where $D$ is the smooth flat connection.
 By an argument similar to the proof of \cite[Lemma 9.13]{Gui}, substituting the metric \eqref{metric} into $\Theta_t=-\cc h_t^{-1}dh_t$ yields
\begin{equation}\label{PSI}\begin{aligned} 
\Theta_t =& -\cc g(t,\bar t)^{-1} \cdot (h_0^{-1}dh_0) \cdot g(t,\bar t)-\cc g(t,\bar t)^{-1} \cdot D^{\End} \Big(g(t,\bar t)\Big)\\
=&g(t,\bar t)^{-1}(\theta_0+\theta_0^\st)g(t,\bar t)-\cc g(t,\bar t)^{-1}D^{\End} \Big(g(t,\bar t)\Big).
\end{aligned}\end{equation}

By \cite[Lemma 9.13]{Gui}, the $(1,0)$-part of $\Theta_t$ with respect to $X_n$ is $\theta_t$ and the $(0,1)$-part of $\Theta_t$ with respect to $X_n$ is $\theta_t^{\star_{h_t}}$.
Comparing the $(1,0)$ and $(0,1)$ parts of both sides of \eqref{PSI}, we obtain the main result of this section: when the deformation is isomonodromic, we can express the $\varphi_i, \psi_i, \alpha_i, \beta_i$ from Lemma~\ref{lem_expanding} solely in terms of the stable Higgs bundle on $X_0$ (equivalently, the initial data $(\E, \bar\p_0, \theta_0, h_0, \Ch^{1,0})$) and the order $n$ deformation $X_n$ of $X_0$ (equivalently, the series $\eta = \sum\limits_{i=1}^n t^i \eta_i$). Let 
$G:=\id+\sum\limits_{i=1}^n t^i g_i;\ G^\st=\id+\sum\limits_{i=1}^n \bar t^i\bg_i.$
\begin{proposition} \label{prop_Taylor_Exp_DolHig}
Suppose the deformation is isomonodromic. 
For any $1\leq i\leq n$, $g_i$ is uniquely determined by $(\E,\bar\p_0,\theta_0,h_0,\Ch^{1,0})$ and $\eta$.
The explicit formulas for $\Phi, \Psi, A, B$ (given in Lemma~\ref{lem_expanding}) are:
\begin{align}&\Phi=\bar\eta(\theta_0^\st)+\cc (G^\st)^{-1}([\theta_0,G^\st]-\Ch^{1,0}G^\st)+\cc\bar\eta\left((G^\st)^{-1}([\theta_0^\st,G^\st]-\bar\p_0 G^\st)\right);\label{eq_Phi}\\
 &\Psi=\cc(G^\st)^{-1}(\bar\p_0 G^\st-[\theta_0^\st,G^\st])-\cc\bar\eta\left((G^\st)^{-1}([\theta_0^\st,G^\st]-\bar\p_0 G^\st)\right);\label{eq_Psi}\\
  &A=\frac{1}{2}G^{-1}\bigl([\theta_0,G]-\Ch^{1,0}G\bigr)-\frac{1}{2}\eta\!\left(
      G^{-1}\bigl([\theta_0,G]-\Ch^{1,0}G\bigr)
     \right);
  \label{eq_sec3_A_def}\\
  &B=\frac{1}{2}G^{-1}\bigl(\bar\p_0 G-[\theta_0^\st,G]\bigr)
    +\frac{1}{2}\eta\!\left(
       G^{-1}\bigl(\Ch^{1,0}G-[\theta_0,G]\bigr)
     \right).
  \label{eq_sec3_B_def}
 \end{align}
In these formulas, the operators $\Ch^{1,0}$ and $\bar\p_0$ are the connections of $\End\E$ induced by those of $\E$. We also note that $\eta_i:\A^{1,0}(\End\E)\to \A^{0,1}(\End\E)$ and $\bar\eta_i:\A^{0,1}(\End\E)\to \A^{1,0}(\End\E)$ in the above are contractions.
\end{proposition}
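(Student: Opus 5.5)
The idea is to compare the $(1,0)$ and $(0,1)$ parts, taken with respect to $X_n$, of both sides of \eqref{PSI}, working modulo $t\bar t$. Modulo $t\bar t$ the mixed coefficients $g_{i\bar j}$ drop out, and
\[
g(t,\bar t)\equiv G+G^\st-\id \pmod{t\bar t}.
\]
Since $(G-\id)(G^\st-\id)\equiv 0$, one has $g^{-1}\equiv G^{-1}+(G^\st)^{-1}-\id$. Hence every term of \eqref{PSI} splits into a $t$-part (built from $G$) plus a $\bar t$-part (built from $G^\st$), with no cross terms. Using $D^{\End}=\Ch^{1,0}+\bar\p_0+[\theta_0+\theta_0^\st,\cdot]$, the $\bar t$-part of $\Theta_t-(\theta_0+\theta_0^\st)$ becomes
\[
\cc (G^\st)^{-1}\bigl([\theta_0,G^\st]-\Ch^{1,0}G^\st\bigr)+\cc (G^\st)^{-1}\bigl([\theta_0^\st,G^\st]-\bar\p_0G^\st\bigr).
\]
The $t$-part is the analogous expression with $G$ in place of $G^\st$.

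Next, decompose a $1$-form on $X_0$ into types relative to $X_n$. By \eqref{eq_Peta} a $(1,0)$-form $a$ on $X_0$ equals $P'_\eta(a)+\eta(a)$, and $\eta(a)$ is of $(0,1)$-type up to higher order. Similarly a $(0,1)$-form $b$ equals $P''_\eta(b)+\bar\eta(b)$. Modulo $t\bar t$, $\eta$ multiplies only $t$-terms and $\bar\eta$ only $\bar t$-terms, so only first-order contractions against the constant term or the same-variable term survive. Applying this splitting gives the following.
\begin{itemize}
\item The $(1,0)$-part with respect to $X_n$ of the $\bar t$-terms, together with $\bar\eta(\theta_0^\st)$ coming from the constant term $\theta_0^\st$, is exactly \eqref{eq_Phi}.
\item The $t$-terms give \eqref{eq_sec3_A_def}, after subtracting $\eta$ of the $(1,0)$-piece.
\end{itemize}
For \eqref{eq_Psi} and \eqref{eq_sec3_B_def}, take the $(0,1)$-part of $\Theta_t$, which is $\theta_t^{\star_{h_t}}$. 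Then use that $\bar\p_t=\pi''_\eta D_{h_t}$, where $D_{h_t}$ is the Chern connection of $h_t$ on the deformed structure. Explicitly, $D_{h_t}=g^{-1}\circ\Ch\circ g$ up to the $\Theta$-correction: by \eqref{PSI}, $D=D_{h_t}+\Theta_t$, so $D_{h_t}=\Ch+\theta_0+\theta_0^\st-\Theta_t$. Projecting with $\pi''_\eta$, subtracting \eqref{eq_piCh}, and sorting by $t$ and $\bar t$ gives $B$ and $\Psi$. A consistency check is that $\Psi$ should equal minus the $(0,1)$-part of the $\bar t$-terms above.

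For uniqueness of the $g_i$, the plan is to argue by induction on $i$. The harmonic metric of a stable Higgs bundle is unique up to a positive scalar. The truncated isomonodromic family is determined by $\eta$ alone, since the flat bundle $(\mathcal V,D)$ is fixed. The Hermitian–Yang–Mills–Higgs equation, linearized at $h_0$, is then an operator of Laplacian type on $\A^0(\End\E)$. Stability (simplicity) makes its kernel consist of constants, and we normalize them away. Hence at each order $[t^i]$ the equation determines $g_i$ uniquely from the lower-order data, $h_0$, $\Ch^{1,0}$, $\theta_0$ and $\eta$.

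The main obstacle is the $(0,1)$ computation giving $\Psi$ and $B$. It requires carefully identifying $\bar\p_t$ with $\pi''_\eta$ of the deformed Chern connection and tracking the signs in the contractions $\eta\circ\Ch^{1,0}$ versus $\eta(\cdot)$ applied to $G$-terms. I would handle it by verifying term-by-term at first order, $i=1$, and then noting that the $G$ and $G^\st$ dependence is formally identical at all orders modulo $t\bar t$.
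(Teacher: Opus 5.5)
Your proposal is correct and is essentially the paper's own argument: you reduce \eqref{PSI} modulo $(t\bar t)$, read off $A,\Phi$ from $\theta_t=\pi'_\eta\Theta_t$ and $B,\Psi$ from $\bar\p_t=\pi''_\eta D-\theta_t^{\star_{h_t}}$, and get uniqueness of the $g_i$ from uniqueness of the harmonic metric. Your identity $g^{-1}\equiv G^{-1}+(G^\st)^{-1}-\id$ only makes explicit what the paper does when it compares coefficients of $t^i$ and $\bar t^i$, and the linearized operator with kernel $\C\cdot\id$ (cf.\ Lemma~\ref{vanlem}) only spells out its uniqueness step. One caveat, which the paper's derivation shares: done carefully, your $\pi''_\eta$ step gives $\Psi=-\pi''_\eta(\bar t\text{-part of }\Theta_t)$, whose $\bar\eta$-term is $+\tfrac12\bar\eta\bigl((G^\st)^{-1}([\theta_0^\st,G^\st]-\bar\p_0G^\st)\bigr)$, as type $(0,1)$ on $X_n$ requires, the opposite sign to the printed \eqref{eq_Psi}; likewise \eqref{eq_sec3_A_def} holds under the convention $\theta_t=P'_\eta(\theta_0)+A+\Phi$ rather than $\theta_0+A+\Phi$ --- these are slips in the printed statement, not gaps in your method, and the later sections use only $\Psi^{0,1}$.
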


\begin{proof}
Throughout the proof, we work modulo the ideal $(t\bar t)$. We first prove the four identities. Let 
\begin{align*}
\pi_\eta':&B_n\otimes \A^1(\End\E)\to  B_n\otimes\Omega^{1,0}(X_n/A_n)\otimes \End\E; \\\pi_\eta'':&B_n\otimes \A^1(\End\E)\to  B_n\otimes\Omega^{0,1}(X_n/A_n)\otimes \End\E
\end{align*}
be the two projections according to type. Then we have $\pi'_\eta\Theta_t=\theta_t$ and $\pi''_\eta\Theta_t=\theta_t^{\star_{h_t}}$. Using \eqref{PSI}, we have
{\fontsize{10}{12}\selectfont \begin{equation} \label{eq_theta_t}
\begin{aligned}
\theta_t=&g(t,\bar t)^{-1}(\theta_0-\eta(\theta_0)+\bar\eta(\theta_0^\st))g(t,\bar t)-\cc g(t,\bar t)^{-1}(D'-\eta\circ D'+\bar\eta\circ D'')^{\End}\Big(g(t,\bar t)\Big);\\
\theta_t^{\star_{h_t}}=&g(t,\bar t)^{-1}(\theta_0^\st+\eta(\theta_0)-\bar\eta(\theta_0^\st))g(t,\bar t)-\cc g(t,\bar t)^{-1}(D''+\eta\circ D'-\bar\eta\circ D'')^{\End}\Big(g(t,\bar t)\Big),
\end{aligned}
\end{equation}}
where $D':\A^0(X_0,\End\E)\to \A^{1,0}(X_0,\End\E)$ and $D'':\A^0(X_0,\End\E)\to \A^{0,1}(X_0,\End\E)$ with $D=D'+D''$. 
Note that we have $D'=\Ch^{1,0}+\theta_0$ and $D''=\bar\p+\theta_0^\st$. By comparing the coefficient of $t^i,\ i=0,1,2,\cdots,n$ in the above identity of $\theta_t$, we have
\begin{align*}\theta_0+A =G^{-1}(\theta_0-\eta(\theta_0))G-\cc G^{-1}(\Ch^{1,0}+\theta_0-\eta\circ\Ch^{1,0}-\eta(\theta_0))^{\End}G.
\end{align*}
Substituting \begin{align*}G^{-1}\theta_0 G=G^{-1}[\theta_0, G]+\theta_0;\quad \theta_0^{\End}G=[\theta_0,G]\end{align*} into the above identity, we obtain the expression \eqref{eq_sec3_A_def}. The same argument gives \eqref{eq_Phi}. We now derive the remaining two identities. A direct check gives $\pi_\eta'' D=D''+\eta\circ D'-\bar\eta\circ D''$. Thus \begin{align*}\bar\p_t=\pi_\eta'' D-\theta_t^{\star_{h_t}}.\end{align*}
Substituting \eqref{eq_theta_t} into the above identity and comparing the coefficient of $t^i,\ i=1,2,\cdots,n$ in the above identity, we have
\begin{align*}B =\eta(\theta_0)-G^{-1}(\theta_0^\st+\eta(\theta_0))G+\cc G^{-1}(\bar\p+\theta_0^\st+\eta\circ\Ch^{1,0}+\eta(\theta_0))^{\End}G.
\end{align*}
This proves the expression \eqref{eq_sec3_B_def}. The same argument gives \eqref{eq_Psi}.

Substituting these expressions into the integrability conditions \eqref{eq_compatibility}, we obtain several PDEs for $g_i,\ i=1,2,\cdots,n$. By the non-abelian Hodge correspondence and the uniqueness of the harmonic metric, the above PDEs on $g_i$ are solvable and uniquely determine $g_i$ for $i=1,2,\cdots,n$.
\end{proof}

\newpage

\section{Obstruction classes of holomorphicity} \label{sec_ob_hol}

In this section, we investigate the holomorphicity of the real-analytic isomonodromic deformations discussed in the previous section. We introduce a sequence of obstruction classes that measure the failure of an isomonodromic deformation to be holomorphic. These obstruction classes lie in a certain cohomology group, which we explicitly describe using the Dolbeault resolution. The main result of this section is Proposition~\ref{prop_obstruction_class_higher_dim}, where we derive explicit formulas for all higher-order obstruction classes. 

\medskip

\subsection{Obstruction classes of modulo-$(t)$-holomorphicity} \label{sec_ob_mod_t_hol}
\subsubsection{Obstruction class group}
Recall the computation of the hypercohomology group $$\barHb$$ via the Dolbeault resolution. First, we have $\Omega_{\overline{X_0}}^{1,0}=\Omega_{X_0}^{0,1}$ and $\Omega_{\overline{X_0}}^{0,1}=\Omega_{X_0}^{1,0}$, where $\overline{X_0}$ is the complex manifold conjugate to $X_0$. Hence $(\E,\Ch^{1,0},\theta_0^\st)$ is a Higgs bundle on $\overline{X_0}$. For any two $\omega_1\in\A^k(\End\E),\ \omega_2\in \A^l(\End\E)$, we define the following Lie brackets.
\begin{align*}[\omega_1,\omega_2]:=\omega_1\circ\omega_2-(-1)^{kl}\omega_2\circ\omega_1.
\end{align*}

We have the following Dolbeault resolution:
\[
\begin{tikzcd}
\vdots & \vdots & \vdots \\
C^{2,0}:=\mathcal A^{2,0}(\End \E) 
\arrow[r, "{\operatorname{ad}( \theta_0^{\star_{h_0}})}"] \arrow[u]
& C^{2,1}:=\mathcal A^{2,0}(\End \E \otimes \Omega_{X_0}^{0,1}) 
\arrow[r, "{\operatorname{ad}( \theta_0^{\star_{h_0}})}"] \arrow[u]
& C^{2,2}:=\mathcal A^{2,0}(\End \E \otimes \Omega_{X_0}^{0,2}) 
\arrow[r] \arrow[u] & \cdots \\
C^{1,0}:=\mathcal A^{1,0}(\End \E) 
\arrow[r, "{\operatorname{ad}( \theta_0^{\star_{h_0}})}"] 
\arrow[u, "\Ch^{1,0}"] 
& C^{1,1}:=\mathcal A^{1,0}(\End \E \otimes \Omega_{X_0}^{0,1}) 
\arrow[r, "{\operatorname{ad}( \theta_0^{\star_{h_0}})}"] 
\arrow[u, "\Ch^{1,0}"] 
& C^{1,2}:=\mathcal A^{1,0}(\End \E \otimes \Omega_{X_0}^{0,2}) 
\arrow[r] \arrow[u, "\Ch^{1,0}"] & \cdots \\
C^{0,0}:=\mathcal A^{0,0}(\End \E) 
\arrow[r, "{\operatorname{ad}( \theta_0^{\star_{h_0}})}"] 
\arrow[u, "\Ch^{1,0}"] 
& C^{0,1}:=\mathcal A^{0,0}(\End \E \otimes \Omega_{X_0}^{0,1}) 
\arrow[r, "{\operatorname{ad}( \theta_0^{\star_{h_0}})}"] 
\arrow[u, "\Ch^{1,0}"] 
& C^{0,2}:=\mathcal A^{0,0}(\End \E \otimes \Omega_{X_0}^{0,2}) 
\arrow[r] \arrow[u, "\Ch^{1,0}"] & \cdots
\end{tikzcd}
\]
which gives the following truncated complex 
\[C^{0,0} \overset{d_c^{0}}{\longrightarrow} C^{1,0} \oplus C^{0,1} \overset{d_c^{1}}{\longrightarrow} C^{2,0}\oplus C^{1,1}\oplus C^{0,2}{\longrightarrow} \cdots\] 
where 
\begin{align*}
 d_c^{0}(g) &= (\Ch^{1,0} g, [\theta_0^{\star_{h_0}},g])\in C^{1,0}\oplus C^{0,1} \quad \text{for } g \in C^{0,0},\\
 d_c^{1}(\varphi, \psi) &= (\Ch^{1,0}\varphi,\Ch^{1,0} \psi +[\theta_0^{\star_{h_0}},\varphi],[\theta_0^\st,\psi])\in C^{2,0}\oplus C^{1,1}\oplus C^{0,2} \quad \text{for } (\varphi, \psi) \in C^{1,0} \oplus C^{0,1}.
\end{align*}
Hence
$$\barHb=\frac{\mathrm{Ker\ }d_c^{1}}{\mathrm{Im\ }d_c^{0}}.$$
In the next subsection we shall see that this $\barHb$ is the desired obstruction group.

\subsubsection{Existence of obstruction classes}
Given a real-analytic deformation of $(\E,\bar\p_0,\theta_0)$ on $X_0$ to $X/S$, let $(\E,\bar\p_t,\theta_t)$ be the truncation to $X_n$ via a holomorphic $n$-jet $c:\operatorname{Spec}A_n\to S,\ c(0)=0$.
\begin{proposition}\label{prop_existence_ob}Suppose $(\E,\bar\p_t,\theta_t)$ is modulo-$(t,\bar t^{k})$-holomorphic with $k\leq n$ (defined in Definition~\ref{def_modulo_hol}). Then there exists a class $\ob_k \in \barHb$ such that $\ob_k$ vanishes if and only if $(\E,\bar\p_t,\theta_t)$  is modulo-$(t,\bar t^{k+1})$-holomorphic. In particular, $\ob_1,\ob_2,\cdots,\ob_n$ all vanish successively if and only if $(\E,\bar\p_t,\theta_t)$ is modulo-$(t)$-holomorphic.
 
\end{proposition}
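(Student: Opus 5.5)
We build $\ob_k$ as the standard obstruction for lifting a gauge transformation one order further in $\bar t$, reducing everything to Lemma~\ref{lem_gauge}. By hypothesis, and Remark (ii), there is a gauge $\mathscr U$ with $\mathscr U\equiv \id \pmod{(t,\bar t)}$ whose purely antiholomorphic part $U_{k-1}=\sum_{i=1}^{k-1}\bar t^i u_i$ solves \eqref{hol} modulo $(t,\bar t^{k})$. Since the mixed coefficients $u_{j\bar i}$ play no role modulo $(t)$, we only need to find $u_k$ such that $U_k=U_{k-1}+\bar t^k u_k$ solves \eqref{hol} modulo $(t,\bar t^{k+1})$, possibly after first modifying $u_1,\dots,u_{k-1}$.

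\textbf{Construction of the class.} Take the coefficient of $\bar t^k$ in the two expressions
\[
\Phi U_{k-1}+[\theta_0,U_{k-1}]\quad\text{and}\quad \Psi U_{k-1}+\bar\p_0U_{k-1}-\bar\eta\circ\bar\p_0U_{k-1},
\]
with $\Phi,\Psi$ taken from Lemma~\ref{lem_expanding}, treating $\varphi_k,\psi_k$ as the leading terms. This gives a pair $(\varphi^{(k)},\psi^{(k)})\in \A^{1,0}(\End\E)\oplus\A^{0,1}(\End\E)=C^{1,0}\oplus C^{0,1}$. Adding $\bar t^k u_k$ changes this pair by $([\theta_0,u_k],\bar\p_0u_k)$.

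Before comparing with $d_c^0$, apply the $h_0$-adjoint $\star_{h_0}$ (equivalently, pass to $\overline{X_0}$). This sends $([\theta_0,u],\bar\p_0 u)$ to $\pm d_c^0(u^{\st})=\pm(\Ch^{1,0}u^\st,[\theta_0^\st,u^\st])$, and it matches the group in which Proposition~\ref{prop_Taylor_Exp_DolHig} expresses $\Phi,\Psi$, namely through $G^\st$. We therefore set $\ob_k$ to be the class of the adjoint pair (with appropriate component swapping), and solvability at order $k$ becomes exactly the statement that this pair lies in $\operatorname{Im}d_c^0$.

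\textbf{The cocycle condition.} The remaining step is $d_c^1$-closedness. Expand the integrability conditions \eqref{eq_compatibility}, namely $\bar\p_t^2=0$, $\theta_t\wedge\theta_t=0$ and $\bar\p_t\theta_t=0$, for the gauge-transformed deformation $\mathscr U^{-1}(\bar\p_t,\theta_t)\mathscr U$ modulo $(t,\bar t^{k+1})$. By the inductive hypothesis, the lower-order terms of this deformation are holomorphic, so they vanish modulo $(t)$. The $\bar t^k$ coefficients of the three equations then become, after conjugation,
\[
[\theta_0,\cdot\,],\quad \bar\p_0(\cdot)+[\theta_0,\cdot\,],\quad \bar\p_0(\cdot)
\]
applied to the pair, which is the conjugate of $d_c^1$. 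I expect this to be the main obstacle. The terms involving $\bar\eta$ must cancel, which should follow from $\bar\eta$ being of order $\ge1$ in $\bar t$ together with the Maurer--Cartan equation for $\eta$. Equivalently, we can work directly with $\pi''_\eta$-types as in \eqref{eq_Peta}, so that the $\bar\eta$ terms are absorbed into the identification $B_n\otimes P''_\eta\Omega^{0,1}$.

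\textbf{Well-definedness and the equivalence.} Changing $U_{k-1}$ by a different admissible choice alters the pair only by a coboundary. This is because two solutions modulo $\bar t^{k}$ differ by an automorphism of a holomorphic deformation, and stability of $(E_0,\theta_0)$ means such automorphisms are scalar to leading order. Hence $\ob_k$ is independent of choices. Vanishing of $\ob_k$ then gives $u_k$, and thus modulo-$(t,\bar t^{k+1})$-holomorphicity. Conversely, any gauge realizing modulo-$(t,\bar t^{k+1})$-holomorphicity exhibits the pair as a coboundary. Iterating over $k=1,\dots,n$ proves the final claim, using that the holomorphicity of Lemma~\ref{lem_gauge} with $k=n$ is precisely modulo-$(t)$-holomorphicity.
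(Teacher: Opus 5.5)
Your argument is correct, but it takes a different route from the paper. The paper never uses the gauge equations in this proof. It complexifies $\sigma\colon\operatorname{Spec}B_n\to M_{\Dol}(X/S)$ to a holomorphic map on $\operatorname{Spec}(A_n\otimes\overline A_n)$ and restricts it to the $\bar t$-axis. Modulo-$(t,\bar t^{k})$-holomorphicity kills the $(k-1)$-truncation of the resulting jet, so its $k$-jet is a tangent vector. Holomorphicity of $\pi_{\Dol}\circ\sigma=c$ then places that vector in the vertical space $T^{0,1}_oM_{\Dol}(X_0)\cong\barHb$, and $\ob_k$ is defined to be this jet.

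That route makes independence of choices automatic and explains why the conjugate group is the natural home. However, it rests on the complexification set-up of the appendix, and it leaves implicit the link between vanishing of the jet and Definition~\ref{def_modulo_hol}. Your obstruction theory on Lemma~\ref{lem_gauge} makes that link explicit and gives a concrete representative, the order-$k$ residual.

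The step you flagged does go through. Modulo $t$, the reference pair $(\bar\p_0-\bar\eta\circ\bar\p_0,\theta_0)$ is already integrable, for three reasons: the Maurer--Cartan equation for $\bar\eta$; $F_{h_0}$ has type $(1,1)$ while the $(1,0)$-forms are undeformed at $t=0$; and $\Ch\theta_0=0$. Since $\bar\eta=O(\bar t)$, the $\bar t^k$-part of \eqref{eq_compatibility} is then exactly $\mathcal D''$-closedness of the residual, where $\mathcal D''=\bar\p_0+\operatorname{ad}\theta_0$. For well-definedness you need more than scalars to leading order: the automorphisms of the reference pair modulo $\bar t^{k}$ must be constant scalars at every order, which follows by induction using stability.

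The real difference in output is the normalization. Your class lives naturally in $\mathbb H^1(X_0,(\End E_0,\operatorname{ad}(\theta_0)))$ and reaches $\barHb$ only through the conjugate-linear map $\star_{h_0}$. That suffices for this existence statement, but it is not the normalization the paper uses afterwards. Propositions~\ref{prop_firstob} and \ref{prop_obstruction_class_higher_dim} take the harmonic residual (the one coming from $U=R$) and read it directly, without adjoint, in the $d_c$-complex. Equivalently, they compose with the complex-linear harmonic identification $H^1_{\mathcal D''}\cong H^1_{\mathcal D'}$ rather than with $\star_{h_0}$. This is what makes the transcendental term $\tfrac12 d_c^0(\bg_1)$ a coboundary and yields the intrinsic formula $\ob_1=[(\bar\eta_1(\theta_0^\st),0)]$.
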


\begin{remark}
 Modulo-$(t)$-holomorphicity is strictly weaker than holomorphicity, so further obstructions are needed to obtain full holomorphicity.
\end{remark}
\begin{proof}

By definition, 
\((\mathcal E,\bar\partial_t,\theta_t)\) is given by a map
\[
\sigma:\operatorname{Spec}\mathbb C[t,\bar t]/(t,\bar t)^{n+1}
\longrightarrow M_{\mathrm{Dol}}(X/S)=:\mathcal{M}
\]
with $
\sigma(0)=(\mathcal E,\bar\p_0,\theta_0)$ on \(X_0\). 

By the complexification argument in Section~\ref{sec_joint_realanalytic}, $\sigma$ uniquely extends to a morphism $g$ such that the following diagram commutes
\[
\begin{tikzcd}
\operatorname{Spec}\mathbb C[t,\bar t]/(t,\bar t)^{n+1} \arrow[r,"\sigma"] \arrow[d,hook] & \mathcal M \arrow[d,hook]\\
\operatorname{Spec}(\mathbb C[t]/(t^{n+1})\otimes \C[\bar t]/(\bar t^{n+1}))\arrow[r, "g"] & M_{\Dol}((X\times\overline X)/(S\times \overline S)) 
\end{tikzcd}
\]

Define the jet spaces of maps sending  \(0\) to $o:=
(X_0,\mathcal E,\bar\partial_0,\theta_0)\in \mathcal M$:
\[
\operatorname{Hom}\left(
\operatorname{Spec}\mathbb C[t,\bar t]/(t,\bar t)^{k+1},
\mathcal M
\right)
:= \mathbb{C}J_k\mathcal M,
\]
and
\[
\operatorname{Hom}\left(
\operatorname{Spec}\mathbb C[\bar t]/(\bar t^{k+1}),
M_{\Dol}(\overline X/\overline S)
\right)
:= \overline{ J_k\mathcal M}.
\]

Then $g$ induces a $k$-jet map $\operatorname{Spec}(\C[\bar t]/(\bar t^{k+1}))\to M_{\Dol}(\overline X/\overline S) $ by base changing via $$0\times\operatorname{Spec}(\C[\bar t]/(\bar t^{k+1}))\hookrightarrow  0\times \operatorname{Spec}(\C[\bar t]/(\bar t^{n+1}))\hookrightarrow\operatorname{Spec}(\mathbb C[t]/(t^{n+1})\otimes \C[\bar t]/(\bar t^{n+1})).$$
We denote this $k$-jet map by $p_k([\sigma])\in \overline{ J_k\mathcal M}$.

For the jet \([\sigma]\), we have the following commutative diagram:
\[
\begin{tikzcd}
 & & \mathbb{C}J_k\mathcal M
 \arrow[r, "\Pi_{k-1}^k"]
 \arrow[d, "p_k"']
 & \mathbb{C}J_{k-1}\mathcal M
 \arrow[d, "p_{k-1}"]
 & \\
0 \arrow[r]
& T^{1,0}_{o}M_{\Dol}(\overline X/\overline S) \arrow[r]
& \overline{ J_k\mathcal M}
 \arrow[r, "\pi_{k-1}^k"]
& \overline{ J_{k-1}\mathcal M}
 \arrow[r]
& 0 
\end{tikzcd}
\]

By modulo-$(t,\bar t^{k})$-holomorphicity, \([\sigma]\) maps to zero under
\[
p_{k-1}\circ \Pi_{k-1}^k.
\]
Thus \(p_k([\sigma])\) is given by an element in \(T^{1,0}_{o}M_{\Dol}(\overline X/\overline S)\). Let $\mathbb CT_o\mathcal M=T^{1,0}_{o}\mathcal M\oplus T^{0,1}_{o}\mathcal M$ be the complexification of the real Zariski tangent space of a real-analytic variety $\mathcal M$ at $o$. One can prove directly there is natural isomorphism $T^{1,0}_{o}M_{\Dol}(\overline X/\overline S)\cong T^{0,1}_o M_{\Dol}(X/S)$ by using the argument in Proposition~\ref{prop:KS-real-analytic-manifold}. Thus we obtain \(p_k([\sigma])\) is given by an element in $T^{0,1}_o \mathcal M$.
\bigskip

Now we prove that \(p_k([\sigma])\) is given by an element in
\(T^{0,1}_oM_{\mathrm{Dol}}(X_0)\). 
By definition, the composition
\[
\operatorname{Spec}\mathbb C[t,\bar t]/(t,\bar t)^{n+1}
\xrightarrow{\ \sigma\ }
\mathcal M
\xrightarrow{\ \pi_{\mathrm{Dol}}\ }
S
\]
is independent of $\bar t$ and is equal to the holomorphic \(n\)-germ $c$ of \(S\) through $0$. Therefore 
\[
p_k\bigl([\pi_{\mathrm{Dol}}\circ \sigma]\bigr)=0
\qquad
\text{in } \quad\overline{ J_k S}
\]
by holomorphicity, and we obtain
\[
p_k\bigl([\pi_{\mathrm{Dol}}\circ \sigma]\bigr)
=
(\pi_{\mathrm{Dol},*})\bigl(p_k([\sigma])\bigr)
=0.
\]
Therefore $
p_k([\sigma])\in \ker(\pi_{\mathrm{Dol},*}),$ 
and hence $
p_k([\sigma])\in T^{0,1}_oM_{\mathrm{Dol}}(X_0),$ which gives \[\ob_k:=p_k([\sigma])\in T^{0,1}_oM_{\mathrm{Dol}}(X_0)\cong \barHb.\qedhere \]
\end{proof}

\subsection{First-order holomorphicity of the isomonodromic deformation of a Higgs bundle}
In this subsection, we give the first-order obstruction class $\ob_1$ defined in Proposition~\ref{prop_existence_ob} of holomorphicity of $\sigma_{\Dol}$.

We first give the following Lemma about ``harmonicity'', which will be repeatedly used later.
\begin{lemma} \label{vanlem}
Let $(\E, \bar\p_0, \theta_0)$ be a stable Higgs bundle. In particular, it admits a harmonic metric $h_0$ and thus it is a harmonic bundle. 
 Suppose  $g\in \A^0(\End \E)$ satisfies either of the following two equations:
\begin{align}\label{exact-harm}
\Ch^{1,0}\bar\p_0 g+[\theta_0^\st,[\theta_0,g]]=0,
\end{align}
or 
\begin{align}\label{exact_harm2}
\bar\p_0\Ch^{1,0} g+[\theta_0,[\theta_0^\st,g]]=0,
\end{align}
Then $g=c\cdot\id$ for some $c\in\C$.
\end{lemma}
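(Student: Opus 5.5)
We prove this by an integration-by-parts (Bochner/Kähler-identity) argument, as in the standard proof that a stable harmonic bundle is simple. We treat \eqref{exact-harm}; the case \eqref{exact_harm2} is symmetric, with the roles of $(\bar\p_0,\theta_0)$ and $(\Ch^{1,0},\theta_0^\st)$ exchanged, or equivalently it follows by taking $h_0$-adjoints and replacing $g$ by $g^{\st}$.

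\textbf{Step 1: integrate against $g^\st$.} Fix a Kähler form $\omega$ on $X_0$ with $\dim X_0=m$. Wedge \eqref{exact-harm} with $\omega^{m-1}$, pair with $g^{\st}$ under $\tr$, and integrate over $X_0$. We use Stokes' theorem and the compatibility of $\Ch$ with $h_0$: $\bar\p(\tr(\cdot\,\cdot))$ splits as the sum of the two covariant derivatives, and $(\Ch^{1,0}g)^{\st}=\bar\p_0 g^{\st}$ up to the convention for adjoints of forms. This gives
\[
0=\int_{X_0}\tr\bigl(\Ch^{1,0}\bar\p_0 g\cdot g^\st\bigr)\wedge\omega^{m-1}+\int_{X_0}\tr\bigl([\theta_0^\st,[\theta_0,g]]g^\st\bigr)\wedge\omega^{m-1}
= c_m\bigl(\|\bar\p_0 g\|_{L^2}^2+\|[\theta_0,g]\|_{L^2}^2\bigr),
\]
with a nonzero constant $c_m$. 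The identity $\tr([\theta_0^\st,[\theta_0,g]]g^\st)=\pm\tr([\theta_0,g]\wedge[\theta_0,g]^\st)$ is the usual ad-invariance of the trace. The signs must be arranged, via $\sqrt{-1}$ factors and the Hodge--Riemann positivity of $\sqrt{-1}\,\alpha\wedge\bar\alpha\wedge\omega^{m-1}$ on $(0,1)$- and $(1,0)$-forms, so that both terms carry the same sign.

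\textbf{Step 2: conclude $g$ is an endomorphism of the Higgs bundle.} From Step 1 we get $\bar\p_0 g=0$ and $[\theta_0,g]=0$. So $g$ is a holomorphic endomorphism of $E_0$ commuting with $\theta_0$.

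\textbf{Step 3: use stability.} A stable Higgs bundle is simple: every eigenvalue $c$ of $g$ gives a nonzero Higgs endomorphism $g-c\cdot\id$ that is not invertible. Its kernel is a $\theta_0$-invariant subsheaf of degree and rank between those of $E_0$, and with trivial Chern classes this contradicts stability unless $g-c\cdot\id=0$. Hence $g=c\cdot\id$.

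\textbf{Main obstacle.} The main care is in Step 1: getting the sign conventions right. Two issues must be checked together: that $(\Ch^{1,0})^*$ is related to $\bar\p_0$ with the correct sign on $\End\E$, and that the cross term from $[\theta_0^\st,[\theta_0,\cdot]]$ contributes a positive rather than negative multiple of $\|[\theta_0,g]\|^2$. This is where harmonicity enters, through the Kähler identities $\sqrt{-1}[\Lambda,\bar\p]=\Ch^{1,0*}$ and the pointwise identity for $\Lambda$ of $\operatorname{ad}\theta_0\wedge\operatorname{ad}\theta_0^\st$. If working directly with $\omega^{m-1}$ gets messy, we will instead apply $\sqrt{-1}\Lambda$ to \eqref{exact-harm} to obtain a Laplace-type equation $\Delta'' g+\sqrt{-1}\Lambda[\theta_0^\st,[\theta_0,g]]=0$, and then pair with $g$ in $L^2$.
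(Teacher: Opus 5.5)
Your proposal is correct and follows the paper's proof essentially verbatim. You pair the equation with $g^\st$ against $\omega^{\dim X_0-1}$ and integrate by parts, which yields a negative multiple of $\|\bar\p_0 g\|^2_{L^2}+\|[\theta_0,g]\|^2_{L^2}$ equal to zero; you then conclude $g=c\cdot\id$ by simplicity of the stable Higgs bundle, and your reduction of \eqref{exact_harm2} to \eqref{exact-harm} via $g\mapsto g^\st$ is also valid.

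One correction to your ``main obstacle'' remark: the signs do agree, and harmonicity of $h_0$ is not where this comes from. The sign check uses only $(\bar\p_0 g)^\st=\Ch^{1,0}(g^\st)$ (metric compatibility of $\Ch$), $d\omega=0$, and the algebraic identity $\tr(g^\st[\theta_0^\st,[\theta_0,g]])=\tr([\theta_0,g]^\st\wedge[\theta_0,g])$.
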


\begin{remark}
There is a Hodge theoretic interpretation of ``harmonicity'' in Lemma~\ref{vanlem}:
\begin{enumerate}\item  The class $[([\theta_0,g],\bar\p_0 g)]\in \mathbb H^1(X_0,(\End E_0,\operatorname{ad}(\theta_0)))$ is exact. 
\item  The equation \eqref{exact-harm} implies $[([\theta_0,g],\bar\p_0 g)]\in \mathbb H^1(X_0,(\End E_0,\operatorname{ad}(\theta_0)))$ is harmonic.
\end{enumerate}
By Hodge decomposition theory, an exact and harmonic class must be zero.
\end{remark}

\begin{proof}[Proof of Lemma~\ref{vanlem}]
Let $\omega$ be a K\"ahler form of $X_0$. Assuming \eqref{exact-harm}, we have
\begin{align*}\sqrt{-1}\int_{X_0}\operatorname{tr}(g^\st\Ch^{1,0}\bar\p_0 g)\omega^{\dim X_0-1}+\sqrt{-1}\int_{X_0}\operatorname{tr}(g^\st[\theta_0^\st,[\theta_0,g]])\omega^{\dim X_0-1}=0.
\end{align*}
By the K\"ahler identity (see \cite{Simp92} and \cite[Remark 9.2]{Gui}), we have
 $$\sqrt{-1}\int_{X_0}\operatorname{tr}(g^\st\Ch^{1,0}\bar\p_0 g)\omega^{\dim X_0-1}=-\sqrt{-1}\int_{X_0}\operatorname{tr}((\bar\p_0 g)^\st\bar\p_0 g)\omega^{\dim X_0-1}\leq 0.$$ One may verify directly that $$\sqrt{-1}\int_{X_0}\operatorname{tr}(g^\st[\theta_0^\st,[\theta_0,g]])\omega^{\dim X_0-1}=-\sqrt{-1}\int_{X_0}\operatorname{tr}([\theta_0,g]\wedge [\theta_0,g]^\st)\omega^{\dim X_0-1}\leq 0.$$
Thus
we have $\bar\p_0 g=0$ and $[\theta_0,g]=0$. This means that $g\in\mathbb H^0(X_0,(\End E_0,\operatorname{ad}(\theta_0)))$ and by the stability $g=c\cdot\id$ for some $c\in\C$. If $g$ satisfies \eqref{exact_harm2}, one proves the conclusion similarly.
\end{proof}

Let $X_1:=X_n\times_{\operatorname{Spec}A_n} \operatorname{Spec}A_1$; we may pull back $(\E,\bar\p_t,\theta_t)$ to $X_1$. In this case, modulo-$(t,\bar t^2)$-holomorphicity on $X_n$ is equivalent to holomorphicity on $X_1$ after base change. 
\begin{proposition}\label{prop_firstob}
Let $(\E,\bar\p_t,\theta_t)$ be the real-analytic deformation in Definition~\ref{def_real_ana_deform} of $(\E,\bar\p_0,\theta_0)$ on $X_0$ along $X_n$. If it is isomonodromic, then the obstruction class $\ob_1$ of modulo-$(t,\bar t^2)$-holomorphicity is 
\begin{align*}\ob_1=[(\bar\eta_1(\theta_0^\st),0)]\in\barHb.
\end{align*}
\end{proposition}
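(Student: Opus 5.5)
We will compute $\ob_1$ by working modulo $(t,\bar t^2)$, where only the first-order coefficients $\varphi_1,\psi_1$ of $\Phi,\Psi$ from Lemma~\ref{lem_expanding} matter. By Lemma~\ref{lem_gauge} with $k=1$, modulo-$(t,\bar t^2)$-holomorphicity holds iff there is $u_1\in\A^0(\End\E)$ with
\[
\varphi_1+[\theta_0,u_1]=0,\qquad \psi_1+\bar\p_0u_1=0 .
\]
The term $\bar\eta\circ\bar\p_0 U$ is of order $\bar t^2$ and drops out. Read on $\overline{X_0}$, where $(\E,\Ch^{1,0},\theta_0^\st)$ is the Higgs bundle, the pair $(\varphi_1,\psi_1)\in C^{1,0}\oplus C^{0,1}$ should be a $d_c^1$-cocycle. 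Solvability then amounts to its class being zero in $\barHb$, modulo a conjugation or sign convention matching the identification $T^{0,1}_oM_{\Dol}(X_0)\cong\barHb$ used in Proposition~\ref{prop_existence_ob}. So the plan is: (i) identify $\ob_1$ with the class of $(\varphi_1,\psi_1)$ via the first-order $\bar t$-jet; (ii) compute $(\varphi_1,\psi_1)$ from Proposition~\ref{prop_Taylor_Exp_DolHig}; (iii) subtract a coboundary.

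For step (ii), take the $\bar t$-coefficient of \eqref{eq_Phi} and \eqref{eq_Psi} with $G^\st\equiv\id+\bar t\bg_1$. The $\bar\eta$-terms in $(G^\st)^{-1}(\cdots)$ are of order $\bar t^2$, so
\[
\varphi_1=\bar\eta_1(\theta_0^\st)+\tfrac12\bigl([\theta_0,\bg_1]-\Ch^{1,0}\bg_1\bigr),\qquad
\psi_1=\tfrac12\bigl(\bar\p_0\bg_1-[\theta_0^\st,\bg_1]\bigr).
\]
The $\bg_1$-contributions combine into $-\tfrac12$ of an expression of the form $(\Ch^{1,0}\bg_1,[\theta_0^\st,\bg_1])=d_c^0(\bg_1)$, paired with the conjugate-type pieces $([\theta_0,\bg_1],\bar\p_0\bg_1)$. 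The first pair is exact in the $\overline{X_0}$ complex. Correspondingly, $([\theta_0,\bg_1],\bar\p_0\bg_1)$ is exactly what the gauge term $([\theta_0,u_1],\bar\p_0u_1)$ absorbs when $u_1=-\tfrac12\bg_1$.

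Carefully, choosing $u_1=-\tfrac12\bg_1$ in the gauge equations kills the $\tfrac12[\theta_0,\bg_1]$ and $\tfrac12\bar\p_0\bg_1$ pieces. The remaining data $(\bar\eta_1(\theta_0^\st)-\tfrac12\Ch^{1,0}\bg_1,\,-\tfrac12[\theta_0^\st,\bg_1])$ differ from $(\bar\eta_1(\theta_0^\st),0)$ by $-\tfrac12 d_c^0(\bg_1)$, hence define the same class. This gives $\ob_1=[(\bar\eta_1(\theta_0^\st),0)]$.

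The main obstacle is step (i): making the abstract jet class $p_1([\sigma])$ concretely equal to the class of the residual pair after an arbitrary gauge $u_1$. One must also check that changing $u_1$ changes the pair by a $d_c^0$-coboundary in the $\overline{X_0}$-complex, not in the $X_0$-complex. The mixing of the two differentials, $(\Ch^{1,0},\operatorname{ad}\theta_0^\st)$ versus $(\bar\p_0,\operatorname{ad}\theta_0)$, is delicate.

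To handle it, first verify that the reduced pair is a $d_c^1$-cocycle. This should follow from the $\bar t$-coefficient of the integrability conditions \eqref{eq_compatibility}: $\Ch^{1,0}$ applied to $\bar\eta_1(\theta_0^\st)$ vanishes up to a term from $\bar\p\eta_1=0$, and $[\theta_0^\st,\bar\eta_1(\theta_0^\st)]=0$ from $\theta_0^\st\wedge\theta_0^\st=0$. Then use the harmonic gauge: if the class vanished with some other $u_1$, Lemma~\ref{vanlem} forces the difference of the gauges to be a constant. This pins down the identification and shows vanishing of $[(\bar\eta_1(\theta_0^\st),0)]$ is equivalent to solvability.
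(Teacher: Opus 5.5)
Your formulas for $\varphi_1,\psi_1$ are correct. So is the observation that, with $u_1=-\cc\bg_1$, the residual $R:=\bigl(\bar\eta_1(\theta_0^\st)-\cc\Ch^{1,0}\bg_1,\,-\cc[\theta_0^\st,\bg_1]\bigr)$ differs from $(\bar\eta_1(\theta_0^\st),0)$ by $-\cc d_c^0(\bg_1)$. But that observation only holds in the complex that computes $\barHb$, and it does not prove the proposition.

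The gauge equations of Lemma~\ref{lem_gauge} ask whether $R$ can be killed by adding $([\theta_0,v],\bar\p_0 v)$. That is, they ask whether $R$ is exact for $\mathcal D''=\bar\p_0+\operatorname{ad}\theta_0$. The group $\barHb$ instead measures exactness for $\mathcal D'=\Ch^{1,0}+\operatorname{ad}\theta_0^\st$. The content of the statement is that these two conditions agree for this particular $R$. Your sentence ``this pins down the identification'' asserts this rather than proves it.

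Your step (i) is also false in general. The $C^{1,1}$-component of $d_c^1(\varphi_1,\psi_1)$ equals $\cc\bigl(\Ch^{1,0}\bar\p_0\bg_1+[\theta_0^\st,[\theta_0,\bg_1]]\bigr)$, and by Lemma~\ref{vanlem} this vanishes only if $\bg_1\in\C\cdot\id$. You also do not need to match the abstract jet $p_1([\sigma])$. The paper only shows that $[(\bar\eta_1(\theta_0^\st),0)]=0$ if and only if the order-one gauge equations are solvable.

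Your last paragraph essentially contains one direction of that equivalence. Since $R$ is $\mathcal D'$-closed, suppose the gauge equations hold with some $u_1$, i.e.\ $R=-\mathcal D''w$ with $w=u_1+\cc\bg_1$. The $(1,1)$-part of $\mathcal D'\mathcal D''w=0$ reads $\Ch^{1,0}\bar\p_0 w+[\theta_0^\st,[\theta_0,w]]=0$. So $w\in\C\cdot\id$ by Lemma~\ref{vanlem}, hence $R=0$, i.e.\ $\bar\eta_1(\theta_0^\st)=\cc\Ch^{1,0}\bg_1$ and $[\theta_0^\st,\bg_1]=0$.

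The converse is the genuine gap. Suppose the class vanishes, so $\bar\eta_1(\theta_0^\st)=\Ch^{1,0}f_1$ and $[\theta_0^\st,f_1]=0$, giving $R=\mathcal D'(f_1-\cc\bg_1)$. This says nothing about $\mathcal D''$-exactness unless you also know that $R$ is $\mathcal D''$-closed. That fact comes from the $\bar t$-coefficient of the integrability condition $\bar\p_t\theta_t=0$. The $(1,1)$-part of $\mathcal D''R=0$ is exactly the equation $\bar\p_0\Ch^{1,0}\bg_1=2\bar\p_0\bigl(\bar\eta_1(\theta_0^\st)\bigr)-[\theta_0,[\theta_0^\st,\bg_1]]$. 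You do invoke the integrability conditions, but only to check the $\mathcal D'$-closedness of $(\bar\eta_1(\theta_0^\st),0)$. That closedness is automatic; it follows from $\bar\p\eta_1=0$ and $\theta_0\wedge\theta_0=0$.

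Substituting the expression for $f_1$ turns the equation into $\bar\p_0\Ch^{1,0}(\bg_1-2f_1)+[\theta_0,[\theta_0^\st,\bg_1-2f_1]]=0$. Lemma~\ref{vanlem} then gives $\bg_1=2f_1+c\cdot\id$, so $R=0$ and $u_1=-\cc\bg_1$ solves the gauge equations. The principle that links the two complexes is that a $1$-form which is $\mathcal D''$-closed and $\mathcal D'$-exact (or vice versa) must vanish. A coboundary shift alone cannot supply this.
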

\begin{proof}
We first prove $[(\bar\eta(\theta_0^\st),0)]$ is a well defined class in $\barHb$. Note that the Higgs field
$\theta_0$ induces a morphism of deformation complexes, denoted $\theta_0:(T_{X_0},0)\to (\End\E_0,\operatorname{ad}(\theta_0))$, and hence a map
\begin{align*}
  \theta_{0,*}:H^1(X_0,T_{X_0})
  \longrightarrow
  \mathbb{H}^1\bigl(X_0,(\End E_0,\operatorname{ad}(\theta_0))\bigr).
\end{align*}
A direct check from the definition shows that $[(0,\eta(\theta_0))]\in \mathbb H^1(X_0,(\End E_0,\operatorname{ad}(\theta_0)))$ is exactly $\theta_{0,*}([\eta])$. A similar argument works for $(\overline{X_0},\overline {E_0}^\vee,\theta_0^\st)$ and $\bar\eta$ in the conjugate case. This proves the assertion.

\textbf{Step1:} modulo-$(t,\bar t^2)$-holomorphicity implies the vanishing of the class $[(\bar\eta_1(\theta_0^\st),0)]$. By equations \eqref{hol} for gauge transformation, the equations \eqref{eq_Phi} and \eqref{eq_Psi} of $\varphi_1$ and $\psi_1$, we have
\begin{equation} \label{g1}
\begin{aligned}
\cc\bar\p_0 \bg_1-\cc[\theta_0^{\star_{h_0}},\bg_1]+\bar\p_0 u_1&=0;\\
\bar \eta_1(\theta_0^{\star_{h_0}})+\cc[\theta_0,\bg_1]-\cc D_{h_0}^{1,0} \bg_1+[\theta_0,u_1]&=0.
\end{aligned}
\end{equation}
Hence
\begin{align*}
\Ch^{1,0}(\bar\p_0 (\cc\bg_1+u_1)-\cc[\theta_0^{\star_{h_0}},\bg_1])=0;\quad[\theta_0^\st,[\theta_0,\cc\bg_1+u_1]+\bar \eta_1(\theta_0^{\star_{h_0}})-\cc D_{h_0}^{1,0} \bg_1]=0.
\end{align*}
Summing them we have 
\begin{align*}
\Ch^{1,0}\bar\p_0 (\cc\bg_1+u_1)+
[\theta_0^\st,[\theta_0,\cc\bg +u_1]]=0,
\end{align*}
because $[\theta_0^\st,\bar\eta_1(\theta_0^\st)]=0$ and $\Ch^{1,0}([\theta_0^\st,\bg_1])=-[\theta_0^\st,\Ch^{1,0}\bg_1]$. By the ``harmonicity'' in Lemma~\ref{vanlem}, we have $u_1=-\cc\bg_1$ (up to adding a term $c\cdot\id$ with $c\in\C$, which we may ignore since it does not affect \eqref{g1}). Thus \eqref{g1} reduces to
\begin{align}\label{eq_ob1exact}0=\cc[\theta_0^{\star_{h_0}},\bg_1];\qquad\ 
\bar \eta_1(\theta_0^{\star_{h_0}})=\cc D_{h_0}^{1,0} \bg_1,\end{align} 
i.e. $\ob_1\in\barHb$ vanishes. 

\textbf{Step2:} the vanishing of the class $[(\bar\eta_1(\theta_0^\st),0)]$ implies modulo-$(t,\bar t^2)$-holomorphicity. We aim to prove the solvability of \eqref{g1} on $u_1$. Since $[(\bar\eta_1(\theta_0^\st),0)]$ vanishes, there exists $f_1\in \A^0(\End\E)$ such that 
\begin{align}\label{eq_ob1vanish}0=[\theta_0^{\star_{h_0}},f_1];\qquad\ 
\bar \eta_1(\theta_0^{\star_{h_0}})= D_{h_0}^{1,0} f_1.
\end{align}
By a similar argument as in \cite[Proposition 4.4 (4.4)]{HSZ}, the condition  $\bar\p_t\theta_t\equiv 0\pmod{(t,\bar t)^2}$ implies that 
\begin{align*}\bar\p_0\Ch^{1,0}\bg_1=2\bar\p_0(\bar\eta(\theta_0^\st))-[\theta_0,[\theta_0^\st,\bg_1]].
\end{align*}
Together, this with the assumption \eqref{eq_ob1vanish} gives 
\begin{align*}\bar\p_0\Ch^{1,0}(\bg_1-2f_1)+[\theta_0,[\theta_0^\st,\bg_1-2f_1]]=0,
\end{align*}
which implies $\bg_1=2f_1+c\cdot\id$ for some $c\in\mathbb C$ by the ``harmonicity'' in Lemma~\ref{vanlem}.
Substituting this and \eqref{eq_ob1vanish} into \eqref{g1}, we have
\begin{align*}
\bar\p_0 (\cc\bg_1+u_1)=0;\qquad
[\theta_0,\cc\bg_1+u_1]=0.
\end{align*}
Thus $u_1=-\cc\bg_1$ is the solution of \eqref{g1}.
\end{proof}

\subsection{Obstruction classes of modulo-\texorpdfstring{$(t)$}{(t)}-holomorphicity of the isomonodromic deformation of a stable Higgs bundle} \label{sec_ob_mod_t_hol_isom_grad}

In this section we always let $(\E,\bar\p_t,\theta_t)$ be the isomonodromic deformation of $(\E,\bar\p_0,\theta_0)$ on $X_0$ to $X_n$, which is a real-analytic deformation as in Definition~\ref{def_real_ana_deform}. We compute the obstruction classes of modulo-$(t)$-holomorphicity defined in Proposition~\ref{prop_existence_ob}.

We define the set of ordered compositions to be
\[
\com(k):=\{I=(i_1,\ldots,i_N)\mid N\ge1,\ i_a\in\mathbb Z_{>0},\ i_1+\cdots+i_N=k\},\quad k\in\mathbb Z_{>0}.
\]
For \(I=(i_1,\ldots,i_N)\) set \(l(I)=N\).  

\noindent \textbf{Auxiliary sequences.} Let $p_{(k)}=1$ and for $N\geq 2$, let
\begin{align*}
p_{(i_1,\cdots,i_N)}:=\prod_{a=2}^N
\frac{i_a}{i_1+\cdots+i_a},\qquad \text{denoted by } p_I.\end{align*}
And we set for any $I=(i_1,\cdots,i_N)$,
\begin{align*}b_I:=\frac{1}{2^{N}}\sum_{j=0}^N
p_{(i_1,i_2,\cdots,i_j)}p_{(i_N,i_{N-1},\cdots,i_{j+1})},\end{align*}
where the summands corresponding to $j=0$ and $j=N$ are understood to be $p_{(i_N,i_{N-1},\cdots,i_1)}$ and $p_{(i_1,\cdots,i_N)}$.

Recall the deformed data $\bg_k$ defined in \eqref{metric}. We define \(x_k\in \A^0(\End E)\) by the following triangular change formula
\begin{align}\label{eq_beta_change}
\bg_k=\sum_{I\in\com(k)}b_Ix_I,
\qquad k\ge1,
\end{align}
where $x_I:=x_{i_1}\cdots x_{i_N}\in \A^0(\End E)$. Note that each $x_i$ can be expressed as a polynomial of $\bg_1,\cdots,\bg_i$. For example,
\[
\bg_1=x_1,
\quad
\bg_2=x_2+\frac12x_1^2,\quad \bg_3
=x_3+\frac12x_1x_2+\frac12x_2x_1+\frac16x_1^3\quad\text{and so on.}
\]

As explained in Lemma~\ref{lem_gauge}, the modulo-$(t)$-holomorphicity can be detected by the gauge equation \eqref{hol}. We have the following proposition on this gauge equation.  
\begin{proposition}\label{prop_ob} If the gauge equation \eqref{hol} is solvable, then any $u_i,\ i=1,2,\cdots,k$ in Lemma~\ref{lem_gauge} must be of the form (after modulo $\C\cdot\id$)
\begin{align}\label{eq_ui}u_i=\sum_{I\in\com(i)}\frac{(-1)^{l(I)}}{2^{l(I)}}p_Ix_I,
\end{align}
where $l(I)=N$ if $I=(i_1,\cdots,i_N)$. Substituting \eqref{eq_ui} into the gauge equation \eqref{hol} gives 
\begin{equation}\label{eq_obvan}
\begin{aligned}\bar\eta_i(\theta_0^\st)-\cc\sum_{j+l=i}\bar\eta_j(\bar\p_0 x_l)+\frac14\sum_{j+l=i}\frac{j}{i}[[\theta_0,x_l],x_j]&=\cc\Ch^{1,0}x_i.\\
\frac14\sum_{j+l=i}\frac{j}{i}[\bar\p_0 x_l,x_j]&=\cc[\theta_0^\st,x_i].
\end{aligned}
\end{equation}
\end{proposition}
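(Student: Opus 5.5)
We argue by induction on $i$, generalizing Step~1 of the proof of Proposition~\ref{prop_firstob}. Work modulo $(t,\bar t^{k+1})$. By Proposition~\ref{prop_Taylor_Exp_DolHig}, $\Phi$ and $\Psi$ are explicit in $G^\st=\id+\sum\bar t^i\bg_i$. Writing $\mathscr U=\id+U$, the gauge equations \eqref{hol} say that the gauged operators
\[
\widetilde\theta=\mathscr U^{-1}(\theta_0+\Phi)\mathscr U,\qquad \widetilde{\bar\p}=\mathscr U^{-1}(\bar\p_0-\bar\eta\circ\bar\p_0+\Psi)\mathscr U
\]
have no $\bar t$-dependence. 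The conceptual reason \eqref{eq_ui} should hold is the following factorization. Formally, $G^\st$ is the ``$\bar t$-part'' of $g(t,\bar t)$. The change of variables \eqref{eq_beta_change} is designed so that $G^\st$ admits a symmetric square-root-type factorization $G^\st=\mathscr W^{\st}\cdot\mathscr W$, with $\mathscr W$ built from $\exp$-like ordered products of the $x_i$ weighted by $p_I/2^{l(I)}$. The coefficients $b_I$ are exactly the convolution $\sum_j p_{(i_1..i_j)}p_{(i_N..i_{j+1})}/2^N$ of the coefficients of two such ordered series, one read forward and one read backward. So the first step is a purely combinatorial lemma:
\[
\text{if } V:=\id+\sum_{i}\bar t^i\sum_{I\in\com(i)}\tfrac{1}{2^{l(I)}}p_Ix_I \text{ and } V' \text{ is its reversed-order analogue, then } G^\st=V'V \text{ (up to the ordering convention).}
\]
Moreover, \eqref{eq_ui} says $\mathscr U=V^{-1}$ modulo $\C\cdot\id$, where $V^{-1}$ is the series with signs $(-1)^{l(I)}$. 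I would also check the defining property of $p_I$: the series $V$ solves the ordered ODE $\bar t\,\partial_{\bar t}V=(\sum_i i\bar t^i x_i)\,V\cdot(\text{weight }1/2)$, and the factor $i_a/(i_1+\dots+i_a)$ is precisely what iterating $\bar t\partial_{\bar t}$ produces.

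For uniqueness, suppose $u_1,\dots,u_{i-1}$ are already of the form \eqref{eq_ui}. Take the $\bar t^i$-coefficients of the two equations in \eqref{hol}. As in Step~1, apply $\Ch^{1,0}$ to the $\bar\p$-equation and $\operatorname{ad}\theta_0^\st$ to the $\theta$-equation, then add. All terms involving only lower $u_j,\bg_j$ combine to a known expression. The top-order part is
\[
\Ch^{1,0}\bar\p_0(u_i+\tfrac12\bg_i)+[\theta_0^\st,[\theta_0,u_i+\tfrac12\bg_i]].
\]
The difference of two solutions $u_i,u_i'$ then satisfies \eqref{exact-harm}, so Lemma~\ref{vanlem} gives $u_i-u_i'\in\C\cdot\id$. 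It therefore suffices to show that the candidate \eqref{eq_ui} is a solution whenever any solution exists. This reduces to showing that the remaining lower-order combination is $\Ch^{1,0}\bar\p_0$ plus $\operatorname{ad}\theta_0^\st\operatorname{ad}\theta_0$ applied to the difference between \eqref{eq_ui} and $-\tfrac12\bg_i$. That difference consists of the $l(I)\ge2$ terms, and it should come out of the factorization lemma after using $[\theta_0^\st,\bar\eta_j(\theta_0^\st)]=0$ and the integrability $\bar\p_t\theta_t=0$ at lower orders, which gives the analogue of the HSZ identity used in Step~2.

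Substituting $\mathscr U=V^{-1}$ into \eqref{hol} gives
\[
V^{-1}(\cdot)V \text{ applied to } (G^\st)^{-1}(\ldots)G^\st \;=\; V'^{-1}(\ldots)V'\text{-type conjugations}.
\]
Reading off the $\bar t^i$-coefficient then yields \eqref{eq_obvan}. The terms with $\tfrac{j}{i}$ arise from the first-order-in-commutator part of the ordered exponential, via the same $p_I$ recursion with $N=2$. The terms with $N\ge3$ should cancel between $V$ and $V'$.

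The main obstacle is this bookkeeping: proving the factorization identity for $b_I$ and verifying that all commutator terms of length $\ge3$ cancel, leaving exactly the quadratic terms in \eqref{eq_obvan}. I would first prove it by generating functions, treating $x_i$ as noncommuting variables and comparing $\bar t\partial_{\bar t}$ of both sides. I would then sanity-check the result at $i=2$ against $\bg_2=x_2+\tfrac12x_1^2$.
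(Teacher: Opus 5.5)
Your architecture matches the paper's: induction on $i$, the factorization $G^\st=L^{-1}R^{-1}$ encoded by the $b_I$, the candidate $\mathscr U=R$, and uniqueness modulo $\C\cdot\id$ obtained by applying $\Ch^{1,0}$ to the $\bar\p$-equation and $[\theta_0^\st,-]$ to the $\theta$-equation and then invoking Lemma~\ref{vanlem}. The gap is that the step carrying all the content is only asserted. Write $u_i=R_i+v_i$, and let $\Omega_{i,1},\Omega_{i,2}$ be the two left-hand sides of \eqref{eq_obvan}. Your reduction then needs exactly $\Ch^{1,0}\Omega_{i,2}+[\theta_0^\st,\Omega_{i,1}]=0$, i.e.\ the $v_i$-free part of the combined equation must vanish. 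This does not follow from the factorization lemma, which is pure algebra in the $x_j$. Nor is the integrability $\bar\p_t\theta_t=0$ that you invoke what drives it: it gives only $\mathcal D''$-type relations, and the paper does not use it for this statement. It is a differential identity whose proof needs three ingredients:
(a) the lower-order equations \eqref{eq_obvan} themselves, substituted for $\Ch^{1,0}x_j$ and $[\theta_0^\st,x_j]$ with $j<i$, together with $[\bar\eta(\alpha),\beta]=-[\alpha,\bar\eta(\beta)]$;
(b) the Hermitian--Yang--Mills--Higgs equation $[[\theta_0,\theta_0^\st],g]=-(\Ch^{1,0}\bar\p_0+\bar\p_0\Ch^{1,0})g$, packaged as Lemma~\ref{lem_iden_2};
(c) Jacobi cancellations among triple brackets.
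Already at $i=2$ the term $\Ch^{1,0}\bar\p_0x_1+[[\theta_0^\st,\theta_0],x_1]$ appears, and (b) is needed to turn it into $-\bar\p_0\Ch^{1,0}x_1$. What remains is a multiple of $\bar\p_0[\Ch^{1,0}x_1,x_1]$, which vanishes because $[\Ch^{1,0}x_1,x_1]=2\bar\eta_1([\theta_0^\st,x_1])=0$ by the order-one equations.

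The derivation of \eqref{eq_obvan} has the same gap. Bracket terms of length $\ge 3$ do not cancel identically between the two factors; for free noncommuting $x_j$ they survive. For example, the $\bar t^3$-coefficient of $S:=R^{-1}(\Psi^{0,1}R+\bar\p_0R)$ contains a nonzero multiple of $[[[\theta_0^\st,x_1],x_1],x_1]$. This term disappears only because the order-one equation gives $[\theta_0^\st,x_1]=0$. So your generating-function check will fail unless the lower-order equations are fed in. The paper organizes this through exact Euler identities such as
\[
\mathscr E(2S)=-[\theta_0^\st,\mathscr EX]+\cc[\bar\p_0X,\mathscr EX]+\frac14\bigl[\mathscr E^{-1}[2S,\mathscr EX],\mathscr EX\bigr],
\]
derived from $\mathscr ER=-\cc R\,\mathscr EX$, $\mathscr EL=-\cc(\mathscr EX)L$ and $G^\st=L^{-1}R^{-1}$. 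By induction $S=O(\bar t^k)$, so the last term is $O(\bar t^{k+2})$ and only the quadratic terms survive at order $k$. The $\theta$-equation needs an analogous identity. It also needs the vanishing of $[\bar t^l][2\bar\eta(\theta_0^\st)-\bar\eta(\bar\p_0X),\mathscr EX]$ for $l<k$, which again comes from the lower-order equations.

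Finally, your bookkeeping of the factors is off: inverting an ordered series reverses the order as well as flipping signs. The series with coefficients $2^{-l(I)}p_I$ solves $\mathscr EV=\cc V\,\mathscr EX$, not $\mathscr EV=\cc(\mathscr EX)V$, and it is the left factor $L^{-1}$. Its inverse $L$ has coefficients $(-1)^{l(I)}2^{-l(I)}p_{I^\vee}$. The candidate \eqref{eq_ui} is instead $R$, the inverse of the right factor (whose coefficients are $2^{-l(I)}p_{I^\vee}$), and $L\ne R$ from order $3$ on.
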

We remark the equations in \eqref{eq_obvan} give the desired explicit form of $\ob_k$ as explained in Proposition~\ref{prop_obstruction_class_higher_dim}.

\medskip

We define the Euler derivative operator and use it to prove Proposition~\ref{prop_ob}. The Euler operator 
$\mathscr E:=\bar t\frac{\p}{\p\bar t}.$ More precisely for any section $\sum_{1\leq i\leq n} \bar t^i a_i\in\A^0(\End\E)\otimes (\bar t)$, we have
\begin{align*}\mathscr E(\sum_{1\leq i\leq n} \bar t^i a_i)=\sum_{i\leq n} i\cdot\bar t^i a_i,\quad\text{ and we define }\quad
\mathscr E^{-1}(\sum_{1\leq i\leq n} \bar t^i a_i):=\sum_{1\leq i\leq n} \frac{1}{i}\cdot \bar t^i a_i.
\end{align*}

Let $X:=\sum_{1\leq i\leq n}\bar t^i x_i$. Therefore \eqref{eq_obvan} for any $1\leq i\leq k$ can be rewritten as the following equations  modulo $(\bar t^{k+1})$
\begin{equation}\label{eq_compact_obvan}
\begin{aligned}\bar\eta(\theta_0^\st)-\cc\bar\eta(\bar\p_0 X)+\frac14\mathscr E^{-1}[[\theta_0,X],\mathscr E X]&=\cc\Ch^{1,0}X.\\
\frac14\mathscr E^{-1}[\bar\p_0 X,\mathscr EX]&=\cc[\theta_0^\st,X].
\end{aligned}
\end{equation}

\begin{lemma}\label{lem_wonder_iden}
\begin{enumerate}
\item[(i)] Let $R\in \A^0(\End\E)\otimes \C[\bar t]/(\bar t^{n+1})$ be a solution of the following equation
\begin{equation}\label{eq_ordered_R}
\mathscr E R=-\cc R\mathscr E X,
\qquad R(0)=\id.
\end{equation}
Then $R$ must be of the form
\begin{align}\label{eq_R_ordered_formula}
R=\id+\sum_{i=1}^n\bar t^i\cdot\sum_{I\in\com(i)}\frac{(-1)^{l(I)}}{2^{l(I)}}p_Ix_I.
\end{align}
Moreover, we have
\begin{equation}\label{eq_R_inverse_formula}
R^{-1}
=
\id
+
\sum_{m=1}^n
\bar t^m
\sum_{I\in\com(m)}
\frac{1}{2^{l(I)}}p_{I^\vee}x_I,
\end{equation}
where $I^\vee:=(i_N,i_{N-1},\cdots,i_1)$ denote the reversal of $I$ and
\begin{align}\label{eq_R_inverse_equation}
\mathscr E R^{-1}=\cc(\mathscr EX)R^{-1},
\qquad R^{-1}(0)=\id.
\end{align}

\item[(ii)] There exists a unique $L\in \A^0(\End\E)\otimes \C[\bar t]/(\bar t^{n+1})$ that solves the following equation
\begin{equation}\label{eq_ordered_L}
\mathscr E L=-\cc (\mathscr E X)L,
\qquad L(0)=\id.
\end{equation}

Moreover, we have
\begin{equation}\label{eq_L_inverse_formula}
L^{-1}
=
\id
+
\sum_{m=1}^n
\bar t^m
\sum_{I\in\com(m)}
\frac{1}{2^{l(I)}}p_Ix_I.
\end{equation}
And
\begin{align}\label{eq_L_inverse_equation}
\mathscr E L^{-1}=\cc L^{-1}(\mathscr EX),
\qquad L^{-1}(0)=\id.
\end{align}

\item[(iii)] We have
\begin{align}\label{eq_beta_factorization}
G^\st=L^{-1}R^{-1}.
\end{align}
\end{enumerate}
\end{lemma}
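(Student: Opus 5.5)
The plan is to treat all three parts as formal ODE statements in the nilpotent variable $\bar t$ over the noncommutative algebra $\A^0(\End\E)$. Since $X\in(\bar t)$ and $\mathscr E$ acts by $\bar t^i\mapsto i\bar t^i$, the equations \eqref{eq_ordered_R} and \eqref{eq_ordered_L} are triangular recursions. For \eqref{eq_ordered_R}, comparing coefficients of $\bar t^m$ gives
\[
m\,r_m=-\tfrac12\sum_{j+l=m} l\, r_j x_l,\qquad r_0=\id,
\]
so $r_m$ is uniquely determined by $r_0,\dots,r_{m-1}$. This gives existence and uniqueness for $R$, and likewise for $L$.

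For the closed formula \eqref{eq_R_ordered_formula}, I will induct on $m$. I substitute the ansatz $r_j=\sum_{J\in\com(j)}(-1)^{l(J)}2^{-l(J)}p_Jx_J$ into the recursion. Each term $r_jx_l$ corresponds to the composition $I=(J,l)$ of $m$, with $l$ appended last. The coefficient produced is $-\tfrac12\cdot\tfrac{l}{m}\cdot(-1)^{l(J)}2^{-l(J)}p_J=(-1)^{l(I)}2^{-l(I)}p_I$. This holds because $p_{(J,l)}=p_J\cdot l/(i_1+\dots+i_N+l)$, which is exactly the defining product of $p_I$. Every $I\in\com(m)$ arises uniquely this way, with $I=(m)$ coming from $j=0$. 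The same computation applied to \eqref{eq_ordered_L}, where $x_l$ is multiplied on the left, produces the reversed product. Hence $L=\id+\sum\bar t^m\sum_I(-1)^{l(I)}2^{-l(I)}p_{I^\vee}x_I$.

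For the inverses, I differentiate $RR^{-1}=\id$ using the derivation property of $\mathscr E$. This gives $\mathscr E(R^{-1})=-R^{-1}(\mathscr E R)R^{-1}=\tfrac12(\mathscr EX)R^{-1}$, which is \eqref{eq_R_inverse_equation}. Similarly $\mathscr E(L^{-1})=\tfrac12L^{-1}(\mathscr EX)$, which is \eqref{eq_L_inverse_equation}. These are again triangular recursions, now with sign $+\tfrac12$ and with $x_l$ on the left in the first case and on the right in the second. The same inductive bookkeeping then yields \eqref{eq_R_inverse_formula}, with reversed $p_{I^\vee}$ because $x_l$ is prepended, and \eqref{eq_L_inverse_formula}, with $p_I$ because $x_l$ is appended.

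For (iii), I will compute the $\bar t^k$-coefficient of $L^{-1}R^{-1}$ from the two inverse formulas. A pair consisting of a composition $I'$ of $a$ and a composition $I''$ of $k-a$ concatenates to a composition $I$ of $k$ with a marked split position $j$. The pair contributes $2^{-l(I)}p_{(i_1,\dots,i_j)}p_{(i_N,\dots,i_{j+1})}x_I$. The split $j=0$ gives $p_{I^\vee}$ and $j=N$ gives $p_I$, matching the stated convention. Summing over $j$ gives exactly $b_I$, so the coefficient equals $\sum_{I\in\com(k)}b_Ix_I=\bg_k$ by the definition \eqref{eq_beta_change}. Together with the constant term $\id$, this proves $G^\st=L^{-1}R^{-1}$. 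So (iii) is essentially the defining motivation for $b_I$ and $x_I$.

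The main obstacle is purely combinatorial: keeping track of which end of the word $x_I$ the new letter attaches to in each of the four recursions, so as to get $p_I$ versus $p_{I^\vee}$ correctly. The risk is especially that one of the inverse formulas in the statement appears with $p_{I^\vee}$ rather than $p_I$. I will check this at $m=2$ with $x_1x_1$ and $x_2$ before writing the general induction.
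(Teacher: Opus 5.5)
Your proposal is correct and follows the paper's own proof: a triangular recursion in the $\bar t$-coefficients gives uniqueness, matching coefficients of the words $x_I$ (new letter appended versus prepended) gives $p_I$ versus $p_{I^\vee}$, differentiating $RR^{-1}=\id$ and $LL^{-1}=\id$ gives the inverse equations, and splitting compositions at a marked position identifies the coefficients of $L^{-1}R^{-1}$ with $b_I$. Your bookkeeping confirms the stated formulas ($p_{I^\vee}$ for $R^{-1}$, $p_I$ for $L^{-1}$), but a sanity check at $m=2$ cannot detect an $I\leftrightarrow I^\vee$ swap because every composition of $2$ is palindromic, so do the check at $m=3$ with $(1,2)$ and $(2,1)$ instead.
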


\begin{proof}
We first prove (i). Write
\[
R=\id+\sum_{m=1}^n\bar t^m R_m.
\]
Since
\[
\mathscr E R=\sum_{m=1}^n m\bar t^m R_m,\qquad
\mathscr E X=\sum_{m=1}^n m\bar t^m x_m,
\]
equation \eqref{eq_ordered_R} determines \(R_m\) recursively from
\(R_1,\ldots,R_{m-1}\).  Hence if the solution exists, it must be unique. We now find a solution. Write
\[
R
=
\id+
\sum_{m=1}^n
\bar t^m
\sum_{I\in\com(m)}
c_I x_I.
\]
Fix $I=(i_1,\ldots,i_N)\in\com(m).$
The coefficient of \(x_I=x_{i_1}\cdots x_{i_N}\) in
\(\mathscr E R\) is $
m c_I.$ The coefficient of \(x_I\) in
\(-\cc R\mathscr E X\) is $
-\cc i_N c_{(i_1,\ldots,i_{N-1})},$
where by convention \(c_{\emptyset}=1\).  Hence
\begin{align*}
m c_I=(i_1+\cdots+i_N)c_I
=
-\cc i_N c_{(i_1,\ldots,i_{N-1})}.
\end{align*}
Iterating the recursion gives
\[
c_I
=\frac{(-1)^N}{2^N}
\prod_{a=2}^N
\frac{i_a}{i_1+\cdots+i_a}=\frac{(-1)^{l(I)}}{2^{l(I)}}p_I.
\]
Thus \eqref{eq_R_ordered_formula} follows.

It remains to compute \(R^{-1}\). Applying \(\mathscr E\) to $RR^{-1}=\id$ gives $(\mathscr E R)R^{-1}+R(\mathscr E R^{-1})=0.$ Using \eqref{eq_ordered_R}, we obtain $-\cc R(\mathscr E X)R^{-1}+R(\mathscr E R^{-1})=0$ and this gives \eqref{eq_R_inverse_equation}. We can prove \eqref{eq_R_inverse_formula} by a similar recursive argument as above. The statements in (ii) can be proved similarly. 

Finally we prove (iii).  By \eqref{eq_L_inverse_formula} and \eqref{eq_R_inverse_formula}, the coefficient of $
\bar t^N x_I=\bar t^N x_{i_1}\cdots x_{i_N}$
in \(L^{-1}R^{-1}\) is 
\begin{align*}
\sum_{q=0}^N
\frac{1}{2^q}p_{(i_1,\ldots,i_q)}
\cdot
\frac{1}{2^{N-q}}p_{(i_{q+1},\ldots,i_N)^\vee}
=b_I.
\end{align*}
Since this holds for every \(I\), we obtain \eqref{eq_beta_factorization}.
\end{proof}

\begin{proof}[Proof of Proposition~\ref{prop_ob}] We prove this proposition by an induction on $i=1,2,\cdots,k$. The case $i=1$ has been proved in Proposition~\ref{prop_firstob}. We inductively assume \eqref{eq_ui} and \eqref{eq_obvan} for $i=1,2,\cdots, k-1$ and prove them for $k$. By comparing \eqref{eq_ui} and \eqref{eq_R_ordered_formula}, we aim to prove if the gauge equation \eqref{hol} is solvable, then $U\equiv R\pmod{\bar t^{k+1}}$. By induction assumption, we have
\begin{align}\label{eq_UR}
U= R+\bar t^k v_k\pmod{\bar t^{k+1}},\end{align}
for some $v_k\in\A^0(\End\E)$. In this proof, all computations are carried out modulo $(\bar t^{k+1})$.

\textbf{Step1:} we determine $[\bar t^k](\Psi^{0,1}U+\bar\p_0 U)$, i.e. the coefficient of $\bar t^k$ in $(\Psi^{0,1}U+\bar\p_0 U)$. By \eqref{eq_Psi} and \eqref{eq_beta_factorization}, $\Psi^{0,1}$ can be expressed by $\theta_0^\st,\bar\p_0$ and $L,R$. Hence after some simplification, we have
\begin{equation}\label{eq_S_alpha_beta}
2R^{-1}(\Psi^{0,1}R+\bar\p_0 R)
=
L(\bar\p_0-\operatorname{ad}\theta_0^\st)(L^{-1})
+
R^{-1}(\bar\p_0+\operatorname{ad}\theta_0^\st)R.
\end{equation}

We evaluate Euler derivatives of \(L(\bar\p_0-\operatorname{ad}\theta_0^\st)(L^{-1})\) and \(R^{-1}(\bar\p_0+\operatorname{ad}\theta_0^\st)R\).
By the Leibniz rule and \eqref{eq_ordered_L}, \eqref{eq_L_inverse_equation}, we have
\[
\begin{aligned}
\mathscr E\left(L(\bar\p_0-\operatorname{ad}\theta_0^\st)(L^{-1})\right)
&=
(\mathscr E L)(\bar\p_0-\operatorname{ad}\theta_0^\st)(L^{-1})
+
L(\bar\p_0-\operatorname{ad}\theta_0^\st)(\mathscr E L^{-1})  \\
&=
-\cc(\mathscr E X)L(\bar\p_0-\operatorname{ad}\theta_0^\st)(L^{-1})
+
\cc L(\bar\p_0-\operatorname{ad}\theta_0^\st)
\bigl(L^{-1}\mathscr E X\bigr).
\end{aligned}
\]
Using the Leibniz rule, we have
\begin{equation}\label{eq_Ealpha}
\mathscr E(L(\bar\p_0-\operatorname{ad}\theta_0^\st)(L^{-1}))
=
\cc[L(\bar\p_0-\operatorname{ad}\theta_0^\st)(L^{-1}),\mathscr E X]
+
\cc\bigl(\bar\p_0(\mathscr E X)-[\theta_0^\st,\mathscr E X]\bigr).
\end{equation}

Similarly, by the Leibniz rule and \eqref{eq_ordered_R}, \eqref{eq_R_inverse_equation}, we have
\begin{equation}\label{eq_Ebeta}
\mathscr E\left(R^{-1}(\bar\p_0+\operatorname{ad}\theta_0^\st)R\right)
=
-\cc[R^{-1}(\bar\p_0+\operatorname{ad}\theta_0^\st)R,\mathscr E X]
-\cc\bigl(\bar\p_0(\mathscr E X)+[\theta_0^\st,\mathscr E X]\bigr).
\end{equation}

Adding \eqref{eq_Ealpha} and \eqref{eq_Ebeta}, we find
\begin{equation}\label{eq_ES_first}\begin{aligned}
&\mathscr E \left(L(\bar\p_0-\operatorname{ad}\theta_0^\st)(L^{-1})+R^{-1}(\bar\p_0+\operatorname{ad}\theta_0^\st)R\right)
=\mathscr E\left(2R^{-1}(\Psi^{0,1}R+\bar\p_0 R)\right)
\\=&
-[\theta_0^\st,\mathscr E X]
+
\cc[L(\bar\p_0-\operatorname{ad}\theta_0^\st)(L^{-1})-R(\bar\p_0+\operatorname{ad}\theta_0^\st)(R^{-1}),\mathscr E X].\end{aligned}
\end{equation}
Subtracting \eqref{eq_Ebeta} from \eqref{eq_Ealpha}, we get
\begin{align*}
&\mathscr E\left(L(\bar\p_0-\operatorname{ad}\theta_0^\st)(L^{-1})-R^{-1}(\bar\p+\operatorname{ad}\theta_0^\st)R\right)
\\=&
\bar\p_0(\mathscr E X)
+
\cc[L(\bar\p_0-\operatorname{ad}\theta_0^\st)(L^{-1})+R^{-1}(\bar\p_0+\operatorname{ad}\theta_0^\st)R,\mathscr E X].
\end{align*}
Since \(L(\bar\p_0-\operatorname{ad}\theta_0^\st)(L^{-1})-R^{-1}(\bar\p_0+\operatorname{ad}\theta_0^\st)R\) has no constant term, applying \(\mathscr E^{-1}\) gives
\begin{equation}\label{eq_alpha_minus_beta}
L(\bar\p_0-\operatorname{ad}\theta_0^\st)(L^{-1})-R^{-1}(\bar\p_0+\operatorname{ad}\theta_0^\st)R
=
\bar\p_0 X
+
\cc\mathscr E^{-1}[2R^{-1}(\Psi^{0,1}R+\bar\p_0 R),\mathscr E X].
\end{equation}
Substituting \eqref{eq_alpha_minus_beta} into \eqref{eq_ES_first}, we obtain 
\begin{equation}\label{eq_radial_B_identity}\begin{aligned}
&\mathscr E \left(2R^{-1}(\Psi^{0,1}R+\bar\p_0 R)\right)
\\=&
-[\theta_0^\st,\mathscr E X]
+
\cc[\bar\p_0 X,\mathscr E X]
+
\frac14
[\mathscr E^{-1}[2R^{-1}(\Psi^{0,1}R+\bar\p_0 R),\mathscr E X],\mathscr E X].\end{aligned}
\end{equation}

We now take the \(\bar t^k\)-coefficient. Using \eqref{eq_UR} and $\Psi^{0,1}=O(\bar t)$, we have
 \begin{align}\label{eq_secondgauge}(\Psi^{0,1}U+\bar\p_0 U)
=
(\Psi^{0,1}R+\bar\p_0 R)
+
\bar t^k\bar\p_0 v_k.
 \end{align}
 By the induction hypothesis, the gauge equation is already solved in orders \(<k\):
\begin{equation}\label{eq_S_order_n}
\Psi^{0,1}R+\bar\p_0 R=O(\bar t^k),\ \text{ and thus }\ R^{-1}(\Psi^{0,1}R+\bar\p_0 R)=O(\bar t^k).
\end{equation}
Therefore $
[R^{-1}(\Psi^{0,1}R+\bar\p_0 R),\mathscr E X]=O(\bar t^{k+1}),
\quad
[\mathscr E^{-1}[R^{-1}(\Psi^{0,1}R+\bar\p_0 R),\mathscr E X],\mathscr E X]
=
O(\bar t^{k+2}).$ Thus the last term in \eqref{eq_radial_B_identity} has no \(\bar t^k\)-coefficient and the \(\bar t^k\)-coefficient in \eqref{eq_radial_B_identity} is
\[
-k[\theta_0^\st,x_k]
+
\cc\sum_{j+l=k}j[\bar\p_0 x_l,x_j].
\]
Therefore
\begin{equation}\label{eq_BR_n_formula_relabel}
[\bar t^k](\Psi^{0,1}R+\bar\p_0 R)
=
-\cc[\theta_0^\st,x_k]
+
\frac14\sum_{j+l=k}\frac{j}{k}[\bar\p_0 x_l,x_j].
\end{equation}

Using \eqref{eq_secondgauge} and \eqref{eq_BR_n_formula_relabel}, we conclude that
\begin{equation}\label{eq_B_residual_n}
[\bar t^k](\Psi^{0,1}U+\bar\p_0 U)
=
\frac14\sum_{j+l=k}\frac{j}{k}[\bar\p_0 x_l,x_j]
-\cc[\theta_0^\st,x_k]
+
\bar\p_0 v_k.
\end{equation}
Hence if the second gauge equation of \eqref{hol} is solvable at order \(k\), then \eqref{eq_B_residual_n} is zero for some $v_k$.

\textbf{Step2:} we determine $[\bar t^k](\Phi U+[\theta,U])$. By the same Leibniz-rule computation as above, using
\eqref{eq_ordered_L}, \eqref{eq_L_inverse_equation},
\eqref{eq_ordered_R}, and \eqref{eq_R_inverse_equation}, we have
\begin{equation}\label{eq_radial_A_identity}
\begin{aligned}
&\mathscr E\left(
([\theta_0,L]-\Ch^{1,0}L)L^{-1}
-
R^{-1}([\theta_0,R]+\Ch^{1,0}R)
\right)
\\
={}&
\Ch^{1,0}(\mathscr E X)
-\cc[[\theta_0,X],\mathscr E X]
\\
&\quad
+\frac14
\left[
\mathscr E^{-1}
\left[
([\theta_0,L]-\Ch^{1,0}L)L^{-1}
-
R^{-1}([\theta_0,R]+\Ch^{1,0}R),
\mathscr E X
\right],
\mathscr E X
\right].
\end{aligned}
\end{equation}

We also need to relate the expression in \eqref{eq_radial_A_identity} to the first gauge equation. We claim that
\begin{equation}\label{eq_A_relation}
\begin{aligned}
&([\theta_0,L]-\Ch^{1,0}L)L^{-1}
-
R^{-1}([\theta_0,R]+\Ch^{1,0}R)
\\
={}&
2\bar\eta(\theta_0^\st)
-\bar\eta(\bar\p_0 X)
+
\bar\eta\left(2R^{-1}(\Psi^{0,1}R+\bar\p_0 R)\right)
\\
&\quad
-\cc\bar\eta\mathscr E^{-1}
\left[
2R^{-1}(\Psi^{0,1}R+\bar\p_0 R),
\mathscr E X
\right]
-2R^{-1}
\left(
\Phi R+[\theta_0,R]
+
\bar\eta(\Psi^{0,1}R+\bar\p_0 R)
\right).
\end{aligned}
\end{equation}
We prove \eqref{eq_A_relation}. By \eqref{eq_Phi} and \eqref{eq_Psi}, we have
\[
\begin{aligned}
2R^{-1}\Phi R
=
2R^{-1}\bar\eta(\theta_0^\st)R
+
R^{-1}(G^\st)^{-1}
\bigl([\theta_0,G^\st]-\Ch^{1,0}G^\st\bigr)R -
2R^{-1}\bar\eta(\Psi^{0,1})R.
\end{aligned}
\]
Using \eqref{eq_beta_factorization} and Leibniz rule, we get
\begin{align*}
 R^{-1}(G^\st)^{-1}
\bigl([\theta_0,G^\st]-\Ch^{1,0}G^\st\bigr)R
+
2R^{-1}[\theta_0,R] 
=
R^{-1}([\theta_0,R]+\Ch^{1,0}R)-([\theta_0,L]-\Ch^{1,0}L)L^{-1}.
\end{align*}
Combining the above two equations gives
\begin{equation}\label{eq_intermediate_A_relation}\begin{aligned}
&2R^{-1}\left(
\Phi R+[\theta_0,R]+\bar\eta(\Psi^{0,1}R+\bar\p_0 R)
\right)
\\=&
2R^{-1}\bar\eta(\theta_0^\st)R
+
2R^{-1}\bar\eta(\bar\p_0 R)
-\left(([\theta_0,L]-\Ch^{1,0}L)L^{-1}
-
R^{-1}([\theta_0,R]+\Ch^{1,0}R)\right).\end{aligned}
\end{equation}
It remains to rewrite the first two terms on the right. Firstly, we have
\[
2R^{-1}\bar\eta(\theta_0^\st)R
+
2R^{-1}\bar\eta(\bar\p_0 R)
=
2\bar\eta(\theta_0^\st)
+
2\bar\eta\left(R^{-1}(\bar\p_0+\operatorname{ad}\theta_0^\st)R\right).
\]
Using \eqref{eq_S_alpha_beta} and \eqref{eq_alpha_minus_beta}, we get
\[
\begin{aligned}
&2R^{-1}\bar\eta(\theta_0^\st)R
+
2R^{-1}\bar\eta(\bar\p_0 R) \\
={}&
2\bar\eta(\theta_0^\st)
+
2\bar\eta(R^{-1}(\Psi^{0,1}R+\bar\p_0 R))
-
\bar\eta(\bar\p_0 X)
-\bar\eta\mathscr E^{-1}[R^{-1}(\Psi^{0,1}R+\bar\p_0 R),\mathscr E X].
\end{aligned}
\]
Substituting this into \eqref{eq_intermediate_A_relation}, we obtain exactly
\eqref{eq_A_relation}. Using \eqref{eq_S_order_n} and \(\bar\eta=O(\bar t)\), the terms in \eqref{eq_A_relation} involving
\[
\bar\eta\left(2R^{-1}(\Psi^{0,1}R+\bar\p_0 R)\right)
,\ \bar\eta\mathscr E^{-1}
\left[
2R^{-1}(\Psi^{0,1}R+\bar\p_0 R),
\mathscr E X
\right]\quad \text{ and }\quad\bar\eta(\Psi^{0,1}R+\bar\p_0 R)
\]
are \(O(\bar t^{k+1})\).  
Therefore \eqref{eq_A_relation} gives
\begin{equation}\label{eq_A_relation_n}
\begin{aligned}
&[\bar t^k]\left(
([\theta_0,L]-\Ch^{1,0}L)L^{-1}
-
R^{-1}([\theta_0,R]+\Ch^{1,0}R)
\right)
\\
={}&
2\bar\eta_k(\theta_0^\st)
-
\sum_{j+l=k}\bar\eta_j(\bar\p_0 x_l)
-
2[\bar t^k](\Phi R+[\theta_0,R]).
\end{aligned}
\end{equation}

Next we show that the last term in \eqref{eq_radial_A_identity} has no \(\bar t^k\)-coefficient.  For \(m<k\), by induction 
\[
\frac12[\theta_0^\st,x_m]
=
\frac14\sum_{a+c=m}\frac{c}{m}[\bar\p_0 x_a,x_c],\ \text{ and hence }\ 
2[\bar\eta_i(\theta_0^\st),x_m]
=
\sum_{a+c=m}\frac{c}{m}[\bar\eta_i(\bar\p_0 x_a),x_c].
\]
Multiplying by \(m\) and summing over \(i+m=l<k\), we get
\[
[\bar t^l]\left[
2\bar\eta(\theta_0^\st)-\bar\eta(\bar\p_0 X),
\mathscr E X
\right]=0.
\]
By \eqref{eq_A_relation} and induction, this implies
\[
[\bar t^l]\left[
([\theta_0,L]-\Ch^{1,0}L)L^{-1}
-
R^{-1}([\theta_0,R]+\Ch^{1,0}R),
\mathscr E X
\right]=0,
\qquad l<k.
\]
Thus the last term in \eqref{eq_radial_A_identity} has no \(\bar t^k\)-coefficient. Taking the \(\bar t^k\)-coefficient in \eqref{eq_radial_A_identity}, we obtain
\begin{equation}\label{eq_A_radial_n}
k[\bar t^k]\left(
([\theta_0,L]-\Ch^{1,0}L)L^{-1}
-
R^{-1}([\theta_0,R]+\Ch^{1,0}R)
\right)
=
k\Ch^{1,0}x_k
-
\cc\sum_{j+l=k}j[[\theta_0,x_l],x_j].
\end{equation}
Combining \eqref{eq_A_relation_n} and \eqref{eq_A_radial_n}, we get
\begin{equation}\label{eq_AR_n_formula}
[\bar t^k](\Phi R+[\theta_0,R])
=
\bar\eta_k(\theta_0^\st)
-\cc\sum_{j+l=k}\bar\eta_j(\bar\p_0 x_l)
+\frac14\sum_{j+l=k}\frac{j}{k}[[\theta_0,x_l],x_j]
-\cc\Ch^{1,0}x_k.
\end{equation}
Using \eqref{eq_UR} and \(\Phi=O(\bar t)\), we have $
[\bar t^k](\Phi U+[\theta_0,U])
=
[\bar t^k](\Phi R+[\theta_0,R])
+
[\theta_0,v_k].$ Therefore
\begin{equation}\label{eq_A_residual_n}
\begin{aligned}
{}&[\bar t^k](\Phi U+[\theta_0,U])
\\=&{}
\bar\eta_k(\theta_0^\st)
-\cc\sum_{j+l=k}\bar\eta_j(\bar\p_0 x_l)
+\frac14\sum_{j+l=k}\frac{j}{k}[[\theta_0,x_l],x_j]
-\cc\Ch^{1,0}x_k
+[\theta_0,v_k].
\end{aligned}
\end{equation}
Hence if the first gauge equation of \eqref{hol} is solvable at order \(k\), then \eqref{eq_A_residual_n} is zero for some $v_k$.

We introduce the following notion for $i=1,2,\cdots,n$
\begin{align}
\Omega_{i,1}:={}&\bar\eta_i(\theta_0^\st)
-\cc\sum_{j+l=i}\bar\eta_j(\bar\p_0 x_l)
+\frac14\sum_{j+l=i}\frac{j}{i}[[\theta_0,x_l],x_j],\label{eq_Omega1_definition}\\
\Omega_{i,2}:={}&\frac14\sum_{j+l=i}\frac{j}{i}[\bar\p_0 x_l,x_j].\label{eq_Omega2_definition}
\end{align}

\textbf{Step3:} we prove the following identity
\begin{align}\label{eq_class}
\Ch^{1,0} \Omega_{k,2}+[\theta_0^\st,\Omega_{k,1}]=0.
\end{align}
By the definitions of \(\Omega_{k,1}\) and \(\Omega_{k,2}\),
\begin{equation}\label{eq_mixed_start}
\begin{aligned}
&\Ch^{1,0}\Omega_{k,2}+[\theta_0^\st,\Omega_{k,1}] \\
={}&
-\cc\sum_{i+j=k}[\theta_0^\st,\bar\eta_i(\bar\p_0 x_j)]  +
\frac14\sum_{i+j=k}\frac{j}{k}
\left(
[\theta_0^\st,[[\theta_0,x_i],x_j]]
+\Ch^{1,0}[\bar\p_0 x_i,x_j]
\right).
\end{aligned}
\end{equation}
For $\alpha,\beta\in \A^{0,1}(\End\E)$, we have the following identity $
[\bar\eta(\alpha),\beta]=-[\alpha,\bar\eta(\beta)]$ by the definition of the contraction and Lie bracket. Hence
\[
-\cc\sum_{i+j=k}[\theta_0^\st,\bar\eta_i(\bar\p_0 x_j)]
=
\cc\sum_{i+j=k}[\bar\eta_i(\theta_0^\st),\bar\p_0 x_j].
\]
For \(i<k\), the induction hypothesis gives
\[
\bar\eta_i(\theta_0^\st)
=
\cc\Ch^{1,0}x_i
+\cc\sum_{a+c=i}\bar\eta_a(\bar\p_0 x_c)
-\frac14\sum_{a+c=i}\frac{c}{i}[[\theta_0,x_a],x_c].
\]
Substituting this into the previous equation, and we get
\begin{equation}\label{eq_contraction_part}
\begin{aligned}
-\cc\sum_{i+j=k}[\theta_0^\st,\bar\eta_i(\bar\p_0 x_j)]
={}
\frac14\sum_{i+j=k}[\Ch^{1,0}x_i,\bar\p_0 x_j] -
\frac18\sum_{a+b+c=k}\frac{c}{a+c}
[[[\theta_0,x_a],x_c],\bar\p_0 x_b].
\end{aligned}
\end{equation}

For $g,h\in \A^0(\End\E)$, we have the following identity proved in Lemma~\ref{lem_iden_2} at the end of this section
\begin{align*}
&\relax[\theta_0^\st,[[\theta_0,g],h]]
+\Ch^{1,0}[\bar\p_0 g,h]  \\
={}&
-[[\theta_0,[\theta_0^\st,g]],h]
-[[\theta_0,g],[\theta_0^\st,h]]
-\bar\p_0[\Ch^{1,0}g,h]
-[\bar\p_0 g,\Ch^{1,0}h]
-[\bar\p_0 h,\Ch^{1,0}g].
\end{align*}
Taking \(g=x_i\) and \(h=x_j\) and multiplying by \(j/k\) and summing over \(i+j=k\), we get
\begin{equation}\label{eq_HYM_sum}
\begin{aligned}
&\frac14\sum_{i+j=k}\frac{j}{k}
\left(
[\theta_0^\st,[[\theta_0,x_i],x_j]]
+\Ch^{1,0}[\bar\p_0 x_i,x_j]
\right) \\
={}&
-\frac14\sum_{i+j=k}\frac{j}{k}
\left(
[[\theta_0,[\theta_0^\st,x_i]],x_j]
+
[[\theta_0,x_i],[\theta_0^\st,x_j]]
\right)  \\
&-
\frac14\sum_{i+j=k}\frac{j}{k}\bar\p_0[\Ch^{1,0}x_i,x_j]
-\frac14\sum_{i+j=k}[\bar\p_0 x_i,\Ch^{1,0}x_j].
\end{aligned}
\end{equation}
The first term in \eqref{eq_contraction_part} cancels the last term in
\eqref{eq_HYM_sum} after exchanging \(i\) and \(j\).  Therefore
\begin{equation}\label{eq_mixed_reduced}
\begin{aligned}
\Ch^{1,0}\Omega_{k,2}+[\theta_0^\st,\Omega_{k,1}]
={}&
-\frac14\sum_{i+j=k}\frac{j}{k}
\left(
[[\theta_0,[\theta_0^\st,x_i]],x_j]
+
[[\theta_0,x_i],[\theta_0^\st,x_j]]
\right)  \\
&-
\frac14\sum_{i+j=k}\frac{j}{k}\bar\p_0[\Ch^{1,0}x_i,x_j]  -
\frac18\sum_{a+b+c=k}\frac{c}{a+c}
[[[\theta_0,x_a],x_c],\bar\p_0 x_b].
\end{aligned}
\end{equation}
We simplify the middle term.  By the induction hypothesis, for \(i<k\),
\[
\Ch^{1,0}x_i
=
2\bar\eta_i(\theta_0^\st)
-
\sum_{a+c=i}\bar\eta_a(\bar\p_0 x_c)
+
\frac12\sum_{a+c=i}\frac{c}{i}[[\theta_0,x_a],x_c].
\]
Therefore
\begin{equation}\label{eq_middle_expansion}
\begin{aligned}
\sum_{i+j=k}\frac{j}{k}\bar\p_0[\Ch^{1,0}x_i,x_j]  
=&
2\sum_{i+j=k}\frac{j}{k}
\bar\p_0[\bar\eta_i(\theta_0^\st),x_j] -
\sum_{a+c+j=k}\frac{j}{k}
\bar\p_0[\bar\eta_a(\bar\p_0 x_c),x_j] \\&+
\frac12
\sum_{a+b+c=k}
\frac{bc}{k(a+c)}
\bar\p_0[[[\theta_0,x_a],x_c],x_b].
\end{aligned}
\end{equation}
We claim that the first two sums on the right hand side cancel. Indeed, by the second induction equation, for \(j<k\), we get
\[
[\bar\eta_i(\theta_0^\st),x_j]
=
\cc\sum_{c+b=j}\frac{b}{j}
[\bar\eta_i(\bar\p_0 x_c),x_b].
\]
Multiplying by \(2j/k\) and summing over \(i+j=k\), we obtain
\[
2\sum_{i+j=k}\frac{j}{k}
[\bar\eta_i(\theta_0^\st),x_j]
=
\sum_{a+c+j=k}\frac{j}{k}
[\bar\eta_a(\bar\p_0 x_c),x_j].
\]
Thus the first two sums in
\eqref{eq_middle_expansion} cancel.  Hence \eqref{eq_middle_expansion} is 
\begin{equation}\label{eq_middle_term}
\sum_{i+j=k}\frac{j}{k}\bar\p_0[\Ch^{1,0}x_i,x_j]
=
\frac12
\sum_{a+b+c=k}
\frac{bc}{k(a+c)}
\bar\p_0[[[\theta_0,x_a],x_c],x_b].
\end{equation}

Substituting \eqref{eq_middle_term} into \eqref{eq_mixed_reduced}, it remains to check
\begin{equation}\label{eq_final_cancellation_target}
\begin{aligned}
0={}&
2\sum_{i+j=k}\frac{j}{k}
\left(
[[\theta_0,[\theta_0^\st,x_i]],x_j]
+
[[\theta_0,x_i],[\theta_0^\st,x_j]]
\right) \\
&+
\sum_{a+b+c=k}
\left\{
\frac{bc}{k(a+c)}
\bar\p_0[[[\theta_0,x_a],x_c],x_b]
+
\frac{c}{a+c}
[[[\theta_0,x_a],x_c],\bar\p_0 x_b]
\right\}.
\end{aligned}
\end{equation}

Using the second induction equation, 
the first line of \eqref{eq_final_cancellation_target} becomes
\begin{equation}\label{eq_T_expanded}
\begin{aligned}
\sum_{a+b+c=k}
\frac{bc}{k(a+c)}
[[\theta_0,[\bar\p_0 x_a,x_c]],x_b] +
\sum_{a+b+c=k}
\frac{c}{k}
[[\theta_0,x_a],[\bar\p_0 x_b,x_c]].
\end{aligned}
\end{equation}
Applying the Jacobi identities
\begin{align*}
[\theta_0,[\bar\p_0 x_a,x_c]]
&=
[[\theta_0,\bar\p_0 x_a],x_c]
-
[[\theta_0,x_c],\bar\p_0 x_a],\\
[[\theta_0,x_a],[\bar\p_0 x_b,x_c]]
&=
[[[\theta_0,x_a],\bar\p_0 x_b],x_c]
-
[[[\theta_0,x_a],x_c],\bar\p_0 x_b]
\end{align*}
to \eqref{eq_T_expanded}, we get

\begin{align}\label{eq_T_after_Jacobi}
\begin{aligned}
\eqref{eq_T_expanded}
={}&
\sum_{a+b+c=k}
\frac{bc}{k(a+c)}
\left(
[[[\theta_0,\bar\p_0 x_a],x_c],x_b]
-
[[[\theta_0,x_c],\bar\p_0 x_a],x_b]
\right)  \\
&+
\sum_{a+b+c=k}
\frac{c}{k}
\left(
[[[\theta_0,x_a],\bar\p_0 x_b],x_c]
-
[[[\theta_0,x_a],x_c],\bar\p_0 x_b]
\right).
\end{aligned}
\end{align}

On the other hand, applying Leibniz rule of $\bar\p_0$, we get the second line of \eqref{eq_final_cancellation_target} is
\begin{align*}
&-\sum_{a+b+c=k}
\frac{bc}{k(a+c)}
\left(
[[[\theta_0,\bar\p_0 x_a],x_c],x_b]
+
[[[\theta_0,x_a],\bar\p_0 x_c],x_b]
\right) +
\sum_{a+b+c=k}
\frac{c}{k}
[[[\theta_0,x_a],x_c],\bar\p_0 x_b].
\end{align*}
Summing this and \eqref{eq_T_after_Jacobi}, all terms cancel and we get \eqref{eq_class}.

\textbf{Step4:} we finish the induction. By \eqref{eq_B_residual_n} and \eqref{eq_A_residual_n}, the order \(k\) gauge equations are
\begin{equation}\label{eq_order_n_system}
\Omega_{k,2}-\cc[\theta_0^\st,x_k]+\bar\p_0 v_k=0,
\qquad
\Omega_{k,1}-\cc\Ch^{1,0}x_k+[\theta_0,v_k]=0.
\end{equation}
Apply \(\Ch^{1,0}\) to the first equation and apply \([\theta_0^\st,-]\) to the second one.  Adding the two equations and using \eqref{eq_class} gives $\Ch^{1,0}\bar\p_0 v_k+[\theta_0^\st,[\theta_0,v_k]]=0,$ where the \(x_k\)-terms cancel. 
By the ``harmonicity'' Lemma~\ref{vanlem}, \(v_k=c\cdot\id\) for some $c\in\C$ and we may take $v_k=0.$ Therefore \eqref{eq_UR} gives \(U=R\) at order \(k\), and hence we have \eqref{eq_ui} for $i=k$. Moreover, \eqref{eq_order_n_system} gives $\Omega_{k,1}=\cc\Ch^{1,0}x_k,$ and $\Omega_{k,2}=\cc[\theta_0^\st,x_k].$ By the definitions \eqref{eq_Omega1_definition} and \eqref{eq_Omega2_definition}, this is exactly \eqref{eq_obvan} for \(i=k\).  The induction is complete.
\end{proof}

\begin{proposition}[Explicit formula of $\ob_k$]\label{prop_obstruction_class_higher_dim}
Suppose $(\E,\bar\p_t,\theta_t)$ is isomonodromic and modulo-$(t,\bar t^k)$-holomorphic. Then the obstruction class $\ob_k$ defined in Proposition~\ref{prop_existence_ob} is given by
\[
\ob_k=[(\Omega_{k,1},\Omega_{k,2})],
\]
where $\Omega_{k,1}$ and $\Omega_{k,2}$ are defined in \eqref{eq_Omega1_definition} and \eqref{eq_Omega2_definition}.
\end{proposition}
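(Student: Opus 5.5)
The plan is to prove three things: the pair $(\Omega_{k,1},\Omega_{k,2})$ is a $d_c^1$-cocycle in the Dolbeault model of $\barHb$; its class vanishes exactly when the order-$k$ gauge equation \eqref{hol} is solvable; and this class agrees with the abstract class $\ob_k=p_k([\sigma])$ of Proposition~\ref{prop_existence_ob}. The inductive hypothesis is modulo-$(t,\bar t^k)$-holomorphicity. By Proposition~\ref{prop_ob} it forces $u_i$ to have the form \eqref{eq_ui} and \eqref{eq_obvan} to hold for all $i<k$, which is exactly the input used in Steps 1--3 of that proof.

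\textbf{Cocycle property.} Here $\Omega_{k,1}\in C^{1,0}$ and $\Omega_{k,2}\in C^{0,1}$, and
\[
d_c^1(\Omega_{k,1},\Omega_{k,2})=\bigl(\Ch^{1,0}\Omega_{k,1},\ \Ch^{1,0}\Omega_{k,2}+[\theta_0^\st,\Omega_{k,1}],\ [\theta_0^\st,\Omega_{k,2}]\bigr).
\]
The middle component vanishes by \eqref{eq_class}, already proved in Step 3. For the outer components I would take the $\bar t^k$-coefficients of the conjugate integrability conditions, namely $\theta_t^{\star_{h_t}}\wedge\theta_t^{\star_{h_t}}=0$ and the flatness $(\Ch^{1,0}_{h_t})^2=0$ of the Chern connection of $h_t$. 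I would express these through $G^\st=L^{-1}R^{-1}$ from Lemma~\ref{lem_wonder_iden}, and run the same Euler-operator and Jacobi bookkeeping as in Step 3, using the lower-order identities \eqref{eq_obvan} for $i<k$. Everything below order $k$ cancels by induction, which should leave $\Ch^{1,0}\Omega_{k,1}=0$ and $[\theta_0^\st,\Omega_{k,2}]=0$.

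\textbf{Vanishing criterion.} Step 4 of Proposition~\ref{prop_ob} shows that solvability at order $k$ forces $v_k\in\C\cdot\id$. Hence
\[
(\Omega_{k,1},\Omega_{k,2})=d_c^0\bigl(\tfrac12 x_k\bigr),
\]
so the class vanishes. Conversely, suppose $(\Omega_{k,1},\Omega_{k,2})=d_c^0(f_k)$ for some $f_k$. I would mimic Step 2 of Proposition~\ref{prop_firstob}. Take the $\bar t^k$-coefficient of $\bar\p_t\theta_t=0$, with $\bar\p_t$ and $\theta_t$ given by \eqref{eq_Phi}, \eqref{eq_Psi}, to obtain a second-order equation for $x_k$. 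The target form is
\[
\bar\p_0\Ch^{1,0}x_k+[\theta_0,[\theta_0^\st,x_k]]=2\bigl(\bar\p_0\Omega_{k,1}+[\theta_0,\Omega_{k,2}]\bigr).
\]
Substituting $\Omega=d_c^0 f_k$ turns this into
\[
\bar\p_0\Ch^{1,0}(x_k-2f_k)+[\theta_0,[\theta_0^\st,x_k-2f_k]]=0,
\]
so Lemma~\ref{vanlem} gives $x_k=2f_k+c\cdot\id$. Then $v_k=0$ solves \eqref{eq_order_n_system}, and the deformation is modulo-$(t,\bar t^{k+1})$-holomorphic.

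\textbf{Identification with the abstract class.} In the construction of Proposition~\ref{prop_existence_ob}, $p_k([\sigma])$ is the tangent vector in $T^{1,0}_oM_{\Dol}(\overline{X_0})$ obtained from the $\bar t^k$-coefficient of the anti-holomorphic $k$-jet. After the gauge $\mathscr U$ has killed the orders below $k$, this coefficient is exactly $[\bar t^k]$ of $\bigl(\mathscr U^{-1}\theta_t\mathscr U,\ \mathscr U^{-1}\bar\p_t\mathscr U\bigr)$. By \eqref{eq_B_residual_n} and \eqref{eq_A_residual_n} with $v_k=0$, this equals $(\Omega_{k,1},\Omega_{k,2})$ modulo $d_c^0$-exact terms. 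Under the standard Dolbeault description of the tangent space, and the conjugation isomorphism $T^{1,0}_oM_{\Dol}(\overline{X_0})\cong T^{0,1}_oM_{\Dol}(X_0)\cong\barHb$, this is the stated formula. For $k=1$ it must reduce to Proposition~\ref{prop_firstob}, which gives a sign and normalization check.

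The main obstacle is the converse step: deriving the order-$\bar t^k$ consequence of $\bar\p_t\theta_t=0$ in the precise "harmonic" form above. The nonlinear products of $L$, $R$ and $\bar\eta$ have to be shown to assemble exactly into $\bar\p_0\Omega_{k,1}+[\theta_0,\Omega_{k,2}]$. I would handle this with the Euler-derivative identities \eqref{eq_Ealpha}--\eqref{eq_radial_B_identity}, applied now to the mixed component $\bar\p_t\theta_t$.
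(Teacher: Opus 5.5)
\textbf{There is a genuine gap.} The forward direction and your use of \eqref{eq_class} for the middle component agree with the paper. However, the two steps that carry the real content are left as computations you only expect to close: the mixed integrability identity you call the main obstacle, and the outer components $\Ch^{1,0}\Omega_{k,1}=0$, $[\theta_0^\st,\Omega_{k,2}]=0$ of the cocycle condition.

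\textbf{The mixed identity.} The missing idea is gauge invariance of \eqref{eq_compatibility}. By Steps 1--2 of Proposition~\ref{prop_ob} and the inductive hypothesis (so $U=R$ and all orders below $k$ are already killed), modulo $(t,\bar t^{k+1})$ one has
\[
R^{-1}\circ\bar\p_t\circ R=\bar\p_0-\bar\eta\circ\bar\p_0+\bar t^k\beta_k,\qquad R^{-1}\circ\theta_t\circ R=\theta_0+\bar t^k\alpha_k,
\]
with $\alpha_k=\Omega_{k,1}-\cc\Ch^{1,0}x_k$ and $\beta_k=\Omega_{k,2}-\cc[\theta_0^\st,x_k]$. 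The $\bar t^k$-part of the integrability conditions is therefore just $\mathcal D''\omega=0$, where $\omega:=\alpha_k+\beta_k$ and $\mathcal D''=\bar\p_0+\operatorname{ad}\theta_0$. Its $(1,1)$-component $\bar\p_0\alpha_k+[\theta_0,\beta_k]=0$ is literally your target equation, so no Euler-operator analysis of the ungauged $\bar\p_t\theta_t$ is needed. You even write this normal form down in your identification paragraph without exploiting it.

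\textbf{The outer components.} Your appeal to the conjugate conditions $\theta_t^{\star_{h_t}}\wedge\theta_t^{\star_{h_t}}=0$ and $(D^{1,0}_{h_t})^2=0$ is unsupported. The gauge $R$ acts on $\theta_t^{\star_{h_t}}$ and $D^{1,0}_{h_t}$ only by the same conjugation, and it does not remove their lower-order $\bar t$-terms. For instance, the $\bar t$-coefficient of $R^{-1}\theta_t^{\star_{h_t}}R$ already contains $-\cc\bar\p_0\bg_1$. Their $\bar t^k$-coefficients thus contain nonlinear lower-order products, and nothing in the proposal shows these collapse to the two desired identities.

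The paper avoids this with harmonic theory:
\begin{enumerate}
\item From \eqref{eq_class} and $\Ch^{1,0}[\theta_0^\st,x_k]=-[\theta_0^\st,\Ch^{1,0}x_k]$ one gets $\Ch^{1,0}\beta_k+[\theta_0^\st,\alpha_k]=0$. This says the $(1,1)$-part of $\mathcal D'\omega$ vanishes, where $\mathcal D'=\Ch^{1,0}+\operatorname{ad}\theta_0^\st$.
\item Since $\omega$ is a $1$-form, the K\"ahler identity $(\mathcal D'')^*=-\sqrt{-1}[\Lambda,\mathcal D']$ gives $(\mathcal D'')^*\omega=0$.
\item Combined with $\mathcal D''\omega=0$, $\omega$ is $\mathcal D''$-harmonic, hence $\mathcal D'$-harmonic because $\Delta_{\mathcal D''}=\Delta_{\mathcal D'}$, hence $\mathcal D'\omega=0$.
\end{enumerate}
The last statement is $d_c^1(\alpha_k,\beta_k)=0$. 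Since $(\Omega_{k,1},\Omega_{k,2})=(\alpha_k,\beta_k)+d_c^0(\cc x_k)$, the full cocycle property follows.

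\textbf{The identification paragraph.} The same harmonicity is what legitimately lets you read the $\bar t^k$-coefficient, which is naturally a $\mathcal D''$-cocycle on $X_0$, as a $d_c^1$-cocycle. Invoking the ``conjugation isomorphism'' does not accomplish this: conjugation would send the pair to its $h_0$-adjoint with the two slots exchanged. The paper makes no such direct identification. It proves only that the class vanishes exactly when modulo-$(t,\bar t^{k+1})$-holomorphicity holds, which is how $\ob_k$ is characterized in Proposition~\ref{prop_existence_ob}.
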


\begin{proof}We first prove $[(\Omega_{k,1},\Omega_{k,2})]$ is a well-defined class in $\barHb$. By the modulo-$(t,\bar t^k)$-holomorphic assumption and Proposition~\ref{prop_ob}, the gauge equivalent Higgs bundle
\[(\E,R^{-1}\circ\bar\p_t\circ R,R^{-1}\circ\theta_t\circ R)\]
satisfies 
\begin{align*}R^{-1}\circ\bar\p_t\circ R=&\bar\p_0-\bar\eta\circ\bar\p_0+\bar t^k(\Omega_{k,2}-\cc[\theta_0^\st,x_k])\pmod{(t,\bar t^{k+1})},\\R^{-1}\circ\theta_t\circ R=&\theta_0+\bar t^k(\Omega_{k,1}-\cc\Ch^{1,0}x_k)\pmod{(t,\bar t^{k+1})},
\end{align*}
where the terms of $\bar t^k$ above are given in the step 1 and step 2 in the proof of Proposition~\ref{prop_ob}.

The integrability condition \eqref{eq_compatibility} of the Higgs bundle gives the vanishing of 
\[
[\theta_0,\Omega_{k,1}-\cc\Ch^{1,0}x_k],
\ 
\bar\p_0 (\Omega_{k,1}-\cc\Ch^{1,0}x_k)+[\theta_0,\Omega_{k,2}-\cc[\theta_0^\st,x_k]],
\ 
\bar\p_0 (\Omega_{k,2}-\cc[\theta_0^\st,x_k]).
\]

Let $\mathcal D':=\Ch^{1,0}+\operatorname{ad}(\theta_0^\st)$ and $\mathcal D''=\bar\p_0+\operatorname{ad}\theta_0$ and thus \begin{align}\label{eq_D2_van}\mathcal D''(\Omega_{k,1}-\cc\Ch^{1,0}x_k+\Omega_{k,2}-\cc[\theta_0^\st,x_k])=0.\end{align}
 
We have already proved the following in the step 3 in the proof of Proposition~\ref{prop_ob}
\begin{align*}
\Ch^{1,0}\Omega_{k,2}+[\theta_0^\st,\Omega_{k,1}]=0.
\end{align*}
This implies
\begin{align}\label{eq_11_condition}
\Ch^{1,0}(\Omega_{k,2}-\cc[\theta_0^\st,x_k])+[\theta_0^\st,\Omega_{k,1}-\cc\Ch^{1,0}x_k]=0.
\end{align}

The K\"ahler identities for the harmonic bundle give $
(\mathcal D'')^*=-\sqrt{-1}[\Lambda,\mathcal  D']$. This together with \eqref{eq_11_condition} gives
\begin{align*}(\mathcal D'')^*(\Omega_{k,1}-\cc\Ch^{1,0}x_k+\Omega_{k,2}-\cc[\theta_0^\st,x_k])=0.
\end{align*}
Together with \eqref{eq_D2_van}, this shows that \((\Omega_{k,1}-\cc\Ch^{1,0}x_k+\Omega_{k,2}-\cc[\theta_0^\st,x_k])\) is \(\mathcal D''\)-harmonic.

By the harmonic bundle identities, $
\Delta_{\mathcal D''}=\Delta_{\mathcal D'},$ the form \((\Omega_{k,1}-\cc\Ch^{1,0}x_k+\Omega_{k,2}-\cc[\theta_0^\st,x_k])\) is also \(\mathcal D'\)-harmonic. In particular, it is annihilated by $\mathcal D'$ and hence $$[(\Omega_{k,1},\Omega_{k,2})]\in\barHb.$$

It remains to identify its class with the obstruction class. Suppose first that the modulo-$(t,\bar t^{k+1})$-holomorphicity holds. Then we have the vanishing equation \eqref{eq_obvan} for $i=k$ by Proposition~\ref{prop_ob}. Hence $[(\Omega_{k,1},\Omega_{k,2})]=0.$

Conversely, suppose that $
[(\Omega_{k,1},\Omega_{k,2})]=0.$
Then there exists \(f_k\in\A^0(\End\E)\) such that
\[
\Omega_{k,1}=\Ch^{1,0}f_k,
\qquad
\Omega_{k,2}=[\theta_0^\st,f_k].
\]
The integrability condition \eqref{eq_compatibility} of the Higgs bundle gives 
\begin{align*}\bar\p_0 (\Omega_{k,1}-\cc\Ch^{1,0}x_k)+[\theta_0,\Omega_{k,2}-\cc[\theta_0^\st,x_k]]=\bar\p_0 \Ch^{1,0}(f_k-\cc x_k)+[\theta_0,[\theta_0^\st,f_k-\cc x_k]]=0.
\end{align*}
By the ``harmonicity'' Lemma~\ref{vanlem}, we have $f_k=\cc x_k+c\cdot\id$ for some $c\in\C$. This fact and \eqref{eq_order_n_system} imply the modulo-$(t,\bar t^{k+1})$-holomorphicity.
\end{proof}

Now we prove the following technical lemma.
\begin{lemma}\label{lem_iden_2}
For any $g,h\in\A^0(\End\E)$, 
we have 
\begin{align*}
&[\theta_0^\st,  [[\theta_0,g],h]]  +\Ch^{1,0}([\bar\p_0 g,h])\\
=&  
-[[\theta_0,[\theta_0^\st,g]],h]
-[[\theta_0,g],[\theta_0^\st,h]]
-\bar\p_0([\Ch^{1,0}g,h])
-[\bar\p_0 g,\Ch^{1,0}h]
-[\bar\p_0 h,\Ch^{1,0}g]
\end{align*}
\end{lemma}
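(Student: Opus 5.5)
The identity is a graded Leibniz computation combined with the Hitchin equation for the harmonic bundle $(\E,\bar\p_0,\theta_0,h_0)$. I will move the terms $-\bar\p_0[\Ch^{1,0}g,h]$ and $-[[\theta_0,[\theta_0^\st,g]],h]$ to the left. The claim then becomes that
\[
\Ch^{1,0}[\bar\p_0 g,h]+\bar\p_0[\Ch^{1,0}g,h]+[\theta_0^\st,[[\theta_0,g],h]]+[[\theta_0,[\theta_0^\st,g]],h]
\]
equals $-[[\theta_0,g],[\theta_0^\st,h]]-[\bar\p_0 g,\Ch^{1,0}h]-[\bar\p_0 h,\Ch^{1,0}g]$. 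Throughout I use the graded bracket of the paper and the fact that $\Ch^{1,0}$, $\bar\p_0$, $\operatorname{ad}\theta_0$ and $\operatorname{ad}\theta_0^\st$ are odd graded derivations of $(\A^\bullet(\End\E),[\cdot,\cdot])$.

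\textbf{Step 1 (derivative terms).} I expand with the graded Leibniz rule:
\[
\Ch^{1,0}[\bar\p_0 g,h]=[\Ch^{1,0}\bar\p_0 g,h]-[\bar\p_0 g,\Ch^{1,0}h],\qquad \bar\p_0[\Ch^{1,0}g,h]=[\bar\p_0\Ch^{1,0}g,h]-[\Ch^{1,0}g,\bar\p_0 h].
\]
By graded antisymmetry of two $1$-forms, $-[\Ch^{1,0}g,\bar\p_0 h]=-[\bar\p_0 h,\Ch^{1,0}g]$. The cross terms are then exactly the last two terms on the right-hand side, and what remains is $[(\Ch^{1,0}\bar\p_0+\bar\p_0\Ch^{1,0})g,h]=[[F_{h_0},g],h]$, where $F_{h_0}$ is the Chern curvature.

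\textbf{Step 2 (Higgs terms).} I apply graded Jacobi to $\operatorname{ad}\theta_0^\st$ acting on $[[\theta_0,g],h]$:
\[
[\theta_0^\st,[[\theta_0,g],h]]=[[\theta_0^\st,[\theta_0,g]],h]-[[\theta_0,g],[\theta_0^\st,h]].
\]
The sign comes from passing the odd $\theta_0^\st$ past the odd $[\theta_0,g]$. The second term is exactly $-[[\theta_0,g],[\theta_0^\st,h]]$ on the right. Adding $[[\theta_0,[\theta_0^\st,g]],h]$ leaves
\[
\bigl[\,[\theta_0^\st,[\theta_0,g]]+[\theta_0,[\theta_0^\st,g]],\,h\bigr]=\bigl[[[\theta_0^\st,\theta_0],g],h\bigr],
\]
again by graded Jacobi.

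\textbf{Step 3 (conclusion).} The identity is now equivalent to $[[F_{h_0}+[\theta_0^\st,\theta_0],g],h]=0$. Since $[\theta_0^\st,\theta_0]=[\theta_0,\theta_0^\st]$ for two $1$-forms, this follows from the $(1,1)$-component of the flatness of $D=\Ch+\theta_0+\theta_0^\st$, namely the Hitchin equation $F_{h_0}+[\theta_0,\theta_0^\st]=0$. That equation holds because $h_0$ is harmonic and the Chern classes are trivial, as assumed in Section~\ref{sec_def_Higg_and_gauge}.

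The main obstacle is sign bookkeeping. The convention $[\omega_1,\omega_2]=\omega_1\omega_2-(-1)^{kl}\omega_2\omega_1$ must be used consistently; the commutator $\Ch^{1,0}\bar\p_0+\bar\p_0\Ch^{1,0}$ on $\End\E$ must be identified with $\operatorname{ad}F_{h_0}$; and the sign in the Hitchin equation under the paper's normalization $\theta_t=(-\cc h_t^{-1}dh_t)^{1,0}$ must be fixed. I would settle the signs by checking each step on decomposable forms and by cross-checking against the $(1,1)$-part of $D^2=0$, which is the form that matters.
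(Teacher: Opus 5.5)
Your proof is correct and follows the paper's route: graded Leibniz on the two $\bar\p_0$/$\Ch^{1,0}$ terms, graded Jacobi twice on the Higgs terms, and the Hitchin equation $F_{h_0}+[\theta_0,\theta_0^\st]=0$ (used in the paper as $[[\theta_0,\theta_0^\st],g]=-(\Ch^{1,0}\bar\p_0+\bar\p_0\Ch^{1,0})g$). The only difference is that you collect the remainder into the single vanishing $[[F_{h_0}+[\theta_0^\st,\theta_0],g],h]=0$ rather than substituting forward, and your Jacobi signs are the correct graded ones, whereas each of the paper's two displayed Jacobi steps carries a sign slip (the two slips cancel, so the paper's final identity is still right).
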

\begin{proof}
By the Jacobi identity, we have
 \begin{align}\label{eq_firststep_C1}
 [\theta_0^\st,  [[\theta_0,g],h]]=[[\theta_0,g],[h,\theta_0^\st]]+[h,[\theta_0^\st,[\theta_0,g]]].
 \end{align}
By applying the Jacobi identity again to $[\theta_0^\st,[\theta_0,g]]$, we have
 \begin{align*}[h,[\theta_0^\st,[\theta_0,g]]]=[h,[\theta_0,[\theta_0^\st,g]]]+[h,[g,[\theta_0^\st,\theta_0]]].
 \end{align*}
 Substituting this into \eqref{eq_firststep_C1}, we have
 \begin{align*} [\theta_0^\st,  [[\theta_0,g],h]]=&-[[\theta_0,g],[\theta_0^\st,h]]-[[\theta_0,[\theta_0^\st,g]],h]+[h,[g,[\theta_0^\st,\theta_0]]]\\
 =&-[[\theta_0,g],[\theta_0^\st,h]]-[[\theta_0,[\theta_0^\st,g]],h]+[h,(\Ch^{1,0}\bar\p_0+\bar\p_0\Ch^{1,0})g]
 \end{align*}
by the identity  
$[[\theta_0,\theta_0^\st],g]=-F(\Ch)g=-(\Ch^{1,0}\bar\p_0+\bar\p_0\Ch^{1,0})g$ following from the Hermitian--Yang--Mills--Higgs equation. Therefore
 \begin{align*} &[\theta_0^\st,  [[\theta_0,g],h]]+\Ch^{1,0}([\bar\p_0 g,h])
 \\=&-[[\theta_0,g],[\theta_0^\st,h]]-[[\theta_0,[\theta_0^\st,g]],h]-[(\Ch^{1,0}\bar\p_0+\bar\p_0\Ch^{1,0})g,h]+\Ch^{1,0}([\bar\p_0 g,h])\\
 =&-[[\theta_0,g],[\theta_0^\st,h]]-[[\theta_0,[\theta_0^\st,g]],h]-[\bar\p_0\Ch^{1,0}g,h]-[\bar\p_0 g,\Ch^{1,0}h]\\
 =& 
-[[\theta_0,[\theta_0^\st,g]],h]
-[[\theta_0,g],[\theta_0^\st,h]]
-\bar\p_0([\Ch^{1,0}g,h])
-[\bar\p_0 g,\Ch^{1,0}h]
-[\bar\p_0 h,\Ch^{1,0}g]. \qedhere
 \end{align*}
 \end{proof}

\newpage
\section{First-order holomorphicity}\label{sec_prop_first_hol}
In this section, we use the first-order approach to study several questions. In Section~\ref{sec_41} and Section~\ref{sec_42}, we study the commutativity between $\C^*$-action and the isomonodromic deformation and try to answer Question~\ref{Ker}. In Section~\ref{sec_43}, we determine the Zariski tangent space of the non-abelian Noether--Lefschetz locus defined in \eqref{eq_NLlocus} and relate it to the first-order holomorphicity of $\sigma_{\Dol}$ discussed in Proposition~\ref{prop_firstob}.
\subsection{On the commutativity between $S^1$-action and the isomonodromic deformation}\label{sec_41}
For any first-order deformation $X_1$ of $X_0$, let $(\E,\bar\p_0,\theta_0)$ be a stable Higgs bundle on $X_0$ and $(\E,\bar\p_t,\theta_t)$ be the isomonodromic deformation of the initial Higgs bundle along $X_1$. Let $h_t$ be its harmonic metric defined in \eqref{metric}. We reduce Theorem~\ref{thm_main_1} to the following proposition, which can be viewed as the first-order version of Theorem~\ref{thm_main_1}.

\begin{proposition}\label{prop_firstorder} For any $\lambda\in S^1\backslash \{\pm 1\}$, we consider the real-analytic deformation $(\E,\bar\p_t,\lambda\cdot\theta_t)$. Its associated family of flat bundles has smooth relative flat connection
\begin{align}\label{eq_flatconnection}D_t:=D_{h_t}+\lambda\theta_t+\bar\lambda\theta_t^{\star_{h_t}}.
\end{align}
Moreover, the following statements are equivalent:
\begin{enumerate}
\item[(i)] the isomonodromic deformation $(\E,\bar\p_t,\theta_t)$ on $X_1$ is holomorphic;
\item[(ii)] the family of flat connections $D_t$ is a constant family of flat connections on $X_1$, that is, up to a gauge transformation, $D_t= \Ch+\lambda\theta_0+\bar\lambda\theta_0^\st$ on $X_1$.
\end{enumerate}
\end{proposition}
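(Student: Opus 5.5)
First I verify the formula \eqref{eq_flatconnection}. For each fixed $(t,\bar t)$ the Higgs bundle $(\E,\bar\p_t,\theta_t)$ carries the harmonic metric $h_t$ of \eqref{metric}. Since $|\lambda|=1$, the rescaled Higgs field $\lambda\theta_t$ has adjoint $\bar\lambda\theta_t^{\star_{h_t}}$. Hence $h_t$ is still harmonic for $(\E,\bar\p_t,\lambda\theta_t)$: the Hitchin equation $F_{D_{h_t}}+[\lambda\theta_t,\bar\lambda\theta_t^{\star_{h_t}}]=0$ is unchanged. By the non-abelian Hodge correspondence the associated flat connection is then $D_{h_t}+\lambda\theta_t+\bar\lambda\theta_t^{\star_{h_t}}$. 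This holds fiberwise and therefore in the relative sense over $X_1$, reading the $(1,0)$ and $(0,1)$ parts with respect to $X_1$ via $\pi'_\eta,\pi''_\eta$. So the first claim is formal.

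For the equivalence, I expand $D_t$ to first order. The Chern connection splits as $D_{h_t}=\bar\p_t+\Ch^{1,0}_{h_t}$. Lemma~\ref{lem_expanding} and Proposition~\ref{prop_Taylor_Exp_DolHig} give $\bar\p_t,\theta_t$ modulo $(t,\bar t)^2$ in terms of $g_1,\bar g_1:=\bg_1$ and $\eta_1$. I also use $\theta_t^{\star_{h_t}}=\pi''_\eta\Theta_t$ and $\Ch^{1,0}_{h_t}=\pi'_\eta D-\theta_t$ from \eqref{eq_theta_t}. Writing $D=\Ch+\theta_0+\theta_0^\st$, I get
\[
D_t=\Ch+\lambda\theta_0+\bar\lambda\theta_0^\st+t\,(\cdots)+\bar t\,(\cdots)\pmod{(t,\bar t)^2}.
\]
Since $\lambda\neq 1$, the first-order terms do not telescope as they do for $\lambda=1$.

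The $t$-coefficient comes from the holomorphic data $g_1,\eta_1$ and the $\bar t$-coefficient from $\bar g_1,\bar\eta_1$. Collecting terms, the $\bar t$-coefficient should be
\[
(\lambda-1)\bigl(\bar\eta_1(\theta_0^\st)-\tfrac12\Ch^{1,0}\bar g_1+\tfrac12[\theta_0,\bar g_1]\bigr)+(\bar\lambda-1)\bigl(\tfrac12[\theta_0^\st,\bar g_1]-\tfrac12\bar\p_0\bar g_1\bigr)+\bar\p_0\text{-type terms},
\]
and the $t$-coefficient is the complex conjugate expression in $g_1,\eta_1$. Condition (ii) says this first-order variation is a gauge term $D^{\lambda}_0 u$, where $D^\lambda_0:=\Ch+\lambda\theta_0+\bar\lambda\theta_0^\st$, after the $\eta$-part is absorbed into the change of complex structure of $X_1$.

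(i)$\Rightarrow$(ii): by Proposition~\ref{prop_firstob}, holomorphicity gives \eqref{eq_ob1exact}, namely $\bar\eta_1(\theta_0^\st)=\tfrac12\Ch^{1,0}\bar g_1$ and $[\theta_0^\st,\bar g_1]=0$. Substituting these makes the $\bar t$-coefficient a pure gauge term $D_0^\lambda(c\,\bar g_1)$ for a suitable constant $c$. The $t$-coefficient is handled by the conjugate identities obtained from the integrability condition, as in Step 2 of Proposition~\ref{prop_firstob}.

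(ii)$\Rightarrow$(i): assume the $\bar t$-coefficient equals $D_0^\lambda u$. I split it by type, into $(1,0)$ and $(0,1)$ parts with respect to $X_0$, and use $\lambda\neq\pm1$, i.e.\ $\lambda\neq\bar\lambda$ and $\lambda\neq 1$. This lets me separate the $\theta_0$-terms from the $\theta_0^\st$-terms. Then I apply $\Ch^{1,0}$ and $\operatorname{ad}\theta_0^\st$ exactly as in Step 1 of Proposition~\ref{prop_firstob}. This yields an equation of the form \eqref{exact-harm} for a combination of $u$ and $\bar g_1$. Lemma~\ref{vanlem} forces that combination to be constant, which gives \eqref{eq_ob1exact}, i.e.\ $\ob_1=0$, and hence holomorphicity on $X_1$.

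The main obstacle is the bookkeeping in this step: ensuring the $\lambda$-dependent coefficients genuinely decouple. For $\lambda=\pm1$ they degenerate, with $\lambda-\bar\lambda=0$, which is why those cases are excluded. I would first check the scalar coefficient matrix in $(\lambda,\bar\lambda)$ and show it is invertible exactly when $\lambda\notin\{\pm1\}$.
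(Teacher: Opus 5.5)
Your strategy is the paper's: show $h_t$ stays harmonic for $\lambda\theta_t$, expand $D_t$ to first order, gauge away using \eqref{eq_ob1exact} for (i)$\Rightarrow$(ii), and use type-splitting, Lemma~\ref{vanlem} and $\lambda\neq\pm1$ for (ii)$\Rightarrow$(i). However, the first-order expansion, which is the whole content, is wrong as written, and with your coefficients neither direction goes through.

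\textbf{The expansion.} There is no ``$\eta$-part'' to absorb. Combine the identity $D^{1,0}_{h_t}=\pi'_\eta D-\theta_t$ that you invoke with $\bar\p_t=\pi''_\eta D-\theta_t^{\star_{h_t}}$ from the proof of Proposition~\ref{prop_Taylor_Exp_DolHig}. Since $\pi'_\eta+\pi''_\eta=\id$, this gives exactly $D_{h_t}+\theta_t+\theta_t^{\star_{h_t}}=D=\Ch+\theta_0+\theta_0^\st$, which is isomonodromy; all operator terms $\bar\eta\circ\bar\p_0$, $\eta\circ\Ch^{1,0}$ cancel. Hence
\[
D_t=D_0^\lambda+(\lambda-1)(\theta_t-\theta_0)+(\bar\lambda-1)(\theta_t^{\star_{h_t}}-\theta_0^\st),\qquad D_0^\lambda:=\Ch+\lambda\theta_0+\bar\lambda\theta_0^\st,
\]
and (ii) literally means $\mathscr G^{-1}D_t\mathscr G=D_0^\lambda$ on the fixed smooth bundle. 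The $\eta$-terms carry the Kodaira--Spencer information and must not be dropped.

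\textbf{The missing term.} By \eqref{eq_theta_t}, the $\bar t$-coefficient of $\theta_t^{\star_{h_t}}$ is $-\bar\eta_1(\theta_0^\st)+\frac12[\theta_0^\st,\bg_1]-\frac12\bar\p_0\bg_1$. Your displayed coefficient therefore misses $-(\bar\lambda-1)\bar\eta_1(\theta_0^\st)$. The correct $\bar t$-coefficient of $D_t$ has
\begin{itemize}
\item $(1,0)$-part $(\lambda-\bar\lambda)\bar\eta_1(\theta_0^\st)+\frac{\lambda-1}{2}\bigl([\theta_0,\bg_1]-\Ch^{1,0}\bg_1\bigr)$;
\item $(0,1)$-part $\frac{\bar\lambda-1}{2}\bigl([\theta_0^\st,\bg_1]-\bar\p_0\bg_1\bigr)$.
\end{itemize}
With $\lambda-1$ in front of $\bar\eta_1(\theta_0^\st)$, as you wrote, both directions fail. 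Substituting \eqref{eq_ob1exact} leaves $\frac{\lambda-1}{2}[\theta_0,\bg_1]-\frac{\bar\lambda-1}{2}\bar\p_0\bg_1$, which is not of the form $D_0^\lambda(c\,\bg_1)$ unless $\Ch^{1,0}\bg_1=0$. The converse argument would only yield $(\lambda-1)\bar\eta_1(\theta_0^\st)=\frac{\lambda-\bar\lambda}{2}\Ch^{1,0}\bg_1$, not \eqref{eq_ob1exact}.

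\textbf{With the correct coefficient your sketch closes.} It is then the complex conjugate of the paper's argument, which uses the $t$-coefficient with $\bar\p_0$ and $\lambda[\theta_0,-]$ and then takes $\star_{h_0}$. Write the $\bar t$-part of the gauge as $w$ and set $w':=w-\frac{\bar\lambda-1}{2}\bg_1$. The two type components of the gauge equation become
\[
\Ch^{1,0}w'+\lambda[\theta_0,w']+(\lambda-\bar\lambda)\bigl(\bar\eta_1(\theta_0^\st)-\tfrac12\Ch^{1,0}\bg_1\bigr)=0,\qquad
\bar\p_0w'+\bar\lambda[\theta_0^\st,w']+\tfrac{\bar\lambda^2-1}{2}[\theta_0^\st,\bg_1]=0.
\]
\begin{itemize}
\item Apply $\bar\lambda[\theta_0^\st,-]$ to the first and $\Ch^{1,0}$ to the second, then add. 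This gives \eqref{exact-harm} for $w'$, using only $\lambda\bar\lambda=1$, not $\lambda\neq\pm1$.
\item Lemma~\ref{vanlem} then gives $w'\in\C\cdot\id$.
\item Only now does $\lambda\neq\pm1$ enter, in the form $\bar\lambda^2\neq1$ and $\lambda\neq\bar\lambda$, to yield \eqref{eq_ob1exact}. This is sharper than a coefficient-matrix check done up front.
\end{itemize}

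\textbf{(i)$\Rightarrow$(ii).} The gauge $\id+t\frac{\lambda-1}{2}g_1+\bar t\frac{\bar\lambda-1}{2}\bg_1$ works, and the residual terms cancel because $\lambda\bar\lambda=1$. For the $t$-part you only need the $\star_{h_0}$-adjoint of \eqref{eq_ob1exact}, namely $\eta_1(\theta_0)=\frac12\bar\p_0g_1$ and $[\theta_0,g_1]=0$. The integrability argument of Step~2 of Proposition~\ref{prop_firstob} is not needed.
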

Now we explain how to derive Theorem~\ref{thm_main_1} from Proposition~\ref{prop_firstorder}.
\begin{proof}[Proof of Theorem~\ref{thm_main_1} using Proposition~\ref{prop_firstorder}]Without loss of generality, we may assume $U$ is smooth. If $U$ is singular, we resolve the singularity, denoted by $\pi:\hat U\to U$. By pulling back \( X_U/U \) and \( \sigma_{\Dol}|_U \) to \( \hat{U} \), and then we just prove Theorem~\ref{thm_main_1} on $\hat U$ and descend to $U$. 

Thus $U$ is a smooth complex analytic subvariety. For any point in $U$ and any direction $v\in TU$, the first-order version of Theorem~\ref{thm_main_1} is true by Proposition~\ref{prop_firstorder}. This implies we have the claim in Theorem~\ref{thm_main_1} by the first-order deformation theory.
\end{proof}

\begin{proof}[Proof of Proposition~\ref{prop_firstorder}]
By \cite[Page 45]{Simp92}, the harmonic metric of the family $(\E,\bar\p_t,\lambda\cdot\theta_t)$ is still $h_t$, and thus we have \eqref{eq_flatconnection}.
 Since $(\E,\bar\p_t,\theta_t)$ is isomonodromic, we have
$$D=D_{h_t}+ \theta_t+\theta_t^{\star_{h_t}}=\Ch+\theta_0+\theta_0^\st.$$
Substituting the above equation into \eqref{eq_flatconnection}, we obtain 
\begin{align}\label{eq_D_t}D_t=D_{h_t}+\lambda \theta_t+\bar\lambda\theta_t^{\star_{h_t}}=\Ch+\lambda\theta_0+\bar\lambda\theta_0^\st+(\lambda-1)(\theta_t-\theta_0)+(\bar\lambda-1)(\theta_t^{\star_{h_t}}-\theta_0^\st).
\end{align}
We have the following first-order deformation equations by Proposition~\ref{prop_Taylor_Exp_DolHig}
\begin{equation}\label{eq_firstorder_D}\begin{aligned}\theta_t=&\theta_0+t(-\eta_1(\theta_0)+\cc[\theta_0,g_1]-\cc \Ch^{1,0} g_1)+\bar t(\bar \eta_1(\theta_0^{\star_{h_0}})+\cc[\theta_0,\bg_1]-\cc \Ch^{1,0} \bg_1),\\
\theta_t^{\star_{h_t}}=&\theta_0^{\star_{h_0}}+t(\eta_1(\theta_0)+\cc[\theta_0^{\star_{h_0}},g_1]-\cc \bar\p_0 g_1)+\bar t(-\bar \eta_1(\theta_0^{\star_{h_0}})+\cc[\theta_0^{\star_{h_0}},\bg_1]-\cc \bar\p_0 \bg_1).
\end{aligned}\end{equation}
Now we prove the equivalence between (i) and (ii).

\noindent \textbf{(i) implies (ii):} by the vanishing equation \eqref{eq_ob1exact} in Proposition~\ref{prop_firstob}, we have
\begin{align}\label{eq_van_ob1}\bar\eta_1(\theta_0^\st)&=\cc\Ch^{1,0}\bg_1,\quad 0=\cc[\theta_0^\st,\bg_1];\qquad 
\eta_1(\theta_0)=\cc\bar\p_0 g_1,\quad 0=\cc[\theta_0,g_1],
\end{align}
where the last two equations are obtained by applying $\st$ to the first two. Substituting those equations into \eqref{eq_firstorder_D}, we obtain
\begin{align*}\theta_t=\theta_0-t\cdot\cc\Ch g_1+\bar t\cdot\cc[\theta_0,\bg_1],\qquad
\theta_t^{\star_{h_t}}=\theta_0^{\star_{h_0}}+t\cdot\cc[\theta_0^{\star_{h_0}},g_1]-\bar t\cdot\cc \Ch \bg_1.
\end{align*}
Substituting those equations into \eqref{eq_D_t}, we obtain
\begin{align*}D_t=\Ch+\lambda\theta_0+\bar\lambda\theta_0^\st+&(\lambda-1)\Big(-t\cdot\cc\Ch g_1+\bar t\cdot\cc[\theta_0,\bg_1]\Big)\\
+&(\bar\lambda-1)\Big(t\cdot\cc[\theta_0^{\star_{h_0}},g_1]-\bar t\cdot\cc \Ch \bg_1\Big).
\end{align*}
Applying the gauge transformation $\mathscr G:=\id+t\cdot\frac{\lambda-1}{2}g_1+\bar t\cdot\frac{\bar\lambda-1}{2}\bg_1$ to the above $D_t$, we obtain
\begin{align*}\mathscr G^{-1}\circ D_t\circ \mathscr G=&\Ch+\lambda\theta_0+\bar\lambda\theta_0^\st+(\Ch+\lambda\theta_0+\bar\lambda\theta_0^\st)^{\End}\mathscr G+\\
&(\lambda-1)\Big(-t\cdot\cc\Ch g_1+\bar t\cdot\cc[\theta_0,\bg_1]\Big)
+(\bar\lambda-1)\Big(t\cdot\cc[\theta_0^{\star_{h_0}},g_1]-\bar t\cdot\cc \Ch \bg_1\Big)\\
=&\Ch+\lambda\theta_0+\bar\lambda\theta_0^\st+(\lambda\theta_0+\bar\lambda\theta_0^\st)^{\End}\mathscr G+(\lambda-1)\bar t\cdot\cc[\theta_0,\bg_1]
+(\bar\lambda-1)t\cdot\cc[\theta_0^{\star_{h_0}},g_1]\\
=&\Ch+\lambda\theta_0+\bar\lambda\theta_0^\st,
\end{align*}
where the last equality follows from \eqref{eq_van_ob1}.

\noindent \textbf{(ii) implies (i):} by assumption, there exists a gauge transformation $\mathscr G=\id+t\cdot f+\bar t\cdot w\in \A^0(\End\E)\otimes \C[t,\bar t]/(t,\bar t)^2$ such that 
$$\mathscr G^{-1}\circ D_t\circ \mathscr G=\Ch+\lambda\theta_0+\bar\lambda\theta_0^\st.$$
Combining this with \eqref{eq_D_t} yields 
\begin{align}\label{eq_flat1}(\Ch+\lambda\theta_0+\bar\lambda\theta_0^\st)^{\End}(t\cdot f+\bar t\cdot w)+(\lambda-1)(\theta_t-\theta_0)+(\bar\lambda-1)(\theta_t^{\star_{h_t}}-\theta_0^\st)=0.\end{align}
Comparing the coefficient of $t$ in \eqref{eq_flat1}, we obtain
\begin{align*}&\Ch f+\lambda[\theta_0,f]+\bar\lambda[\theta_0^\st,f]+(\lambda-1)(-\eta_1(\theta_0)+\cc[\theta_0,g_1]-\cc \Ch^{1,0} g_1)\\
&+(\bar\lambda-1)(\eta_1(\theta_0)+\cc[\theta_0^{\star_{h_0}},g_1]-\cc \bar\p_0 g_1)=0.
\end{align*}
By comparing $(1,0)$ and $(0,1)$ parts of the above equation, we obtain
\begin{equation}\label{eq_obflatvan}\begin{aligned}\Ch^{1,0} f+\lambda[\theta_0,f]+(\lambda-1)(\cc[\theta_0,g_1]-\cc \Ch^{1,0} g_1)=0\\
\bar\p_0 f+\bar\lambda[\theta_0^\st,f]+(-\lambda+\bar\lambda)\eta_1(\theta_0)+(\bar\lambda-1)(\cc[\theta_0^{\star_{h_0}},g_1]-\cc \bar\p_0 g_1)=0.
\end{aligned}\end{equation}
Applying $\bar\p_0(-)$ to the first equation above and $\lambda\cdot[\theta_0,-]$ to the second equation above and summing them, we obtain
\begin{align*}\bar\p_0\Ch^{1,0} (f-\frac{\lambda-1}{2}g_1)+[\theta_0,[\theta_0^\st,f-\frac{\lambda-1}{2}g_1]=0,
\end{align*}
which gives $f=\frac{\lambda-1}{2}g_1+c\cdot \id$ for some $c\in \C$ by the ``harmonicity'' in Lemma~\ref{vanlem}. Substituting this into \eqref{eq_obflatvan}, we obtain
\begin{align*}\frac{\lambda^2-1}{2}[\theta_0,g_1]=0,\quad
\frac{\lambda-\bar\lambda}{2}\bar\p_0 g_1=(\lambda-\bar\lambda)\eta_1(\theta).
\end{align*}
Since $\lambda\ne\pm1$, we obtain $0=\cc[\theta_0,g_1],\ \eta_1(\theta_0)=\cc\bar\p_0 g_1$, which implies $[(\bar\eta_1(\theta_0^\st),0)]$ vanishes and gives the first-order holomorphicity by Proposition~\ref{prop_firstob}.
\end{proof}

\subsection{On the commutativity between $\mathbb R^*$-action and the isomonodromic deformation}\label{sec_42}
In this section, we give some evidence for Question~\ref{Ker} for $\lambda\in\mathbb R^*$. 
\begin{proposition}\label{prop_pm1}When $\lambda=\pm 1$, $\lambda\cdot\sigma_{\Dol}$ is an isomonodromic deformation.
\end{proposition}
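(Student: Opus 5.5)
For $\lambda=1$ there is nothing to prove, since $1\cdot\sigma_{\Dol}=\sigma_{\Dol}$. The substantive case is $\lambda=-1$. The plan is to show that the family of flat connections attached to $(\E,\bar\p_s,-\theta_s)$ is gauge equivalent to a constant family. We do this by writing it down in terms of the harmonic metrics $h_s$, as in \eqref{eq_flatconnection} of Proposition~\ref{prop_firstorder}, but now over the whole family $X/S$ rather than at first order.

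First, by \cite[Page 45]{Simp92}, the harmonic metric of $(\E,\bar\p_s,\lambda\theta_s)$ for $|\lambda|=1$ is the same $h_s$ as that of $(\E,\bar\p_s,\theta_s)$. Hence the flat connection associated with $-1\cdot\sigma_{\Dol}(s)$ is
\[
D^{-}_s:=D_{h_s}-\theta_s-\theta_s^{\star_{h_s}}.
\]
Isomonodromy of $\sigma_{\Dol}$ means that
\[
D_s:=D_{h_s}+\theta_s+\theta_s^{\star_{h_s}}
\]
is, after the identification of the underlying smooth bundle fixed in Section~\ref{sec_HarmMet_IsomDef}, the constant flat connection $D$ on the fixed differentiable manifold, with $D=\Ch+\theta_0+\theta_0^\st$ at $s=0$.

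The key step is to express $D^-_s$ through $D$ and the metric alone. Viewing $h_s$ as a $D$-equivariant harmonic map, one has the decomposition
\[
D=\nabla_{h_s}+\Theta_s,\qquad \Theta_s=-\tfrac12 h_s^{-1}\,d_D h_s=\theta_s+\theta_s^{\star_{h_s}},
\]
with $\nabla_{h_s}=D_{h_s}$ the $h_s$-unitary part; this is \eqref{PSI}. Therefore $D^-_s=D-2\Theta_s$.

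Next, let $D^{\dagger_s}$ be the $h_s$-dual (adjoint) connection, defined by
\[
d\,h_s(u,v)=h_s(Du,v)+h_s(u,D^{\dagger_s}v).
\]
This connection has unitary part $\nabla_{h_s}$ and self-adjoint part $-\Theta_s$, so $D^{\dagger_s}=D-2\Theta_s=D^-_s$. The dual connection $D^{\dagger_s}$ is gauge equivalent, via the gauge transformation $h_s$ itself (viewed as a map $\E\to\bar\E^\vee$), to the connection $\bar D^{\vee}$ induced by $D$ on $\bar\E^\vee$. Its monodromy is therefore the conjugate-dual representation $\overline{\rho}^{\,\vee}$, which depends only on $D$ and not on $s$. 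Hence the $D^-_s$ all have the same monodromy, namely that of $D^-_0$, and $-1\cdot\sigma_{\Dol}$ is isomonodromic.

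The main obstacle is making this identification rigorous in families, so that the resulting deformation is the isomonodromic one in the sense of the section $\sigma_{\dR}$. We need the gauge transformations $h_s^{-1}h_0$, which relate $D^{\dagger_s}$ to $D^{\dagger_0}$ through the fixed bundle $\bar\E^\vee$, to depend real-analytically on $s$. This follows from Theorem~\ref{R-analyticity}. We also need that, on contractible $S$, a family of flat connections on a fixed bundle with constant conjugacy class of monodromy is exactly the isomonodromic section starting from $D^-_0$; since $\rho$ is irreducible, $\overline{\rho}^{\,\vee}$ is irreducible, so there are no automorphism ambiguities beyond scalars. As a check against Proposition~\ref{prop_firstorder}, set $\lambda=-1$ in its (ii)$\Rightarrow$(i) computation: the coefficient $(\lambda-\bar\lambda)$ vanishes, so no constraint on $\eta_1(\theta_0)$ arises. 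This is consistent with the conclusion not requiring holomorphicity.
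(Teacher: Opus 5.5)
Your proposal is correct and is essentially the paper's alternative argument: the $-1$-action sends the monodromy $\rho$ to its conjugate-dual (via \cite[Lemma 2.11]{Simp92}), and you make this explicit by identifying $D_{h_s}-\theta_s-\theta_s^{\star_{h_s}}$ with the $h_s$-adjoint of the fixed flat connection $D$. Your gauge transformation $h_0^{-1}h_s$ also subsumes the paper's primary first-order computation, whose gauge $\id-tg_1-\bar t\bg_1$ is the first-order truncation of $(h_0^{-1}h_s)^{-1}$; your route therefore handles all orders at once, without the first-order-to-global reduction.
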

\begin{proof}
It suffices to prove the case $\lambda=-1$. Fix a point $0\in S$ and a vector $v\in T_0S$.  We may choose a first-order curve $\gamma:\operatorname{Spec}A_1\to S$ with $\gamma(0)=0$ and $\gamma_*(\frac{d}{d t})=v$. The first-order argument used in Proposition~\ref{prop_firstorder} then proves the claim. 

Combining \eqref{eq_D_t} and \eqref{eq_firstorder_D}, we obtain
\begin{align*}D_t=&\Ch-\theta_0-\theta_0^\st-2(\theta_t-\theta_0)-2(\theta_t^{\star_{h_t}}-\theta_0^\st)\\
=&\Ch-\theta_0-\theta_0^\st+t(\Ch g_1-[\theta_0+\theta_0^\st,g_1])+\bar t(\Ch\bg_1-[\theta_0+\theta_0^\st,\bg_1]).
\end{align*}
Thus $(\id-tg_1-\bar t \bg_1)^{-1}\circ D_t\circ(\id-tg_1-\bar t \bg_1)=\Ch-\theta_0-\theta_0^\st$.

There is an alternative proof. Note that by \cite[Lemma 2.11]{Simp92}, the $-1$-action has the following property: for any Higgs bundle $(E,\theta)$, let $\rho$ be the corresponding monodromy representation by $\NHC$.  Then the dual Higgs bundle of $(E,-\theta)$ corresponds to the complex conjugate representation $\overline\rho^T$. This property proves our claim.
\end{proof}

Now we prove Question~\ref{Ker} for rank-one Higgs bundles. 

\begin{proposition}\label{prop_rank1}Let $(E_0,\theta_0)$ be a rank-one stable Higgs bundle on $X_0$ and let $\sigma_{\Dol}$ be its isomonodromic deformation along $X/S$. For any $\lambda\in\mathbb R^*$, the deformation $\lambda\cdot\sigma_{\Dol}$ is also isomonodromic.
\end{proposition}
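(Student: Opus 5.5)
In rank one everything is abelian, so I would not use the obstruction machinery. Instead I would compute the non-abelian Hodge correspondence explicitly and show that the $\mathbb R^*$-action is induced by a \emph{topological}, hence flat, operation on the Betti side. Since $\End E=\mathcal O$, all brackets vanish. A rank-one stable Higgs bundle $(E,\theta)$ with trivial Chern class is a degree-zero line bundle together with a holomorphic $1$-form $\theta\in H^0(X_s,\Omega^1)$. Its harmonic metric $h$ is the flat metric, so the Chern connection $D_h$ is unitary, and the associated flat connection is $D=D_h+\theta+\bar\theta$.

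The first step is to describe this on the level of monodromy. Write the monodromy of $D$ as a character $\rho:\pi_1(X_s)\to\C^*$ and factor $\rho=|\rho|\cdot(\rho/|\rho|)$. The unitary part $\rho/|\rho|$ is the monodromy of $D_h$. The positive part is $|\rho|=\exp(c)$, where $c\in H^1(X_s,\mathbb R)$ is the de Rham class of the real closed form $\theta+\bar\theta$. Scaling $\theta\mapsto\lambda\theta$ with $\lambda\in\mathbb R^*$ leaves $h$, hence $D_h$, unchanged. This is \cite[Page 45]{Simp92}, as used in Proposition~\ref{prop_firstorder}, and in rank one it is immediate because $h$ is flat. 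The new flat connection is therefore $D_h+\lambda(\theta+\bar\theta)$, whose monodromy is
\[
\rho_\lambda=\frac{\rho}{|\rho|}\cdot\exp(\lambda c)=\frac{\rho}{|\rho|}\cdot|\rho|^{\lambda}.
\]
The key point is that this uses only that $\lambda$ is real: then $\lambda(\theta+\bar\theta)$ is still a real form. For nonreal $\lambda$ it is complex, and the Hodge type of $\theta$ enters.

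The second step is to pass to the family. Over the contractible base $S$, the isomonodromic deformation $\sigma_{\dR}$ corresponds to a fixed character $\rho$ of $\pi_1(X_0)\cong\pi_1(X_s)$, identified via the topological trivialization of $X\to S$. The operation
\[
\rho\mapsto\frac{\rho}{|\rho|}\cdot|\rho|^\lambda
\]
is a map of the Betti moduli space $\operatorname{Hom}(\pi_1,\C^*)$ to itself. It is defined purely from the group $\pi_1$, so it commutes with the parallel transport identifications along $S$. By the first step, applied fiberwise, $\lambda\cdot\sigma_{\Dol}(s)$ corresponds under $\NHC$ to $\rho_\lambda$ for every $s\in S$. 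Since $\rho_\lambda$ is independent of $s$, the family $\lambda\cdot\sigma_{\Dol}$ is isomonodromic.

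The step I expect to need the most care is the first: that the unitary/positive factorization of $\rho$ matches exactly the splitting $D=D_h+(\theta+\bar\theta)$. This requires checking that the monodromy of $D_h$ is unitary, which holds because $h$ is $D_h$-parallel, and that $\theta+\bar\theta$ is closed. Closedness follows from $d\theta=\bar\partial\theta=0$ for a holomorphic form on a compact Kähler manifold. As a consistency check, one can verify the statement infinitesimally with the paper's formulas. In rank one, \eqref{eq_firstorder_D} reduces to $\theta_t=\theta_0+t(-\eta_1(\theta_0)-\cc\partial g_1)+\bar t(\bar\eta_1(\bar\theta_0)-\cc\partial\bar g_1)$. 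Plugging this into \eqref{eq_D_t} with real $\lambda$, the gauge transformation $\id+\frac{\lambda-1}{2}(tg_1+\bar t\bar g_1)$ should remove the $\partial g_1$ terms. The remaining $\eta_1$ terms combine into $(\lambda-1)\,\theta_{s,*}$ of a real Kodaira--Spencer variation, and this vanishes in cohomology because the real class $[\theta+\bar\theta]$ is constant. This mirrors the global argument above.
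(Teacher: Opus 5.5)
Your proposal is correct and follows essentially the same route as the paper. The paper also splits the rank-one monodromy as $\rho=\rho_1\rho_2$, with $\rho_1$ unitary (coming from $\mathrm{Pic}^0(X_0)$) and $\rho_2(\gamma)=\exp\bigl(-\int_\gamma\theta+\bar\theta\bigr)$, and notes that real scaling fixes $\rho_1$ and sends $\rho_2\mapsto\rho_2^{\lambda}$, so constancy in $s$ is preserved. One small caveat: \cite[Page 45]{Simp92} only gives invariance of the harmonic metric for $|\lambda|=1$. For $|\lambda|\neq 1$ you must rely on the rank-one justification you also give: $[\theta,\theta^{\star_h}]=0$ forces $h$ to be the flat metric of the underlying line bundle, independent of $\theta$.
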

We remark that this proposition together with Theorem~\ref{thm_main_1} implies Question~\ref{Ker} for rank-one Higgs bundles.
\begin{proof}[Proof of Proposition~\ref{prop_rank1}]
In the rank-one case, the $\NHC$ between $M_{\mathrm{B}}(X_0)$ and $M_{\Dol}(X_0)$ can be described explicitly: assume the rank of the abelianization of $\pi_1(X_0)$ is $2p$. Then
\begin{align*}M_{\mathrm{B}}(X_0)=(\C^*)^{2p}=(S^1)^{2p}\times (\mathbb R_+)^{2p}\quad\text{and}\quad M_{\Dol}(X_0)=\mathrm{Pic}^0(X_0)\times H^0(\Omega^1_{X_0}).
\end{align*}
There is a homeomorphism from $\mathrm{Pic}^0(X_0)$ to $(S^1)^{2p}$, which coincides with $\NHC$ on $\mathrm{Pic}^0(X_0)\times 0$. Let $\{\gamma_i\}_{i=1}^{2p}$ be a set of generators of the abelianization of $\pi_1(X_0)$. For the Higgs bundle $[(E_0,\theta_0)]\in M_{\Dol}(X_0)$, the associated monodromy representation by $\NHC$ is \begin{align*}\rho:\pi_1(X_0)&\to \mathbb C^*\\
\gamma_i&\mapsto \rho(\gamma_i)=\rho_1(\gamma_i)\rho_2(\gamma_i),\end{align*}
where $\{\rho_1(\gamma_i)\}_{i=1}^{2p}\in (S^1)^{2p}$ and $\rho_2(\gamma_i)=\exp\Big(-\int_{\gamma_i}\theta_0+\bar\theta_0\Big)\in \mathbb R_+$. See \cite[Page 21]{Simp92} for this property for the curve case. 

For any $s\in S$, $\sigma_{\Dol}(s)=[(E_s,\theta_s)]\in M_{\Dol}(X_s)$ is the isomonodromic deformation. Thus we obtain the corresponding monodromy representation $\rho_s\in M_{\mathrm{B}}(X_s)\cong (\mathbb C^*)^{2p}$. By assumption, $\{\rho_s(\gamma_i)\}_{i=1}^{2p}$ is independent of $s$. For $\lambda\in\mathbb R^*$, consider $\lambda\cdot\sigma_{\Dol}(s)=[(E_s,\lambda\cdot\theta_s)]$ and the corresponding monodromy representation $\rho_s^{(\lambda)}$. It suffices to prove that, for each $\gamma_i$ with $i=1,2,\cdots,2p$, $\rho_s^{(\lambda)}(\gamma_i)$ is independent of $s$. By definition, $\{\rho_{1,s}^{(\lambda)}(\gamma_i)\}_{i=1}^{2p}=\{\rho_{1,s}(\gamma_i)\}_{i=1}^{2p}\in (S^1)^{2p}$, so $\rho_{1,s}^{(\lambda)}$ is independent of $s$. Moreover, for each $\gamma_i$ with $i=1,2,\cdots,2p$,
\begin{align*}\rho_{2,s}^{(\lambda)}(\gamma_i)=\exp\Big(-\int_{\gamma_i}(\lambda\theta_s+\overline{\lambda\theta_s})\Big)=(\rho_{2,s}(\gamma_i))^\lambda,
\end{align*}
so $\rho_{2,s}^{(\lambda)}$ is independent of $s$.
\end{proof}

\subsection{The Zariski tangent space of the non-abelian Noether--Lefschetz locus}\label{sec_43}

Let $(\E,\bar\p_0,\theta_0)$ be a graded stable Higgs bundle on $X_0$ with weight $w$. First, we have canonical decomposition
$\End \E=\bigoplus\limits_{l=-w}^w(\End\E)^{l,-l}$,
where $$(\End \E)^{l,-l}:=\{f\in \End \E\mid f(\E^{p,w-p})\subset \E^{p+l,w-p-l}\}.$$
We extend the grading by setting $(\End \E)^{l,-l}=0$ for all $|l|>w$, so that we have 
\begin{align*}
\End \E = \bigoplus_{l\in \mathbb{Z}} (\End\E)^{l,-l}.
\end{align*}
Recall the setup for the non-abelian Noether--Lefschetz locus given in equation \eqref{eq_NLlocus}. Let $$\theta_{0,*}:H^1(T_{X_0})\to \mathbb{H}^1\bigl(X_0,(\End E_0,\operatorname{ad}(\theta_0))\bigr)$$ be the non-abelian Higgs field defined in \eqref{eq_nabHiggs} and let $\tau_0:T_0^{1,0}S\to H^1(T_{X_0})$ be the Kodaira--Spencer map of the family $X/S$ at $0$. By Proposition~\ref{prop_firstob}, the condition $\theta_{0,*}\circ\tau_0(v)=0$ is equivalent to the first-order holomorphicity of the Dolbeault $\sigma_{\Dol}$ along any tangent vector $v\in T_0S$. By virtue of this result, the first-order truncated case of Theorem~\ref{thm_main_tru} reduces to the following theorem. For all higher-order truncations, we use the same strategy to establish the full statement of Theorem~\ref{thm_main_tru} in Section~\ref{sec_high_hol_proof}.

\begin{thm}\label{thm_Zar_NL}
The Zariski tangent space of $\mathcal{NL}$ at $0\in S$ is  
\begin{align*}T^{\mathrm{Zar}}_0\mathcal{NL}=\{v\in T^{1,0}_0S\mid  v\in \ker(\theta_{0,*}\circ\tau_0)\}.
\end{align*}
\end{thm}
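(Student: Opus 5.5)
The plan is to give $\mathcal{NL}$ its natural analytic-space structure as the locus where the isomonodromic flat bundle admits a Griffiths-transversal filtration. The Zariski tangent space is then the set of $v$ along which the Hodge filtration extends to first order, and I will compute the obstruction to that extension and compare it with $\theta_{0,*}\circ\tau_0(v)$. Fix $v\in T^{1,0}_0S$ and let $X_1$ be the first-order deformation of $X_0$ along $v$, represented by $\eta=t\eta_1$ with $[\eta_1]=\tau_0(v)$. The isomonodromic flat bundle over $X_1$ is $(\mathbb V\otimes\mathcal O_{X_1},\nabla_1)$. Concretely, $\mathcal{NL}$ is cut out in $S$ by the relative flag variety of filtrations of $\mathbb V\otimes\mathcal O_{X_s}$ by holomorphic subbundles with the Hodge ranks, satisfying $\nabla F^p\subset F^{p-1}\otimes\Omega^1$. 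The fibres of the forgetful map to $S$ are discrete near the initial filtration, since the Hodge filtration of a $\mathbb C$-PVHS is rigid, being determined by the $\mathbb C^*$-fixed Higgs bundle. So the scheme-theoretic germ of $\mathcal{NL}$ at $0$ is the image of this flag space, and
\[
v\in T^{\mathrm{Zar}}_0\mathcal{NL}
\iff
F^\bullet \text{ lifts to a transversal filtration of } \mathbb V\otimes\mathcal O_{X_1}.
\]
I will take this as the definition, checking it against Simpson's structure \cite[Theorem 12.1]{Simp97}.

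Next I compute the obstruction to lifting $F^\bullet$. Working on the fixed $C^\infty$ bundle $\mathcal V$ with $D=\Ch+\theta_0+\theta_0^\st$, the holomorphic structure on $X_1$ is $\pi''_\eta D=D''+\eta\circ D'$. A filtration $F^\bullet$ by $D''$-holomorphic subbundles stays holomorphic to first order iff the term $\eta_1\circ D'$, projected to $\operatorname{Hom}(F^p,\mathcal V/F^p)$, becomes exact after adding a correction $u\in\A^0(\End\mathcal V/F^0\End\mathcal V)$. Since $\Ch^{1,0}$ preserves $F^\bullet$ and $\theta_0$ lowers $F$ by exactly one, the surviving term is $\eta_1(\theta_0)\in\A^{0,1}((\End\E)^{-1,1})$. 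Griffiths transversality of the lifted filtration is then automatic up to correcting by $u$ in the $\operatorname{ad}(\theta_0)$-direction. Hence the obstruction is the class
\[
[(0,\eta_1(\theta_0))]\in \mathbb H^1\bigl(X_0,\ \operatorname{gr}^{-1}\text{ of } (\End E_0,\operatorname{ad}\theta_0)\bigr),
\]
where $\operatorname{gr}^{-1}$ is the complex $(\End\E)^{-1,1}\xrightarrow{\operatorname{ad}\theta_0}(\End\E)^{-2,2}\otimes\Omega^1$ resolved by $\bar\p_0$. Up to checking that only the $F^{-1}/F^0$ part matters at first order, $v$ is tangent to $\mathcal{NL}$ iff this graded class vanishes.

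To conclude, I compare the graded class with $\theta_{0,*}\tau_0(v)$. By the direct check in the proof of Proposition~\ref{prop_firstob}, $\theta_{0,*}\tau_0(v)=[(0,\eta_1(\theta_0))]$ in the full $\mathbb H^1(X_0,(\End E_0,\operatorname{ad}\theta_0))$. So the theorem follows once the natural map from the $\operatorname{gr}^{-1}$ hypercohomology to the total one is injective. Because $(E_0,\theta_0)$ is graded, the $\mathbb C^*$-action makes the Dolbeault complex $(\End E_0,\operatorname{ad}\theta_0)$ a direct sum over the weights $l$ of the subcomplexes $(\End\E)^{l,-l}\to(\End\E)^{l-1,1-l}\otimes\Omega^1\to\cdots$. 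Hypercohomology commutes with this direct sum, so the weight $-1$ piece is a direct summand and injectivity is immediate. This also gives Hodge-theoretic strictness.

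The main obstacle I expect is the first step: making precise that the analytic structure on $\mathcal{NL}$ is the one given by liftability of the filtration, in particular that $\operatorname{gr}^{-1}$ controls first-order liftability with no contribution from deeper pieces. If that proves delicate, the fallback is a direct argument. For $\supset$: if $\theta_{0,*}\tau_0(v)=0$, then $\ob_1=0$ by Proposition~\ref{prop_firstob}, and \eqref{eq_van_ob1} produces $g_1$ with $[\theta_0,g_1]=0$ and $\eta_1(\theta_0)=\tfrac12\bar\p_0g_1$. Projecting $g_1$ to the weight components then exhibits an explicit graded first-order deformation $\mathscr G$ commuting with the $S^1$-action. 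For $\subset$: a first-order deformation of the VHS is holomorphic, so $\ob_1=0$.
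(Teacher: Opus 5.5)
Your route differs from the paper's. The paper never puts an analytic structure on $\mathcal{NL}$; it reads tangency as first-order gradedness of $\sigma_{\Dol}$ and works entirely on the Dolbeault side. For $v\in\ker(\theta_{0,*}\circ\tau_0)$ it takes $\bg_1=2f_1+c\,\id$ from Step~2 of Proposition~\ref{prop_firstob}. A weight argument plus stability then forces $\bg_1=(\bg_1)^{1,-1}$, and the gauge $\mathscr U=\id-\frac t2 g_1-\frac{\bar t}2\bg_1$ writes down an explicit first-order graded family. The converse inclusion is quoted from \cite[Theorem C]{HSZ}.

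You instead run the non-abelian analogue of Griffiths' Hodge-locus computation on the de Rham side. The obstruction to lifting $F^\bullet$ is $[\eta_1(\theta_0)]\in\mathbb H^1(\mathrm{DR}(\End\mathcal V)/F^0)$, which you push down to the weight $-1$ summand of the Higgs complex. This gives an explicit analytic structure and both inclusions at once, with no harmonic-metric expansion. The paper's route is the one that extends to Theorem~\ref{thm_main_tru} and ties tangency directly to holomorphicity of $\sigma_{\Dol}$. Your fallback is essentially the paper's proof of $\supseteq$, and your $\subseteq$ via $\ob_1$ is a cleaner substitute for the citation.

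Two of your justifications do not do what you claim, though both are repairable.

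\emph{The identification of $\mathcal{NL}$ with the image of the flag space.} Rigidity of the Hodge filtration only controls fibres over points already in $\mathcal{NL}$. The real question is whether a Griffiths-transversal filtration near $F^\bullet$ makes the bundle a PVHS at all. In general it does not: opers near a uniformizing VHS are a counterexample. It does hold here because the flat polarization $Q$ of $\mathbb V$ is unchanged under isomonodromy. For $F_s^\bullet$ uniformly close to $F^\bullet$, the pieces $F_s^p\cap(F_s^{p+1})^{\perp_Q}$ are mutually $Q$-orthogonal and span. They keep their alternating definiteness by openness on the compact $X_0$, and $(F_s^\bullet)^{\perp_Q}$ supplies the antiholomorphic conjugate filtration. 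Moreover, discreteness of fibres is not enough for $T^{\mathrm{Zar}}_0$ of the image to equal the set of liftable $v$. You need the flag germ to be unramified over $S$, i.e.\ $\mathbb H^0(\mathrm{DR}(\End\mathcal V)/F^0)=0$.

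\emph{The injectivity in your second step.} You need $\mathbb H^1(\operatorname{gr}^{-1}_F)\to\mathbb H^1(\mathrm{DR}(\End\mathcal V)/F^0)$ to be injective. The weight splitting of the Dolbeault complex does not supply this and gives no strictness; it only gives injectivity into the full Higgs $\mathbb H^1$, which you need for the final comparison with $\theta_{0,*}\tau_0(v)$. The injectivity is nonetheless true. Its kernel is the image of $\mathbb H^0(\mathrm{DR}(\End\mathcal V)/F^{-1})$. This group, like $\mathbb H^0(\mathrm{DR}(\End\mathcal V)/F^0)$, vanishes: filtering by $F$, every graded contribution is $\mathbb H^0(\operatorname{gr}^p_F)$ with $p<0$, i.e.\ a Higgs endomorphism of negative weight, which is zero by stability.

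With these two points supplied, your argument is complete.
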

\begin{proof}
By the same argument as in \cite[Theorem C]{HSZ}, for any $v\in T^{1,0}_0S$, if $v\notin \ker(\theta_{0,*}\circ\tau_0)$, then the isomonodromically deformed Higgs bundle $\sigma_{\Dol}$ is not graded (in fact not nilpotent) along $v$. Thus $v\notin T^{\mathrm{Zar}}_0\mathcal{NL}$.

If $v\in \ker(\theta_{0,*}\circ\tau_0)$, we prove the isomonodromically deformed Higgs bundle $\sigma_{\Dol}$ coincides with a holomorphic family of graded Higgs bundles up to first order along $v$. Let $[\eta_1]:=\tau_0(v)\in H^1(T_{X_0})$. The condition $\theta_{0,*}([\eta_1])=0$ is equivalent to the existence of $f_1\in \A^0(\End\E)$ such that \eqref{eq_ob1vanish} holds after applying $\st$. Thus, by step 2 in the proof of Proposition~\ref{prop_firstob}, we know that $\bg_1=2f_1+c\cdot \id$, for some $c\in\C$. Thus \eqref{eq_ob1vanish} gives
\begin{align}\label{eq_ob1_iden}\bar\eta_1(\theta_0^\st)=\cc\Ch^{1,0}\bg_1,\quad 0=\cc[\theta_0^\st,\bg_1].\end{align}
Write $\bg_1=\sum_a {(\bg_1)}^{a,-a}$.  Since
$\bar\eta_1(\theta_0^\st)\in \A^{1,0}\bigl((\End E)^{1,-1}\bigr)$, for every $a\neq1$ we have
\[
  \Ch^{1,0}{(\bg_1)}^{a,-a}=0,
  \qquad
  [\theta_0^\st,{(\bg_1)}^{a,-a}]=0.
\]
By stability, ${(\bg_1)}^{a,-a}=c\cdot\id$ for some $c\in \C$. Thus after normalization, we have \begin{align}\label{eq_bg1_type}\bg_1=(\bg_1)^{1,-1}.\end{align}

Substituting \eqref{eq_ob1_iden} into the deformation terms $\varphi_1,\psi_1,\alpha_1,\beta_1$ in Proposition~\ref{prop_Taylor_Exp_DolHig}, we have
\begin{align*}\varphi_1=\cc[\theta_0,\bg_1],&\quad \psi_1=\cc\bar\p_0\bg_1,\\
\alpha_1=-\eta_1(\theta_0)-\cc\Ch^{1,0}g_1,&\quad \beta_1=-\cc[\theta_0^\st,g_1]+\bar\p_0 g_1.
\end{align*}
By taking $\mathscr U:=\id-\frac{t}{2}g_1-\frac{\bar t}{2}\bg_1$, we have on $X_1$
\begin{align}\label{eq_firstdeforma}\mathscr U^{-1}\circ\bar\p_t\circ\mathscr U=\pi''_{\eta}\Ch+t\cdot\cc[\theta_0^\st,g_1],\qquad
\mathscr U^{-1}\circ\theta_t\circ\mathscr U=\theta_0-t\eta_1(\theta_0)-t\cdot\cc\Ch^{1,0}g_1.
\end{align}

By \eqref{eq_ob1_iden}, it follows immediately that
\begin{align*}
[\theta_0^\st,g_1]\in\mathcal{A}^{1}((\End\E)^{0,0}) \qquad \text{and} \qquad \eta_1(\theta_0)+\cc\Ch^{1,0}g_1\in \mathcal{A}^{1}((\End\E)^{-1,1}).
\end{align*}
Together with \eqref{eq_firstdeforma}, this implies that the isomonodromically deformed Higgs bundle preserves a graded structure up to first order.\end{proof}

We will extend this method to higher orders to answer Question~\ref{EsnKer}.

\newpage

\section{Higher-order holomorphicity}\label{sec_high_hol_proof}
In this section we assume the isomonodromic deformation is holomorphic up to order $n$. Using the obstruction theory developed in Section~\ref{sec_ob_hol}, we give a simple gauge normal form of the isomonodromic deformation in Theorem~\ref{thm_sec3_ordered_normal_forms}. This normal form together with the previous higher order theory proves Theorem~\ref{thm_main_tru} on the holomorphic isomonodromic deformation of a graded Higgs bundle and Theorem~\ref{thm_sec4_arbitrary_nilpotency} on the holomorphic isomonodromic deformation of a generically regular nilpotent Higgs bundle simultaneously. Their proofs are given in Section~\ref{subsec_sec3_graded_case} and Section~\ref{sec:regular-nilpotent-case}.

\subsection{The holomorphic Taylor coefficients}
\label{subsec_sec3_holomorphic_taylor}
Assume the isomonodromic deformation $(\E,\bar\p_t,\theta_t)$ of $(\E,\bar\p_0,\theta_0)$ is holomorphic on $X_n$. Recall $G:=\id+\sum\limits_{i=1}^n t^i g_i$, where $g_i$ is defined in \eqref{metric}. Let
\begin{align*}
\mathscr{U} = \operatorname{id} + \sum_{i=1}^n \bar{t}^i u_i + \sum_{i=0}^{n-1} \sum_{j=1}^{n-i} t^j \bar{t}^i u_{j\bar i}\in\A^0(\End\E)\otimes B_n,
\end{align*}
be a gauge transformation such that $(\E,\mathscr U^{-1}\circ \bar\p_t\circ\mathscr U,\mathscr U^{-1}\circ \theta_t\circ\mathscr U)$ satisfies \eqref{eq_hol}; the existence of such $\mathscr U$ is guaranteed by the holomorphicity assumption.
We may choose $u_{j\bar 0}$ freely without affecting the validity of \eqref{eq_hol}. We will take $V:=\id+\sum_{j=1}^n t^j u_{j\bar 0}$ to be the unique solution of the following equation:
\begin{align}\label{eq_V}\mathfrak EV=-\cc V\mathfrak EY, \quad V(0)=\id,
\end{align}
where we define $y_m:=x_m^\st$ (the notion $x_m$ is given in \eqref{eq_beta_change}), and 
\begin{equation}  \label{eq_sec3_Y_E_def}
  Y:=\sum_{m=1}^nt^my_m,
  \qquad
  \mathfrak E:=t\frac{\partial}{\partial t}.
\end{equation}
We remark that the same recursive argument as in Lemma \ref{lem_wonder_iden} shows \eqref{eq_V} admits a unique solution expressed in terms of $y_1,\cdots,y_n$. Similarly, we have $W\in\mathcal A^0(\End\E)\otimes A_n$ as a unique solution of 
\begin{align}\label{eq_W}\mathfrak EW=-\cc(\mathfrak EY)W,\qquad W(0)=\id.
\end{align}
Note that $y_m$ is a polynomial of $g_1,\cdots ,g_m$. Similarly, we can prove
\begin{equation}  \label{eq_sec3_G_factorization}
  G=W^{-1}V^{-1}.
\end{equation}

By the holomorphicity Proposition~\ref{prop_criterion_holo_deform} and Proposition~\ref{prop_Taylor_Exp_DolHig}, there exists $A'\in \mathcal A^{1,0}(\End\E)\otimes (t)$ and $B'\in \A^{0,1}(\End\E)\otimes (t)$ such that
\begin{align*}
\mathscr U^{-1}\circ\theta_t\circ \mathscr U=&  V^{-1}(\theta_0+A)V=P'_\eta(\theta_0+A'),\\
\mathscr U^{-1}\circ\bar\p_t\circ\mathscr U=&  V^{-1}\circ (\pi''_\eta\Ch+B)\circ V
  =\pi''_\eta\Ch+P''_\eta(B').
\end{align*}
Note that the explicit formulas of $A,B$ are given in Proposition~\ref{prop_Taylor_Exp_DolHig}, and hence we have the following formulas on $A'$ and $B'$:
\begin{align}
  AV+[\theta_0,V]&=VA',
  \label{eq_sec3_A_gauge_equation}\\
  \bar\p_0 V+\eta(\Ch^{1,0}V)+B^{0,1}V&=VB'.
  \label{eq_sec3_B_gauge_equation}
\end{align}
We prove the following normalization theorem for holomorphic isomonodromic deformation
\begin{thm}[Higher-order normalization]
\label{thm_sec3_ordered_normal_forms}Assume the isomonodromic deformation $(\E,\bar\p_t,\theta_t)$ of $(\E,\bar\p_0,\theta_0)$ is holomorphic on $X_n$. Let $A'$ and $B'$ be the coefficients defined by
\eqref{eq_sec3_A_gauge_equation} and \eqref{eq_sec3_B_gauge_equation}.
Then
\begin{equation}  \label{eq_sec3_Aprime_Bprime_ordered}
  A'=-\frac{1}{2}\Ch^{1,0}Y,
  \qquad
  B'=-\frac{1}{2}[\theta_0^\st,Y].
\end{equation}
Consequently,
\begin{align*}
  \mathscr U^{-1}\circ\theta_t\circ \mathscr U
  =P'_\eta\left(\theta_0-\frac{1}{2}\Ch^{1,0}Y\right),\quad 
\mathscr U^{-1}\circ\bar\p_t\circ \mathscr U
  =\pi''_\eta\Ch
    +P''_\eta\left(-\frac{1}{2}[\theta_0^\st,Y]\right).
\end{align*}
\end{thm}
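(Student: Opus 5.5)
Since the deformation is holomorphic on $X_n$, it is in particular modulo-$(t)$-holomorphic. Proposition~\ref{prop_ob} therefore applies in every order $i\le n$, and the equations \eqref{eq_obvan} hold for $x_1,\dots,x_n$. My plan is to obtain the holomorphic-side statement by applying the $h_0$-adjoint $\st$ to these identities, rather than redoing the obstruction theory with $t$ in place of $\bar t$.

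Applying $\st$ to \eqref{eq_obvan}, with $y_m=x_m^\st$, should yield the mirror identities
\begin{align*}
\eta_i(\theta_0)-\tfrac12\sum_{j+l=i}\eta_j(\Ch^{1,0}y_l)+\tfrac14\sum_{j+l=i}\tfrac{j}{i}[[\theta_0^\st,y_l],y_j]&=\tfrac12\bar\p_0 y_i,\\
\tfrac14\sum_{j+l=i}\tfrac{j}{i}[\Ch^{1,0}y_l,y_j]&=\tfrac12[\theta_0,y_i].
\end{align*}
In deriving these I will use $(\Ch^{1,0}g)^\st=\bar\p_0 g^\st$, $(\theta_0)^\st=\theta_0^\st$, and the sign rule for adjoints of brackets. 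The bracket ordering should flip compatibly, because $(x_l x_j)^\st=y_j y_l$; this is exactly why $V$ is defined with $Y$ on the right in \eqref{eq_V}, mirroring $R$ in \eqref{eq_ordered_R}. In compact form, using $\mathfrak E$, these become the $t$-analogues of \eqref{eq_compact_obvan}, with $(\bar\eta,\bar\p_0,\theta_0^\st,\Ch^{1,0},\theta_0)$ replaced by $(\eta,\Ch^{1,0},\theta_0,\bar\p_0,\theta_0^\st)$.

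Next I will recompute $A'$ and $B'$ from \eqref{eq_sec3_A_gauge_equation} and \eqref{eq_sec3_B_gauge_equation}, running the Euler-operator argument of Steps 1--2 of Proposition~\ref{prop_ob} with $(L,R,X,\mathscr E)$ replaced by $(W,V,Y,\mathfrak E)$. Using the factorization \eqref{eq_sec3_G_factorization} $G=W^{-1}V^{-1}$ and the formulas \eqref{eq_sec3_A_def} and \eqref{eq_sec3_B_def}, I expect analogues of \eqref{eq_S_alpha_beta}, namely
\[
2V^{-1}(AV+[\theta_0,V]) = W(\text{op})(W^{-1})+V^{-1}(\text{op}')V+(\eta\text{-terms}),
\]
and similarly for $B$. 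Applying $\mathfrak E$ and using \eqref{eq_V} and \eqref{eq_W} should give recursions analogous to \eqref{eq_radial_B_identity}. In these recursions the quadratic bracket terms $\tfrac14\sum\tfrac ji[\cdot,\cdot]$ and the $\eta$-contraction terms are precisely those killed by the adjointed identities above. After cancellation, order by order in $t^k$, the result should be $[t^k]A'=-\tfrac12\Ch^{1,0}y_k$ and $[t^k]B'=-\tfrac12[\theta_0^\st,y_k]$. I will organize this as an induction on $k$: the inductive hypothesis handles the lower-order commutator terms, exactly as the terms of order $O(\bar t^{k+1})$ were discarded in Step~1. Once $A'$ and $B'$ are identified, the final displayed formulas follow immediately from the defining equations for $\mathscr U^{-1}\circ\theta_t\circ\mathscr U$ and $\mathscr U^{-1}\circ\bar\p_t\circ\mathscr U$.

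The main obstacle is the bookkeeping in the $B$-equation. There the term $\eta(\Ch^{1,0}V)$ and the $\eta$-part of $B$ must combine with $\eta_i(\theta_0)$, via the first adjointed identity, so as to leave exactly $-\tfrac12[\theta_0^\st,Y]$ with no residual $\eta$-term. I would first check orders $k=1,2$ by hand against \eqref{eq_firstdeforma}, which already gives $B'=-\tfrac12[\theta_0^\st,g_1]t$ at first order. The $\eta$-contraction identity $[\eta(\alpha),\beta]=-[\alpha,\eta(\beta)]$ is the tool for moving these terms into place.
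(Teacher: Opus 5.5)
Your plan matches the paper's proof: take the $h_0$-adjoint of \eqref{eq_obvan} to get \eqref{eq_sec3_holo_radial_1}--\eqref{eq_sec3_holo_radial_2}, then run the Euler recursion for $(W,V,Y,\mathfrak E)$ using $G=W^{-1}V^{-1}$. Instead of an order-by-order induction, the paper packages this as Euler systems for $F_\pm,S_\pm$ with explicit solutions $F_+=\Ch^{1,0}Y$, $S_+=0$, $S_-=-[\theta_0^\st,Y]$ plus uniqueness, together with the pure Leibniz identities $2A'=-F_+$ and $B'=\tfrac12S_-+\tfrac12\eta(S_+)$. One correction to your bookkeeping forecast: the $\eta$-terms in $B'$ (from $\eta(\Ch^{1,0}V)$ and the $\eta$-part of $B$) collapse to $\tfrac12\eta(S_+)$ and vanish simply because $S_+=0$, which uses only \eqref{eq_sec3_holo_radial_2}; the first identity \eqref{eq_sec3_holo_radial_1} is needed only to rewrite $F_-=2\eta(\theta_0)-\eta(\Ch^{1,0}Y)$, so that $[F_-,\mathfrak EY]=0$ follows from \eqref{eq_sec3_holo_radial_2} contracted with $\eta_p$ (this needs only $\eta_p([\omega,y])=[\eta_p(\omega),y]$ for $0$-forms $y$, not your identity for two $1$-forms).
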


To prove this theorem, it suffices to prove \eqref{eq_sec3_Aprime_Bprime_ordered}. For the calculation below, set
\begin{align*}
  F_+
  &:=( [\theta_0,W]-\Ch^{1,0}W )W^{-1}
     -V^{-1}(\Ch^{1,0}V+[\theta_0,V]),\\
  S_+
  &:=( [\theta_0,W]-\Ch^{1,0}W )W^{-1}
     +V^{-1}(\Ch^{1,0}V+[\theta_0,V]),\\
  F_-
  &:=( [\theta_0^\st,W]-\bar\p_0 W )W^{-1}
     -V^{-1}(\bar\p_0 V+[\theta_0^\st,V]),\\
  S_-
  &:=( [\theta_0^\st, W]-\bar\p_0 W)W^{-1}
     +V^{-1}(\bar\p_0 V+[\theta_0^\st,V]).
\end{align*}
We first give the following lemma.

\begin{lemma}
\label{lem_sec3_Euler_solutions}
One can verify that the Leibniz rule and \eqref{eq_V} and \eqref{eq_W} give the Euler systems
\begin{align}
  \mathfrak EF_+
  &=\Ch^{1,0}(\mathfrak EY)+\frac{1}{2}[S_+,\mathfrak EY],
  &
  \mathfrak ES_+
  &=-[\theta_0,\mathfrak EY]+\frac{1}{2}[F_+,\mathfrak EY],
  \label{eq_sec3_Euler_plus}\\
  \mathfrak EF_-
  &=\bar\p_0 (\mathfrak EY)+\frac{1}{2}[S_-,\mathfrak EY],
  &
  \mathfrak ES_-
  &=-[\theta_0^\st,\mathfrak EY]+\frac{1}{2}[F_-,\mathfrak EY].
  \label{eq_sec3_Euler_minus}
\end{align}
The unique solutions of the above Euler systems with zero constant term are
\begin{align}
  F_+&=\Ch^{1,0}Y,
  &S_+&=0,
  \label{eq_sec3_plus_solution}\\
  F_-&=\bar\p_0 Y-\frac{1}{2}\mathfrak E^{-1}
          [[\theta_0^\st,Y],\mathfrak EY]
       =2\eta(\theta_0)-\eta(\Ch^{1,0}Y),
  &S_-&=-[\theta_0^\st,Y].
  \label{eq_sec3_minus_solution}
\end{align}
\end{lemma}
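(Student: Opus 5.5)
The plan has two stages. First I derive the Euler systems \eqref{eq_sec3_Euler_plus}--\eqref{eq_sec3_Euler_minus} by the same Leibniz computation as in Step~1 of the proof of Proposition~\ref{prop_ob}. Then I identify the solutions by a uniqueness argument in the $t$-degree, supplying the extra relations among the $y_m$ from the vanishing of all obstruction classes.

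\textbf{Deriving the Euler systems.} From \eqref{eq_V} and \eqref{eq_W} I get the companion identities
\[
\mathfrak E V^{-1}=\tfrac12(\mathfrak EY)V^{-1},\qquad \mathfrak E W^{-1}=\tfrac12 W^{-1}(\mathfrak EY),
\]
exactly as in Lemma~\ref{lem_wonder_iden}. I apply $\mathfrak E$ to $([\theta_0,W]-\Ch^{1,0}W)W^{-1}$, using that $\mathfrak E$ commutes with $\Ch^{1,0}$ and with $\operatorname{ad}\theta_0$. This gives
\[
\tfrac12\bigl[([\theta_0,W]-\Ch^{1,0}W)W^{-1},\mathfrak EY\bigr]+\tfrac12\bigl(\Ch^{1,0}(\mathfrak EY)-[\theta_0,\mathfrak EY]\bigr),
\]
the analogue of \eqref{eq_Ealpha}. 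Applying $\mathfrak E$ to $V^{-1}(\Ch^{1,0}V+[\theta_0,V])$ similarly gives the analogue of \eqref{eq_Ebeta}. Taking the difference and the sum yields the two equations in \eqref{eq_sec3_Euler_plus}; the same computation with $(\bar\p_0,\theta_0^\st)$ in place of $(\Ch^{1,0},\theta_0)$ yields \eqref{eq_sec3_Euler_minus}. This part is routine.

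\textbf{Uniqueness.} Write $F_\pm=\sum_m t^m F_{\pm,m}$ and $S_\pm=\sum_m t^m S_{\pm,m}$. The $t^m$-coefficient of each Euler equation reads $mF_{\pm,m}=(\text{terms built from } y_m \text{ and } F_{\pm,j},S_{\pm,j} \text{ with } j<m)$, because $\mathfrak EY=O(t)$. Hence a solution with zero constant term is determined recursively, and it suffices to check that the proposed formulas \eqref{eq_sec3_plus_solution} and \eqref{eq_sec3_minus_solution} satisfy the systems.

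\textbf{Checking the proposed solutions.} Substituting \eqref{eq_sec3_plus_solution}, the first equation of \eqref{eq_sec3_Euler_plus} is immediate. The second reduces to the identity
\[
[\theta_0,\mathfrak EY]=\tfrac12[\Ch^{1,0}Y,\mathfrak EY],
\]
which does \emph{not} hold formally; it encodes holomorphicity. To obtain it I will use that holomorphicity on $X_n$ forces modulo-$(t)$-holomorphicity. By Propositions~\ref{prop_existence_ob} and \ref{prop_ob}, the relations \eqref{eq_obvan} then hold for every $i\le n$. I apply the $h_0$-adjoint $\star_{h_0}$ to them, using $x_m^\st=y_m$, $(\bar\p_0 x)^\st=\Ch^{1,0}y$, $(\theta_0^\st)^\st=\theta_0$ and $(\bar\eta(\theta_0^\st))^\st=\eta(\theta_0)$. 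The second equation of \eqref{eq_obvan} becomes the $t$-expansion $\tfrac14\mathfrak E^{-1}[\Ch^{1,0}Y,\mathfrak EY]=\tfrac12[\theta_0,Y]$, up to the sign conventions for brackets of forms. Applying $\mathfrak E$ to this, and using the induction relations to discard the extra terms, should give exactly the needed identity. For the minus system, the conjugated first equation of \eqref{eq_obvan} gives
\[
\eta(\theta_0)-\tfrac12\eta(\Ch^{1,0}Y)+\tfrac14\mathfrak E^{-1}\bigl[[\theta_0^\st,Y],\mathfrak EY\bigr]=\tfrac12\bar\p_0 Y,
\]
which is precisely the equality of the two expressions for $F_-$ in \eqref{eq_sec3_minus_solution}. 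With $S_-=-[\theta_0^\st,Y]$, the first equation of \eqref{eq_sec3_Euler_minus} becomes an identity after applying $\mathfrak E$ to the first form of $F_-$. The second equation of \eqref{eq_sec3_Euler_minus} reduces to $-[\theta_0^\st,\mathfrak EY]=-[\theta_0^\st,\mathfrak EY]+\tfrac12[F_-,\mathfrak EY]$, that is, $[F_-,\mathfrak EY]=0$. I would try to prove this from the same conjugated relations together with the argument in Step~2 of Proposition~\ref{prop_ob}, which shows that brackets of lower-order obstruction expressions against $\mathfrak EY$ vanish.

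The main obstacle is this last step: the bookkeeping of signs under $\star_{h_0}$ for brackets of $1$-forms, and verifying that the conjugated obstruction relations imply $[F_-,\mathfrak EY]=0$ and the $S_+$-identity to all orders. I expect this to require an induction on $m$ in parallel with the recursive uniqueness argument.
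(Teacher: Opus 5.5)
Your proposal is essentially the paper's proof: derive the Euler systems by the Leibniz rule, get uniqueness recursively in the $t$-degree, and verify the candidate solutions using the $\star_{h_0}$-conjugates of the relations \eqref{eq_obvan}, which hold to all orders by holomorphicity. The step you flag as the main obstacle closes in one line with no induction. Applying $\mathfrak E$ to the conjugated second relation gives $[\theta_0,\mathfrak EY]=\tfrac12[\Ch^{1,0}Y,\mathfrak EY]$ directly, with no extra terms to discard. Since contraction with $\eta$ commutes with bracketing against the $0$-form $\mathfrak EY$, contracting that identity with $\eta$ yields $[2\eta(\theta_0)-\eta(\Ch^{1,0}Y),\mathfrak EY]=0$. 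By the second expression for $F_-$, this is $[F_-,\mathfrak EY]=0$. This is exactly what the paper does, and it is the $\star_{h_0}$-conjugate of the Step~2 computation you point to.
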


\begin{proof}
By holomorphicity, we have \eqref{eq_compact_obvan}. Taking $\st$ in \eqref{eq_compact_obvan} gives
\begin{align}
  \eta(\theta_0)
  -\frac{1}{2}\eta(\Ch^{1,0}Y)
  +\frac{1}{4}\mathfrak E^{-1}
       [[\theta_0^\st,Y],\mathfrak EY]
  &=\frac{1}{2}\bar\p_0 Y,
  \label{eq_sec3_holo_radial_1}\\
  \frac{1}{4}\mathfrak E^{-1}[\Ch^{1,0}Y,\mathfrak EY]
  &=\frac{1}{2}[\theta_0,Y].
  \label{eq_sec3_holo_radial_2}
\end{align}
Applying $\mathfrak E$ to \eqref{eq_sec3_holo_radial_2} gives $
  \frac{1}{2}[\Ch^{1,0}Y,\mathfrak EY]=[\theta_0,\mathfrak EY].$ Hence $F_+=\Ch^{1,0}Y$ and $S_+=0$ solve
\eqref{eq_sec3_Euler_plus}.

For the minus system, substitute $S_-=-[\theta_0^\st,Y]$ into the first equation
of \eqref{eq_sec3_Euler_minus}.  We obtain
\[
  F_-=\bar\p_0 Y-\frac{1}{2}\mathfrak E^{-1}
       [[\theta_0^\st,Y],\mathfrak EY].
\]
Equation \eqref{eq_sec3_holo_radial_1} gives the second expression for $F_-$.
It remains to check
\begin{equation}
  [F_-,\mathfrak EY]=0.
  \label{eq_sec3_Fminus_commute}
\end{equation}
Recall $\eta=\sum_{p=1}^nt^p\eta_p$.  Applying $\mathfrak E$ to
\eqref{eq_sec3_holo_radial_2} and taking the coefficient of $t^q$ gives
\[
  2q[\theta_0,y_q]
  =\sum_{r+s=q}s[\Ch^{1,0}y_r,y_s]\quad\text{and hence}\quad 
  2q[\eta_p(\theta_0),y_q]
  =\sum_{r+s=q}s[\eta_p(\Ch^{1,0}y_r),y_s].
\]
After summing over $p+q=m$, this is exactly the vanishing of the coefficient
of $t^m$ in
\[
  [2\eta(\theta_0)-\eta(\Ch^{1,0}Y),\mathfrak EY].
\]
Thus \eqref{eq_sec3_Fminus_commute} holds.  The second equation in
\eqref{eq_sec3_Euler_minus} follows.  Uniqueness in both systems is obtained
recursively from the Euler equations.
\end{proof}

\begin{proof}[Proof of Theorem~\ref{thm_sec3_ordered_normal_forms}]
Using the factorization
\eqref{eq_sec3_G_factorization} and \eqref{eq_sec3_A_gauge_equation}, a direct Leibniz-rule calculation gives $ 2A'=-F_+.$
Lemma~\ref{lem_sec3_Euler_solutions} therefore yields $A'=-\frac12\Ch^{1,0}Y$.

Similarly, 
\eqref{eq_sec3_G_factorization} and \eqref{eq_sec3_B_gauge_equation} give $
  B'=\frac{1}{2}S_-+\frac{1}{2}\eta(S_+).$
Since $S_+=0$ and $S_-=-[\theta_0^\st,Y]$, we obtain
$B'=-\frac12[\theta_0^\st,Y]$.  This proves \eqref{eq_sec3_Aprime_Bprime_ordered} and the rest is clear by definition.
\end{proof}

\subsection{The graded case}
\label{subsec_sec3_graded_case}
Let $(\E,\bar\p_0,\theta_0)$ be a graded stable Higgs bundle on $X_0$ with weight $w$. In this section we give a short proof of Theorem~\ref{thm_main_tru} via the normalization Theorem~\ref{thm_sec3_ordered_normal_forms}.

\begin{proposition}[Type properties]
\label{prop_sec3_purity_x}
Recall the type decomposition $\End \E=\bigoplus\limits_{l=-w}^w(\End\E)^{l,-l}$ defined in Section~\ref{sec_43}. For every $1\leq m\leq n$, we have
\begin{equation}
  x_m=(x_m)^{1,-1}.
  \label{eq_sec3_x_pure}
\end{equation}
\end{proposition}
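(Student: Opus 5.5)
Under the standing assumption of this section (the isomonodromic deformation is holomorphic on $X_n$), I would prove $x_m=(x_m)^{1,-1}$ by strong induction on $m$, using the vanishing equations \eqref{eq_obvan} of Proposition~\ref{prop_ob}. Holomorphicity implies modulo-$(t)$-holomorphicity, so \eqref{eq_obvan} holds for every $1\le i\le n$. The base case $m=1$ is \eqref{eq_bg1_type} from the proof of Theorem~\ref{thm_Zar_NL}, since $x_1=\bg_1$. For the induction step I assume $x_l\in\A^0((\End\E)^{1,-1})$ for all $l<m$ (modulo $\C\cdot\id$) and read off the types in \eqref{eq_obvan} for $i=m$.

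The bookkeeping rests on the type rules. Because $(\E,\bar\p_0,\theta_0)$ is graded with respect to $h_0$, the grading is $h_0$-orthogonal and $\Ch$-parallel. Consequently $\bar\p_0$ and $\Ch^{1,0}$ preserve types, $\operatorname{ad}\theta_0$ shifts type by $-1$, $\operatorname{ad}\theta_0^\st$ shifts it by $+1$, and the contractions $\bar\eta_j$ act only on the form part and so preserve types. Moreover $\theta_0^\st$ has type $(1,-1)$, and brackets add types. Using these rules in the first equation of \eqref{eq_obvan} with $i=m$: $\bar\eta_m(\theta_0^\st)$ has type $(1,-1)$; each $\bar\eta_j(\bar\p_0x_l)$ with $l<m$ has type $(1,-1)$; and each $[[\theta_0,x_l],x_j]$ with $l,j<m$ has type $0+1=1$. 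So the left side is pure of type $(1,-1)$, hence $\Ch^{1,0}(x_m)^{a,-a}=0$ for every $a\ne1$. In the second equation, $[\bar\p_0x_l,x_j]$ has type $2$, so $[\theta_0^\st,(x_m)^{a,-a}]=0$ for every $a\ne 1$. As a consistency check, the $(2,-2)$ component of the second equation reads $[\theta_0^\st,(x_m)^{1,-1}]=\frac12\sum\frac{j}{m}[\bar\p_0x_l,x_j]$, which is consistent and places no constraint on the other components.

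To finish the step, I would show that each $(x_m)^{a,-a}$ with $a\ne1$ is a scalar. Since $\Ch^{1,0}$ and $\operatorname{ad}\theta_0^\st$ are the operators of the conjugate Higgs bundle, $f:=(x_m)^{a,-a}$ lies in $\mathbb H^0$ of the conjugate deformation complex. The cleanest route is to take $\star_{h_0}$: this gives $\bar\p_0 f^\st=0$ and $[\theta_0,f^\st]=0$, so $f^\st$ is an endomorphism of the stable Higgs bundle $(E_0,\theta_0)$ and therefore a scalar. Alternatively, $f$ satisfies \eqref{exact_harm2}, and Lemma~\ref{vanlem} gives the same conclusion, exactly as in the proof of Theorem~\ref{thm_Zar_NL}. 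A scalar lies in type $(0,0)$, and it can be absorbed by the normalization modulo $\C\cdot\id$ already used in Proposition~\ref{prop_ob}.

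I expect the main obstacle to be making this normalization rigorous. The change of variables \eqref{eq_beta_change} is nonlinear, so discarding a scalar in $x_m$ changes the $\bg_k$ for $k>m$ by terms involving products with the identity. I would check that adding $c\cdot\id$ to $x_m$ leaves every bracket in \eqref{eq_obvan} unchanged; it alters only $\Ch^{1,0}x_m$ and $[\theta_0^\st,x_m]$ by zero. Hence the normalized $x_m$ still satisfy \eqref{eq_obvan}, and the induction can be carried out directly on these normalized representatives.
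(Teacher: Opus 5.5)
Your proposal is correct and follows the paper's route: induction on $m$ starting from \eqref{eq_bg1_type}, comparing types in \eqref{eq_obvan} to get $\Ch^{1,0}x_m^{a,-a}=0$ and $[\theta_0^\st,x_m^{a,-a}]=0$ for $a\neq1$, then using stability to show that these components are scalars. Your extra checks make explicit what the paper's two-line proof leaves implicit in ``after omitting a scalar in $\C\cdot\id$'': the type rules rest on the $h_0$-orthogonality of the grading, and \eqref{eq_obvan} is unchanged when the $x_l$ are shifted by constant multiples of $\id$.
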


\begin{proof}
We argue by induction on $m$.  For $m=1$, we have $x_1=\bg_1$ by \eqref{eq_beta_change}. Thus $x_1=x_1^{1,-1}$ by \eqref{eq_bg1_type}. Assume now that $x_k=x_k^{1,-1}$ for all $k<m$.  By \eqref{eq_obvan} and induction, we have
\begin{align*}
  \Ch^{1,0}x_m^{a,-a}=0,\ [\theta_0^\st,x_m^{a,-a}]=0
  \qquad(a\neq1).
\end{align*}
By stability, we have $x_m=x_m^{1,-1}$ after omitting a scalar in $\C\cdot\id$.
\end{proof}

\begin{proof}[Proof of Theorem~\ref{thm_main_tru}]
By holomorphicity on $X_n$, we have Theorem~\ref{thm_sec3_ordered_normal_forms}. Thus $(\E,\bar\p_t,\theta_t)$ is gauge equivalent to the Higgs bundle
\begin{align*}
  (\E, \mathscr U^{-1}\circ\bar\p_t\circ \mathscr U, \mathscr U^{-1}\circ\theta_t\circ \mathscr U)=(\E, \pi''_\eta\Ch
    +P''_\eta\left(-\frac{1}{2}[\theta_0^\st,Y]\right),P'_\eta\left(\theta_0-\frac{1}{2}\Ch^{1,0}Y\right)).
\end{align*}
Taking $\st$ in
\eqref{eq_sec3_x_pure}, we obtain
$ y_m=y_m^{-1,1}$, and $Y\in\A^0\bigl((\End E)^{-1,1}\bigr)\otimes(t).$ Thus we have
\[
  \theta_0-\frac{1}{2}\Ch^{1,0}Y
  \in\A^{1,0}\bigl((\End E)^{-1,1}\bigr)\otimes A_n,\ \text{ and } [\theta_0^\st,Y]\in\A^{0,1}\bigl((\End E)^{0,0}\bigr)\otimes(t).
\]
Therefore $(\E, \mathscr U^{-1}\circ\bar\p_t\circ \mathscr U, \mathscr U^{-1}\circ\theta_t\circ \mathscr U)$ is a graded Higgs bundle on $X_n$ and so is $(\E,\bar\p_t,\theta_t)$.
\end{proof}

\subsection{The generically regular nilpotent case}
\label{sec:regular-nilpotent-case}
Let $(\E,\bar\p_0,\theta_0)$ be a stable generically regular nilpotent Higgs bundle of rank $r$ on $X_0$. In this section we give a proof of Theorem~\ref{thm_sec4_arbitrary_nilpotency} via the normalization Theorem~\ref{thm_sec3_ordered_normal_forms}. Under the assumption of Theorem~\ref{thm_sec4_arbitrary_nilpotency}, the isomonodromic deformation $(\E,\bar\p_t,\theta_t)$ is holomorphic along $X_n$. By Theorem~\ref{thm_sec3_ordered_normal_forms}, we have the following normalization by the holomorphicity on $X_n$
\begin{equation}  \label{eq_sec4_M_pure_normal_form}
  \mathscr U^{-1}\theta_t\mathscr U=P'_\eta(M),
  \qquad
  M:=\theta_0-\frac{1}{2}\Ch^{1,0}Y.
\end{equation}
It is enough to prove nilpotency for $M$. By definition and \eqref{eq_sec3_holo_radial_2}, one has
\begin{equation}
  [M,\mathfrak EY]=0,
  \qquad
  \mathfrak EM=-\frac{1}{2}\Ch^{1,0}\mathfrak EY.
  \label{eq_sec4_radial_identities}
\end{equation}

\begin{lemma}[Trace normalization]
\label{lem_sec4_trace_normalization}
For each $1\leq i\leq n$, $\tr(y_i)$ is a constant function on $X_0$, where $y_i$ is defined in \eqref{eq_sec3_Y_E_def}. 
\end{lemma}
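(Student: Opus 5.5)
We show that each $\tr(y_i)$ satisfies an elliptic equation whose only solutions are constants. Since $y_i=x_i^{\st}$ and $\tr(a^{\st})=\overline{\tr(a)}$, it is enough to show that $\tr(x_i)$ is constant. We take traces in the vanishing equations \eqref{eq_obvan}, which hold for all $i\le n$ because the deformation is holomorphic on $X_n$ (via Proposition~\ref{prop_ob} and Proposition~\ref{prop_obstruction_class_higher_dim}).

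The second equation of \eqref{eq_obvan} has right-hand side $\cc[\theta_0^\st,x_i]$, and the trace of a graded commutator vanishes, so it gives nothing directly. The first equation is more useful. The terms $\frac14\sum\frac{j}{i}[[\theta_0,x_l],x_j]$ are commutators, so they have trace zero. This gives
\[
\tfrac12\,\p\,\tr(x_i)=\tr\bigl(\bar\eta_i(\theta_0^\st)\bigr)-\tfrac12\sum_{j+l=i}\tr\bigl(\bar\eta_j(\bar\p_0 x_l)\bigr)
=\overline{\tr(\eta_i(\theta_0))}-\tfrac12\sum_{j+l=i}\bar\eta_j\bigl(\bar\p\,\tr(x_l)\bigr),
\]
using that trace commutes with contraction and with the induced connections on $\End\E$.

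We then argue by induction on $i$. Assume $\tr(x_l)$ is constant for all $l<i$. Then the sum on the right vanishes, and we are left with $\p\,\tr(x_i)=2\,\overline{\tr(\eta_i(\theta_0))}$. Here $\tr(\theta_0)=0$ because $\theta_0$ is nilpotent, so $\tr(\eta_i(\theta_0))=\eta_i(\tr\theta_0)=0$. Hence $\p\,\tr(x_i)=0$, so $\tr(x_i)$ is an antiholomorphic function on the compact manifold $X_0$, and therefore constant. The base case $i=1$ is the same computation, using \eqref{eq_ob1exact}, which says $\bar\eta_1(\theta_0^\st)=\cc\Ch^{1,0}\bg_1$ with $x_1=\bg_1$.

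The step I expect to need the most care is checking that taking the trace commutes with the Chern connection and with the contractions on $\End\E$. In particular, one has to confirm that $\tr(\Ch^{1,0}x)=\p\,\tr(x)$, which holds because the trivial determinant piece carries the trivial connection. With that in place, the argument reduces to the nilpotency of $\theta_0$ and the maximum principle for holomorphic functions on a compact manifold.
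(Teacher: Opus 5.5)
Your proof is correct and is essentially the paper's argument. The paper takes traces in the $\st$-conjugated equation \eqref{eq_sec3_holo_radial_1}, kills the commutator terms and $\tr(\eta(\theta_0))=\eta(\tr\theta_0)=0$, and inducts on the $t$-coefficients of $\bar\p\tr(Y)+\eta(\p\tr(Y))=0$; you do the same computation on the unconjugated equations \eqref{eq_obvan} and conjugate at the end.
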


\begin{proof}
Taking traces in \eqref{eq_sec3_holo_radial_1}, using
$\tr(\eta(\theta_0))=\eta(\tr\theta_0)=0$ and the vanishing of the trace of a
commutator, gives
\begin{align}\label{eq_tr_y}
  \bar\p \tr(Y)+\eta\bigl(\p\tr(Y)\bigr)=0.
\end{align}
Thus $\tr(y_1)$ is a holomorphic function on $X_0$ by comparing the $ t$-coefficient of \eqref{eq_tr_y}. Hence $\tr(y_1)$ is a constant. By comparing the $t^2$-coefficient of \eqref{eq_tr_y}, we have the constancy of $\tr(y_2)$ and so on.
\end{proof}

The generically regular nilpotent assumption gives the following property:
\begin{lemma}
\label{lem_sec4_generically_regular}
Suppose $(E_0,\theta_0)$ is generically regular nilpotent on $X_0$. Then there is a dense Zariski-open subset $X_0^{\mathrm{reg}}\subset X_0$ such that, for every
$x\in X_0^{\mathrm{reg}}$, the set
\begin{align*}
  \mathcal R_x
  :=\left\{\xi\in T_{X_0,x}\;\middle|\;
      \theta_0(\xi)(x)\text{ is regular nilpotent}\right\}
\end{align*}
is a nonempty Zariski-open subset of
$T_{X_0,x}$.
\end{lemma}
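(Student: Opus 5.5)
The plan is to translate ``regular nilpotent'' into a pointwise polynomial non-vanishing condition and then use the nonvanishing of the holomorphic section $\theta_0^{r-1}=\mathrm{Sym}^{r-1}\theta_0$ twice: once globally on $X_0$ to produce $X_0^{\mathrm{reg}}$, and once fiberwise on $T_{X_0,x}$ to show that $\mathcal R_x$ is nonempty and Zariski open.

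\textbf{Step 1 (pointwise criterion).} Since $\theta_0\wedge\theta_0=0$, the endomorphisms $\theta_0(\xi)(x)$, $\xi\in T_{X_0,x}$, pairwise commute. Hence $\mathrm{Sym}^{r-1}\theta_0$ is a well-defined holomorphic section of $\operatorname{Hom}(\mathrm{Sym}^{r-1}T_{X_0},\End E_0)$, and for every $\xi\in T_{X_0,x}$ we have
\[
\bigl(\mathrm{Sym}^{r-1}\theta_0\bigr)(\xi^{r-1})=\theta_0(\xi)(x)^{r-1}.
\]
Nilpotency of the Higgs bundle means that the characteristic polynomial of $\theta_0$ is trivial, so each $\theta_0(\xi)(x)$ is a nilpotent endomorphism of the $r$-dimensional space $E_{0,x}$. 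A nilpotent $N$ on an $r$-dimensional space is regular, i.e.\ has a single Jordan block, if and only if $N^{r-1}\neq0$. Therefore
\[
\mathcal R_x=\{\xi\in T_{X_0,x}\mid \theta_0(\xi)(x)^{r-1}\neq0\}.
\]
This is the complement of the common zero set of the matrix entries of the homogeneous degree-$(r-1)$ polynomial map $\xi\mapsto\theta_0(\xi)(x)^{r-1}$. In particular $\mathcal R_x$ is Zariski open in $T_{X_0,x}$ for every $x$.

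\textbf{Step 2 (choice of $X_0^{\mathrm{reg}}$).} Let $Z\subset X_0$ be the zero locus of the section $\mathrm{Sym}^{r-1}\theta_0$. By GAGA, $Z$ is a closed algebraic subset. By the generic regularity assumption the section is not identically zero, and since $X_0$ is irreducible (connected and smooth), $Z$ is a proper subset. We set $X_0^{\mathrm{reg}}:=X_0\setminus Z$, which is then a dense Zariski-open subset.

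\textbf{Step 3 (nonemptiness of $\mathcal R_x$).} Fix $x\in X_0^{\mathrm{reg}}$. The linear map $\mathrm{Sym}^{r-1}T_{X_0,x}\to\End E_{0,x}$ is nonzero. In characteristic zero, polarization shows that the pure powers $\xi^{r-1}$ span $\mathrm{Sym}^{r-1}T_{X_0,x}$. Hence some $\xi$ satisfies $\theta_0(\xi)(x)^{r-1}\neq0$, so $\mathcal R_x\neq\emptyset$. Being a nonempty Zariski-open subset of the irreducible space $T_{X_0,x}$, it is also dense.

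I do not expect a real obstacle here. The only points that need care are the commutativity used to make $\mathrm{Sym}^{r-1}\theta_0$ well defined, and the polarization step used to pass from a nonzero symmetric map to a nonzero value on some pure power $\xi^{r-1}$.
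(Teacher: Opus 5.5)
Your proof is correct: the criterion that a nilpotent endomorphism of an $r$-dimensional space is regular if and only if its $(r-1)$-st power is nonzero, the choice of $X_0^{\mathrm{reg}}$ as the complement of the zero locus of the nonzero section $\mathrm{Sym}^{r-1}\theta_0$, and polarization to find a pure power $\xi^{r-1}$ with nonzero image are exactly what is needed. The paper gives no separate proof and treats the lemma as an immediate consequence of the definition of generic regular nilpotency, so your three steps spell out that implicit argument in essentially the same way.
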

Fix $x\in X_0^{\mathrm{reg}}$ and $\xi\in\mathcal R_x$.  Choose a local
holomorphic vector field $v$ of $X_0$ with $v(x)=\xi$.  After shrinking the
neighborhood of $x$, we may assume that $ N_v:=\theta_0(v)$
is regular nilpotent throughout that neighborhood.  Put
\begin{align*} 
  M_v:=M(v)=N_v+O(t).
\end{align*}
By the regular nilpotency, we can choose a local section $e$ of $\E$ such that $e,N_ve,\ldots,N_v^{r-1}e$ is a frame.  Since $M_v=N_v+O(t)$, we know that $e,M_ve,\ldots,M_v^{r-1}e$ is also a frame.

\begin{lemma}
\label{lem_sec4_cyclic_centralizer}
There exist smooth functions $a_0,\cdots,a_{r-1}$ of $X_0$ near $x$ with coefficients in $A_n$ such that
\begin{equation}  \label{eq_sec4_Z_polynomial}
  \mathfrak EY=\sum_{j=0}^{r-1}a_jM_v^j.
\end{equation}
Moreover, $a_j=O(t)$ for any $j$.
\end{lemma}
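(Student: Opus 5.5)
We will deduce the statement from the first identity in \eqref{eq_sec4_radial_identities}, namely $[M,\mathfrak EY]=0$, together with the classical fact that the centralizer of a regular (cyclic) endomorphism consists of polynomials in it. The work is pointwise linear algebra over the Artin ring $A_n$, with smooth dependence on the point of $X_0$.

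First, contract the relation $[M,\mathfrak EY]=0$ with the local holomorphic vector field $v$. Since $\mathfrak EY$ is a section of $\End\E$ with no form part, this gives $[M_v,\mathfrak EY]=0$ as $A_n$-valued smooth endomorphisms near $x$. The bracket here is the graded commutator from Section~\ref{sec_ob_mod_t_hol}, and for a $1$-form paired with a $0$-form it reduces to the ordinary commutator after contraction.

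Next, we know that $e,M_ve,\ldots,M_v^{r-1}e$ is a frame of $\E\otimes A_n$, because modulo $t$ it is the frame attached to $N_v$ and $t$ is nilpotent. So we can expand
\[
(\mathfrak EY)e=\sum_{j=0}^{r-1}a_jM_v^je
\]
with unique coefficients $a_j\in\mathcal C^\infty\otimes A_n$, which are smooth because the frame is smooth. Set $P:=\mathfrak EY-\sum_j a_jM_v^j$. Then $P$ commutes with $M_v$, since both $\mathfrak EY$ and the polynomial in $M_v$ do, and $Pe=0$. It follows that $P(M_v^ke)=M_v^kPe=0$ for every $k$, so $P$ kills the frame and hence $P=0$. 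This proves \eqref{eq_sec4_Z_polynomial}.

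Finally, for $a_j=O(t)$: $\mathfrak EY=\sum_m m t^m y_m$ has no constant term. Reducing the expansion modulo $t$ gives $0=\sum_j a_j(0)N_v^je$, and linear independence of the frame $N_v^je$ forces $a_j(0)=0$.

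The only point needing care is the first step: making sure the commutator identity \eqref{eq_sec4_radial_identities}, derived from \eqref{eq_sec3_holo_radial_2}, really gives the plain commutator $M_v\circ\mathfrak EY-\mathfrak EY\circ M_v=0$ after contraction with $v$, with the correct sign conventions. Once that is checked, the rest is the standard cyclic-vector argument carried over to coefficients in $A_n$.
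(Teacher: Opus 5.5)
Your proposal is correct and follows the paper's proof essentially step for step. You expand $(\mathfrak EY)e$ in the cyclic frame $e,M_ve,\ldots,M_v^{r-1}e$, use $[M_v,\mathfrak EY]=0$ to extend the identity to the whole frame, and get $a_j=O(t)$ from $\mathfrak EY=O(t)$ and linear independence modulo $t$. The sign check you flag is harmless: the graded sign is $(-1)^{1\cdot 0}=+1$, and applying $\mathfrak E$ to \eqref{eq_sec3_holo_radial_2} gives $[\theta_0,\mathfrak EY]=\frac12[\Ch^{1,0}Y,\mathfrak EY]$, which is exactly the plain commutator identity $[M,\mathfrak EY]=0$.
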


\begin{proof}
There are functions $a_j$ for which
\[
  (\mathfrak EY)e=(a_0\id+a_1M_v+\cdots+a_{r-1}M_v^{r-1})e,
\]
because $e,M_ve,\ldots,M_v^{r-1}e$ is a frame. The first identity in \eqref{eq_sec4_radial_identities} gives
$[M_v,\mathfrak EY]=0$. Thus we have \eqref{eq_sec4_Z_polynomial} by considering $(\mathfrak EY)(M_v^ie)$. Since $\mathfrak EY=O(t)$ and $\id,N_v,\ldots,N_v^{r-1}$ are linearly independent, we have $a_j=O(t)$.
\end{proof}

For $m\geq2$, the second identity of \eqref{eq_sec4_radial_identities} gives
\begin{equation}  \label{eq_sec4_radial_hmv_start}
  \mathfrak E \tr(M_v^m)
  =-\frac{m}{2}\tr\bigl(M_v^{m-1}D^{1,0}_{h_0,v}\mathfrak EY\bigr).
\end{equation}
Differentiating \eqref{eq_sec4_Z_polynomial}, we obtain
\[
  D^{1,0}_{h_0,v}\mathfrak EY
  =\sum_{j=0}^{r-1}(\p_va_j)M_v^j
   +\sum_{j=1}^{r-1}a_jD^{1,0}_{h_0,v}(M_v^j)=\sum_{j=0}^{r-1}(\p_va_j)M_v^j+\sum_{j=1}^{r-1}a_j\sum_{\ell=0}^{j-1}
    M_v^\ell(D^{1,0}_{h_0,v}M_v)M_v^{j-1-\ell}.
\]
Substituting this into \eqref{eq_sec4_radial_hmv_start} and taking trace, we have
\begin{equation}  \label{eq_sec4_power_sum_system}
  \mathfrak E \tr(M_v^m)
  =-\frac{m}{2}\sum_{j=0}^{r-1}(\p_va_j)\tr(M_v^{m+j-1})
   -\frac{m}{2}\sum_{j=1}^{r-1}
       \frac{j}{m+j-1}a_j\p_v \tr(M_v^{m+j-1}).
\end{equation}

\begin{proof}[Proof of Theorem~\ref{thm_sec4_arbitrary_nilpotency}]
Fix $x\in X_0^{\mathrm{reg}}$ and a regular direction
$\xi\in\mathcal R_x$, and choose $v$ as above.  We first prove,
simultaneously for every $m\geq1$, that
\begin{equation}
  \tr(M_v^m)=O(t^{n+1}).
  \label{eq_sec4_hmv_final_order}
\end{equation}
Since $N_v$ is nilpotent and by \eqref{eq_sec4_M_pure_normal_form}, we have $ \tr(M_v^m)=O(t)$ for every $m\geq1$. When $m=1$, by Lemma~\ref{lem_sec4_trace_normalization}, we have
\[\tr(M_v)=\tr N_v-\frac{1}{2}\p_v\tr(Y)=0.\]

Assume inductively on $1\leq q\leq n$, that $\tr(M_v^m)=O(t^q)$, for every $m\geq2.$ Then $\p_v \tr(M_v^m)=O(t^q)$ for all $m$.  By Lemma~\ref{lem_sec4_cyclic_centralizer}, every term on the right-hand side of
\eqref{eq_sec4_power_sum_system} is $O(t^{q+1})$.  Therefore
\[
  \mathfrak E \tr(M_v^m)=O(t^{q+1})
  \qquad(m\geq2).
\]
 This proves
\eqref{eq_sec4_hmv_final_order}. By Cayley--Hamilton's theorem, we have $ M_v^r=0
  \pmod{t^{n+1}}$. By the definition of $M$ and the density in Lemma~\ref{lem_sec4_generically_regular}, we obtain that $\theta_t^r=O(t^{n+1})$.
\end{proof}

\appendix
\newpage

\section{Real-analytic manifolds and real-analytic deformations}
\label{sec_complexification}

In this appendix we spell out the convention used in the paper for
real-analytic deformations of holomorphic objects.  The guiding principle is
that a real-analytic family is obtained by restricting a holomorphic family on
the complexification of the real-analytic base to the diagonal.

\subsection{Real-analytic functions and complexification}

Let \(\mathbb D_{\epsilon}\subset \mathbb R^2\) be a sufficiently small disk
centered at the origin. Write $z=x+\sqrt{-1}y,\quad
  \bar z=x-\sqrt{-1}y.$ A function $f:\mathbb D_{\epsilon}\longrightarrow \C$ is said to be real-analytic near \(0\) if it admits a convergent expansion
\[
  f(x,y)=\sum_{i,j\geq 0}a_{ij}z^i\bar z^j
\]
near \(0\).  Two such functions define the same germ at \(0\) if they agree on
some neighborhood of \(0\).

Let $\mathcal R_0$ be the ring of germs of \(\C\)-valued real-analytic functions at
\(0\in\mathbb R^2\), and let \(\mathcal R\) be the corresponding sheaf on
\(\mathbb R^2\).

We regard $\C=\mathbb R^2$ as the complex line with its standard complex structure, and $\overline{\C}$ as the same real vector space with the opposite complex structure.  We write
\(z\) for the holomorphic coordinate on \(\C\), and write \(\zeta\) for the
holomorphic coordinate on \(\overline{\C}\).  Under the diagonal embedding
\[
  i:\mathbb R^2\hookrightarrow \C\times\overline{\C},
  \qquad
  (x,y)\longmapsto (z,\bar z),
\]
one has $ i^*z=z,\quad i^*\zeta=\bar z.$

\begin{lemma}
\label{lem:real-analytic-functions-complexification}
There is a natural isomorphism of sheaves of \(\C\)-algebras
\[
  \mathcal R\simeq i^*\mathcal O_{\C\times\overline{\C}}.
\]
Equivalently, every real-analytic germ in the variables \((z,\bar z)\) is the
restriction of a holomorphic germ in the independent variables \((z,\zeta)\).
\end{lemma}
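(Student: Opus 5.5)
We need to prove the lemma identifying $\mathcal R$ with $i^*\mathcal O_{\C\times\overline{\C}}$.

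The plan is to construct the map stalkwise and check bijectivity via power series. First I define the morphism of sheaves of $\C$-algebras $i^{-1}\mathcal O_{\C\times\overline{\C}}\to\mathcal R$ by pullback of functions along the real-analytic embedding $i(x,y)=(z,\bar z)$. Concretely, a holomorphic germ $F(z,\zeta)=\sum a_{ij}z^i\zeta^j$ converging on a polydisc $|z|,|\zeta|<\epsilon$ restricts to $f(x,y)=F(z,\bar z)=\sum a_{ij}z^i\bar z^j$. This series converges absolutely for $|z|<\epsilon$. Since $z=x+\sqrt{-1}y$ and $\bar z=x-\sqrt{-1}y$ are real-linear in $(x,y)$, the restriction is a real-analytic germ. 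The map is clearly compatible with restriction to open sets and is a ring homomorphism, so it gives a morphism of sheaves. It is enough to check that it is an isomorphism on stalks at every point; by translation it suffices to treat $0$.

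For surjectivity at $0$, I use the definition adopted in the paper. A real-analytic germ $f$ has a convergent expansion $\sum a_{ij}z^i\bar z^j$ near $0$. The key point is that convergence on a small disc implies absolute convergence of $\sum |a_{ij}|\rho^{i+j}$ for some $\rho>0$. I would get this by rewriting $f$ as a convergent power series in $(x,y)$, where absolute convergence on a small polydisc is standard, and then substituting $x=(z+\zeta)/2$, $y=(z-\zeta)/(2\sqrt{-1})$. Then $F(z,\zeta):=\sum a_{ij}z^i\zeta^j$ is holomorphic on the bidisc of radius $\rho$ and restricts to $f$.

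For injectivity at $0$, suppose $F$ is holomorphic near $0\in\C^2$ with $F(z,\bar z)\equiv0$ near $0$. The totally real subspace $\{\zeta=\bar z\}$ is a uniqueness set. Writing $F=\sum a_{ij}z^i\zeta^j$, the restriction is $\sum a_{ij}z^i\bar z^j$. Applying $\partial_z^i\partial_{\bar z}^j$ at $0$ to this real-analytic function gives $i!\,j!\,a_{ij}=0$, because $\partial_z$ and $\partial_{\bar z}$ act on monomials $z^i\bar z^j$ as independent derivations. Hence $F=0$. The same argument also shows that the expansion coefficients $a_{ij}$ of a real-analytic germ are unique, so the map is well defined on germs.

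The only mildly delicate step is the absolute convergence on a bidisc in the surjectivity step. I would handle it by passing through real coordinates and the change of variables above, rather than working directly with the $z,\bar z$ expansion. Everything else is formal.
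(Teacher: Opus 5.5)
Your proposal is correct and follows the same route as the paper: restrict $\sum a_{ij}z^i\zeta^j$ to the diagonal $\zeta=\bar z$, and conversely replace $\bar z$ by the independent variable $\zeta$ to extend a convergent expansion $\sum a_{ij}z^i\bar z^j$. The paper leaves two points implicit that you make explicit: injectivity, which you obtain by applying $\partial_z^i\partial_{\bar z}^j$ at $0$, and convergence on a bidisc. The latter is actually immediate if the expansion converges absolutely, since $|z^i\bar z^j|=|z|^{i+j}$, so your detour through real coordinates is not needed.
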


\begin{proof}
The assertion is local.  A holomorphic function on a sufficiently small
polydisc in \(\C\times\overline{\C}\) has a convergent expansion $F(z,\zeta)=\sum\limits_{i,j\geq 0}a_{ij}z^i\zeta^j.$ Restricting to the diagonal \(\zeta=\bar z\) gives the real-analytic function $i^*F(z,\bar z)=\sum\limits_{i,j\geq 0}a_{ij}z^i\bar z^j.$

Conversely, if $f(z,\bar z)=\sum\limits_{i,j\geq 0}a_{ij}z^i\bar z^j$ is a convergent real-analytic germ, then $F(z,\zeta):=\sum\limits_{i,j\geq 0}a_{ij}z^i\zeta^j$ is a convergent holomorphic germ on a sufficiently small polydisc in
\(\C\times\overline{\C}\), and \(i^*F=f\).  These local constructions glue to the desired isomorphism. 
\end{proof}
By a similar argument, this complexification lemma also holds for real-analytic germs at $0\in\mathbb R^{2k}$.
Let \(M\) be a complex manifold and let \(\overline M\) be the conjugate
complex manifold.  We write \(M^\circ\) for the underlying real-analytic
manifold.  The diagonal embedding is
\[
  i_{M^\circ}:M^\circ\hookrightarrow M\times\overline M.
\]
We define the sheaf of \(\C\)-valued real-analytic functions on \(M^\circ\) by $ \mathcal R_{M^\circ}:=i_{M^\circ}^*\mathcal O_{M\times\overline M}.$ Thus the associated real-analytic space is the locally ringed space $(M^\circ,\mathcal R_{M^\circ}).$ The sheaf $\mathcal R_{M^{\circ}}$ carries a canonical complex conjugation. For example, the complex conjugation of $F(z,\bar z)=\sum a_{IJ}z^I\bar z^J$ is $\sum \overline{a_{IJ}}\bar z^I z^J$.

The complexified tangent bundle of \(M^\circ\) decomposes as
\[
  \C TM^\circ
  =
  TM^\circ\otimes_{\mathbb R}\C
  =
  T^{1,0}M^\circ\oplus T^{0,1}M^\circ.
\]
Under the above complexification, this is naturally identified with $
  i_{M^\circ}^*T_M\oplus i_{M^\circ}^*T_{\overline M}.$

\subsection{Infinitesimal real-analytic disks}

For \(n\geq 1\), define
\[
  A_n:=\C[t]/(t^{n+1}),
  \qquad
  \overline A_n:=\C[\bar t]/(\bar t^{n+1}),\qquad 
\text{and}\qquad
  B_n:=\C[t,\bar t]/(t,\bar t)^{n+1}.
\]
Here \(\bar t\) is a formal variable.  Geometrically, \(t\) is the holomorphic
coordinate on the first factor of the complexified disk, while \(\bar t\) is
the holomorphic coordinate on the conjugate factor.

For \(n=1\), we have $B_1=\C[t,\bar t]/(t,\bar t)^2
      =\C\oplus \C t\oplus \C \bar t,$ and its maximal ideal
\[
  I_{B_1}:=(t,\bar t)\cong \C t\oplus \C\bar t.
\]
satisfies \(I_{B_1}^2=0\). Equivalently, $B_1\simeq A_1\times_{\C}\overline A_1.$ We remark that $B_n$ also carries a canonical complex conjugation: the complex conjugation of $\sum a_{ij}t^i\bar t^j$ is $\sum \bar a_{ij}\bar t^i t^j$.

The definition of the complexified tangent
space gives the following property:
\begin{proposition}
\label{prop:real-analytic-first-jets}
Let \(x\in M^\circ\). We define
\[\mathrm{Hom}^{\dagger}_{\C\text{-alg}}
  \bigl(\mathcal R_{M^\circ,x},B_1\bigr)\]
to be $\C$-algebra morphism compatible with both complex conjugations. Precisely, for \[F\in R_{M^\circ,x},\quad\text{and}\quad\varphi\in \mathrm{Hom}^{\dagger}_{\C\text{-alg}}
  \bigl(\mathcal R_{M^\circ,x},B_1\bigr),\] we have $\varphi(\bar F)=\overline{\varphi(F)}$. We call it 
the set of first-order real-analytic arcs in
\(M^\circ\) through \(x\) and this is 
naturally identified with the complexified tangent space:
\[\mathrm{Hom}^{\dagger}_{\C\text{-alg}}
  \bigl(\mathcal R_{M^\circ,x},B_1\bigr)
  \cong\C T_xM^\circ\cong
  T_{M,x}\oplus T_{\overline M,x}.
\]

For general \(n\), we define the space of \(n\)-jets of real-analytic arcs
through \(x\) by
\[
  J_n^{\mathbb R\mathrm{an}}(M^\circ)_x
  :=
  \mathrm{Hom}^{\dagger}_{\C\text{-alg}}
  \bigl(\mathcal R_{M^\circ,x},B_n\bigr).
\]
\end{proposition}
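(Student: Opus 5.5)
We need to prove: $\mathrm{Hom}^\dagger_{\C\text{-alg}}(\mathcal R_{M^\circ,x},B_1)\cong \C T_xM^\circ\cong T_{M,x}\oplus T_{\overline M,x}$.

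We work in local holomorphic coordinates $z_1,\dots,z_k$ on $M$ centered at $x$. By Lemma~\ref{lem:real-analytic-functions-complexification} (in its $\mathbb R^{2k}$ form), every germ $F\in\mathcal R_{M^\circ,x}$ is the restriction of a convergent holomorphic germ $F(z,\zeta)$ on $M\times\overline M$ near $(x,\bar x)$. So $\mathcal R_{M^\circ,x}$ is a local $\C$-algebra with maximal ideal $\mathfrak m_x=(z_1,\dots,z_k,\bar z_1,\dots,\bar z_k)$, and it is isomorphic to the convergent power series ring $\C\{z,\zeta\}$.

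First step: any $\C$-algebra map $\varphi:\mathcal R_{M^\circ,x}\to B_1$ is local. The composite with $B_1\to\C$ is a character of a local ring, hence is evaluation at $x$; so $\varphi(\mathfrak m_x)\subset I_{B_1}$. Since $I_{B_1}^2=0$, the map $\varphi$ factors through $\mathcal R_{M^\circ,x}/\mathfrak m_x^2\cong \C\oplus\mathfrak m_x/\mathfrak m_x^2$. Such maps correspond bijectively to $\C$-linear maps $\mathfrak m_x/\mathfrak m_x^2\to I_{B_1}=\C t\oplus\C\bar t$. Equivalently, $\varphi(F)=F(x)+t\,\delta_1F+\bar t\,\delta_2F$, where $\delta_1,\delta_2$ are $\C$-derivations at $x$. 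These are exactly elements of $\mathrm{Der}_x(\mathcal R_{M^\circ,x},\C)=(\mathfrak m_x/\mathfrak m_x^2)^\vee$, and this space is $\C T_xM^\circ$, with basis $\p/\p z_j,\p/\p\bar z_j$. Using Taylor expansion, $\delta$ is determined by its values on $z_j,\bar z_j$, and any assignment of values gives a derivation.

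Second step: the $\dagger$ condition. The conjugation on $B_1$ swaps $t\leftrightarrow\bar t$ and conjugates coefficients, and $\overline{F}(x)=\overline{F(x)}$. So $\varphi(\bar F)=\overline{\varphi(F)}$ says exactly that $\delta_2 F=\overline{\delta_1\bar F}$, i.e.\ $\delta_2=\overline{\delta_1}$ as derivations. Hence $\varphi\mapsto\delta_1$ gives a bijection between $\dagger$-morphisms and $\C T_xM^\circ$, which is linear for the evident structure. Splitting $\delta_1=\sum a_j\p/\p z_j+\sum b_j\p/\p\bar z_j$ gives the decomposition $T^{1,0}\oplus T^{0,1}$. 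Under the complexification $\zeta\leftrightarrow\bar z$, this is $T_{M,x}\oplus T_{\overline M,x}$: the $z$-derivatives give $T_M$ and the $\zeta$-derivatives give $T_{\overline M}$. This matches the identification $\C TM^\circ\cong i^*T_M\oplus i^*T_{\overline M}$ stated above.

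The only delicate point is bookkeeping, not a real obstacle: one must check that the conjugation condition cuts the a priori pair $(\delta_1,\delta_2)\in(\C T_x)^2$ down to a single copy, rather than imposing a reality condition on $\delta_1$. This holds because $\delta_2$ is free to be determined by $\delta_1$. Naturality in $x$ and in holomorphic coordinate changes follows because derivations transform by the chain rule.
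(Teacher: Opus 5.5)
Your argument is correct and is the same route as the paper, which offers no argument beyond saying the identification follows from the definition of the complexified tangent space; you supply the omitted details: local $\C$-algebra maps to $B_1$ factor through $\mathcal R_{M^\circ,x}/\mathfrak m_x^2$, and the $\dagger$-condition forces $\delta_2=\overline{\delta_1}$, leaving $\delta_1\in\C T_xM^\circ$ free. The one point worth making explicit is that the $\dagger$-morphisms form only a real subspace of the complex vector space of all local maps, so the complex structure making $\varphi\mapsto\delta_1$ complex-linear is the one induced by the conjugation-compatible automorphisms $t\mapsto ct$, $\bar t\mapsto\bar c\,\bar t$ of $B_1$.
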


\subsection{Real-analytic deformations of complex manifolds}

Let \(X\) be a complex manifold.  A real-analytic deformation of \(X\) over
\(M^\circ\) is a diagram
\[
\begin{tikzcd}
X \arrow[r, hook] \arrow[d]
&
\mathcal X^\circ \arrow[r, hook] \arrow[d]
&
\mathcal X \arrow[d]
\\
\operatorname{Spec}\C \arrow[r, hook]
&
M^\circ \arrow[r, hook, "i_{M^\circ}"]
&
M\times\overline M
\end{tikzcd}
\]
such that \(\mathcal X\to M\times\overline M\) is a holomorphic deformation,
and $\mathcal X^\circ
  =
  \mathcal X\times_{M\times\overline M}M^\circ.$

Equivalently, a real-analytic deformation is the restriction to the diagonal
of a holomorphic deformation over the complexification of the base.

For an infinitesimal real-analytic deformation over \(B_n\), we have a
cartesian diagram
\[
\begin{tikzcd}
X \arrow[r, hook] \arrow[d]
&
X_n \arrow[d, "\pi"]
\\
\operatorname{Spec}\C \arrow[r, hook]
&
\operatorname{Spec}B_n.
\end{tikzcd}
\]

Assume now \(n=1\).  The cotangent sequence for
\(\pi:X_1\to\operatorname{Spec}B_1\) gives
\[
  0
  \longrightarrow
  I_{B_1}^{\vee}\otimes_{\C}\mathcal O_X
  \longrightarrow
  \Omega^1_{X_1}\big|_X
  \longrightarrow
  \Omega_X^1
  \longrightarrow
  0.
\]
Its extension class is the real-analytic Kodaira--Spencer class
\[
  \mathrm{KS}^{\mathbb R\mathrm{an}}(X_1)
  \in
  \mathrm{Ext}^1_X
  \bigl(\Omega_X^1,I_{B_1}^{\vee}\otimes_{\C}\mathcal O_X\bigr).
\]
Since \(X\) is smooth, $\mathrm{Ext}^1_X(\Omega_X^1,\mathcal O_X)
  =
  H^1(X,T_X).$ Therefore $\mathrm{KS}^{\mathbb R\mathrm{an}}(X_1)
  \in
  H^1(X,T_X)\otimes_{\C}I_{B_1}^{\vee}.$

Using $I_{B_1}^{\vee}\cong \C\,dt\oplus\C\,d\bar t,$ we obtain a decomposition
\[
  \mathrm{KS}^{\mathbb R\mathrm{an}}(X_1)
  =
  \mathrm{KS}_t(X_1)\,dt+\mathrm{KS}_{\bar t}(X_1)\,d\bar t\quad\text{where}\quad 
\mathrm{KS}_t(X_1),\mathrm{KS}_{\bar t}(X_1)\in H^1(X,T_X).
\]

If one records the \(\bar t\)-direction after conjugation,
then the same decomposition is written as
\[
  \mathrm{KS}^{\mathbb R\mathrm{an}}(X_1)
  \in
  H^1(X,T_X)
  \oplus
  H^1(\overline X,T_{\overline X}).
\]
The first summand is the holomorphic Kodaira--Spencer direction, and the
second summand is the anti-holomorphic direction.

\begin{proposition}
\label{prop:KS-real-analytic-manifold}
First-order real-analytic deformations of \(X\) over
\(\operatorname{Spec}B_1\) are classified by
\[
  H^1(X,T_X)\otimes_{\C}I_{B_1}^{\vee}
  \cong
  H^1(X,T_X)\oplus H^1(X,T_X).
\]
Equivalently, after conjugating the \(\bar t\)-direction, this classification
may be written as
\[
  H^1(X,T_X)\oplus H^1(\overline X,T_{\overline X}).
\]
\end{proposition}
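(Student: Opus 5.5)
The plan is to deform $X$ over the base $\operatorname{Spec}B_1$ and then split that base into its two factors. Since $B_1=\C[t,\bar t]/(t,\bar t)^2$ and $I_{B_1}^2=0$, we have $B_1\simeq A_1\times_{\C}\overline A_1$, which is a small extension of $\C$ by the two-dimensional vector space $I_{B_1}\cong\C t\oplus\C\bar t$. So we treat a first-order deformation over $B_1$ exactly as in the classical Kodaira--Spencer theory, with the one-dimensional tangent space $\C t$ replaced by $I_{B_1}$.

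\textbf{Step 1: a torsor under $H^1(X,T_X)\otimes_{\C}I_{B_1}$.} Because $X$ is smooth, any deformation $X_1\to\operatorname{Spec}B_1$ is locally trivial. We cover $X$ by Stein open sets $U_\alpha$ and fix isomorphisms $X_1|_{U_\alpha}\cong U_\alpha\times\operatorname{Spec}B_1$. On overlaps, two such trivializations differ by automorphisms of $\mathcal O_{U_{\alpha\beta}}\otimes B_1$ that reduce to the identity modulo $I_{B_1}$. These automorphisms have the form $\id+D$, where $D$ is a derivation with values in $\mathcal O\otimes I_{B_1}$, that is, a section of $T_X\otimes_{\C}I_{B_1}$.
\begin{itemize}
\item The cocycle condition follows from $I_{B_1}^2=0$.
\item Changing the local trivializations alters the cocycle by a coboundary.
\item Conversely, every \v{C}ech $1$-cocycle can be used to glue a deformation over $B_1$.
\end{itemize}
This identifies the isomorphism classes of first-order deformations with $H^1(X,T_X)\otimes_{\C}I_{B_1}$. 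Dually, the class is the extension class of the cotangent sequence written above. The Ext-description and the \v{C}ech description agree by the standard comparison: restrict the sequence to each $U_\alpha$, where it splits, and then compare the splittings on overlaps.

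\textbf{Step 2: the decomposition.} Choosing the basis $t,\bar t$ of $I_{B_1}$ gives
\[
H^1(X,T_X)\otimes_{\C}I_{B_1}\cong H^1(X,T_X)\oplus H^1(X,T_X).
\]
Concretely, the two components are the Kodaira--Spencer classes $\KS_t$ and $\KS_{\bar t}$ of the two pullbacks along $\operatorname{Spec}A_1\hookrightarrow\operatorname{Spec}B_1$ and $\operatorname{Spec}\overline A_1\hookrightarrow\operatorname{Spec}B_1$. By the fibre-product structure, the map to these two components is compatible with the classification.

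\textbf{Step 3: the conjugate description.} The conjugation map sends a holomorphic object on $X$ to an antiholomorphic one, and induces a $\C$-antilinear bijection $H^1(X,T_X)\to H^1(\overline X,T_{\overline X})$. In Dolbeault terms it is $[\eta]\mapsto[\bar\eta]$. We apply it to the $\bar t$-component, pairing it with the conjugation on $B_1$ that exchanges $t$ and $\bar t$. This records the antiholomorphic direction as a class on $\overline X$, matching the diagonal-complexification picture $\mathcal R_{M^\circ}=i^*\mathcal O_{M\times\overline M}$. It is also consistent with the identification $\C T_xM^\circ\cong T_{M,x}\oplus T_{\overline M,x}$ of Proposition~\ref{prop:real-analytic-first-jets}.

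\textbf{Main obstacle.} I expect the only real subtlety to be Step 3. One has to check that this conjugation convention is well defined on isomorphism classes, in other words that it respects the ``$\dagger$'' compatibility with complex conjugation. It should also agree with the way the $\bar t$-coefficient $\bar\eta$ enters the formulas of Lemma~\ref{lem_expanding}. I would handle this by writing both sides in Dolbeault representatives, where the identification is simply entrywise complex conjugation.
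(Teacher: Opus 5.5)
Your argument is essentially the paper's. The paper glues the trivial thickenings $U_i\times\operatorname{Spec}B_1$ over a Stein cover by $z_i=f_{ij}(z_j)+t\,v_{ij}(z_j)+\bar t\,w_{ij}(z_j)$, which is your $\id+D$ with $D=t\,v+\bar t\,w\in T_X\otimes_{\C}I_{B_1}$. It then reads off two \v{C}ech $1$-cocycles, accounts for coboundaries and the converse gluing, and obtains the $H^1(\overline X,T_{\overline X})$ form by conjugating the $\bar t$-part, exactly as in your Steps 1--3.

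You arrive at $H^1(X,T_X)\otimes_{\C}I_{B_1}$ rather than the stated $H^1(X,T_X)\otimes_{\C}I_{B_1}^{\vee}$. This is harmless, and your version is in fact the canonical one, since the conormal term of the cotangent sequence is $I_{B_1}\otimes_{\C}\mathcal O_X$, spanned by $dt,d\bar t$. With the basis $t,\bar t$, both forms give $H^1(X,T_X)\oplus H^1(X,T_X)$.
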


\begin{proof}
Take a sufficiently fine Stein open cover \(\{U_i\}\) of \(X\).  A first-order
deformation over \(B_1\) is obtained by gluing the trivial thickenings $U_i\times\operatorname{Spec}B_1$ by transition functions of the form
\[
  z_i
  =
  f_{ij}(z_j)
  +t\,v_{ij}(z_j)
  +\bar t\,w_{ij}(z_j),
\]
where \(f_{ij}\) are the original transition functions of \(X\), and
\(v_{ij},w_{ij}\) are holomorphic vector fields on \(U_{ij}\).  The cocycle
condition modulo \((t,\bar t)^2\) says precisely that
\[
  \{v_{ij}\}\in Z^1(\{U_i\},T_X),
  \qquad
  \{w_{ij}\}\in Z^1(\{U_i\},T_X).
\]
Changing the local trivializations modifies these cocycles by coboundaries.
Thus the isomorphism class of the deformation is determined by
\[
  \bigl([\{v_{ij}\}],[\{w_{ij}\}]\bigr)
  \in
  H^1(X,T_X)\oplus H^1(X,T_X).
\]
Conversely, any such pair of cocycles defines a first-order deformation by the
above gluing formula.  This proves the classification.  The final formulation
with $H^1(\overline X,T_{\overline X})$ is obtained by applying complex conjugation to the \(\bar t\)-part.
\end{proof}

\subsection{Real-analytic deformations of coherent sheaves on a fixed space}

Let \(X\) be a fixed complex manifold or, more generally, a fixed complex
scheme, and let \(\mathcal F\) be a coherent \(\mathcal O_X\)-module.

\begin{definition}
\label{def:ra-sheaf-fixed}
A real-analytic deformation of \(\mathcal F\) over \(M^\circ\), with the space
\(X\) fixed, is a diagram
\[
\begin{tikzcd}
\mathcal F \arrow[r] \arrow[d]
&
\mathcal F^\circ \arrow[r] \arrow[d]
&
\mathcal F^{\mathbb C} \arrow[d]
\\
X\times \operatorname{Spec}\C \arrow[r, hook]
&
X\times M^\circ \arrow[r, hook]
&
X\times M\times\overline M
\end{tikzcd}
\]
such that \(\mathcal F^{\mathbb C}\) is a coherent sheaf on
\(X\times M\times\overline M\), flat over \(M\times\overline M\), and $\mathcal F^\circ
  =
  \mathcal F^{\mathbb C}
  \big|_{X\times M^\circ}.$

\end{definition}

For the first-order base \(B_1\), this is the same as a coherent $\mathcal O_X\otimes_{\C}B_1$-module \(\mathcal F_1\), flat over \(B_1\), together with an isomorphism $\mathcal F_1\otimes_{B_1}\C\simeq \mathcal F.$ Because \(I_{B_1}^2=0\), every such deformation fits into an exact sequence of
\(\mathcal O_X\)-modules
\[
  0
  \longrightarrow
  \mathcal F\otimes_{\C}I_{B_1}
  \longrightarrow
  \mathcal F_1
  \longrightarrow
  \mathcal F
  \longrightarrow
  0.
\]
The extension class is
\[
  \mathrm{KS}^{\mathbb R\mathrm{an}}(\mathcal F_1)
  \in
  \mathrm{Ext}^1_X
  \bigl(
    \mathcal F,
    \mathcal F\otimes_{\C}I_{B_1}
  \bigr).
\]
Since $I_{B_1}\cong \C t\oplus \C\bar t,$ we have
\[
  \mathrm{Ext}^1_X
  \bigl(
    \mathcal F,
    \mathcal F\otimes_{\C}I_{B_1}
  \bigr)
  \cong
  \mathrm{Ext}^1_X(\mathcal F,\mathcal F)
  \oplus
  \mathrm{Ext}^1_X(\mathcal F,\mathcal F).
\]

\begin{proposition}
\label{prop:ra-fixed-sheaf-KS}
First-order real-analytic deformations of a coherent sheaf \(\mathcal F\) on a
fixed \(X\) over \(\operatorname{Spec}B_1\) are classified by
\[
  \mathrm{Ext}^1_X(\mathcal F,\mathcal F)\otimes_{\C}I_{B_1}
  \cong
  \mathrm{Ext}^1_X(\mathcal F,\mathcal F)
  \oplus
  \mathrm{Ext}^1_X(\mathcal F,\mathcal F).
\]
The two components are the \(t\)- and \(\bar t\)-Kodaira--Spencer classes of
the real-analytic family.
\end{proposition}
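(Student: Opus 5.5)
This mirrors the proof of Proposition~\ref{prop:KS-real-analytic-manifold}, with the base space fixed and only the sheaf deforming. It rests on the standard square-zero extension dictionary, applied to the small extension $0\to I_{B_1}\to B_1\to\C\to0$.

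\textbf{Step 1: the extension sequence.} Given a first-order deformation $\mathcal F_1$, flat over $B_1$ with $\mathcal F_1\otimes_{B_1}\C\simeq\mathcal F$, I tensor the small extension with $\mathcal F_1$ over $B_1$. Flatness gives exactness on the left. Since $I_{B_1}^2=0$, the ideal $I_{B_1}$ is a $\C$-module, so $\mathcal F_1\otimes_{B_1}I_{B_1}\simeq\mathcal F\otimes_\C I_{B_1}$. This yields the displayed sequence of $\mathcal O_X$-modules and hence a class in $\Ext^1_X(\mathcal F,\mathcal F\otimes_\C I_{B_1})$.

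\textbf{Step 2: the class determines the deformation.} Conversely, take an $\mathcal O_X$-extension $0\to\mathcal F\otimes I_{B_1}\to\mathcal G\to\mathcal F\to0$. I make $\mathcal G$ into an $\mathcal O_X\otimes B_1$-module by letting $a\in I_{B_1}$ act through the composite $\mathcal G\to\mathcal F\xrightarrow{\,\cdot\otimes a\,}\mathcal F\otimes I_{B_1}\hookrightarrow\mathcal G$. This is compatible with the $\mathcal O_X$-action, and its square is zero, so it really is a $B_1$-module structure. Flatness over $B_1$ then follows from the local criterion of flatness over an Artin ring: the sequence exhibits $I_{B_1}\otimes_{B_1}\mathcal G\to\mathcal G$ as injective, and $\mathcal G\otimes_{B_1}\C=\mathcal F$.

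The two constructions are mutually inverse on isomorphism classes. Here isomorphisms of deformations are those restricting to the identity on $\mathcal F$, and these correspond exactly to equivalences of extensions, since a $B_1$-linear map is automatically compatible with the $I_{B_1}$-action just defined. This identifies deformations with $\Ext^1_X(\mathcal F,\mathcal F)\otimes_\C I_{B_1}$.

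\textbf{Step 3: splitting into components.} The Ext group is additive in its second argument, and $I_{B_1}=\C t\oplus\C\bar t$, so I obtain the decomposition into the two Ext summands. By construction the components are the $t$- and $\bar t$-coefficients of the extension class, that is, the two Kodaira--Spencer classes. As a cross-check, in Čech terms I would glue $\mathcal F|_{U_i}\otimes B_1$ by $\id+t\,a_{ij}+\bar t\,b_{ij}$ when $\mathcal F$ is locally free, exactly as in Proposition~\ref{prop:KS-real-analytic-manifold}.

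\textbf{Main obstacle.} The only delicate point is the flatness/module-structure verification in Step 2 for a general coherent $\mathcal F$, where local freeness is unavailable. I would handle it by the local flatness criterion rather than by explicit cocycles. Everything else is formal.
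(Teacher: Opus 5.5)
Your argument is correct, but it follows a different route from the paper's. The paper cites ``the standard deformation theory of coherent sheaves over a square-zero extension.'' For completeness it then sketches a \v{C}ech construction: it assumes the deformation is locally trivial on a cover $\{U_i\}$, glues the pieces by automorphisms $1+t\,a_{ij}+\bar t\,b_{ij}$, and reads off two $\mathcal{E}nd(\mathcal F)$-valued cocycles modulo coboundaries. It only remarks in passing that for non-locally-free $\mathcal F$ these ``represent classes in $\Ext^1_X(\mathcal F,\mathcal F)$.''

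You instead prove the cited standard statement directly via Yoneda extensions. In one direction you tensor $0\to I_{B_1}\to B_1\to\C\to 0$ with $\mathcal F_1$. In the other you define the $I_{B_1}$-action through $\mathcal G\to\mathcal F\to\mathcal F\otimes_{\C}I_{B_1}\hookrightarrow\mathcal G$ and check flatness via $\mathrm{Tor}_1^{B_1}(\C,\mathcal G)=\ker(I_{B_1}\otimes_{B_1}\mathcal G\to\mathcal G)=0$. This is Hartshorne's argument for $k[\epsilon]$, run with a two-dimensional ideal.

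What your route buys is validity for arbitrary coherent $\mathcal F$. First-order deformations of a non-locally-free sheaf need not be locally trivial: a skyscraper sheaf whose support moves is the basic example, and it is detected by $H^0(X,\mathcal{E}xt^1(\mathcal F,\mathcal F))$ rather than by $H^1(X,\mathcal{E}nd\,\mathcal F)$. So the gluing picture literally computes only $H^1(X,\mathcal{E}nd\,\mathcal F)$, which coincides with $\Ext^1_X(\mathcal F,\mathcal F)$ only when $\mathcal F$ is locally free. The paper's cocycle picture, in turn, gives explicit representatives parallel to Proposition~\ref{prop:KS-real-analytic-manifold} and to the gauge-theoretic descriptions used for vector bundles elsewhere in the paper.

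One wording point in your Step~2. The substantive facts are two: an equivalence of extensions is automatically $B_1$-linear, because the $I_{B_1}$-action factors through the quotient and lands in the subobject; and a $B_1$-isomorphism inducing the identity on $\mathcal F$ also induces the identity on $I_{B_1}\mathcal F_1=\mathcal F\otimes_{\C}I_{B_1}$. Your phrase ``a $B_1$-linear map is automatically compatible with the $I_{B_1}$-action'' states only the trivial direction.
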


\begin{proof}
The standard deformation theory of coherent sheaves over a square-zero
extension gives the classification by
\[
  \mathrm{Ext}^1_X
  \bigl(
    \mathcal F,
    \mathcal F\otimes_{\C}I_{B_1}
  \bigr).
\]
For completeness, we recall the construction.  Choose an open cover
\(\{U_i\}\) on which the deformation is locally trivial.  Then the local
trivial deformations glue by automorphisms of the form 
$1+t\,a_{ij}+\bar t\,b_{ij},$ where \(a_{ij},b_{ij}\) are local endomorphisms of \(\mathcal F\).  The gluing
condition modulo \((t,\bar t)^2\) says that
\[
  \{a_{ij}\},\{b_{ij}\}
\]
are \(1\)-cocycles with values in \(\mathcal End(\mathcal F)\), or more
generally represent classes in 
$\mathrm{Ext}^1_X(\mathcal F,\mathcal F)$ when \(\mathcal F\) is not locally free.  Changing the local trivializations
changes these cocycles by coboundaries.  Therefore the isomorphism class of the
first-order deformation is determined by the pair
\[
  \bigl([\{a_{ij}\}],[\{b_{ij}\}]\bigr)
  \in
  \mathrm{Ext}^1_X(\mathcal F,\mathcal F)
  \oplus
  \mathrm{Ext}^1_X(\mathcal F,\mathcal F).
\]
Conversely, a pair of such extension classes defines the required gluing data,
hence a first-order real-analytic deformation.
\end{proof}

\begin{remark}
If one also conjugates the fixed holomorphic space \(X\) in the
\(\bar t\)-direction, then the second summand may equivalently be written as
\[
  \mathrm{Ext}^1_{\overline X}
  (\overline{\mathcal F},\overline{\mathcal F}).
\]
In the fixed-space convention of Definition~\ref{def:ra-sheaf-fixed},
however, the two summands are both naturally Ext-groups on \(X\); the second
one is anti-holomorphic only with respect to the parameter.
\end{remark}

\subsection{Real-analytic deformations of coherent sheaves on a moving space}\label{sec_joint_realanalytic}
Let $f:\mathcal X\longrightarrow S$ be a holomorphic family of complex manifolds or smooth complex schemes. Fix a point in $S$, say \(0\in S\) and let $X_0:=f^{-1}(0).$ For a coherent \(\mathcal O_{X_0}\)-module \(\mathcal G\), a real-analytic deformation of \(\mathcal G\) along the family
\(\mathcal X\to S\) is obtained by restricting a holomorphic deformation on the
complexification.  Thus it is represented by a diagram
\[
\begin{tikzcd}
\mathcal G \arrow[r] \arrow[d]
&
\mathcal G^\circ \arrow[r] \arrow[d]
&
\mathcal G^{\mathbb C} \arrow[d]
\\
X_0 \arrow[r, hook] \arrow[d]
&
\mathcal X^\circ \arrow[r, hook] \arrow[d]
&
\mathcal X\times\overline{\mathcal X}
  \arrow[d, "{(f,\bar f)}"]
\\
\operatorname{Spec}\C \arrow[r, hook]
&
S^\circ \arrow[r, hook]
&
S\times\overline S,
\end{tikzcd}
\]
where \(\mathcal G^{\mathbb C}\) is flat over \(S\times\overline S\), and $\mathcal G^\circ
  =
  \mathcal G^{\mathbb C}\big|_{\mathcal X^\circ}.$

For first-order deformation theory, let \(I\) be a finite-dimensional
\(\C\)-vector space with \(I^2=0\).  Let $X_I$ be a square-zero deformation of \(X_0\) with ideal \(I\otimes_{\C}\mathcal O_{X_0}\).
Its Kodaira--Spencer class is
\[
  \kappa(X_I)
  \in
  \Ext^1_{X_0}
  \bigl(
    \Omega^1_{X_0},
    I\otimes_{\C}\mathcal O_{X_0}
  \bigr)
  \cong
  H^1(X_0,T_{X_0})\otimes_{\C}I.
\]

Let
\[
  \operatorname{At}(\mathcal G)
  \in
  \mathrm{Ext}^1_{X_0}
  \bigl(
    \mathcal G,
    \mathcal G\otimes\Omega^1_{X_0}
  \bigr)
\]
be the Atiyah class of \(\mathcal G\).  Contracting the Atiyah class with the
Kodaira--Spencer class gives
\[
  \operatorname{At}(\mathcal G)\cup\kappa(X_I)
  \in
  \mathrm{Ext}^2_{X_0}
  \bigl(
    \mathcal G,
    \mathcal G\otimes_{\C} I
  \bigr).
\]

\begin{thm}
\label{thm:ra-moving-sheaf-KS}
Let \(X_I\) be a square-zero deformation of \(X_0\) with ideal
\(I\otimes_{\C}\mathcal O_{X_0}\).

\begin{enumerate}
\item The obstruction to lifting \(\mathcal G\) to a coherent sheaf
\(\mathcal G_I\) on \(X_I\), flat over \(\C\oplus I\), is
\[
  \operatorname{At}(\mathcal G)\cup\kappa(X_I)
  \in
  \mathrm{Ext}^2_{X_0}
  \bigl(
    \mathcal G,
    \mathcal G\otimes_{\C}I
  \bigr).
\]

\item If this obstruction vanishes, the set of isomorphism classes of such
lifts is a torsor under
\[
  \mathrm{Ext}^1_{X_0}
  \bigl(
    \mathcal G,
    \mathcal G\otimes_{\C}I
  \bigr)
  \cong
  \mathrm{Ext}^1_{X_0}(\mathcal G,\mathcal G)\otimes_{\C}I.
\]

\item The infinitesimal automorphisms of a fixed lift are given by
\[
  \mathrm{Ext}^0_{X_0}
  \bigl(
    \mathcal G,
    \mathcal G\otimes_{\C}I
  \bigr).
\]
\end{enumerate}
\end{thm}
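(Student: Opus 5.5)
This is the standard deformation theory of coherent sheaves over a square-zero extension of the base space. I would run the usual Čech/local-to-global argument, with the Atiyah class entering through the comparison of local trivializations of \(X_I\).

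\textbf{Local lifts.} Cover \(X_0\) by a sufficiently fine Stein (or affine) cover \(\{U_i\}\). Since \(X_0\) is smooth, \(X_I|_{U_i}\cong U_i\times\operatorname{Spec}(\C\oplus I)\). Fix such trivializations \(\phi_i\). On \(U_{ij}\), the comparison \(\phi_i^{-1}\phi_j\) is an automorphism of the trivial thickening that reduces to the identity, so it has the form \(\id+\delta_{ij}\) with \(\delta_{ij}\in \Gamma(U_{ij},T_{X_0})\otimes I\). The collection \(\{\delta_{ij}\}\) is a Čech cocycle representing \(\kappa(X_I)\).

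On each \(U_i\) take the trivial lift \(\mathcal G_i:=\mathcal G|_{U_i}\otimes_{\C}(\C\oplus I)\), which is flat over \(\C\oplus I\); this uses that \(\mathcal G\) is already defined over \(U_i\). Gluing \(\mathcal G_i\) to \(\mathcal G_j\) over \(U_{ij}\) requires an isomorphism covering \(\id+\delta_{ij}\). Such an isomorphism is a first-order differential operator \(\id+\nabla_{ij}(\delta_{ij})+a_{ij}\), where \(\nabla_{ij}\) is a local connection, or more generally a local splitting of the first jet sequence of \(\mathcal G\), and \(a_{ij}\in\operatorname{End}(\mathcal G|_{U_{ij}})\otimes I\). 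For non-locally-free \(\mathcal G\), I would replace connections by a locally free resolution, or work with the jet extension \(0\to\mathcal G\otimes\Omega^1\to J^1\mathcal G\to\mathcal G\to0\), whose class is \(\operatorname{At}(\mathcal G)\).

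\textbf{Obstruction (part (1)).} On triple overlaps, the failure of the cocycle condition is measured by
\[
c_{ijk}=(\nabla_{ij}-\nabla_{ik}+\nabla_{jk})\text{-terms contracted with }\delta .
\]
Concretely, this is the cup product of the Čech cocycle \(\{\nabla_i-\nabla_j\}\), which represents \(\operatorname{At}(\mathcal G)\), with \(\{\delta_{ij}\}\). Changing the corrections \(a_{ij}\) alters \(c\) by a coboundary. Hence a compatible choice exists if and only if \(\operatorname{At}(\mathcal G)\cup\kappa(X_I)=0\) in \(\mathrm{Ext}^2_{X_0}(\mathcal G,\mathcal G\otimes I)\). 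In the general coherent case the Čech \(\operatorname{End}\) computation gets replaced by Ext via a resolution. Alternatively, one can cite Illusie's cotangent-complex theory: the obstruction is the composite of \(\operatorname{At}(\mathcal G)\colon\mathcal G\to\mathcal G\otimes L_{X_0}[1]\) with the Kodaira--Spencer class \(L_{X_0}\to I\otimes\mathcal O[1]\).

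\textbf{Torsor and automorphisms (parts (2) and (3)).} Given one lift, any other lift differs, after choosing the same local trivial pieces, by modifying the \(a_{ij}\) by an \(\operatorname{End}(\mathcal G)\otimes I\)-valued cocycle, modulo coboundaries. This gives a free transitive action of \(\mathrm{Ext}^1(\mathcal G,\mathcal G\otimes I)\). Flatness of \(\mathcal G\otimes I\) over \(\C\) gives the identification \(\mathrm{Ext}^1(\mathcal G,\mathcal G)\otimes I\), since \(I\) is a finite-dimensional vector space. Automorphisms of a fixed lift restricting to the identity are of the form \(\id+\varepsilon\) with \(\varepsilon\colon\mathcal G_I\to I\mathcal G_I\cong\mathcal G\otimes I\) factoring through \(\mathcal G\), which gives \(\mathrm{Hom}(\mathcal G,\mathcal G\otimes I)\).

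\textbf{Main obstacle.} The delicate step is the triple-overlap computation that identifies the obstruction cocycle with the cup product \(\operatorname{At}\cup\kappa\), including signs, for possibly non-locally-free \(\mathcal G\). For that case I would first treat locally free \(\mathcal G\) by the connection computation above. I would then pass to a finite locally free resolution on the smooth projective \(X_0\), using functoriality of the Atiyah class, or simply invoke the cotangent-complex formalism.
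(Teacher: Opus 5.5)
Your proposal follows essentially the same route as the paper, which also just invokes the standard Atiyah--Kodaira--Spencer theory. In both, the obstruction is the splice (cup product) of $\operatorname{At}(\mathcal G)$ with $\kappa(X_I)$, read as the failure of the transition data to lift; lifts differ by $\mathcal Hom(\mathcal G,\mathcal G\otimes_{\C}I)$-valued $1$-cocycles; and automorphisms come from $0$-cocycles. Your \v{C}ech computation with local connections makes explicit the paper's sentence that this product ``measures precisely the failure of the transition functions of $\mathcal G$ to be lifted.''

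As you already suspect, that computation is only literally valid for locally free $\mathcal G$. A local splitting of the jet sequence is the same thing as a local connection, which forces local freeness, so it is not ``more general.'' Non-locally-free sheaves can also have nontrivial local deformations, so $\Ext^1$ and $\Ext^2$ need not be \v{C}ech cohomology of $\mathcal End(\mathcal G)$. Hence, for general coherent $\mathcal G$, the appeal to Illusie's cotangent-complex theorem is what actually carries parts (1) and (2); the paper's own sketch glosses over this point in the same way.
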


\begin{proof}
The statement is the standard Atiyah--Kodaira--Spencer obstruction theory for
a sheaf on a varying space.  We recall the construction.

The square-zero deformation \(X_I\) is classified by the extension
\[
  0
  \longrightarrow
  I\otimes_{\C}\mathcal O_{X_0}
  \longrightarrow
  \Omega^1_{X_I}\big|_{X_0}
  \longrightarrow
  \Omega^1_{X_0}
  \longrightarrow
  0,
\]
whose class is \(\kappa(X_I)\).  The Atiyah class of \(\mathcal G\) is the
extension class of the first jet sequence
\[
  0
  \longrightarrow
  \mathcal G\otimes\Omega^1_{X_0}
  \longrightarrow
  P^1(\mathcal G)
  \longrightarrow
  \mathcal G
  \longrightarrow
  0.
\]
Splicing these two extensions gives the Yoneda product $\operatorname{At}(\mathcal G)\cup\kappa(X_I)
  \in
  \mathrm{Ext}^2_{X_0}
  \bigl(
    \mathcal G,
    \mathcal G\otimes_{\C}I
  \bigr).$ This product measures precisely the failure of the transition functions of
\(\mathcal G\) to be lifted compatibly to the deformed structure sheaf
\(\mathcal O_{X_I}\).  Hence it is the obstruction to the existence of a lift.

If the obstruction vanishes, choices of lifted transition data differ by
\(1\)-cocycles with values in
\[
  \mathcal Hom(\mathcal G,\mathcal G\otimes_{\C}I),
\]
or, for an arbitrary coherent sheaf, by classes in $\mathrm{Ext}^1_{X_0}
  \bigl(
    \mathcal G,
    \mathcal G\otimes_{\C}I
  \bigr).$ Thus the set of lifts is a torsor under this Ext-group.  Automorphisms of a
fixed lift are similarly given by \(0\)-cocycles, namely by $\mathrm{Ext}^0_{X_0}
  \bigl(
    \mathcal G,
    \mathcal G\otimes_{\C}I
  \bigr).$
\end{proof}

Applying Theorem~\ref{thm:ra-moving-sheaf-KS} to $I=I_{B_1}=\C t\oplus \C\bar t$ gives the real-analytic first-order deformation theory. Let $X_1^{\mathbb R\mathrm{an}}
  \rightarrow
  \operatorname{Spec}B_1$ be the first-order real-analytic deformation of \(X_0\).  Its
Kodaira--Spencer class decomposes as
\[
  \kappa(X_1^{\mathbb R\mathrm{an}})
  =
  \kappa_t\,t+\kappa_{\bar t}\,\bar t,\quad 
\text{where}\quad  \kappa_t,\kappa_{\bar t}\in H^1(X_0,T_{X_0}).
\]
Equivalently, after conjugating the \(\bar t\)-direction, one writes $\kappa_t\in H^1(X_0,T_{X_0})$ and 
$\kappa_{\bar t}\in H^1(\overline{X_0},T_{\overline{X_0}}).$

\begin{corollary}
\label{cor:ra-moving-sheaf-first-order}
The obstruction to a first-order real-analytic deformation of
\(\mathcal G\) over \(X_1^{\mathbb R\mathrm{an}}\) is the pair
\[
  \bigl(
    \operatorname{At}(\mathcal G)\cup\kappa_t,\,
    \operatorname{At}(\mathcal G)\cup\kappa_{\bar t}
  \bigr)
  \in
  \mathrm{Ext}^2_{X_0}(\mathcal G,\mathcal G)
  \oplus
  \mathrm{Ext}^2_{X_0}(\mathcal G,\mathcal G).
\]
If this pair vanishes, then the set of first-order real-analytic deformations
of \(\mathcal G\) over the fixed deformation
\(X_1^{\mathbb R\mathrm{an}}\) is a torsor under $\mathrm{Ext}^1_{X_0}(\mathcal G,\mathcal G)
  \oplus
  \mathrm{Ext}^1_{X_0}(\mathcal G,\mathcal G).$

\end{corollary}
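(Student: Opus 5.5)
The statement is Corollary~\ref{cor:ra-moving-sheaf-first-order}. The plan is to specialize Theorem~\ref{thm:ra-moving-sheaf-KS} to the square-zero ideal $I=I_{B_1}=\C t\oplus\C\bar t$. Every Ext-group that appears is then split along this two-dimensional vector space.

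First, since $X_1^{\mathbb R\mathrm{an}}\to\operatorname{Spec}B_1$ is a square-zero deformation with ideal $I_{B_1}\otimes_\C\mathcal O_{X_0}$, part (1) of Theorem~\ref{thm:ra-moving-sheaf-KS} applies. It says the obstruction to lifting $\mathcal G$ is $\operatorname{At}(\mathcal G)\cup\kappa(X_1^{\mathbb R\mathrm{an}})\in\Ext^2_{X_0}(\mathcal G,\mathcal G\otimes_\C I_{B_1})$.

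Second, $I_{B_1}$ is a finite-dimensional $\C$-vector space with basis $t,\bar t$. Ext commutes with tensoring by a finite-dimensional vector space, so
\[
\Ext^k_{X_0}(\mathcal G,\mathcal G\otimes_\C I_{B_1})\cong \Ext^k_{X_0}(\mathcal G,\mathcal G)\,t\oplus\Ext^k_{X_0}(\mathcal G,\mathcal G)\,\bar t .
\]
This follows from additivity of $\Ext$ in the second argument applied to $\mathcal G\otimes I_{B_1}\cong\mathcal G\oplus\mathcal G$.

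Third, the Yoneda product is $\C$-bilinear and is compatible with this splitting. Writing $\kappa=\kappa_t\,t+\kappa_{\bar t}\,\bar t$ with $\kappa_t,\kappa_{\bar t}\in H^1(X_0,T_{X_0})$, we get
\[
\operatorname{At}(\mathcal G)\cup\kappa=(\operatorname{At}(\mathcal G)\cup\kappa_t)\,t+(\operatorname{At}(\mathcal G)\cup\kappa_{\bar t})\,\bar t ,
\]
which is the claimed pair. To justify this step, we use that the cup product $\Ext^1(\mathcal G,\mathcal G\otimes\Omega^1)\times \Ext^1(\Omega^1,\mathcal O\otimes I)\to\Ext^2(\mathcal G,\mathcal G\otimes I)$ is natural in the coefficient vector space $I$. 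Applying naturality to the two coordinate projections $I_{B_1}\to\C$ shows that projecting the product equals taking the product with the projected class.

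Fourth, when both components vanish, part (2) of Theorem~\ref{thm:ra-moving-sheaf-KS} makes the lifts a torsor under $\Ext^1_{X_0}(\mathcal G,\mathcal G)\otimes_\C I_{B_1}$. The same splitting identifies this group with $\Ext^1_{X_0}(\mathcal G,\mathcal G)\oplus\Ext^1_{X_0}(\mathcal G,\mathcal G)$, which gives the second assertion.

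The only point needing care is the third step: compatibility of the Yoneda product with the decomposition of $I$. We handle it by naturality in $I$ as described. The rest is bookkeeping.
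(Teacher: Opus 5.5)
Your proposal is correct and follows the paper's proof: you apply Theorem~\ref{thm:ra-moving-sheaf-KS} to the square-zero ideal $I=I_{B_1}=\C t\oplus\C\bar t$, then split both the $\Ext^2$ obstruction group and the $\Ext^1$ torsor along this decomposition. Your extra justification, that the cup product respects the splitting by bilinearity and naturality in $I$, is a detail the paper leaves implicit.
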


\begin{proof}
This is Theorem~\ref{thm:ra-moving-sheaf-KS} applied to the square-zero ideal $I_{B_1}=\C t\oplus\C\bar t.$ The obstruction group splits according to this decomposition:
\[
  \mathrm{Ext}^2_{X_0}
  \bigl(
    \mathcal G,
    \mathcal G\otimes_{\C}I_{B_1}
  \bigr)
  \cong
  \mathrm{Ext}^2_{X_0}(\mathcal G,\mathcal G)
  \oplus
  \mathrm{Ext}^2_{X_0}(\mathcal G,\mathcal G).
\]
The same splitting holds for the torsor of lifts, giving the Ext\(^1\)
statement.
\end{proof}

It is often useful to package the deformation theory of the pair
\((X_0,\mathcal G)\) into a single complex.  The Atiyah class induces a
morphism in the derived category $T_{X_0}
  \longrightarrow
  R\mathcal Hom_{X_0}(\mathcal G,\mathcal G)[1].$ Define the Atiyah--Kodaira--Spencer complex of the pair by
\[
  \mathcal K_{\mathcal G}
  :=
  \operatorname{Cone}
  \left(
    T_{X_0}
    \longrightarrow
    R\mathcal Hom_{X_0}(\mathcal G,\mathcal G)[1]
  \right)[-1].
\]
Then first-order deformations of the pair \((X_0,\mathcal G)\) over a
square-zero ideal \(I\) are governed by
\[
  \mathbb H^1(X_0,\mathcal K_{\mathcal G})\otimes_{\C}I.
\]
The natural long exact sequence contains
\[
\begin{aligned}
  \mathrm{Ext}^1_{X_0}(\mathcal G,\mathcal G)\otimes I
  \longrightarrow
  \mathbb H^1(X_0,\mathcal K_{\mathcal G})\otimes I
  \longrightarrow
  H^1(X_0,T_{X_0})\otimes I
\xrightarrow{\operatorname{At}(\mathcal G)\cup -}
  \mathrm{Ext}^2_{X_0}(\mathcal G,\mathcal G)\otimes I.
\end{aligned}
\]
Thus the image of a deformation of the pair in \(H^1(X_0,T_{X_0})\otimes I\)
is the Kodaira--Spencer class of the moving space, and the connecting map is
the Atiyah obstruction to lifting the sheaf.

For \(I=I_{B_1}\), this becomes
\[
  \mathbb H^1(X_0,\mathcal K_{\mathcal G})
  \oplus
  \mathbb H^1(X_0,\mathcal K_{\mathcal G}).
\]
After conjugating the \(\bar t\)-direction, one may equivalently write the
real-analytic tangent space of the pair as $\mathbb H^1(X_0,\mathcal K_{\mathcal G})
  \oplus
  \mathbb H^1(\overline{X_0},\mathcal K_{\overline{\mathcal G}}).$

\begin{definition}
\label{def:ra-KS-map-pair}
Let $f:\mathcal X\to S$ be a holomorphic family and let \(\mathcal G^\circ\) be a real-analytic
deformation of \(\mathcal G\) along \(S^\circ\).  The real-analytic
Kodaira--Spencer map of the pair \((\mathcal X^\circ,\mathcal G^\circ)\) at
\(0\in S\) is the linear map
\[
  \KS^{\mathbb R\mathrm{an}}_{\mathcal G}
  :
  \C T_0S^\circ
  \longrightarrow
  \mathbb H^1(X_0,\mathcal K_{\mathcal G})
\]
obtained by pulling the family back to first order real-analytic arcs $\operatorname{Spec}B_1\longrightarrow S^\circ.$

Under the splitting $\C T_0S^\circ
  \cong
  T_{S,0}\oplus T_{\overline S,0},$ this map decomposes into its holomorphic and anti-holomorphic components.
\end{definition}

\begin{remark}
When the ambient space \(X\) is fixed, the Kodaira--Spencer class of the space
is zero.  Therefore the Atiyah obstruction vanishes automatically, and the
complex \(\mathcal K_{\mathcal G}\) reduces to
\(R\mathcal Hom_X(\mathcal G,\mathcal G)\).  Hence the moving-space theory
recovers Proposition~\ref{prop:ra-fixed-sheaf-KS}.
\end{remark}

\section{NHC of relative moduli spaces is real-analytic} \label{sec_joint_real_ana}
Let $\pi:X\longrightarrow S$ be a smooth projective morphism of complex manifolds. Fix a topological type of complex vector bundle such that the usual non-abelian Hodge correspondence applies.  In our applications, this means that the rational Chern classes vanish.  Let
\[
  M_{\Dol}(X/S)
  \qquad\text{and}\qquad
  M_{\Betti}(X/S)
\]
be respectively the relative Dolbeault moduli space and the relative Betti moduli space.

In this section, we prove the following result, which was proved in \cite[Theorem 4.23]{CTW} when the fibers of $\pi$ are compact Riemann surfaces.

\begin{thm}[Relative real-analyticity]
\label{R-analyticity}
The relative non-abelian Hodge correspondence
\[
  \NHC:M_{\Dol}(X/S)
  \longrightarrow
  M_{\Betti}(X/S)
\]
is a real-analytic isomorphism near their smooth points. Moreover, for any holomorphic family of flat bundles over $X$, the corresponding family of Higgs bundles over $X$ by taking $\NHC$ is a real-analytic family of Higgs bundles.
\end{thm}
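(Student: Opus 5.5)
The plan is to reduce everything to the absolute (single-fiber) real-analyticity of the non-abelian Hodge correspondence, by placing all fibers on a common smooth model and treating the harmonic metric equation as an elliptic equation that depends analytically on parameters. Since $S$ is a manifold and the statement is local, I will work on a polydisc $\Delta\ni 0$. By Ehresmann I trivialize $X|_\Delta\cong X_0\times\Delta$ as a $C^\infty$ family of complex structures, and I choose this trivialization to be real-analytic in $s$. Following Section~\ref{sec_def_projvar}, the complex structure on $X_s$ is then encoded by a Beltrami differential $\eta(s)\in\A^{0,1}(T_{X_0})$ that depends holomorphically on $s$ (Kuranishi). I fix the smooth bundle $\E$ and a background metric $k$.

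A point of $M_{\Betti}(X/S)$ near a smooth point is a flat connection $D$ on $\E$, taken modulo gauge. A holomorphic family of flat bundles gives, after gauge fixing, a family $D_s$ that is real-analytic in $(s,\bar s)$. For each $s$, I look for the harmonic metric $h_s=k\,e^{u}$ with $u$ $k$-self-adjoint. The harmonicity condition splits $D_s=\Ch_{h}+\theta+\theta^{\star_h}$ according to the type decomposition of $X_s$, that is, using the projections $\pi'_{\eta(s)},\pi''_{\eta(s)}$. It is the Corlette equation $\mathcal F(s,\bar s,u):=d^{*}_{D_s,h,\eta(s)}\bigl(h^{-1}D_s h\bigr)=0$, where the adjoint is taken with respect to a Kähler metric on $X_s$ that also depends real-analytically on $s$.

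The key step is to verify the hypotheses of the real-analytic implicit function theorem on Banach spaces, and there are three things to check. First, $\mathcal F$ is real-analytic as a map $\Delta\times C^{k+2,\alpha}(\mathrm{Herm})\to C^{k,\alpha}$. This holds because it is built polynomially from $e^{\pm u}$, from derivatives of $u$, and from coefficients that are real-analytic in $(s,\bar s)$. Second, $\mathcal F(0,0)=0$ by Corlette--Donaldson on $X_0$. Third, the linearization in $u$ at $(0,0)$ is the operator $\dot u\mapsto \Delta_{D''}\dot u$-type Laplacian, namely the one appearing in Lemma~\ref{vanlem}, acting on self-adjoint endomorphisms. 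It is elliptic and Fredholm of index zero. By Lemma~\ref{vanlem}, using irreducibility, i.e. stability or smoothness of the point, its kernel consists only of scalars. I remove these by imposing $\det$- or trace-normalization, which makes the linearization an isomorphism.

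The implicit function theorem then gives a unique family $u(s,\bar s)$ that is real-analytic in $(s,\bar s)$. By elliptic regularity it is smooth on $X_0$, and uniqueness of the harmonic metric identifies it with the true one. From $h_s$, the Higgs field $\theta_s=\pi'_{\eta(s)}\bigl(-\tfrac12 h_s^{-1}D_s h_s\bigr)$ and the operator $\bar\p_s$ follow by algebraic and real-analytic operations. This proves the second claim: the family of Higgs bundles is real-analytic. It is also exactly the complexification picture of Appendix~\ref{sec_complexification}, since $u$ extends holomorphically in $(s,\zeta)$. For the first claim, I run the same argument with $(s,D)$ ranging over a real-analytic local slice of $M_{\Betti}(X/S)$, for example a Kuranishi slice that is holomorphic in the Betti structure. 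This shows that $\NHC$ is real-analytic. The inverse is handled symmetrically, solving the Hitchin--Simpson equation for Higgs bundles near stable points, where the linearization is again invertible by Lemma~\ref{vanlem}. Hence $\NHC$ is a real-analytic isomorphism.

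I expect the main obstacle to be the bookkeeping needed to make $\mathcal F$ honestly real-analytic and Fredholm uniformly in $s$. The moving complex structure enters the adjoint operators, so I must choose the Kähler forms $\omega_s$ and the Sobolev/Hölder setup so that the operator family is analytic in $s$ as a map into a fixed Banach space. I also need to check that the gauge slices on both sides are analytic. I would handle this by pulling everything back to the fixed manifold $X_0$ via the real-analytic trivialization, so that only the coefficients vary, and they do so analytically.
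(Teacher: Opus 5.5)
Your proposal is correct and follows essentially the same route as the paper. Both arguments take a real-analytic trivialization of $X_\Delta$, so that only the coefficients ($J_s$, $\omega_s$) vary, and then apply the real-analytic implicit function theorem on H\"older spaces to the harmonic-metric equation: Corlette's equation for the Betti-to-Dolbeault direction and the Hitchin--Simpson equation for the converse. In both, the linearization is made invertible because irreducibility/stability forces its kernel to be scalars, which trace normalization then removes.

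The only difference is cosmetic. You identify the kernel of the Corlette linearization via Lemma~\ref{vanlem}. That requires observing that, at a harmonic metric, this linearization is (up to a nonzero constant) $\Delta_D$, hence $(\mathcal D'')^*\mathcal D''$ by the K\"ahler identities, i.e.\ the $\Lambda$-contraction of the operator in that lemma. The paper instead appeals directly to the Bochner identity.
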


The key analytic input used in the proof is the real-analytic implicit function theorem in Banach spaces; see, for example, \cite{Mujica}, which is reviewed here in the form we need:

\begin{proposition}[Analytic implicit-function principle]
\label{prop_analytic_impli}
Let $Q$ be a finite-dimensional real-analytic manifold, let $E,F$ be vector bundles on a compact manifold $M$, and let
\[
 \mathcal F:Q\times \mathcal U\longrightarrow C^{k,\alpha}(F),
 \qquad \mathcal U\subset C^{k+2,\alpha}(E),
\]
be real-analytic near $(q_0,u_0)$. Here $C^{k,\alpha}(E)$ denotes the Banach space of $C^k$-sections
of $E$ whose $k$-th derivatives are H\"older continuous with
exponent $\alpha\in(0,1)$. Assume $\mathcal F(q_0,u_0)=0$ and that
\[
 D_u\mathcal F(q_0,u_0):C^{k+2,\alpha}(E)\longrightarrow C^{k,\alpha}(F)
\]
is an isomorphism. Then, after shrinking $Q$, there is a unique real-analytic map $q\mapsto u(q)$ with $u(q_0)=u_0$ and $\mathcal F(q,u(q))=0$.
\end{proposition}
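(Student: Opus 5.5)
The plan is to reduce the statement to the \emph{holomorphic} implicit function theorem by complexifying, and to prove the latter by a contraction argument in which every iterate is holomorphic. Throughout, a map between (open subsets of) Banach spaces is real-analytic near a point if, in a neighbourhood, it equals a norm-convergent series $\sum_k P_k(x-x_0)$. Here each $P_k$ is a continuous homogeneous polynomial, i.e.\ the restriction to the diagonal of a bounded symmetric $k$-linear map, and $\sum_k\|P_k\|r^k<\infty$ for some $r>0$. Working in local real-analytic coordinates on $Q$, we may take $Q$ to be an open subset of $\mathbb R^d$ with $q_0=0$, and translate so that $u_0=0$.

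\textbf{Step 1: complexification.} Let $E_{\C}=C^{k+2,\alpha}(E)\otimes_{\mathbb R}\C$ and $F_{\C}=C^{k,\alpha}(F)\otimes_{\mathbb R}\C$; if $E,F$ are already complex bundles, these spaces are simply used as they are. Each bounded symmetric multilinear map $A_k$ giving $P_k$ extends complex-multilinearly to $A_k^{\C}$ with $\|A_k^{\C}\|\le 2^k\|A_k\|$ (any fixed geometric constant suffices). Hence the series for $\mathcal F$ converges on a small ball of $\C^d\times E_{\C}$ and defines a holomorphic map $\mathcal F^{\C}$ extending $\mathcal F$. The partial derivative $D_u\mathcal F^{\C}(0,0)$ is the complexification of $L:=D_u\mathcal F(0,0)$ and is therefore again an isomorphism, with inverse $(L^{-1})^{\C}$. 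I expect these norm estimates for complexified multilinear maps to be the only genuinely delicate point; everything else is formal.

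\textbf{Step 2: contraction.} Set $T(q,u):=u-L^{-1}\mathcal F^{\C}(q,u)$. Then $D_uT(0,0)=0$ and $T(0,0)=0$. By continuity of $D_uT$, and by a Cauchy estimate applied to the holomorphic map, we can choose $\rho,\delta>0$ with the following two properties:
\[
\|D_uT(q,u)\|\le\tfrac12\quad\text{and}\quad \|T(q,0)\|\le\rho/2\qquad\text{for }|q|<\delta,\ \|u\|\le\rho .
\]
Then $T(q,\cdot)$ maps the closed ball $\bar B_\rho$ into itself and is a $\tfrac12$-contraction, uniformly in $q$.

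\textbf{Step 3: holomorphic limit.} Define the iterates $u_0(q)=0$ and $u_{j+1}(q)=T(q,u_j(q))$. Each $u_j$ is holomorphic in $q$, and the $u_j$ converge uniformly on $|q|<\delta$, geometrically. A uniform limit of holomorphic Banach-valued maps is holomorphic (by the Cauchy integral formula, or by Weierstrass's theorem applied to $\ell\circ u_j$ for continuous linear functionals $\ell$, combined with local boundedness). The limit $u^{\C}(q)$ is therefore holomorphic. It is the unique fixed point of $T(q,\cdot)$ in $\bar B_\rho$, hence the unique zero of $\mathcal F^{\C}(q,\cdot)$ there.

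\textbf{Step 4: restriction to real parameters and uniqueness.} For real $q$, both $\mathcal F$ and $L^{-1}$ preserve the real subspaces. Consequently every iterate $u_j(q)$ lies in $C^{k+2,\alpha}(E)$, and so does the limit. Put $u(q):=u^{\C}(q)|_{\mathbb R^d}$. This is the restriction of a holomorphic map to real points, so its Taylor series at each point converges in norm, i.e.\ $u$ is real-analytic. Moreover $u(0)=0$ and $\mathcal F(q,u(q))=0$. Uniqueness, after shrinking $Q$, follows from the contraction property: any solution close to $u_0$ is a fixed point of $T(q,\cdot)$ in $\bar B_\rho$, so it coincides with $u(q)$. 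Finally, one returns to the original coordinates on $Q$ and to the original base point $u_0$.
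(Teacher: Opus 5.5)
Your proof is correct and is the standard argument (complexify, solve holomorphically by a contraction, restrict to real points); the paper gives no proof of its own and only cites Mujica's holomorphic implicit function theorem in Banach spaces, which your argument reproduces. One clause needs fixing: even when $E,F$ are complex bundles you must complexify the underlying \emph{real} Banach spaces, $C^{k+2,\alpha}(E)\otimes_{\mathbb R}\C$. The reason is that $\mathcal F$ is only assumed real-analytic in $u$ (it may contain adjoints or conjugations), so its $\mathbb R$-multilinear Taylor terms need not extend complex-multilinearly to $C^{k+2,\alpha}(E)$ itself, and your Step~4 needs a real subspace in which the iterates stay.
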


\begin{proof}[Proof of Theorem~\ref{R-analyticity}]
The assertion is local. Shrink $S$ to a polydisc $\Delta$ and choose a real-analytic differentiable trivialization $X_\Delta\simeq X_0\times\Delta$. Thus the fiberwise complex structures $J_s$ and K\"ahler forms $\omega_s$ of $X/S$ are real-analytic families on the fixed compact manifold $X_0$.

Fix a stable Higgs bundle $q_0=(s_0,\bar\partial_0,\theta_0)$, a harmonic metric $h_0$, and a finite-dimensional real-analytic slice $\Sigma$ through $q_0$. Write $q=(s,\bar\partial_q,\theta_q)\in\Sigma$. On the fixed smooth bundle let
\[
 \mathcal B^{m,\alpha}_0
 =C^{m,\alpha}\bigl(X_0,\operatorname{Herm}_0(E,h_0)\bigr),
\]
the trace-free $h_0$-self-adjoint endomorphisms, and write a nearby normalized metric as $h_a=h_0e^a$, $a\in\mathcal B^{k+2,\alpha}_0$. Define
\[
 \mathcal H(q,a)={\operatorname{pr}_{0}}\!\left[\sqrt{-1}\Lambda_{\omega_s}
 \left(F_{h_a,\bar\partial_q}+[\theta_q,\theta_q^{*h_a}]\right)\right]
 \in\mathcal B^{k,\alpha}_0,
\]
where
$
\operatorname{pr}_0(\cdot)$ denotes the trace-free part. The operations $e^a$, inversion, curvature, adjoint, contraction, and bracket are real-analytic on H\"older spaces; hence $\mathcal H$ is a real-analytic nonlinear elliptic map.

Put $\mathcal D''_0=\bar\partial_0+\operatorname{ad}(\theta_0)$. Up to a positive convention-dependent factor, the metric linearization at $(q_0,0)$ is
\[
 L_0=(\mathcal D''_0)^*\mathcal D''_0.
\]
For $b\in\mathcal B^{k+2,\alpha}_0$ one has
\[
 \langle L_0b,b\rangle_{L^2}
 =\|\bar\partial_0b\|_{L^2}^2+\|[\theta_0,b]\|_{L^2}^2.
\]
Thus $\ker L_0$ consists of trace-free Higgs endomorphisms. Stability implies that every Higgs endomorphism is scalar, so $\ker L_0=0$. Since $L_0$ is self-adjoint elliptic, we have an isomorphism
\[
 L_0:\mathcal B^{k+2,\alpha}_0\xrightarrow{\sim}\mathcal B^{k,\alpha}_0.
\]
Proposition~\ref{prop_analytic_impli} gives a unique real-analytic solution $a=a(q)$. Hence the normalized harmonic metric $h_q=h_0e^{a(q)}$ depends real-analytically on $q$. The associated flat connection
\[
 D_q=\bar\partial_q+\partial_{h_q,\bar\partial_q}
       +\theta_q+\theta_q^{*h_q}
\]
is therefore real-analytic. This proves the Dolbeault-to-Betti direction.

Conversely, let $D_q$ be a real-analytic family of irreducible flat connections. For a Hermitian metric $h$, write $D_q=d_{q,h}+\Psi_{q,h}$ with $d_{q,h}$ unitary and $\Psi_{q,h}$ self-adjoint. The normalized harmonic metric is characterized by Corlette's equation
\[
 d_{q,h}^{*}\Psi_{q,h}=0.
\]
Its metric linearization is the self-adjoint elliptic Jacobi operator. The Bochner identity identifies its kernel with $D_{q_0}$-parallel self-adjoint endomorphisms; irreducibility makes them scalar, and the trace-free normalization removes them. Proposition~\ref{prop_analytic_impli} again gives real-analytic dependence of $h_q$. Taking the type decomposition with respect to $(J_s,h_q)$ yields
\[
 D_q=(\bar\partial_{E,q}+\theta_q)
     +(\partial_{E,q}^{h_q}+\theta_q^{*h_q}),
\]
so $(\bar\partial_{E,q},\theta_q)$ depends real-analytically on $q$. This proves the inverse direction.  
\end{proof}

\end{document}